\documentclass[10pt,a4paper]{amsart}[2000]
\usepackage{geometry}
\usepackage{comment}
\usepackage{amsmath}
\usepackage{amssymb}
\usepackage{amsthm}
\usepackage{mathrsfs}
\usepackage[hang]{footmisc}

\usepackage[linktocpage]{hyperref}
\usepackage{tikz, tikz-cd}
\usepackage{color}
\usepackage{mathtools}
\usepackage{enumitem}
\usepackage{stackengine}

\usepackage{graphicx}
\usepackage{esvect}
\makeatletter
\DeclareRobustCommand{\cev}[1]{%
  {\mathpalette\do@cev{#1}}%
}
\newcommand{\do@cev}[2]{%
  \vbox{\offinterlineskip
    \sbox\z@{$\m@th#1 x$}%
    \ialign{##\cr
      \hidewidth\reflectbox{$\m@th#1\vec{}\mkern4mu$}\hidewidth\cr
      \noalign{\kern-\ht\z@}
      $\m@th#1#2$\cr
    }%
  }%
}
\makeatother

\thanks{}

\allowdisplaybreaks

\theoremstyle{plain}

\newtheorem{Thm}{Theorem}[section]

\theoremstyle{definition}

\theoremstyle{plain}
\newtheorem{thm}[Thm]{Theorem}
\newtheorem{lem}[Thm]{Lemma}
\newtheorem{cor}[Thm]{Corollary}
\newtheorem{prop}[Thm]{Proposition}

\theoremstyle{definition}
\newtheorem{defn}[Thm]{Definition}

\newtheorem{eg}[Thm]{Example}
\newtheorem{rmk}[Thm]{Remark}

\newtheorem{innercustomthm}{Theorem}
\newenvironment{customthm}[1]
{\renewcommand\theinnercustomthm{#1}\innercustomthm}
{\endinnercustomthm}

\newcommand{\B}{B}
\newcommand{\A}{A}
\newcommand{\J}{J}
\newcommand{\K}{\mathcal{K}}
\newcommand{\D}{D}
\newcommand{\Ch}{D}
\newcommand{\Zh}{\mathcal{Z}}
\newcommand{\E}{E}
\newcommand{\Oh}{\mathcal{O}}

\newcommand{\T}{{\mathbb T}}
\newcommand{\R}{{\mathbb R}}
\newcommand{\N}{{\mathbb N}}
\newcommand{\Z}{{\mathbb Z}}
\newcommand{\C}{{\mathbb C}}
\newcommand{\Q}{{\mathbb Q}}

\newcommand{\aut}{\mathrm{Aut}}
\newcommand{\supp}{\mathrm{supp}}

\newcommand{\eps}{\varepsilon}
\numberwithin{equation}{section}

\newcommand{\id}{\mathrm{id}}

\newcommand{\halpha}{\widehat{\alpha}}
\newcommand{\calpha}{\widehat{\alpha}}

\newcommand{\tih}{\widetilde {h}}

\newcommand\set[1]{\left\{#1\right\}}  % Menge
\newcommand\mset[1]{\left\{\!\!\left\{#1\right\}\!\!\right\}}

 \newcommand{\IN}[0]{\mathbb{N}}

\newcommand{\CA}[0]{\mathcal{A}} \newcommand{\CB}[0]{\mathcal{B}}
\newcommand{\CC}[0]{\mathcal{C}} \newcommand{\CD}[0]{\mathcal{D}}
\newcommand{\CE}[0]{\mathcal{E}} \newcommand{\CF}[0]{\mathcal{F}}
\newcommand{\CG}[0]{\mathcal{G}} \newcommand{\CH}[0]{\mathcal{H}}
 
 \newcommand{\CL}[0]{\mathcal{L}}
\newcommand{\CM}[0]{\mathcal{M}} 
\newcommand{\CO}[0]{\mathcal{O}} 
\newcommand{\CQ}[0]{\mathcal{Q}} 
\newcommand{\CS}[0]{\mathcal{S}} \newcommand{\CT}[0]{\mathcal{T}}
\newcommand{\CU}[0]{\mathcal{U}} 
 
 \newcommand{\CZ}[0]{\mathcal{Z}}

\newcommand{\Ra}[0]{\Rightarrow}
\newcommand{\La}[0]{\Leftarrow}
\newcommand{\LRa}[0]{\Leftrightarrow}

\newcommand{\quer}[0]{\overline}
\newcommand{\eins}[0]{\mathbf{1}}			% Eine Eins in allgemeinerem Kontext, z.B. in einem Ring
\newcommand{\diag}[0]{\operatorname{diag}}
\newcommand{\ad}[0]{\operatorname{Ad}}
\newcommand{\ev}[0]{\operatorname{ev}}
\newcommand{\fin}[0]{{\subset\!\!\!\subset}}
\newcommand{\diam}[0]{\operatorname{diam}}
\newcommand{\Hom}[0]{\operatorname{Hom}}
\newcommand{\dst}[0]{\displaystyle}
\newcommand{\spp}[0]{\operatorname{supp}}
\newcommand{\lsc}[0]{\operatorname{Lsc}}
\newcommand{\del}[0]{\partial}
\newcommand{\GU}[0]{\CG^{(0)}}

\theoremstyle{definition}

\numberwithin{equation}{Thm}

\title[]{Full Gabor frames, its existence problem, and a non-uniform Balian-Low type theorem}	

\begin{document}
\global\long\def\floorstar#1{\lfloor#1\rfloor}
\global\long\def\ceilstar#1{\lceil#1\rceil}	

\global\long\def\B{B}
\global\long\def\A{A}
\global\long\def\J{J}
\global\long\def\K{\mathcal{K}}
\global\long\def\D{D}
\global\long\def\Ch{D}
\global\long\def\Zh{\mathcal{Z}}
\global\long\def\E{E}
\global\long\def\Oh{\mathcal{O}}

\global\long\def\T{{\mathbb{T}}}
\global\long\def\BR{{\mathbb{R}}}
\global\long\def\N{{\mathbb{N}}}
\global\long\def\Z{{\mathbb{Z}}}
\global\long\def\C{{\mathbb{C}}}
\global\long\def\Q{{\mathbb{Q}}}

\global\long\def\aut{\mathrm{Aut}}
\global\long\def\supp{\mathrm{supp}}

\global\long\def\eps{\varepsilon}

\global\long\def\id{\mathrm{id}}

\global\long\def\halpha{\widehat{\alpha}}
\global\long\def\calpha{\widehat{\alpha}}

\global\long\def\tih{\widetilde{h}}

\global\long\def\opFol{\operatorname{F{\o}l}}

\global\long\def\opRange{\operatorname{Range}}

\global\long\def\opIso{\operatorname{Iso}}
\global\long\def\opisom{\operatorname{Isom}}
\global\long\def\dimnuc{\dim_{\operatorname{nuc}}}
\global\long\def\dimtow{\dim_{\operatorname{tow}}}

\global\long\def\set#1{\left\{  #1\right\}  }

% Menge

\global\long\def\mset#1{\left\{  \!\!\left\{  #1\right\}  \!\!\right\}  }

\global\long\def\Ra{\Rightarrow}
\global\long\def\La{\Leftarrow}
\global\long\def\LRa{\Leftrightarrow}

\global\long\def\quer{\overline{}}
\global\long\def\eins{\mathbf{1}}
\global\long\def\diag{\operatorname{diag}}
\global\long\def\ad{\operatorname{Ad}}
\global\long\def\ev{\operatorname{ev}}
\global\long\def\fin{{\subset\!\!\!\subset}}
\global\long\def\diam{\operatorname{diam}}
\global\long\def\Hom{\operatorname{Hom}}
\global\long\def\dst{{\displaystyle }}
\global\long\def\spp{\operatorname{supp}}
\global\long\def\spo{\operatorname{supp}_{o}}
\global\long\def\del{\partial}
\global\long\def\lsc{\operatorname{Lsc}}
\global\long\def\GU{\CG^{(0)}}
\global\long\def\HU{\CH^{(0)}}
\global\long\def\AU{\CA^{(0)}}
\global\long\def\BU{\CB^{(0)}}
\global\long\def\CUU{\CC^{(0)}}
\global\long\def\DU{\CD^{(0)}}
\global\long\def\QU{\CQ^{(0)}}
\global\long\def\TU{\CT^{(0)}}
\global\long\def\CUUU{\CC'{}^{(0)}}
\global\long\def\dom{\operatorname{dom}}
\global\long\def\ran{\operatorname{ran}}
\global\long\def\AUl{(\CA^{l})^{(0)}}
\global\long\def\BUl{(B^{l})^{(0)}}
\global\long\def\HUp{(\CH^{p})^{(0)}}
\global\long\def\sym{\operatorname{Sym}}
\global\long\def\stab{\operatorname{Stab}}
\newcommand{\cat}[0]{\operatorname{CAT}(0)}
\global\long\def\properlength{proper}
\global\long\def\deg{\operatorname{deg}}
\global\long\def\isom{\operatorname{Isom}}
\global\long\def\interior#1{#1^{\operatorname{o}}}
	\global\long\def\ln{\operatorname{ln}}

\author{Rui Liu}
	
	\address{R. Liu:  School of Mathematical Sciences and LPMC, Nankai University,
Tianjin, 300071, PR China.}
  \email{ruiliu@nankai.edu.cn}

\author{Xin Ma}
	\address{X. Ma:  Institute for Advanced Study in Mathematics, Harbin Institute of Technology, Harbin, China, 150001}
\email{xma17@hit.edu.cn}

\author{Yuxuan Zheng}
\address{Y. Zheng: School of Mathematical Sciences and LPMC, Nankai University,
Tianjin, 300071, PR China.}
\email{yuxuanzheng@mail.nankai.edu.cn}

\keywords{}

%\subjclass[2010]{46L35, 37B05, 57S30, 20F36, 20F55, 20E08}

\date{\today}

\begin{abstract}
For a broad class of Delone sets in $\R^n$
that are of significance in both mathematics and physics, we prove a non‑uniform Balian–Low type theorem and settle the converse problem on the existence of Gabor frames, for arbitrary dimension $n$. To this end, we introduce a class of Gabor frames, termed full Gabor frames, and prove that the existence of such a frame on the Delone set with  Schwartz window functions is equivalent to the condition that the lower Beurling density be strictly greater than one. In fact, the usual Balian--Low direction using window functions from the Feichtinger's algebra can be proven for arbitrary point sets, thereby improving an earlier density theorem by Christensen, Deng, and Heil. The corresponding dual result for Riesz sequences is also obtained. The main technical tools employed in this paper are tiling groupoid constructions and $C^*$-algebraic methods. As a byproduct, we resolve an open question from Ito's thesis concerning the bounded dynamical asymptotic dimension of tiling groupoids. Furthermore, this result allows us to extend the classification theorem of Ito, Whittaker, and Zacharias to the twisted case.
\end{abstract}

%\footnote{This work was supported by the National Natural Science Foundation of China (Grant Nos. 11971348, 12071230 and 12471131).}

\maketitle

\section{Introduction}
The structure theory of Gabor frames and Gabor Riesz sequences has long been a central topic in time-frequency analysis. In the classical Euclidean setting, for $z = (x, \omega) \in \mathbb{R}^{2d}$ and $g \in L^2(\mathbb{R}^d)$, the translation operator $T_x$, modulation operator $M_\omega$, and time-frequency shift $\pi(z)$ are defined as
\[   T_x g(t) = g(t-x), \ \ \ \ 
    M_\omega g(t) = e^{2\pi i \omega t} g(t), \ \ \ \ 
    \pi(z) g(t) = M_\omega T_x g(t) = e^{2\pi i \omega t} g(t-x),
\]
with $t \in \mathbb{R}^d$. The function $g$ is typically called a \textit{window function}. Let $\Lambda \subset \mathbb{R}^{2d}$ be a discrete set. The associated \textit{Gabor system} is defined as $G(g, \Lambda) = \{\pi(\lambda) g : \lambda \in \Lambda\}$. Such a system is called a \textit{Gabor frame} if there exist constants $0 < A \leq B < \infty$ such that
$$
A \|f\|_2^2 \leq \sum_{\lambda \in \Lambda} |\langle f, \pi(\lambda) g \rangle|^2 \leq B \|f\|_2^2 \quad \text{for all } f \in L^2(\mathbb{R}^d).
$$
Regarding interpolation, $\mathcal{G}(g, \Lambda)$ is said to be a \textit{Gabor Riesz sequence} if there exist constants $0 < A \leq B < \infty$ such that
$$
A \|c\|_2 \leq \big\| \sum_{\lambda \in \Lambda} c_\lambda \pi(\lambda) g \big\|_2 \leq B \|c\|_2 \quad \text{for all } c \in \ell^2(\Lambda).
$$

Deep connections have been established between the existence of a Gabor frame (or, respectively, a Gabor Riesz sequence) $G(g, \Lambda)$ with window function $g$ in Feichtinger's algebra or Schwartz space and the Beurling densities of the sampling (or interpolating) set $\Lambda$ in $\mathbb{R}^{2d}$. These connections are known as the Balian–Low type theorem and its converse. For example, in the one‑dimensional case $d = 1$ with a Gaussian window $g$, Lyubarskii \cite{Lyubarskii}, Seip \cite{Seip}, and Seip–Wallstén \cite{SW} proved that $G(g, \Lambda)$ is a frame for $L^2(\mathbb{R})$ if and only if $D^{-}(\Lambda) > 1$, and a Riesz sequence if and only if $D^{+}(\Lambda) < 1$. For other special windows—such as totally positive functions
%, certain rational functions, 
and functions of hyperbolic secant type—we refer to \cite{BB,G23,GRS18} for results showing that $G(g, \Lambda)$ over certain semi‑regular set or certain lattice $\Lambda$ is a frame precisely when $D^{-}(\Lambda) >1$.

In higher dimensions, however, the problem becomes significantly more intricate. Suppose $d \ge 2$. Gröchenig and Lyubarskii constructed counterexamples in \cite{GL} showing that time‑frequency lattices $\Lambda$ with the density $D(\Lambda)>1$ may fail to generate a Gabor frame, even for Gaussian windows. Nevertheless, using Kähler geometry, Luef and Wang \cite{LW} provided a sufficient condition for the construction of Gaussian Gabor frames for almost all lattice. In a related development, Gröchenig extended the complex variable techniques from the univariate case to produce various examples of complex lattices with covolume less than $1$ for which the associated Gaussian Gabor system forms a frame \cite{G11}. Romero, Ulanovskii, and Zlotnikov \cite{RUZ} further supplied numerous examples of bivariate Gaussian frames for certain semi‑regular sets $\Lambda \times \mathbb{Z}^2 \subset \mathbb{R}^4$ satisfying $D^{-}(\Lambda \times \mathbb{Z}^2) > 1$. 

In noncommutative geometry, projective modules over noncommutative $C^*$-algebras serve as a noncommutative analogue of vector bundles over manifolds. Connes constructed projective modules over smooth noncommutative tori \cite{Con}, and Rieffel later extended this construction to higher‑dimensional noncommutative tori, introducing what are now known as Heisenberg modules \cite{Ri}. Since this pioneering work, such projective modules have found numerous applications in mathematics and physics, particularly in Gabor analysis. Luef \cite{Luef} first established a link between the duality theory of regular Gabor frames with windows in modulation spaces \cite{F83} and the Morita–Rieffel equivalence of Heisenberg modules over noncommutative tori. Consequently, the generators of projective modules over noncommutative tori are Gabor atoms of multi‑window Gabor frames for modulation spaces, and there exist well‑behaved multi‑window Gabor frames with atoms in modulation spaces or in the Schwartz space. Building on \cite{Luef} and on Rieffel's work on strict comparison of projections and cancellation in noncommutative tori \cite{Ri}, Jakobsen and Luef \cite{JL} resolved the existence problem for Gabor frames with atoms in  Feichtinger's algebra on non‑rational lattices. More recently, Enstad, Thiel, and Vilata \cite{ETV} treated the rational lattice case for Schwartz Gabor frames. In addition, Luef \cite{Luef18} invoked a  constant curvature connections on Heisenberg modules over noncommutative tori to interpret a version of the Balian–Low theorem in terms of noncommutative geometry.

In a recent paper \cite{BEV}, B\'edos, Enstad, and van Velthoven extended the Hilbert module framework of \cite{Ri} from locally compact abelian groups to the setting of nilpotent Lie groups and generalized the existence theorem for Gabor frames established in \cite{Luef, JL}.  In \cite{BEV}, the authors demonstrated a converse result to the Balian–Low type theorem in  \cite{AFK, FK, GOR, GRRV20} for smooth vectors. For further details, see \cite[Theorems 1.3]{BEV} and the parallel results for Riesz sequences. Another key ingredient in their approach is the simplicity of the twisted group $C^*$-algebra and its strict comparison of projections, which follows from recent progress on the Toms–Winter conjecture (see, e.g., \cite{Winter, Ror}). Moreover, we refer to \cite{ETV} and \cite{EV} for results obtained in the case of non‑simple $C^*$-algebras.

Beyond the cases of lattices and semi‑regular sets, Gr\"ochenig, Ortega‑Cerd\`a, and Romero \cite{GOR} studied the deformation of Gabor systems and obtained a non‑uniform Balian–Low theorem \cite[Corollary 1.2]{GOR}. We now introduce several key concepts. Let $\Lambda_1, \dots, \Lambda_n$ be a finite family of discrete sets in $\mathbb{R}^{2d}$ and let $g_1, \dots, g_n \in L^2(\mathbb{R}^d)$. In \cite[Section 6.1]{GRS20}, Gr\"ochenig, Romero, and St\"ockler defined the \textit{multi‑window Gabor system} as
\[
G(\Lambda_1,\dots,\Lambda_n, g_1,\dots, g_n) := \{\pi(z)g_i : z \in \Lambda_i,\ i = 1,\dots, n\} = \bigcup_{i=1}^n G(\Lambda_i, g_i).
\]
We note that the term ``multi‑window Gabor system'' is also widely used in the literature when all $\Lambda_i$ coincide with a single $\Lambda$. 
A multi‑window Gabor system $G(\Lambda_1,\dots,\Lambda_n, g_1,\dots, g_n)$ is a frame if there exist constants $0 < A \le B < \infty$ such that
\[
A\|f\|_2^2 \le \sum_{i=1}^n \sum_{\lambda \in \Lambda_i} |\langle f, \pi(\lambda) g_i \rangle|^2 \le B\|f\|_2^2 \qquad \text{for all } f \in L^2(\mathbb{R}^d).
\]
Remarkably, \cite[Theorem 6.1]{GRS20} provides a connection between the Gabor frame property of  multi‑window Gabor system and a sampling problem  for certain discrete sets, as a vector‑valued extension of \cite[Theorem 2.3]{GRS18}.
%Remarkably, \cite[Theorem 6.1]{GRS20} provides a characterization of generalized multi‑window Gabor frames in $L^2(\mathbb{R})$ for certain discrete sets, as a vector‑valued extension of \cite[Theorem 2.3]{GRS18}.

Using the results of \cite{GOR}, Kreisel \cite{K} constructed multi‑window Gabor frames for any quasicrystal $\Lambda \subset \mathbb{R}^{2d}$. It is worth emphasizing that when $\Lambda$ is no longer a lattice, one can work with the tiling groupoid $R_\Lambda$ of $\Lambda$, as in \cite{K}. This approach was subsequently extended by \cite{ER} to locally compact groups, yielding a more abstract density theorem; see \cite[Theorem 4.1]{ER}.

Motivated by \cite{GOR, ETV, K, ER}, in this paper we investigate the Balian–Low type theorem and its converse for point sets in $\mathbb{R}^{2d}$ that are not necessarily lattices, using techniques from groupoids and $C^*$-algebras. We first introduce a concept, which we call a \textit{full Gabor system}. Let $\Lambda$ be a discrete set in $\mathbb{R}^{2d}$ with a disjoint decomposition $\Lambda = \bigsqcup_{i=1}^n \Lambda_i$, and let $\vec{g} = (g_1,\dots, g_n) \in L^2(\mathbb{R}^d)^n$. We define
\[
G_F(\Lambda, \vec{g}) = G(\Lambda_1,\dots,\Lambda_n, g_1,\dots, g_n) := \{\pi(z)g_i : z \in \Lambda_i,\ i = 1,\dots, n\}
\]
as a \textit{full Gabor system} for $\Lambda = \bigsqcup_{i=1}^n \Lambda_i$ and $\vec{g} = (g_1,\dots, g_n)$. We say $G_F(\Lambda, \vec{g})$ a \textit{full Gabor frame} (resp. \textit{full Riesz sequence}) if it is a frame (resp. a Riesz sequence). This notion is essentially a piecewise constant version of a single‑window Gabor system and shares the spirit of \textit{full groups} arising from a single group action (see, e.g., \cite{GPS}). Although not explicitly named as such, the full Gabor system has already appeared in the literature, and a density theorem for it was established by Christensen, Deng, and Heil \cite[Theorem 1.1]{CDH}. 

Our main contribution in this paper is to solve the existence problem of full Schwartz Gabor frames and provide a Balian–Low type theorem for repetitive, aperiodic Delone sets with finite local complexity (FLC), using window functions from the Schwartz space $\mathcal{S}(\mathbb{R}^d)$. In fact, we obtain a characterization of the existence of full Gabor frames for such Delone sets in higher‑dimensional Euclidean spaces in terms of density conditions. To the best of our knowledge, this is the first bidirectional result on this topic for the class of Delone sets in arbitrary high dimensional Euclidean spaces $\R^n$ that are of both mathematical and physical significance.

\begin{eg}
A natural way to describe such a Delone set is to use tilings, by regarding the Delone set as the puncture points set of a FLC, repetitive, aperiodic tiling of $\mathbb{R}^d$, which is the set of all the translates of any chosen points from the \textit{prototiles} of the tiling. We refer to \cite[Definition 5.2]{BG} for these concepts. In particular, this class of tilings includes the famous Penrose tilings (see \cite[Proposition 6.3]{BG}) and Ammann–Beenker tilings (see \cite[Proposition 6.2]{BG}). We refer to, e.g., \cite[Figure 6.44, p. 238]{BG} and \cite[Figure 6.41, p. 236]{BG} for excellent pictures for these two tilings.  In addition, a large class of higher‑dimensional examples arises from the well‑known process of \textit{substitution tilings} (see, e.g., \cite[Section 3.2]{Ito}), such as the $d$-dimensional chair tiling for arbitrary $d \ge 2$ (see \cite[Example 6.8]{BG}). We refer to \cite[Figure 6.35, p. 228]{BG} for a picture of $3$-dimensional chair tiling.
Moreover, such FLC, repetitive, aperiodic tilings have deep connections to noncommutative geometry and $C^*$-algebras through the construction of tiling $C^*$-algebras. For further information, we refer to \cite{Kel0, Kel, KP, K, Ito, IWZ}. 
\end{eg}

\begin{customthm}{A}[Theorem \ref{Thm:converse Balian-low}, Theorem \ref{thm: improved Balian-Low}]\label{thm: main 1}
		 Let $\Lambda  \subset \mathbb{R}^{2d}$ be a FLC, repetitive and aperiodic Delone set. Then the following hold:
    \begin{enumerate} [label=(\roman*)]
        \item $D^-(\Lambda)  >1$ if and only if  there exist disjoint sets $\Lambda_i \subset \Lambda$ for $i=1,\dots,n$ and $\vec{g}=(g_1,\dots, g_n)\in \CS(\R^d)^n$ such that $\bigsqcup\limits_{i=1}^n \Lambda_i=\Lambda$ and 
        $G_F(\Lambda, \vec{g})$ is a frame for $L^2(\mathbb{R}^d)$.
        \item  $D^+(\Lambda)  <1$ if and only if  there exist disjoint sets $\Lambda_i \subset \Lambda$ for $i=1,\dots,n$ and $\vec{g}=(g_1,\dots, g_n)\in \CS(\R^d)^n$ such that $\bigsqcup\limits_{i=1}^n \Lambda_i=\Lambda$ and 
        $G_F(\Lambda, \vec{g})$ is a Riesz sequence for $L^2(\mathbb{R}^d)$. 
    \end{enumerate}
	\end{customthm}
The aperiodicity condition on $\Lambda$ prevents it from being \textit{crystallographic}, i.e., a finite shift of a lattice (see \cite[Definition 2.5]{BG}). Therefore, Theorem~\ref{thm: main 1} has a fundamentally different flavor compared to the lattice cases considered in the literature, such as \cite{Luef} and \cite{BEV}.

For the ``if'' part of Theorem \ref{thm: main 1}, the key ingredients is the construction of the tiling groupoid $R_\Lambda$ and the twisted groupoid $C^*$-algebra $C^*_r(R_\Lambda, \sigma_\Lambda)$. Unlike the lattice case studied in \cite{Ri, Luef, BEV}, a Delone set $\Lambda$ yields a tiling groupoid $R_\Lambda$ whose unit space is a Cantor set rather than a singleton. It is precisely this fiberwise nature of $R_\Lambda$ that creates the main obstacles. Two serious consequences arise. First, instead of a single fiber as in the lattice case, there are uncountably many range fibers. Consequently, the projective representation $\pi_\Lambda$ does not act uniformly; one must sum contributions over all range fibers, a task that has no analogue for lattices. Second—and more critically—the fiberwise structure prevents the construction of a single‑window Gabor frame for $L^2(\mathbb{R}^d)$. Indeed, because $\Lambda$ is not a lattice, each translate of $\Lambda$ by $z \in \Lambda$ is generally different, and the groupoid $R_\Lambda$ forces us to consider all such translates simultaneously.  In this more complex setting, finite generation of a Hilbert $C^*$-module over $C^*_r(R_\Lambda, \sigma_\Lambda)$ is equivalent only to the existence of multi‑frame vectors in the sense of \cite[Definition 3.1]{ER} for the projective representations $\pi_\Lambda$, taken in an averaging sense with respect to a given invariant probability measure on the Cantor set, as demonstrated in Proposition~\ref{single}.

To address the challenges outlined above, we introduce new techniques tailored to the groupoid setting. First, we perform a perturbation (see Proposition~\ref{prop: pre module}) of the existing Hilbert module framework from \cite{Ri, Luef, BEV}—specifically, the completion of admissible pairs as defined in \cite[Definition 4.1]{BEV}—thereby constructing a module $\mathcal{E}$ over $C_r^*(R_\Lambda, \sigma_\Lambda)$. Second, we employ a topological dynamical argument (Proposition~\ref{equ}) to convert average multi‑frame vectors into genuine multi‑frame vectors. Finally, assuming that $C^*_r(R_\Lambda, \sigma_\Lambda)$ enjoys the strict comparison property and that the relevant density condition is satisfied, we obtain a single generating element for $\mathcal{E}$, which in turn yields a full Gabor frame. The same strategy also handles the Riesz sequence part of the second statement.

It therefore remains to verify the strict comparison property for $C^*_r(R_\Lambda, \sigma_\Lambda)$ when $\Lambda$ is an FLC, repetitive, aperiodic Delone set—a question that has so far remained open. To resolve it, we prove that $R_\Lambda$ has finite dynamical asymptotic dimension (finite d.a.d.), a combinatorial invariant for groupoids introduced in \cite{GWY}. In fact, we establish this result for the more general class of transverse groupoids, which is of independent interest. As a corollary, we answer a question raised in \cite[Section 6.4]{Ito}. Finite nuclear dimension and strict comparison then follow from \cite{Winter, Ror, BL}; see also \cite{GWY, CDGHV}. Moreover, by using results from \cite{BL, CGSTW, EGLN, GLN, TWW}, our work extends the classification theorem of \cite{IWZ} to the twisted case.  

\begin{thm}[Corollary \ref{cor:C^*_r(R_varphi,sigma) has the strict comparison}]\label{thm: main2}
   Let $\R^d\curvearrowright \Omega$ be a free action on a compact metrizable space $\Omega$. Suppose the induced transverse groupoid (see Definition \ref{defn: groupoid def}) $R_\varphi$ is minimal and principal. Then its dynamical asymptotic dimension $\operatorname{d.a.d}(R_\varphi)\leq 6^d-1$. Therefore, for any twist $\Sigma$ (could come from a continuous $2$-cocycle $\sigma$) on $R_\varphi$, the nuclear dimension $\dimnuc(C^*_r(R_\varphi, \Sigma))\leq 6^d$ and thus $C^*_r(R_\varphi, \Sigma)$ has the strict comparison (of projections). In addition, $C^*_r(R_\varphi, \Sigma)$ is classified by its Elliott invariant.
\end{thm}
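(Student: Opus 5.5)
The core of the argument is the bound $\operatorname{d.a.d}(R_\varphi)\le 6^d-1$. Everything after it follows by citing known results.

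Recall the setup. $R_\varphi$ is the restriction of the transformation groupoid $\Omega\rtimes\R^d$ to a transversal $X\subset\Omega$; in the tiling case, $X$ is the Cantor set of tilings punctured at the origin. An arrow of $R_\varphi$ is a pair $(\omega, \omega - t)$ with both points in $X$. For a compact open $K\subset R_\varphi$, the set of displacements $t$ appearing in $K$ is bounded, say by $r$. By the definition in \cite{GWY}, we must produce an open cover $U_0,\dots,U_m$ of $X$, with $m=6^d-1$, such that for each $i$ the subgroupoid generated by $K\cap R_\varphi|_{U_i}$ is relatively compact. Concretely, that means points of $U_i$ joined by chains of $K$-steps inside $U_i$ stay within uniformly bounded displacement.

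The plan is to use Euclidean geometry of the orbits. Each $\R^d$-orbit meets $X$ in a uniformly discrete, relatively dense set (a Delone set), by minimality and freeness. I would tile $\R^d$ with cubes of side $L\gg r$. Each cube I would then split into $6^d$ "colour classes" of subcubes, namely a $6$-adic partition along each axis. Two subcubes of the same colour are then separated by at least about $L/6>r$ unless they coincide, so a $K$-chain inside one colour class cannot jump between distinct subcubes. This gives bounded displacement of about $L$.

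The difficulty is that a single global cube grid is not canonical on $\Omega$: the grid must vary continuously, or at least locally constantly, in $\omega$. Here is what I would try first. Pick a clopen $Y\subset X$ whose points are $r'$-separated and relatively dense in every orbit; minimality and the Cantor or transversal structure give this via a Kakutani–Rokhlin-type argument. Then use the Voronoi cells of the points of $Y$, which are convex polytopes of diameter bounded above and inradius bounded below, in place of cubes. Thicken the cells and colour them by a bounded-degree colouring. The factor $6^d$ should come from a "cubes plus shifted cubes" argument, in the spirit of the proof that $\operatorname{asdim}\R^d\le d$ but cruder. One grid at scale $L$ with local coordinates given by the nearest point of $Y$, combined with $3$ shifts per axis and a doubling, gives $6$ per axis. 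Openness of each $U_i$ follows from choosing strict inequalities on distances. Verifying that these sets cover $X$ and that the generated subgroupoids are precompact is the main obstacle. The essential points are that the local coordinate charts from $Y$ glue consistently, which uses freeness, and that the chains stay bounded.

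Given $\operatorname{d.a.d}(R_\varphi)\le 6^d-1$, I would apply \cite{GWY} and its twisted version \cite{CDGHV}, or \cite{BL}. These give $\dimnuc(C^*_r(R_\varphi,\Sigma))\le 6^d$ for any twist, using that $R_\varphi$ is étale, second countable and principal, so amenability is automatic from finite d.a.d. Minimality together with principality, hence effectiveness, gives simplicity. Nuclearity also holds, as does the UCT, which is available for twisted principal amenable groupoids by \cite{BL}. Finite nuclear dimension for simple, separable, unital, nuclear C*-algebras then yields $\mathcal Z$-stability \cite{Winter}, and hence strict comparison \cite{Ror}. Finally, classification by the Elliott invariant follows from \cite{CGSTW, EGLN, GLN, TWW}, since the algebra is simple, separable, unital, $\mathcal Z$-stable and satisfies the UCT.
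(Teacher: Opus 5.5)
The $C^*$-algebraic part of your argument is fine. Simplicity follows from minimality plus principality, the nuclear dimension bound from \cite{BL}, and then \cite{Winter}, \cite{Ror} and the classification theorems apply. The gap is the bound $\operatorname{d.a.d}(R_\varphi)\le 6^d-1$ itself, which is the substantive part of the statement and is not proved.

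Your first construction needs a global cube grid on each orbit that varies locally constantly on $X$. As you note, no such grid exists in general. Even if it did, it would give $2^d$ colour classes rather than $6^d$.

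Your second construction leaves the decisive points open:
\begin{itemize}
\item A bounded-degree colouring of (thickened) Voronoi cells of $Y$ needs a number of colours controlled by a packing bound. That bound depends on the ratio between the covering radius of $Y$ and its separation. It is independent of $K$ only if $Y$ is a clopen net whose separation and density are at comparable scales. You do not construct such a $Y$, and nothing makes the count equal $6^d$.
\item A ``grid with local coordinates given by the nearest point of $Y$'' runs into exactly the gluing problem you identified. The grids attached to neighbouring points of $Y$ do not match. You do not say how the colour classes are defined near cell boundaries, why they are open, or why $K$-chains inside one class stay bounded.
\end{itemize}
You flag the covering and precompactness checks as ``the main obstacle'' yourself. ``3 shifts per axis and a doubling'' is numerology, not an argument.

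The paper avoids partitions and gluing entirely. First, a greedy clopen construction (Lemma \ref{lem: local towers}, $U_i=V_i\setminus\bigcup_{j<i}\bigcup_{p\in E_j}\varphi^p(U_j)$) gives clopen sets $U_1,\dots,U_n$ with the following properties:
\begin{itemize}
\item the $2r$-partial orbits are constant on each $U_i$, equal to $F_i$;
\item every point of $X$ lies in some $\varphi^p(U_i)$ with $\|p\|_\infty<r$;
\item $\varphi^p(U_i)\cap\varphi^q(U_j)=\emptyset$ whenever $p\in F_i$, $q\in F_j$, $(i,p)\ne(j,q)$ and $\|p-q\|_\infty<r$.
\end{itemize}
Next, let $m$ be the maximal displacement occurring in $K$ and set $r=3m$. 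Each net point carries its own boxes: the $2^d$ orthant cubes $D_k$ of side $3m$, and their $3^d$ shifts $w_l+D_k$ with $w_l\in\{-m,0,m\}^d$.

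For fixed $(l,k)$, the levels $\{\varphi^p(U_i): p\in F_i\cap(w_l+D_k)\}$ are pairwise disjoint by the separation property, so they form a clopen castle $\CC_{l,k}$. The unshifted families already cover $X$. Now take $x=\varphi^p(u)$ with $u\in U_i$ and $p\in D_k$. Then $p$ lies in the centre subcube $w_l+D_k^0$ for some $l$. So every $K$-arrow at $x$, having displacement at most $m$, stays in the same box, i.e.\ $Kx\subset(\bigcup\CC_{l,k})x$. These $6^d$ castles are therefore $K$-Lebesgue (Theorem \ref{thm: R_varphi has finite tower dim}).

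Finally, Proposition \ref{prop: dad implies tower dim} converts this to dynamic asymptotic dimension. It uses the sets $\{u: Lu\subset(\bigcup\CC_{l,k})u\}$, which are open by ampleness and cover $X$. The $K$-arrows with both endpoints in one such set lie in the precompact subgroupoid $\bigcup\CC_{l,k}$. These sets overlap, and that is harmless; this is why no global colouring and no gluing of local coordinates is ever needed.
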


For the ``only-if'' direction in Theorem~\ref{thm: main 1} — that is, the Balian–Low type result — we actually establish in Theorem~\ref{thm: improved Balian-Low} the same conclusion even for a general discrete set and for $\vec{g} \in M^1(\mathbb{R}^d)^n$. This part is motivated by \cite[Corollary 1.7]{AFK}, \cite[Corollary 1.2]{GOR} and \cite{FK} and the proof is a small modification of that in \cite{GOR}. This particularly improves upon the density theorem in \cite[Theorem 1.1]{CDH}, which states that if the full Gabor system $G_F(\Lambda, \vec{g})$ is a frame, then $D^{-}(\Lambda) \ge 1$. 
We further note that \cite[Corollary 3.7]{CDH} shows that if a full Gabor system $G_F(\Lambda, \vec{g})$ for a discrete set $\Lambda$ is a Riesz basis for $L^2(\mathbb{R}^d)$, then necessarily $D^{+}(\Lambda) = 1$. However, by our Theorem~\ref{thm: main 1}, the full Riesz sequence $G_F(\Lambda, \vec{g})$ is never a Riesz basis for $\vec{g} \in M^1(\mathbb{R}^d)^n$. 

\subsection*{Outline of the paper:} In Section \ref{sec: prelim}, we mainly recall necessary backgrounds on time-frequency analysis, Delone sets, Groupoid, Hilber modules, and $C^*$-algebras. In Section \ref{sec: tower} and \ref{sec: finite dad}, we establish Theorem \ref{thm: main2} by introducing tower dimension for ample groupids. In Section \ref{sec: groupoid frame}, we extend the framework in \cite{ER} to discuss (average) frame and Riesz vectors for groupoids. In addition, we introduce corresponding analysis and synthesis operators for them. In Section \ref{sec: tiling groupoid}, we prove the ``if'' part of Theorem \ref{thm: main 1} using Theorem \ref{thm: main2} and Hilbert $C^*$-modules. In Section \ref{sec: Balian-Low}, motivated by \cite{GOR}, we prove the Balian-Low type result for full Gabor frame and full Riesz sequences, i.e., the ``only if '' part of Theorem \ref{thm: main 1}.

\section{Preliminaries}\label{sec: prelim}

\subsection{Time-frequency analysis}
In this section we collect some elementary concepts from time-frequency analysis. For more details, we refer readers to \cite{FZ,G01}.

Let $\Gamma$ be a countable index set. A set $\{e_\gamma:\gamma \in \Gamma\}$ in $\CH$ is called a frame if there exist constants $0< A \le B <\infty$ such that for all $f\in \CH$   
\begin{equation}\label{def:frame}
    A \|f\|^2 \le \sum_{\gamma \in \Gamma}|\langle f, e_\gamma \rangle |^2\le B \|f\|^2.
\end{equation}
Any constants $A,B$ satisfying (\ref{def:frame}) are called frame bounds.
If $A=B=1,$ the set $\{e_\gamma\}_{\gamma\in \Gamma}$ is said to constitute a Parseval frame. If only the upper frame bounds exist, we call $\{e_\gamma\}_{\gamma\in \Gamma}$ a Bessel sequence.

For a Bessel sequence $\{e_\gamma\}_{\gamma\in \Gamma},$ the associated analysis operator is given by 
\[C:\CH \to \ell^2(\Gamma), \quad Cf=(\langle f, e_\gamma\rangle )_{\gamma \in \Gamma}.\]
The synthesis operator $D=C^*$  is defined by 
\[D:\ell^2(\Gamma)\to \CH, \quad Dc=\sum_{\gamma \in \Gamma}c_\gamma e_\gamma.\]
The frame operator associated to $\{e_\gamma\}_{\gamma \in \Gamma}$ is given by $S=DC.$ Recall the definitions of (multi-window, full) Gabor systems, (multi-window, full) Gabor frames, (multi-window, full) Riesz sequences introduced in Introduction.

For a window function $g \in L^2(\R^d),$ the short-time Fourier transform (STFT) of $f\in L^2(\R^d)$ with respect to $g$ is given by 
\begin{equation}\label{def of STFT}
    V_gf(z)=\langle f, \pi(z)g\rangle=\int_{\R^d}f(t)\overline{g(t-x)}e^{-2\pi i t \omega} \text{d}t \ \text{for} \ z=(x,\omega)\in \R^{2d}.
\end{equation}

The time-frequency shifts satisfy the following  commutation relations 
\begin{equation} \label{eq:commutation relation}
    \pi (z_1)\pi(z_2)=\sigma(z_1,z_2)\pi(z_1 ,z_2), \ z_i=(x_i,\omega_i)\in \R^{d} \times \R^d, i=1,2,
\end{equation}
where $\sigma(z_1,z_2)=e^{-2\pi i x_1 \omega_2}$ is a symplectic $2$-cocycle on $\R^{2d}$. 
As a consequence, one has the following covariance principle 
\[V_g \pi (z_1)f(z_2) = e^{-2\pi i x_1( \omega_2 -\omega_1)}V_gf(x_2-x_1 ,\omega _2 -\omega_1), \ z_i=(x_i, \omega_i) \in \R^d \times \R^d, i=1,2.\]

The modulation spaces introduced by Feichtinger \cite{F81,F83} are appropriate function spaces for time-frequency analysis.
\begin{defn}\cite{F83}\label{def:modulation space}
    Fix a non-zero window $g$ in Schwartz space $\mathcal{S}(\R^d).$ For $1\le p\le \infty$, the \textit{modulation space} $M^p(\R^d)$ consists of all tempered distributions $f\in \mathcal{S}'(\R^d)$ such that $V_gf\in L^p(\R^{2d}).$ The norm on $M^p(\R^d)$ is defined as 
    \begin{equation}
        \|f\|_{M^p}=\|V_gf\|_{L^p}.
    \end{equation}
\end{defn}
The modulation spaces are independent of the particular choice of non-zero windows $g 
\in \CS(\R^d)$. 
The space $M^1(\R^d)$ is the well-known \textit{Feichtinger's algebra} \cite{F81} and it is known that $M^2(\R^d)=L^2(\R^d).$ Moreover, it follows from \cite[Theorem 9]{F81} that $M^1(\R^d)$
contains $\CS(\R^d)$. 
 Fix a non-zero window $g \in \CS(\R^d)$, the space $M^0(\R^d)$ is defined as \[M^0(\R^d)=\{f \in  M^{\infty}(\R^d): V_g f \in C_0(\R^{2d})\}.\]  With respect to the duality $\langle f,h\rangle:= \langle V_gf ,V_gh\rangle$, one has $M^0(\R^d)^*=M^1(\R^d)$ and $M^1(\R^d)^*=M^{\infty}(\R^d)$.
 
In addition,  \textit{Wiener amalgam space} $W(L^{\infty},l^p)(\R^{2d})$ consists of all measurable functions $f$ on $\R^{2d}$ satisfying
\begin{equation}\label{def:Wiener amalgam sp}
    \|f\|_{W(L^{\infty},l^p)}:=(\sum_{k\in \Z^{2d}}\|f\cdot T_k \chi_{[0,1]^{2d}}\|^p_{\infty})^{1/p} < \infty.
\end{equation} The subspace of $W(L^{\infty},\ell^1)(\R^{2d})$ consisting of continuous functions is $W(C_0 ,\ell^1)(\R^{2d})$.  
 If  $f,g\in M^1(\R^d)$, it follows from \cite[Theorem~12.1.11]{G01} that $V_gf\in W(C_0,l^1)(\R^{2d})$.

We denote the space of complex regular Borel measures on $\R^{2d}$ by $\mathcal{M}(\R^{2d})$. The dual space of $W(C_0 ,\ell^1)(\R^{2d})$ is $W(\mathcal{M}, L^{\infty})(\R^{2d})$ which consists of all complex-valued Borel measures $\mu$ such that \[\|\mu\|_{W(\mathcal{M},L^{\infty})}:=\sup_{x\in \R^{2d}}|\mu|(B(x,1)) < \infty.\] Feichtinger constructed general theory of Wiener amalgam space in \cite{F80}.

\subsection{Delone sets in $\R^d$ and their dynamics}
We now recall some basic backgrounds on point sets, especially Delone sets in Euclidean spaces $\R^d$. We refer to \cite{BG}, \cite{Ito}, \cite{IWZ}, \cite{K}, \cite{Kel0}, \cite{Kel}, and \cite{KP} for more information. We use the notation $B_Z(z,r)$ for the open ball with center $z$ and radius $r$ in a metric space $(Z,d)$ and also denote $B(z,r)$ if the metric space $(Z, d)$ is understood. Moreover, we denote by $\bar{B}_Z(z, r)$ or $\bar{B}(z, r)$ the corresponding closed balls.

\begin{defn}
  Let $(M, d)$ be a metric space and $\Lambda\subset M$ a discrete subset. 
  \begin{enumerate}[label=(\roman*)]
      \item $\Lambda$ is said to be \textit{uniformly separated}(or \textit{uniformly discrete}) if $\inf\{d(x, y): x\neq y\in \Lambda\}>0$.
      \item $\Lambda$ is said to be \textit{relatively dense} if there exists $c>0$ such that the collection $\{B_M(x,c):x\in \Lambda\}$ covers $M$.
      \item $\Lambda$ is said to be a \textit{Delone set} if $\Lambda$ is both uniformly separated and relatively dense.
  \end{enumerate}
\end{defn}

For $\R^d$, 
%(or more generally, a locally compact second countable unimodular groups), 
one makes the following definition.

\begin{defn}\label{defn: relative separate}
   Let $\Lambda\subset \R^d$ be a discrete set. We define the \textit{hole} $\rho(\Lambda)$ of $\Lambda$ to be 
   \[\rho(\Lambda):=\sup_{x\in \R^d}\inf_{\lambda\in \Lambda}|x-\lambda|.\]
     The $\Lambda$ is called \textit{relatively separated} if 
   \[\operatorname{rel}(\Lambda):=\sup \{|\Lambda\cap C_1(z)|: z\in \R^d\}\]
   is finite, where $C_1(z)$ is the cube centered at $z$, whose edges are of length $1$. 
   \end{defn}

\begin{rmk}\label{Delone set is relative separated}
Let $\Lambda\subset \R^d$ be a discrete subset. Then $\Lambda$ is relatively dense if and only if its hole $\rho(\Lambda)<\infty$. In adition, the relative separability in Definition \ref{defn: relative separate} means bounded geometry for the metric space $\Lambda$ as a subspace of $(\R^d, \|\cdot\|_\infty)$. It is direct to see that if $\Lambda$ is a Delone set, then $\operatorname{rel(\Lambda)}$ is finite by looking at the volume of cubes.
\end{rmk}

There is a metric on the collection  of Delone sets in the following way. Given two Delone set $\Lambda_1$ and $\Lambda_2$, define 
\[R(\Lambda_1,\Lambda_2)=\sup\{r>0:  \exists z\in \R^d\text{ with }\|z\|<1/r\text{ such that }B(0,r)\cap(\Lambda_1-z)=\Lambda_2\cap B(0,r)\},\]
Then define the metric to be $d(\Lambda_1, \Lambda_2)=\min\{1, 1/R(\Lambda_1,\Lambda_2)\}$. Denote by $(\CD,d)$ the metric space consisting of all Delone sets in $\R^d$. Then there is natural $\R^d$ action on $\CD$ by translation. 

\begin{defn}\label{defn: continuous hull}
Let $\Lambda$ be a FLC Delone set. The \textit{continuous hull}, denoted by $\Omega(\Lambda)$, is the closure of the orbit $\{\Lambda-z: z\in \R^d\}$ of $\Lambda$ under the translation action $\R^d\curvearrowright (\CD, d)$. 
\end{defn}

\begin{defn}\label{defn: Patch}
 Let $\Lambda$ be a Delone set and $r>0$. The intersection $\bar{B}(z,r)\cap \Lambda$ for a $z\in \Lambda$ is a called a $r$-\textit{patch} of $\Lambda$ centered at $z$.
\end{defn}

The following definitions could be found in, e.g. \cite[Section 2, Section 5]{BG}. See also \cite{LP}.

\begin{defn}\label{defn: properties for Delone sets}
Let $\Lambda$ be a Delone set in $\R^d$. We say
    \begin{enumerate}[label=(\roman*)]
        \item $\Lambda$ is of \textit{finite local complexity (FLC)} if for any $r>0$ there are only finitely many $r$-patches up to translation. 
        \item $\Lambda$ is \textit{non-periodic} if $\Lambda-z\neq \Lambda$ for any non-zero $z\in \R^d$. $\Lambda$ is further said to be \textit{aperiodic} if $\Gamma-z\neq \Gamma$ for any $\Gamma$ in the coninuous hull $\Omega(\Lambda)$.
        \item $\Lambda$ is \textit{repetitive} if for any $r>0$  there exists a finite number $R>0$ such that for any  $z\in \R^d$ and any $r$-patch $P$, the closed ball  $\bar{B}(z, R)$ contains the center of a $r$-patch which is a translate of $P$.  
    \end{enumerate}
\end{defn}

\begin{rmk}
\begin{enumerate}[label=(\roman*)]
    \item For the basic relation among these properties, it follows from \cite[Proposition 5.6]{BG} that if the Delone set $\Lambda$ is repetitive and non-periodic then $\Lambda$ is FLC and aperiodic.
    \item Each FLC, aperiodic, and repetitive Delone set $\Lambda\subset \R^d$ can produce FLC, aperiodic, and repetitive tilings of $\R^d$ via the well-known construction of Voronoi cells.  We refer to \cite[Chapter 5]{BG} and \cite{Ito} for what do these mean for tilings and more details. For the converse, any FLC, aperiodic, and repetitive Delone set $\Lambda$ can be regarded as the puncture point sets of a FLC, aperiodic, and repetitive tiling $\CT$. See \cite[Section 3.3]{Ito}.
\end{enumerate}

\end{rmk}

The following basic properties on  $\R^d\curvearrowright \Omega(\Lambda)$ is well-known. See, e.g., \cite[Theorem 3.1.15]{Ito}  written in the language of tiling groupoids, as we will explain in subsection \ref{subsec2}.

\begin{prop}\label{prop: basic Rd action}
Let $\Lambda$ be  a Delone set. 
\begin{enumerate}[label=(\roman*)]
    \item The continuous hull $\Omega(\Lambda)$ is compact metrizable if and only if $\Lambda$ is FLC.
    \item The translation action of $\R^d$ on $\Omega(\Lambda)$ is free if and only if $\Lambda$ is aperiodic.
    \item If $\Lambda$ is  FLC, then $\R^d\curvearrowright \Omega(\Lambda)$ is minimal if and only if  $ \Lambda$ is repetitive
    \end{enumerate}
\end{prop}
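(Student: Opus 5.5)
We prove the three items one at a time. Throughout we use the local (patch) metric $d$ on $\CD$ and the elementary identity $(\Lambda - z)\cap B(0,r) = (\Lambda\cap B(z,r)) - z$. This identity says that closeness of translates in $d$ means that large patches around the origin agree up to a small shift.

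For (i), assume first that $\Lambda$ is FLC. Since $\Omega(\Lambda)$ is a closed subset of the metric space $(\CD,d)$, it suffices to show total boundedness and completeness. For total boundedness, fix $r>0$. Every element $\Lambda - z$ of the orbit has $r$-ball around the origin equal, up to a shift of size less than $\rho(\Lambda)$, to a translate of one of finitely many $(r+\rho(\Lambda))$-patches of $\Lambda$ (FLC). Because $\Lambda$ is uniformly separated and the patches are finite, the admissible shifts of each patch inside $B(0,\rho(\Lambda))$ can be covered by finitely many $1/r$-balls. Hence the orbit, and so its closure, is covered by finitely many $d$-balls of radius about $1/r$. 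For completeness, take a Cauchy sequence in the orbit. The local patches stabilize up to vanishing shifts, so the shifts converge, and the limit is a Delone set with the same separation and hole constants, hence lies in $\CD$. Metrizability is automatic since $d$ is a metric.

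For the converse, suppose $\Lambda$ is not FLC. Then for some $r$ there are infinitely many pairwise non-translate $r$-patches $P_k$ centered at points $z_k$. The sequence $\Lambda - z_k$ has no convergent subsequence. Indeed, any limit would force the patches $P_k - z_k$ to agree with each other up to arbitrarily small shifts. Uniform discreteness then forces these patches to be exact translates for large $k$ after a pigeonhole on cardinalities (bounded by $\operatorname{rel}$), which is a contradiction. So $\Omega(\Lambda)$ is not compact.

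For (ii), freeness means that $\Gamma - z \neq \Gamma$ for all $\Gamma\in\Omega(\Lambda)$ and $z\neq 0$, which is verbatim the definition of aperiodic in Definition \ref{defn: properties for Delone sets}(ii).

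For (iii), assume $\Lambda$ is FLC and repetitive, and let $\Gamma\in\Omega(\Lambda)$. Every $r$-patch of $\Lambda$ occurs within distance $R(r)$ of every point of $\Lambda$, and by closeness also of every point of $\Gamma$. Hence $\Gamma$ contains translates of all $r$-patches of $\Lambda$, so $\Lambda$ lies in the orbit closure of $\Gamma$, and therefore every orbit is dense. Conversely, suppose $\Lambda$ is not repetitive. Then there are an $r$-patch $P$ and points $z_k$ with $\bar B(z_k,k)$ containing no center of a translate of $P$. By compactness from (i), the sets $\Lambda - z_k$ accumulate at some $\Gamma$. No $r$-patch of $\Gamma$ can be a translate of $P$, since large balls of $\Gamma$ are approximated by the patches $\Lambda\cap\bar B(z_k,k)$ with exactly matching point sets. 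Thus $\Lambda\notin\overline{\Gamma+\R^d}$ and the action is not minimal.

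The main obstacle is the converse in (i) and the limit step in (iii). There one must handle the small shift $z$ allowed in the metric carefully, using uniform discreteness to upgrade approximate patch agreement to exact translation equivalence.
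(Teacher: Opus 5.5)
The paper gives no argument of its own for this proposition. It calls the result well known and cites \cite[Theorem 3.1.15]{Ito}, where it is proved for (punctured) tilings in tiling-groupoid language, tacitly using the Delone set/tiling dictionary. Your proposal instead gives a direct, self-contained proof for point sets in the local metric $d$. This is the standard argument, and it is correct in outline. What it buys is transparency:
\begin{itemize}
\item (ii) is seen to be purely definitional;
\item FLC enters exactly in the total boundedness step of (i), and in (iii) through compactness and the choice of a single patch class;
\item the converse in (i) is visibly a feature of the rigid local metric. In the Chabauty--Fell topology the orbit closure of any Delone set would be compact.
\end{itemize}

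Three steps should be tightened.

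\textbf{Converse of (i).} A pigeonhole on cardinalities does not by itself yield exact translates. Patches of equal cardinality need not be translates, and small shifts can move points across $\partial\bar{B}(0,r)$. The clean mechanism is the centring. Suppose $\Lambda-z_k\to\Gamma$ along a subsequence. Then for large $k$ you have $B(0,R_k)\cap(\Lambda-z_k-w_k)=\Gamma\cap B(0,R_k)$ with $R_k\to\infty$ and $\|w_k\|\to 0$. Since $z_k\in\Lambda$, the point $-w_k$ lies in $\Gamma$. Uniform discreteness of $\Gamma$ then forces $w_k=0$ for all large $k$. Hence $\bar{B}(0,r)\cap(\Lambda-z_k)=\bar{B}(0,r)\cap\Gamma$ for all large $k$, which contradicts the pairwise inequivalence of the patches.

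\textbf{Converse of (iii).} Negating repetitivity gives, for a fixed $r$ and each $k$, a point $z_k$ and an $r$-patch $P_k$ that may depend on $k$. You need FLC and a pigeonhole to pass to a subsequence on which all $P_k$ are translates of a single $P$. The radii tending to infinity along that subsequence are all you use afterwards.

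\textbf{Completeness in (i).} This step is best run along a subsequence with summable relative shifts. Then the correspondingly re-centred sets agree exactly on balls of radius tending to infinity, and so stabilise to a limit. The phrase ``the shifts converge'' is not literally true of the $z_k$.
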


\begin{defn}
Let $\Lambda$ be a FLC Delone set.  The \textit{discrete hull} of $\Lambda$ is defined to be $\Omega_0(\Lambda)=\{T\in \Omega(\Lambda): 0\in T\}$.
\end{defn}

\subsection{Groupoids and their $C^*$-algebras}
 We refer to standard references \cite{R} and \cite{Sims} for more detailed information on groupoids and groupoid $C^*$-algebras. We only recall the following necessary notions that will be used in the paper.
 \begin{defn}
A \textit{groupoid} $\CG$ is a set equipped with a distinguished
subset $\CG^{(2)}\subset\CG\times\CG$, called the set of \textit{composable
pairs}, a product map $\CG^{(2)}\rightarrow\CG$, denoted by $(\gamma,\eta)\mapsto\gamma\eta$
and an inverse map $\CG\rightarrow\CG$, denoted by $\gamma\mapsto\gamma^{-1}$
such that the following hold 
\begin{enumerate}[label=(\roman*)]
\item If $(\alpha,\beta)\in\CG^{(2)}$ and $(\beta,\gamma)\in\CG^{(2)}$
then so are $(\alpha\beta,\gamma)$ and $(\alpha,\beta\gamma)$. In
addition, $(\alpha\beta)\gamma=\alpha(\beta\gamma)$ holds in $\CG$.
\item For all $\alpha\in\CG$ one has $(\gamma,\gamma^{-1})\in\CG^{(2)}$
and $(\gamma^{-1})^{-1}=\gamma$.
\item For any $(\alpha,\beta)\in\CG^{(2)}$ one has $\alpha^{-1}(\alpha\beta)=\beta$
and $(\alpha\beta)\beta^{-1}=\alpha$. 
\end{enumerate}
Every groupoid is equipped with a subset $\GU=\{\gamma\gamma^{-1}:\gamma\in\CG\}$
of $\CG$. We refer to elements of $\GU$ as \textit{units} and to
$\GU$ itself as the \textit{unit space}. We define two maps $s,r:\CG\rightarrow\GU$
by $s(\gamma)=\gamma^{-1}\gamma$ and $r(\gamma)=\gamma\gamma^{-1}$,
respectively, in which $s$ is called the \textit{source} map and
$r$ is called the \textit{range} map. 
\end{defn}
\label{paragraph:sections-in-groupoids}
When a groupoid $\CG$ is endowed with a locally compact Hausdorff
topology under which the product and inverse maps are continuous,
the groupoid $\CG$ is called a locally compact Hausdorff groupoid.
A locally compact Hausdorff groupoid $\CG$ is called \textit{\'{e}tale}
if the range map $r$ is a local homeomorphism from $\CG$ to itself,
which means for any $\gamma\in\CG$ there is an open neighborhood
$U$ of $\gamma$ such that $r(U)$ is open and $r|_{U}$ is a homeomorphism. Let $u\in \GU$. We define $\CG_u:=\{\gamma\in \CG: s(\gamma)=u\}$ and $\CG^u:=\{\gamma\in \CG: r(\gamma)=u\}$, which are called \textit{source fiber} and \textit{range fiber} at $u$, respectively.

A set $B$ in $\CG$ is called a \emph{bisection} if there exists an open set $U$ containing $B$ such that $s|_U$ and $r|_U$ are homeomorphisms from $U$ to $s(U)$ and $r(U)$, respectively. 
It is not hard to see a locally compact
Hausdorff groupoid is \'{e}tale if and only if its topology has a basis
consisting of open bisections. We say a locally compact Hausdorff
\'{e}tale groupoid $\CG$ is \textit{ample} if its topology has a basis
consisting of compact open bisections. A groupoid $\CG$ is said to be \textit{minimal} if $r(\CG\cdot u)$ is dense in $\GU$ for every $u\in \GU$. Moreover,  the groupoid $\CG$ is said to be \textit{principal} if the isotropy subgroupoid $\operatorname{Iso}(\CG):=\{\gamma\in \CG:s(\gamma)=r(\gamma)\}=\GU$. \textbf{Throught the paper, for simplicity, we mean by ``groupoids'' locally compact Hausdorff \'{e}tale groupoids}.

\begin{defn}\label{defn: tower}
    Let $\CG$ be a groupoid and $\CB=\{B_{i,j}: i, j\in I\}$ be a finite collection of open bisections. We say $\CB$ is an \textit{open tower} (or an \textit{open multisection}) if it satisfies
    \begin{enumerate}[label=(\roman*)]
        \item $B_{i,j}B_{j, k}=B_{i,k}$ for $i,j,k\in I$ and 
        \item $\{B_{i, i}: i\in I\}$ is a disjoint family of subsets of $\GU$.
    \end{enumerate}
\end{defn}

\begin{prop}\label{prop: groupoid tower}
    Let $\CG$ be a  groupoid with a compact unit space $\GU$. Let $u\in \GU$ and $\gamma_1, \dots, \gamma_n\in \CG$ be such that $s(\gamma_i)=u$ for any $i=1,\dots, n$ and $r(\gamma_1),\dots, r(\gamma_n)$ are pairwise distinct. Then there exists bisections $U_1, \dots, U_n$ such that
    \begin{enumerate}[label=(\roman*)]
        \item $s(U_1)=s(U_2)=\dots=s(U_n)$ and 
        \item $\{r(U_i): i=1,\dots,n\}$ is a disjoint family.
    \end{enumerate}
\end{prop}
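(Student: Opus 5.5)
Since $\CG$ is étale, the topology has a basis of open bisections, so for each $i$ I would first choose an open bisection $V_i$ with $\gamma_i\in V_i$. Then $s|_{V_i}$ and $r|_{V_i}$ are homeomorphisms onto the open sets $s(V_i)$ and $r(V_i)$. The plan is to shrink all the $V_i$ to a common source set. Each such shrinking will be a restriction of a bisection to a smaller source set, so the result is again an open bisection and still contains $\gamma_i$.

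For the disjointness of ranges I would use that $\GU$ is Hausdorff. The points $r(\gamma_1),\dots,r(\gamma_n)$ are pairwise distinct and finite in number, so there are pairwise disjoint open sets $W_1,\dots,W_n\subset\GU$ with $r(\gamma_i)\in W_i$. Replacing $V_i$ by $V_i\cap r^{-1}(W_i)$, which is still an open bisection containing $\gamma_i$, I may assume $r(V_i)\subset W_i$. The family $\{r(V_i)\}$ is then disjoint.

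Next I would equalize the sources. Put $O=\bigcap_{i=1}^n s(V_i)$. This is an open subset of $\GU$ containing $u$, since $s(\gamma_i)=u$ for every $i$. Define $U_i=V_i\cap s^{-1}(O)=(s|_{V_i})^{-1}(O)$. Each $U_i$ is open, it is a bisection because it is contained in $V_i$, and it contains $\gamma_i$. Since $s|_{V_i}$ maps onto $s(V_i)\supseteq O$, we get $s(U_i)=O$ for all $i$, which is (i). Also $r(U_i)\subset r(V_i)\subset W_i$, so the ranges remain pairwise disjoint, which is (ii).

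There is no real obstacle here; the only point needing care is the order of the shrinkings. Restricting the sources must not break the disjointness of the ranges, and it does not, because passing to subsets preserves disjointness. If compact open bisections were wanted, one would in addition pick a compact open neighbourhood of $u$ inside $O$ in the ample case; the statement as given does not require this. Compactness of $\GU$ is not needed for this argument.
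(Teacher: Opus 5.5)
Your proof is correct and is essentially the paper's argument. The paper likewise takes disjoint open neighborhoods $O_i$ of $r(\gamma_i)$ and open bisections $B_i\ni\gamma_i$, sets $V_i=O_i\cdot B_i$ (your $V_i\cap r^{-1}(W_i)$) and $O=\bigcap_i s(V_i)$, and takes $U_i=V_i\cdot O$ (your $(s|_{V_i})^{-1}(O)$).
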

\begin{proof}
First choose disjoint open neighborhoods $O_i$ of $r(\gamma_i)$ by the Hausdorff-ness of $\GU$. Then because $\CG$ is \'{e}tale, choose open bisections $B_i\ni \gamma_i$. Define $V_i=O_i\cdot B_i$ for each $i=1,\dots, n$ and define $O=\bigcap_{i=1}^ns(V_i)$, which contains $u$. Now it is ready to see that open bisections $U_i=V_i\cdot O$ for $i=1,\dots, n$ satisfy the required conditions.
\end{proof}

\begin{rmk}
Let $\CU=\{U_1, \dots, U_n\}$ be the family of open bisections satisfying Proposition \ref{prop: groupoid tower}. Then define open bisections $B_{i, j}=U_i\cdot U^{-1}_j$
for any $i, j=1,\dots, n$. Then direct calculation shows that the family
\[\CB=\{B_{i,j}: i,j=1,\dots,n\}\]
is an open tower in the sense of Definition \ref{defn: tower}. Moreover, if all $U_i$ are precompact, then so are all $B_{i, j}$.
\end{rmk} 

\begin{defn}\label{defn: castle}
 Let $J$ be a finite set and for each $l$, denote by $I_l$ a finite set.   A collection $\CC=\{B^l_{i, j}: i,j\in I_l, l\in J\}$ of open bisections is said to be an \textit{open castles} if the following holds.
 \begin{enumerate}
     \item For each $l\in J$, the collection $\{B^l_{i, j}: i, j\in I_l\}$ is an open tower.
     \item the intersection $B^l_{i, j}\cap B^{l'}_{i',j'}=\emptyset$ whenever $l\neq l'\in J$.
 \end{enumerate}
\end{defn}

\begin{rmk}\label{rmk: basic property of castles}
 An open castle can be viewed as a collection of disjoint open towers. It is straightforward to see that if $\CC$ is an open castle, then the union $\bigcup \CC$ is a subgroupoid of $\CG$, denoted by $\CG(\CC)$. If each $B^l_{i, j}$ is precompact, then so is the subgroupoid $\CG(\CC)=\bigcup \CC$. Such the groupoid $\CG(\CC)$ is also referred as an open \text{elementary} groupoid. 
 This construction particularly applies to an open tower $\CB$, as an open castle.
\end{rmk}

\begin{defn}
    Let $\CG$ be a groupoid. Denote by $Z^2(\CG, \T)$ the set of all continuous $2$-cocycles, which are continuous functions $\sigma:\CG^{(2)}\to \T$ satisfying
    \begin{enumerate}
        \item $\sigma(x,y)\sigma(xy, z)=\sigma(x, yz)\sigma(y,z)$ whenever $(x, y), (y,z)\in \CG^{(2)}$, and
        \item $\sigma(x, s(x))=\sigma(r(x), x)=1$ for any $x\in \CG$.
    \end{enumerate}
\end{defn}

We denote by $C_c(\CG)$ the set of compact supported continuous functions $f:\CG\to \C$. Let $\sigma\in Z^2(\CG, \T)$. For $f, g\in C_c(\CG)$, define the convolution $f*_\sigma g\in C_c(\CG)$ by
\[f*_\sigma g(x)=\sum_{x=yz}\sigma(y,z)\cdot f(y)\cdot g(z)\]
and the involution by
\[f^*(x)=\overline{f(x^{-1})}\cdot\overline{\sigma(x, x^{-1})}.\]
These make $C_c(\CG)$ a $*$-algebra, denoted by $C_c(\CG, \sigma)$. 

\begin{defn}
  Let $\CG$ be a groupoid and $\sigma$ a continuous $2$-cocycle in $Z^2(\CG, \T)$.  For $f\in C_c(\CG, \sigma)$, define the \textit{I-norm} of $f$ by
  \[\|f\|_I=\max\{\sup_{u\in \GU}\sum_{x\in \CG_u}|f(x)|, \sup_{u\in \GU}\sum_{x\in \CG^u}|f(x)|\}\]
\end{defn}

It follows from \cite[Proposition II.1.4]{R}
that for any $\sigma\in Z^2(\CG, \T)$, the I-norm $\|\cdot\|_I$ is a $*$-algebraic norm on $C_c(\CG, \sigma)$. Therefore, after the completion, $\ell_1(\CG, \sigma):=\overline{C_c(\CG, \sigma)}^{\|\cdot\|_I}$ is a Banach $*$-algebra. The \textit{full twisted groupoid $C^*$-algebra} is defined to be the $\overline{C_c(\CG, \sigma)}^{\|\cdot\|_u}$, denoted by $C^*(\CG, \sigma)$ in which 
\[\|f\|_u:=\sup\{\|\pi(f)\|: \pi: C_c(\CG)\to B(H) \text{ is a } *\text{-representation bounded by } \|\cdot\|_I\}\]
Then $\sigma$-\textit{left regular representation} of $C_c(\CG, \sigma)$ is a faithful $*$-representation $\pi_r=\oplus_{u\in \GU}\pi_u$, which is the direct sum of all $\pi_u: C_c(\CG)\to B(\ell_2(\CG_u))$ defined by
\[\pi_u(f)h=f*_\sigma h.\]
Note that the $\pi_r$ is still bounded by the I-norm. The reduced twisted groupoid $C^*_r(\CG, \sigma)$ is defined to be the completion of $C_c(\CG, \sigma)$ under the norm $\|\cdot\|_r$ induced by $\pi_r$. Moreover, since all $*$-representations $\pi: C_c(\CG, \sigma)\to B(H)$ bounded by the $\|\cdot\|_I$ can be extended to $\ell_1(\CG, \sigma)$, the Banach $*$-algebra $\ell_1(\CG, \sigma)$ can be viewed as a dense $*$-subalgebra in both $C_r^*(\CG, \sigma)$ and $C^*(\CG, \sigma)$. Moreover, it is known (see, e.g., \cite[Proposition 3.3.3]{Sims}) that there exists an injective, norm-decreasing embedding $j: C^*_r(\CG)\to C_0(\CG)$ defined by
\[j(a)(x)=\langle\pi_{s(x)}(a)\delta_{s(x)}, \delta_x\rangle\]
satisfying $j(f)=f$ for any $f\in C_c(\CG)$.
We denote by $E: C^*_r(\CG)\to C_0(\GU)$ the faithful \textit{canonical conditional expectation}. Then it is also a standard fact (see, e.g., \cite[Proposition 4.2.6]{Sims}) that $j(E(a))=j(a)|_{\GU}$ for any $a\in C^*_r(\CG)$. Then we record the following result with a proof.
\begin{prop}\label{prop: conditional expectation}
   Let $\CG$ be a groupoid on a compact unit space $\GU$ with a continuous $2$-cocycle $\sigma$. Let $a\in \ell_1(\CG,\sigma)\cap C_0(\CG)$. Then $E(a)=a|_{\GU}$.
   %\[\int_{\GU} E(a)\text{d}\mu=\int_{\GU} a|_{\GU}\text{d}\mu\]
\end{prop}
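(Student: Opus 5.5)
The idea is to identify $a$ and $E(a)$ as elements of $C_0(\CG)$ and $C_0(\GU)$ via the embedding $j$, and to reduce everything to the identity $j(E(b))=j(b)|_{\GU}$ on $C^*_r(\CG,\sigma)$. The statement is understood in the sense that $a\in\ell_1(\CG,\sigma)\subset C^*_r(\CG,\sigma)$ and that its image under $j$ is the function $a\in C_0(\CG)$. This is the twisted analogue of \cite[Propositions 3.3.3, 4.2.6]{Sims}.

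The proof then has two steps.

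First, show that for every $b\in\ell_1(\CG,\sigma)$ the function $j(b)$ equals the pointwise limit of the $\|\cdot\|_I$-approximants of $b$. Choose $f_n\in C_c(\CG,\sigma)$ with $\|f_n-b\|_I\to 0$. Since $\|\cdot\|_r\le\|\cdot\|_I$, we have $f_n\to b$ in $C^*_r(\CG,\sigma)$. The map $j$ is norm-decreasing into $C_0(\CG)$ with the sup norm, and $j(f_n)=f_n$, so $f_n\to j(b)$ uniformly. On the other hand, for each $x\in\CG$,
\[
|f_n(x)-f_m(x)|\le \sum_{y\in\CG_{s(x)}}|f_n(y)-f_m(y)|\le\|f_n-f_m\|_I .
\]
Hence $(f_n)$ is also uniformly Cauchy, and its uniform limit is the function representing $b$. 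This holds in particular for the given $a$, so $j(a)=a$ as functions. The twisted setting only adds unimodular cocycle factors in the formula for $j$, namely $j(b)(x)=\langle\pi_{s(x)}(b)\delta_{s(x)},\delta_x\rangle$, and these do not affect the estimates.

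Second, apply the conditional expectation. The map $E$ is the continuous extension of restriction $f\mapsto f|_{\GU}$ from $C_c(\CG,\sigma)$; the cocycle plays no role on $\GU$ because $\sigma(u,u)=1$. By continuity, $E(a)=\lim_n f_n|_{\GU}$ in $C(\GU)$. Since $f_n\to a$ uniformly on $\CG$, the restrictions satisfy $f_n|_{\GU}\to a|_{\GU}$ uniformly. Therefore $E(a)=a|_{\GU}$. Equivalently, $j(E(a))=j(a)|_{\GU}=a|_{\GU}$.

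The main point needing care is the consistency of identifications. One must confirm that the embedding $j$ and the identity $j\circ E=(j\,\cdot)|_{\GU}$ hold in the twisted case, not only in the untwisted case cited from \cite{Sims}. If a direct reference is unavailable, I would verify the identity on $C_c(\CG,\sigma)$ by computing $\langle\pi_u(f)\delta_u,\delta_u\rangle=\sigma(u,u)f(u)=f(u)$, and then extend by continuity, since both sides are norm-continuous in $b$.
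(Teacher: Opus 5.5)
Your proof is correct, but it takes a different route from the paper's.

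\textbf{The paper's route.} The paper argues pointwise through $j$. Since $\GU$ is compact, $E(a)\in C(\GU)\subset C_c(\CG)$. Hence $E(a)=j(E(a))=j(a)|_{\GU}$ by \cite[Propositions 3.3.3, 4.2.6]{Sims}. It then gets $j(a)=a$ by a direct computation, based on the assertion that $\pi_u(a)h=a*_\sigma h$ for $a\in\ell_1(\CG,\sigma)\cap C_0(\CG)$.

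\textbf{Your route.} You argue by density. The $I$-norm dominates both $\|\cdot\|_r$ and the sup norm. So an $I$-approximating sequence $f_n\in C_c(\CG,\sigma)$ converges to $a$ both in $C^*_r(\CG,\sigma)$ and uniformly on $\CG$. The identity $E(f_n)=f_n|_{\GU}$ then passes to the limit because $E$ is contractive.

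Your second step already proves the statement on its own and never uses $j$. Your first step, and your concern about a twisted version of Sims's results, are needed only for the side identity $j(a)=a$, which the conclusion does not use.

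\textbf{What each buys.} Your argument is self-contained. It avoids applying the untwisted results of \cite{Sims} in the twisted setting. It also supplies the limiting argument that the paper's claim $\pi_u(a)h=a*_\sigma h$ for $\ell_1$-elements tacitly needs. It shows as well that the hypothesis $a\in C_0(\CG)$ is automatic, since every element of $\ell_1(\CG,\sigma)$ is represented by a uniform limit of functions in $C_c(\CG)$. The paper's route is shorter once the citations are granted, and it directly identifies the function $a$ with $j$ of the corresponding element of $C^*_r(\CG,\sigma)$.
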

\begin{proof}
   Since $\GU$ is compact, one has $E(a)\in C(\GU)\subset C_c(\CG)$, which implies \[E(a)=j(E(a))=j(a)|_{\GU}\]
   by \cite[Proposition 3.3.3, Proposition 4.2.6]{Sims}. Now, because $a\in \ell_1(\CG,\sigma)\cap C_0(\CG)$, one still has $\pi_u(a)h=a*_{\sigma}h$ for any $h\in \ell_2(\CG_u)$ and $u\in \GU$, and thus the straightforward calculation shows $j(a)=a$. This implies $E(a)=a|_{\GU}$.
   \end{proof}

We finally note that if $\CG$ is \textit{topological amenable} (see, e.g., \cite[Definition 4.1.2]{Sims}), then $C^*_r(\CG, \sigma)$ coincides with $C^*(\CG, \sigma)$. In particular, it is known that the transversal groupoid $R_\varphi$ defined below in Definition \ref{defn: groupoid def} (and thus tiling groupoid $R_\Lambda$ in Definition \ref{defn: tiling groupoid}) are topological amenable.

\subsection{Transverse groupoids and tiling groupoids of Delone sets}\label{subsec2}
We now introduce the groupoid constructed from the free action $\R^d\curvearrowright \Omega$ on a compact metrizable space $\Omega$, that is useful to investigate tiling groupoids introduced below.

\begin{defn}\label{defn: flat cantor transversal}
    Let $d\in \N$. Let $\varphi$ be a free action of $\R^d$ on a compact metrizable space $\Omega$. We call a closed subset $X\subset \Omega$ a \textit{flat Cantor transversal} if the following are satisfied.
    \begin{enumerate}[label=(\roman*)]
        \item $X$ is homeomorphic to a Cantor set.
        \item For any $x\in \Omega$, there exists a $p\in \R^d$ such that $\varphi^p(x)\in X$.
        \item There exits a positive real number $M>0$ such that 
        \[C=\{\varphi^p(x): x\in X, p\in B(0, M)\}\]
        is open in $\Omega$ and 
        \[X\times B(0, M)\ni (x, p)\to \varphi^p(x)\in C\]
        is a homeomorphism.
        \item\label{cantor transversal etale} For any $x\in X$ and $r>0$, there exists an open neighborhood $U\subset X$ of $x$ in $X$ such that 
        \[\{p\in B(0, r): \varphi^p(x)\in X\}=\{p\in B(0, r): \varphi^p(y)\in X\}\]
        holds for any $y\in U$.
    \end{enumerate}
\end{defn}

The following is a well-known construction of \'{e}tale groupoids from flat Cantor transversals, which plays main role in this paper.

\begin{defn}\label{defn: groupoid def}
Let $\R^d\curvearrowright \Omega$ be a free action on a compact metrizable space $\Omega$. Let $X$ be a flat Cantor transversal for $\R^d\curvearrowright \Omega$. Define \textit{transversal groupoid}:
\[R_\varphi=\{(\varphi^p(x),x): x, \varphi^p(x)\in X\text{ and }p\in \R^d\},\]
which is a locally compact Hausdorff \'{e}tale ample groupoid with a compact metrizable unit space $X$.
\end{defn}

\begin{rmk}\label{rmk: basic info on the R_varphi}
We refer to \cite[Section 2]{GMPS} for some basic properties for such the groupoids $R_\varphi$. 
\begin{enumerate}
    \item First, $R_\varphi$ is equipped with the topology with the basis of sets of the form $\{(\varphi^p(x),x): x\in U\}\cap R_\varphi$
for some open set $U$ in $X$ and $p\in \R^d$. Then the condition \ref{cantor transversal etale} implies that there exists clopen bisections of the form
\[B_{p, V}=\{(\varphi^p(y), y): y\in V\}\]
form a topological basis for the topology. This shows that $R_\varphi$ is ample.
\item If $\R^d\curvearrowright \Omega$ is minimal then the groupoid $R_\varphi$ is minimal.
\item Since the action $\R^d\curvearrowright \Omega$ in Definition \ref{defn: groupoid def} is set to be free, the groupoid $R_\varphi$ is principal.
\end{enumerate}
\end{rmk}

Recall the definitions of groupoids associated to Delone sets in $\R^d$ or equivalently, a tiling of $\R^d$ with punctures, which are examples of transversal groupoids in Definition \ref{defn: groupoid def}. We refer to, e.g., \cite{ER}, \cite{BM}, \cite{Ito} and \cite{K} for more details on the notions demonstrated below.

\begin{defn}\label{defn: tiling groupoid}
Let $\Lambda$ be a FLC, aperiodic, repetitive Delone set. Define 
\[R_\Lambda=\{(T, T-z): T\in \Omega_0(\Lambda), z\in T\},\]
equipped with the multiplication defined by $(T_1, T_1-z_1)\cdot (T_2, T_2-z_2)=(T_1, T_2-z_2)$ whenever $T_1-z_1=T_2$ and the inverse defined by $(T, T-z)^{-1}=(T-z, T)$. Define the source map $s(T, T-z)=T-z$ and the range map $r(T, T-z)=T$.
\end{defn}

\begin{rmk}\label{rmk: tiling groupoid well defined}
We remark that the study of Delone set is also formulated by using (punctured) tilings of $\R^d$ in the literature, in which Delone sets exactly play the role of punctures of the tilings. All concepts above can be similarly defined to punctured tilings of $\R^d$  We refer to, e.g., \cite{Kel} and \cite{Ito} for more details. In addition, given a FLC, aperiodic, and repetitive Delone set $\Lambda$, the $R_\Lambda$ in Definition \ref{defn: tiling groupoid}, referred as a \textit{tiling groupoid}, is thus a \textbf{locally compact Hausdorff minimal principal second countable  \'{e}tale ample groupoids on a compact metrizable unit space}. Moreover, it is known (see, e.g., \cite[Example 2.3]{GMPS} and \cite[Lemma 4.4.3]{Ito}) that the discrete hull $\Omega_0(\Lambda)$ is a flat Cantor transversal of the continuous hull $\Omega(\Lambda)$ and therefore the groupoid $R_\Lambda$ is an example of transversal groupoids in Definition \ref{defn: groupoid def}.
\end{rmk}

\subsection{Hilbert $C^*$-modules}\label{sec: Hilbert mod}
We recall basic definitions of Hilbert $C^*$-modules and refer to classical references  \cite{Lance} and \cite{RW} for more details. However, we follows the convention in \cite{BEV}  using \textit{left} Hilbert $C^*$-module. 

\begin{defn}\label{defn: Hilbert Module}
    Let $A$ be a unital $C^*$-algebra. An \textit{inner product $A$-module} is an complex vector space $\CE$ with a left $A$-module structure and a map ${_\bullet \langle} \cdot, \cdot \rangle: \CE\times \CE\to A$ satisfies the following.
    \begin{enumerate}[label=(\roman*)]
        \item\label{Hilbert m1} ${_\bullet \langle} \cdot, \cdot \rangle$ is $\C$-linear on the first variable.
        \item\label{Hilbert m2}${_\bullet \langle} a\cdot x, y \rangle=a\cdot {_\bullet \langle} x, y \rangle$ hold for any $a\in A$ and $x, y\in \CE$.
        \item\label{Hilbert m3}${_\bullet \langle}x, y \rangle^*={_\bullet \langle} y, x \rangle$ hold for any $x, y\in \CE$.
        \item\label{Hilbert m4}${_\bullet \langle}x, x \rangle\geq 0$ in $A$.
        \item\label{Hilbert m5}${_\bullet \langle} x, x \rangle=0$ if and only if $x=0$.
        \end{enumerate}
    \end{defn}

An inner product $A$-module $\CE$ becomes a normed space under the $2$-norm defined by $\|x\|_\CE=\|{_\bullet \langle}x, x \rangle\|^{1/2}$ for $x\in \CE$. If $\|\cdot\|_\CE$ is a complete norm, then $\CE$ is said to be a \textit{Hilbert $A$-module}.

Let $A_0$ be a dense $*$-subalgebra of $A$. Then a \textit{pre-inner product $A_0$-module} is a complex vector space $\CE_0$ equipped with a left $A_0$-module structure and a map ${_\bullet \langle} \cdot, \cdot \rangle: \CE_0\times \CE_0\to A$ such that the conditions \ref{Hilbert m1}-\ref{Hilbert m4} in Definition \ref{defn: Hilbert Module} holds for all $a\in A_0$ and $x, y\in \CE_0$. We remark that, unlike in the literature like \cite{RW} and \cite{BEV}, we do not ask the image of the pre-inner product ${_\bullet \langle} \cdot, \cdot \rangle$ is in $A_0$. However, we still have the Cauchy-Schwarz inequality in our setting. This should be compared to, e.g., \cite[Lemma 2.5]{RW}.

\begin{lem}[Cauchy-Schwarz inequality]\label{lem: CS}
    It $\CE_0$ is a pre-inner product $A_0$-module and if $x, y\in \CE_0$, then
    \[ {_\bullet \langle} y, x \rangle^*\cdot{_\bullet \langle} y, x \rangle\le \|{_\bullet \langle} y, y \rangle\|{_\bullet \langle} x,x  \rangle\]
\end{lem}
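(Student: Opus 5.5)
The plan is to run the standard Hilbert-space Cauchy--Schwarz argument of \cite[Lemma 2.5]{RW}, but to test positivity of $A$-valued inner products against states of $A$ (equivalently, positive functionals) rather than multiplying vectors by elements of $A$. The only place where $A_0$ versus $A$ matters is that $\CE_0$ is a module over $A_0$ alone, whereas ${_\bullet \langle} y, x \rangle$ lies in $A$ and may not act on $\CE_0$. The remedy is to approximate ${_\bullet \langle} y, x \rangle$ by elements of $A_0$, using density and the continuity of all expressions in the coefficient.

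First, if ${_\bullet \langle} y,y\rangle=0$, I would show ${_\bullet \langle} y,x\rangle=0$. For any state $\phi$ on $A$, the form $(u,v)\mapsto \phi({_\bullet \langle} u,v\rangle)$ is a positive semidefinite sesquilinear form on $\CE_0$, by \ref{Hilbert m1}, \ref{Hilbert m3} and \ref{Hilbert m4}. The scalar Cauchy--Schwarz inequality then gives $\phi(c^*c)=0$ for $c={_\bullet \langle} y,x\rangle$, after testing with $u=b\cdot y$ for $b\in A_0$ near $c$; this is the same computation as in the general case below. So I assume $\|{_\bullet \langle} y,y\rangle\|>0$ and, after scaling, $\|{_\bullet \langle} y,y\rangle\|=1$. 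The target is then ${_\bullet \langle} y,x\rangle^*{_\bullet \langle} y,x\rangle\le {_\bullet \langle} x,x\rangle$.

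Main step: fix $b\in A_0$ and expand $0\le {_\bullet \langle} x-b^*\cdot y,\,x-b^*\cdot y\rangle$. Using \ref{Hilbert m2} in the first variable and \ref{Hilbert m3} to move $b$ out of the second variable (so ${_\bullet \langle} u, a\cdot v\rangle={_\bullet \langle} u,v\rangle a^*$), this gives
\[0\le {_\bullet \langle} x,x\rangle - {_\bullet \langle} x,y\rangle b - b^*{_\bullet \langle} y,x\rangle + b^*{_\bullet \langle} y,y\rangle b .\]
Since ${_\bullet \langle} y,y\rangle\le 1$, the last term is bounded above by $b^*b$, so
\[{_\bullet \langle} x,x\rangle - c^*b - b^*c + b^*b\ge 0,\qquad c={_\bullet \langle} y,x\rangle\in A .\]
Now choose $b_n\in A_0$ with $b_n\to c$, which is possible by density of $A_0$ in $A$. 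The left side converges in norm to ${_\bullet \langle} x,x\rangle - c^*c$, and the positive cone of $A$ is closed, so ${_\bullet \langle} x,x\rangle - c^*c\ge0$. Undoing the normalization gives the stated inequality. The degenerate case also follows from this computation: replacing $y$ by $ty$ and letting $t\to\infty$ forces $c^*c\le t^{-2}{_\bullet \langle} x,x\rangle\to0$.

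The main obstacle, and the reason the lemma is stated at all, is that $c={_\bullet \langle} y,x\rangle$ may not lie in $A_0$, so $c^*\cdot y$ is not defined in $\CE_0$. The approximation step handles this. It needs only two facts: that the expansion is a polynomial expression, continuous in $b$ for the norm of $A$, and that $A_+$ is norm-closed. I would check carefully that the module axioms are applied only with coefficients from $A_0$.
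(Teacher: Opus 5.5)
Your proof is correct. It shares the paper's key idea but reaches the inequality by a different mechanism.

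\textbf{The paper's route.} It works state by state. By \cite[Remark 2.6]{RW} it suffices to prove $\rho({_\bullet\langle} y,x\rangle^*{_\bullet\langle} y,x\rangle)\le\|{_\bullet\langle} y,y\rangle\|\,\rho({_\bullet\langle} x,x\rangle)$ for every state $\rho$. It chooses $a_n\in A_0$ with $a_n\to{_\bullet\langle} x,y\rangle$ and applies the scalar Cauchy--Schwarz inequality for the positive form $\rho({_\bullet\langle}\cdot,\cdot\rangle)$ to the pair $x$, $a_n\cdot y$. It then passes to the limit, uses $b^*cb\le\|c\|b^*b$, and divides by $\rho({_\bullet\langle} y,x\rangle^*{_\bullet\langle} y,x\rangle)^{1/2}$.

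\textbf{Your route.} You stay inside $A$. You expand the positive element ${_\bullet\langle} x-b^*\cdot y,\,x-b^*\cdot y\rangle$ with $b\in A_0$, which is legitimate because $A_0$ is $*$-closed. You bound $b^*{_\bullet\langle} y,y\rangle b\le b^*b$ and let $b\to{_\bullet\langle} y,x\rangle$, using only continuity of the algebra operations and norm-closedness of $A_+$. This is the classical Paschke/Lance expansion combined with the same density approximation the paper uses to get around ${_\bullet\langle} y,x\rangle\notin A_0$.

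\textbf{Comparison.} Your version is more elementary: it needs no states, no characterization of the order by states, and no division step. The price is a normalization and a separate treatment of ${_\bullet\langle} y,y\rangle=0$.

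\textbf{Simplification.} The degenerate case is already covered by your main step, since that step only uses ${_\bullet\langle} y,y\rangle\le 1$, not equality. Apply it to $y/\sqrt{s}$ for every $s>\|{_\bullet\langle} y,y\rangle\|$. This gives ${_\bullet\langle} y,x\rangle^*{_\bullet\langle} y,x\rangle\le s\,{_\bullet\langle} x,x\rangle$, and letting $s$ decrease yields the inequality in all cases at once. So your opening paragraph with states can be dropped.
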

\begin{proof}
    By \cite[Remark 2.6]{RW}, it suffices to show 
    \[\rho({_\bullet \langle} y, x \rangle^*\cdot{_\bullet \langle} y, x \rangle)\leq \|{_\bullet \langle} y, y \rangle\|\rho({_\bullet \langle} x,x  \rangle)\]
    for any state $\rho\in S(A)$. Then for ${_\bullet \langle} x, y \rangle\in A$, choose a sequence $a_n\in A_0$ such that $a_n\to {_\bullet \langle} x, y \rangle$ as $n\to \infty$. Then $A_0$-module properties \ref{Hilbert m2} and \ref{Hilbert m3} in Definition \ref{defn: Hilbert Module} imply that  
     \[{_\bullet \langle} x, a_n \cdot y \rangle={_\bullet \langle} y, x \rangle^* \cdot a_n^* \to{_\bullet \langle} y, x \rangle^* \cdot {_\bullet \langle} y, x \rangle, \]
     and
     \[{_\bullet \langle}a_n \cdot y, a_n  \cdot y \rangle=a_n \cdot{_\bullet \langle} y, y \rangle \cdot a_n^* \to {_\bullet \langle} y, x \rangle^*\cdot{_\bullet \langle} y, y \rangle \cdot{_\bullet \langle} y, x \rangle .\]
   Using the usual Cauchy-Schwarz inequality for the positive sesquilinear form $(y,x)\mapsto \rho({_\bullet \langle}y, x\rangle)$ and $b^*cb\leq \|c\|b^*b$ for any $c\in A_+$ and $b\in A$, one obtains
\begin{align}
    \rho(({_\bullet \langle} y, x\rangle^*\cdot{_\bullet \langle} y, x\rangle))&=\lim_{n\to \infty}\rho({_\bullet \langle} x, a_n \cdot y \rangle)\notag\\
    &\le \lim _{n\to \infty}\rho({_\bullet \langle} x,x \rangle)^{1/2}\rho({_\bullet \langle} a_n \cdot y,a_n \cdot y\rangle)^{1/2}\notag\\
    &=\rho({_\bullet \langle} x,x \rangle))^{1/2}\rho({_\bullet \langle} y, x \rangle^*\cdot {_\bullet \langle} y, y \rangle \cdot{_\bullet \langle} y, x \rangle )^{1/2}\notag\\
    &\le \|{_\bullet \langle} y, y \rangle\|^{1/2}\rho({_\bullet \langle} x,x \rangle))^{1/2}\rho({_\bullet \langle} y, x\rangle^*\cdot{_\bullet \langle} y, x\rangle))^{1/2}.\notag
\end{align}  
This establishes the result.
     \end{proof}

The following is a slight generalization of well-known completion theorem for pre-inner product module to a genuine inner product module as in \cite[Lemma 2.16]{RW}. The proof is the same to \cite[Lemma 2.16]{RW} using Lemma \ref{lem: CS} above and we omit it.
\begin{prop}\label{prop: completion}
Let $A_0$ be a dense $*$-subalgebra of a unital $C^*$-algebra $A$ and $\CE_0$ is a pre-inner product $A_0$-module with the pre-inner product ${_\bullet \langle} \cdot, \cdot \rangle_{\CE_0}: \CE_0\times \CE_0\to A$, which satisfies Definition \ref{defn: Hilbert Module}\ref{Hilbert m1}-\ref{Hilbert m4}. Then there exists a Hilbert $A$-module $\CE$ and a linear map $q:\CE_0\to \CE$ such that $q(\CE_0)$ is dense in $\CE$ and $a\cdot q(x)=q(a\cdot x)$ for all $x\in \CE_0$, $a\in A_0$, and ${_\bullet \langle} q(x), \cdot q(y) \rangle_{\CE}={_\bullet \langle} q(x), \cdot q(y) \rangle_{\CE_0}$
\end{prop}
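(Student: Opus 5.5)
The plan is to follow the classical Rieffel--Williams completion of \cite[Lemma 2.16]{RW}, using Lemma \ref{lem: CS} as the only new input. The first step is to show that $\|x\|_0:=\|{_\bullet \langle} x,x\rangle\|^{1/2}$ is a seminorm on $\CE_0$. Homogeneity is immediate from the sesquilinearity of ${_\bullet \langle}\cdot,\cdot\rangle$; sesquilinearity holds because linearity in the first variable is \ref{Hilbert m1}, and conjugate-linearity in the second follows from \ref{Hilbert m3}. For the triangle inequality, expand ${_\bullet \langle} x+y,x+y\rangle$ and bound the cross terms. By Lemma \ref{lem: CS} we have $\|{_\bullet \langle} y,x\rangle\|^2\le \|{_\bullet \langle} y,y\rangle\|\,\|{_\bullet \langle} x,x\rangle\|$, which gives $\|x+y\|_0\le\|x\|_0+\|y\|_0$. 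Let $N=\{x:\|x\|_0=0\}$. Lemma \ref{lem: CS} also shows that ${_\bullet \langle} y,x\rangle=0$ for every $x\in N$ and every $y$. Consequently $N$ is a subspace, the pairing descends to $\CE_0/N$, and \ref{Hilbert m5} holds there.

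Next we check that the left $A_0$-action preserves $N$ and is bounded. For $a\in A_0$ we have ${_\bullet \langle} a x,ax\rangle=a\,{_\bullet \langle} x,x\rangle a^*\le\|a\|^2{_\bullet \langle} x,x\rangle$, using \ref{Hilbert m2}, \ref{Hilbert m3} and positivity \ref{Hilbert m4}. Hence $\|ax\|_0\le\|a\|\,\|x\|_0$. This is the key point: the bound involves the $C^*$-norm of $A$, not any norm on $A_0$.

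We then define $\CE$ to be the completion of $\CE_0/N$ and let $q$ be the composite of the quotient map with the inclusion. The inner product extends to $\CE$ by continuity: by Lemma \ref{lem: CS} and sesquilinearity, ${_\bullet \langle}\cdot,\cdot\rangle$ is jointly continuous with values in $A$, and $A$ is complete. The action of $A_0$ extends by the bound above to a contractive action on $\CE$. Since $A_0$ is dense in $A$, a second continuity argument extends it to all of $A$, because $(a,x)\mapsto ax$ is jointly continuous. Axioms \ref{Hilbert m1}--\ref{Hilbert m4} pass to limits because the positive cone $A_+$ is closed. Axiom \ref{Hilbert m5} holds on $\CE$ because $\|\cdot\|_\CE$ coincides with the completed norm. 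By construction $q(\CE_0)$ is dense, $q$ intertwines the $A_0$-actions, and $q$ preserves inner products.

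The only delicate point is that ${_\bullet \langle}\cdot,\cdot\rangle$ takes values in $A$ rather than in $A_0$. This is why the approximation argument inside Lemma \ref{lem: CS} is needed, and once that lemma is available the rest of the argument is routine.
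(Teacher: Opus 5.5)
Your proposal is correct and follows the same route as the paper. The paper omits the proof, saying it is that of \cite[Lemma 2.16]{RW} with Lemma \ref{lem: CS} in place of the usual Cauchy--Schwarz inequality, and your seminorm, quotient, completion and extension-of-the-action argument is exactly that. One slip to repair: the order inequality $a\,{_\bullet \langle} x,x\rangle a^*\le\|a\|^2\,{_\bullet \langle} x,x\rangle$ is false in general (in $M_2(\C)$ take ${_\bullet \langle} x,x\rangle=e_{11}$ and $a=e_{21}$). All you actually need is the norm bound $\|a\,{_\bullet \langle} x,x\rangle a^*\|\le\|a\|^2\|{_\bullet \langle} x,x\rangle\|$, which follows from submultiplicativity and still gives $\|ax\|_0\le\|a\|\,\|x\|_0$ and $A_0 N\subset N$.
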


Let $\CE$ and $\CF$ be Hilbert $A$-module. An adjoint of a map $T:\CE\to \CF$ is a (uniquely determined) map $T^*:\CF\to \CE$ that satisfies ${_\bullet \langle} Tx, y \rangle={_\bullet \langle} x, T^*y \rangle$ for all $x\in \CE$ and $y\in \CF.$ In this case $T$ and $T^*$ are bounded and $A$-linear. The space of all adjointable operators from $\mathcal{E}$ to $\mathcal{F}$ is denoted by $\CL_A(\CE,\CF).$ If $\CE=\CF,$ this space is also denoted by $\CL_A(\CE)$ and it is a $C^*$-algebra with the natural operations and operator norm. 

Frank and Larson \cite{FL} introduced module frames for countably generated Hilbert $C^*$-module. Let $\CE$ be a Hilbert $A$-module. The module analysis operator $\mathscr{C}:\CE\to A^n$ and the module synthesis operator $\mathscr{D}:A^n \to \CE$ associated to finite set $\{y_1,\dots,y_n\}\subseteq \CE$ is defined by 
\[\mathscr{C}x=({_\bullet \langle} x, y_i \rangle)_{i=1}^n,\]
and
\[\mathscr{D}(a_i)_{i=1}^n=\sum_{i=1}^na_iy_i\]
for $x\in \CE$ and $(a_i)_i\in A^n.$ Both these operators are adjointable with $\mathscr{C}^*=\mathscr{D}.$ The operator $\mathscr{S}=\mathscr{D}\mathscr{C}$ is the module frame operator while $\mathscr{G}=\mathscr{C}\mathscr{D}$ is the module Gramian operator. A finite set $\{y_1,\dots,y_n\}\subseteq \CE$ is a module frame for $\CE$ \cite{FL} if there exist constant $A,B>0$ such that 
    \begin{equation} \label{frame of Hilbert module}
        A {_\bullet \langle} x, x \rangle \le \sum_{j=1}^n  {_\bullet \langle} x, y_j \rangle  {_\bullet \langle} x, y_j \rangle^* \le B {_\bullet \langle} x, x \rangle  \quad \text{for all} \ x\in \CE.
    \end{equation}
    That is to say the module frame operator $\mathscr{S}$ of $\{y_i\}_{i=1}^n$ satisfies $AI_{\CE}\le \mathscr{S} \le B I_{\CE}$. 
If one can choose $A=B=1$ in (\ref{frame of Hilbert module}), the frame called Parseval. For a finite set $\{y_i\}_{i=1}^n \subseteq \CE$, its $A$-span is the set of all finite $A$-linear combinations of elements in this set. We call $\{y_i\}_{i=1}^n$ is algebraically finitely generated for $\CE$ if its $A$-span is $\CE$. And $\{y_i\}_{i=1}^n \subseteq \CE$ is called $A$-linearly independent if whenever $\{a_i\}_{i=1}^n \subset A$ is such that $\sum_{i=1}^na_ix_i=0 $, then $a_i=0$ for $1 \le i \le n$.

Let $\CE,\CF$ be a Hilbert $A$-module. We assume that $\tau$ is a faithful tracial state on $A$ and $H$ is the Hilbert space obtained from the GNS construction of $(A,\tau).$ Let $M=A'' \subseteq \mathcal{B}(H)$ be the von Neumann algebra generated by $A.$ Then the cyclic and separating vector $\xi_{\tau}$ of $(A,\tau)$ gives rise to a faithful normal trace on $M,$ which we also denote by $\tau.$  We write $L^2(M,\tau)$ for the Hilbert space underlying the GNS representation $(M,\tau),$ then $L^2(M,\tau)$ can be identified with $H.$ Following \cite[Section 3.4]{BEV}, we define a scalar-valued inner product on $\CE$ by $\langle x,y \rangle_{H_\CE^\tau} =\tau({_\bullet \langle} x, y\rangle)$ for $x,y\in \mathcal{E}$ and denote by $H_\CE^{\tau}:=\overline{\CE}^{\|\cdot\|_{H_\CE^\tau}}.$  The left action of $A$ on $\CE$ will extend to a representation $\pi_\CE^\tau$ of $A$ on $H_\CE^\tau.$ The triple pair $(H_\CE^\tau,\pi_\CE,\tau)$ is called the \textit{localization space} of $\CE$ with respect to $(A,\tau).$ According to \cite[Lemma 3.6]{BEV} every adjoint operator $T\in \CL_A(\CE,\CF)$ extends uniquely to a bounded, $M$-linear map $T^\tau:H_\CE^\tau \to H_\CF^\tau.$

\section{Strict comparison for twisted reduced groupoid $C^*$-algebras $C^*_r(\CG, \Sigma)$}\label{sec: tower}
As mentioned in the introduction, a key ingredient in establishing Theorem \ref{thm: main 1} is the strict comparison (of projections) of the $C^*$-algebra $C^*_r(R_\Lambda, \sigma_\Lambda)$. To this end, it suffices to verify the finite \textit{nuclear dimension} or the $\CZ$-\textit{stability} of  $C^*_r(R_\Lambda, \sigma_\Lambda)$ by \cite{Winter} and \cite{Ror}. 

We avoid recalling lengthy definitions. Instead, we refer to \cite{WZ} for the definition of the nuclear dimension, which is known as a noncommutative analogue of the covering dimension for topological spaces. Moreover, the notation $\CZ$ above denotes the \textit{Jiang-Su algebra} and $\CZ$-stability of a $C^*$-algebra $A$ means $A\otimes \CZ\simeq A$. These two properties, together with the \textit{strict comparison of positive elements}( see, e.g., \cite{Ror}) play important roles in the modern classification programme of nuclear simple separable $C^*$-algebras satisfying the UCT. We refer to, e.g., \cite{CGSTW}, \cite{EGLN}, \cite{GLN}, \cite{TWW} for more details.  However, in this paper, we only need the following  strict comparison of projections, which is formally weaker than the above three properties in general.

\begin{defn}\label{defn: comparison for projections}
    Let $A$ be a unital $C^*$-algebra. Denote by $T(A)$ the trace space of $A$. We say $A$ has strict comparison of projections, if whenever $p, q\in M_n(A)$ are projections in $n$-dimensional matrix algebra of $A$ with $\tau(p)<\tau(q)$, then $p\preceq q$ in the sense that there exists some $v\in M_n(A)$ such that $p=v^*v$ and $vv^*\leq q$.
\end{defn}

Let $\CG$ be a groupoid. The following concept was introduced in \cite{GWY} as an dynamical analogue of the asymptotic dimension in geometry, which can be used to bound nuclear dimension of groupoid $C^*$-algebras.

\begin{defn}\cite[Definition 5.1]{GWY}
Let $\CG$ be a groupoid. We say $\CG$ has \textit{dynamic asymptotic dimension} $d\in \N$, denoted by $\operatorname{d. a. d}(\CG)=d$ if $d$ is the smallest number with the following property: For every precompact open subset $K$ of $\CG$, there are open subsets $U_0, \dots, U_d$ of $\GU$ that covers $s(K)\cup r(K)$ such that for each $i$, the set $\{\gamma\in K: s(\gamma), r(\gamma)\in U_i\}$ is contained in a relatively compact subgroupoid of $\CG$.
\end{defn}

Based this concept, it follows from the next result, proved in \cite{BL}, on the estimation of the nuclear dimension of the twisted reduced groupoid $C^*$-algebra $C^*_r(\CG, \Sigma)$. This implies that $C^*(\CG, \Sigma)$ has the strict comparison of positive elements and thus the strict comparison of projections. See also \cite{GWY} and \cite{CDGHV}.

\begin{thm}\cite[Theorem B]{BL}\label{thm: finite dad implies finite dim nuc}
    Let $\CG$ be a second countable groupoid and let $\Sigma$ be a twist over $\CG$. Then,
    \[\dim^{+1}_{\operatorname{nuc}}(C^*_r(\CG;\Sigma))\leq \operatorname{d.a .d}^{+1}(\CG)\cdot\dim^{+1}(\GU).\]
\end{thm}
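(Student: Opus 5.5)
Since this result is quoted from \cite{BL}, I only outline how I would prove it, following the strategy of \cite{GWY} for untwisted groupoids and checking that each step survives a twist. Write $d=\operatorname{d.a.d}(\CG)$ and $m=\dim(\GU)$; assume both are finite, otherwise there is nothing to prove. The target is, for each finite set $F\subset C_c(\CG;\Sigma)$ and $\eps>0$, a completely positive approximation
\[
C^*_r(\CG;\Sigma)\xrightarrow{\ \psi\ } A_F \xrightarrow{\ \phi\ } C^*_r(\CG;\Sigma)
\]
through a finite-dimensional algebra $A_F$. Here $\psi$ should be c.p.c., $\phi$ should split into $(d+1)(m+1)$ c.p.c.\ order-zero pieces, and $\|\phi\psi(f)-f\|<\eps$ for $f\in F$. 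The plan has three steps.

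\textbf{Step 1 (localization).} Choose a precompact open $K\subset\CG$ containing the supports of $F$ and enlarged so that it is invariant under the operations needed in Step 2. Apply the definition of d.a.d.\ to $K$ to obtain open sets $U_0,\dots,U_d$ covering $s(K)\cup r(K)$. For each $i$, the set $K_i=\{\gamma\in K: s(\gamma),r(\gamma)\in U_i\}$ lies in a relatively compact open subgroupoid $H_i$. Take a partition of unity $h_0,\dots,h_d$ on $s(K)\cup r(K)$ subordinate to $\{U_i\}$, as in \cite{GWY}, chosen so that each $h_i$ almost commutes with $F$: that is, $\|h_i^{1/2}f-fh_i^{1/2}\|$ is small for $f\in F$. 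In the twisted convolution the twist only enters through products of two arrows, and $h_i$ is supported on units, so the twist does not obstruct this. The compressions $f\mapsto h_i^{1/2}fh_i^{1/2}$ then map $F$ approximately into $C^*_r(H_i;\Sigma|_{H_i})$, viewed inside $C^*_r(\CG;\Sigma)$, and $\sum_i h_i^{1/2}fh_i^{1/2}\approx f$.

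\textbf{Step 2 (local algebras).} Show that for a relatively compact open subgroupoid $H$ the algebra $C^*_r(H;\Sigma|_H)$ satisfies $\dim_{\operatorname{nuc}}^{+1}\le \dim^{+1}(\GU)$. The idea is that relative compactness forces uniformly bounded fibers. After passing to a precompact open neighbourhood, a \emph{twisted} elementary groupoid or open castle (Definition~\ref{defn: castle}) gives a subhomogeneous algebra whose local pieces are $C_0(V)\otimes M_k$, up to a locally trivial twist. Locally on the unit space the twist can be trivialized, and then the known estimate $\dim_{\operatorname{nuc}}\le\dim$ for such (sub)homogeneous algebras over spaces of dimension $\le m$ applies.

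\textbf{Step 3 (assembly).} Compose the compressions of Step 1 with the $(m+1)$-coloured approximations of Step 2. The downward maps $f\mapsto h_i^{1/2}fh_i^{1/2}$ are c.p.c.\ and $\sum_i h_i\le 1$, so the direct sum is c.p.c. The upward maps are indexed by $(d+1)\times(m+1)$ colours, each order-zero. Together these give the bound $\dim^{+1}_{\operatorname{nuc}}\le (d+1)(m+1)$.

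I expect Step 2 to be the main obstacle: controlling a twist over a relatively compact, not necessarily principal, subgroupoid. If needed I would first reduce to the principal case through isotropy, using the amenable-stabilizer techniques of \cite{CDGHV}. A secondary difficulty is arranging the almost-commutation in Step 1 uniformly over the fibres.
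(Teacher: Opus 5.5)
The paper gives no proof of this statement; it is imported from \cite{BL}. Your outer scheme (Steps~1 and~3) is the Guentner--Willett--Yu assembly on which that result rests, and the local estimate you state in Step~2 is the right one. However, Step~2 is where the new content of \cite{BL} lies, and your sketch of it does not work, for three reasons.

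\emph{Castles.} A relatively compact open subgroupoid $H$ cannot in general be placed inside an open castle. Indeed $\CG(\CC)$ is always principal: an arrow $\gamma\in B^l_{i,j}$ with $s(\gamma)=r(\gamma)$ forces $i=j$, and $B^l_{i,i}\subset\GU$. By contrast, $H$ may have non-trivial finite isotropy.

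\emph{Jumping fibres.} Even for principal $H$ the algebra is not locally of the form $C_0(V)\otimes M_k$, because $u\mapsto|H^u|$ is only lower semicontinuous. For example, in the compact groupoid $[0,1]\times\{1,2\}^2$, let $H$ be the open subgroupoid consisting of all units together with the arrows over $(0,1]$. Then $C^*_r(H)=\{f\in C([0,1],M_2): f(0)\in\C\oplus\C\}$, i.e.\ $f(0)$ is diagonal.

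\emph{Twist on isotropy.} With isotropy, your claim that the twist can be trivialized locally on the unit space is false. Over a unit $u$ the twist restricts to a $2$-cocycle on the finite group $H^u_u$, and this need not be a coboundary. For a bundle of copies of $\Z/2\times\Z/2$ carrying the non-trivial cocycle, the fibre algebras are $M_2$, not $\C^4$. So the part of the spectrum over an orbit consists of irreducible representations of twisted group algebras, whose number and dimensions jump. Your fallback, a reduction to the principal case ``through isotropy'', is left unspecified, and these fibrewise twisted group algebras are exactly what must be controlled.

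Your castle picture is adequate only in special situations. One such case is the zero-dimensional principal setting of this paper, where the relevant subgroupoids lie in the elementary groupoids $\CG(\CC_i)$ from the proof of Proposition~\ref{prop: dad implies tower dim}.

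What Step~2 actually needs is the following. If $N$ open bisections cover $\overline{H}$, each regular representation of $C^*_r(H;\Sigma|_H)$ acts on a space of dimension at most $N$, so the algebra is subhomogeneous. Its nuclear dimension is at most its decomposition rank. By Winter's theorem, that rank equals the topological dimension of the primitive ideal space, computed on the locally compact Hausdorff strata $\operatorname{Prim}_k$. You must then prove $\dim\operatorname{Prim}_k(C^*_r(H;\Sigma|_H))\le\dim(\GU)$ for every $k$. This is a genuine dimension-theoretic argument that has to handle both the jumping fibres and the twisted isotropy. It is what \cite{BL} supply, and without it your proposal contains no proof of the local bound.

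Two smaller points:
\begin{enumerate}
\item In Step~1, the almost invariance $|h_i(r(\gamma))-h_i(s(\gamma))|<\delta$ on the supports of $F$ does not hold for an arbitrary partition of unity subordinate to the cover obtained for $K$. You must apply the d.a.d.\ hypothesis to a large power of a symmetric precompact set containing $K$, and build the $h_i$ from that cover, as in \cite{GWY}.
\item In Step~3, $h_i^{1/2}ah_i^{1/2}$ lies in $C^*_r(H_i;\Sigma|_{H_i})$ only when $a$ is supported in $K$. So the downward map on all of $C^*_r(\CG;\Sigma)$ has to be obtained by extending the finite-dimensional c.p.c.\ approximations of $C^*_r(H_i;\Sigma|_{H_i})$ via Arveson's extension theorem.
\end{enumerate}
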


To calculate the dynamic asymptotic dimension of $R_\varphi$ introduced in Definition \ref{defn: groupoid def}, we introduce a version of \textit{tower dimension} for groupoids, motivated by \cite[Definition 4.2, Definition 4.3]{Kerr}.

\begin{defn}\label{defn: lebesgue}
    Let $\CG$ be a groupoid with a compact unit space.
     Let $K\subset \CG$ be compact and let $\{\CC_i: i\in I\}$ be a collection of open castles such that the levels $\bigcup_{i\in I}\CC^{(0)}$ covers $s(K)$, i.e., $s(K)\subset\bigcup_{i\in I}\bigcup \CC^{(0)}_i$. We say $\{\CC_i: i\in I\}$ is $K$-\textit{Lebesgue} if for any $u\in s(K)$, there exists an open castle $\CC_i$ for some $i\in I$ such that $K\cdot u\subset (\bigcup \CC_i)\cdot u$.
\end{defn}

\begin{defn}\label{defn: tower dim}
Let $\CG$ be a groupoid with a compact unit space. We say $\CG$ has tower dimension at most $d\in \N$, denoted by $\dimtow(\CG)\leq d$ if for any compact $K\subset \CG$ there exists a $K$-Lebesgue collection $\{\CC_0,\dots, \CC_d\}$, consisting of precompact open castles, of size $d+1$. 
\end{defn}

\begin{prop}\label{prop: dad implies tower dim}
Let $\CG$ be an ample groupoid with a compact unit space. Suppose $\dimtow(\CG)\leq d$. Then $\operatorname{d.a.d}(\CG)\leq d$.
\end{prop}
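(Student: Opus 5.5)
Given a precompact open $K\subset\CG$, I would replace it by a compact set. Since $\GU$ is compact and $\CG$ is ample, the closure $\overline{K}$ is compact. Put $K'=\overline{K}\cup\overline{K}^{-1}\cup\GU$, which is still compact. Applying $\dimtow(\CG)\le d$ to $K'$ gives a $K'$-Lebesgue collection $\{\CC_0,\dots,\CC_d\}$ of precompact open castles. Write $\CG(\CC_i)=\bigcup\CC_i$ for the associated precompact open subgroupoids (Remark \ref{rmk: basic property of castles}).

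Next I would build the open sets. For each $u\in s(K')=\GU$, the Lebesgue property gives some $i$ with $K'u\subset\CG(\CC_i)u$. Let $W_i$ be the set of $u$ for which index $i$ works. Each $W_i$ is contained in $\CC_i^{(0)}$, since $u\in K'u$ forces $u\in\CG(\CC_i)$. The sets $W_i$ need not be open. I would therefore try to show that every $u\in W_i$ has an open neighbourhood $O_u\subset\CC_i^{(0)}$ such that, for all $v\in O_u$, every $\gamma\in K$ with $s(\gamma)=v$ lies in $\CG(\CC_i)$. Then set $U_i=\bigcup_{u\in W_i}O_u$. These sets are open, cover $\GU\supseteq s(K)\cup r(K)$, and satisfy
\[\{\gamma\in K: s(\gamma),r(\gamma)\in U_i\}\subset\CG(\CC_i),\]
which is a relatively compact subgroupoid. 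That gives $\operatorname{d.a.d}(\CG)\le d$.

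The main obstacle is producing $O_u$, because the Lebesgue condition is pointwise and not uniform. I would use ampleness. Cover the compact set $\overline{K}$ by finitely many compact open bisections $V_1,\dots,V_m$. For a fixed $u\in W_i$, each $V_k$ with $u\in s(V_k)$ has its unique arrow $\gamma_k=V_ks(\cdot)^{-1}$-preimage over $u$ lying in $K'u\subset\CG(\CC_i)$, and $\CG(\CC_i)$ is open. So I can shrink to a compact open neighbourhood $O$ of $u$ such that $V_kO\subset\CG(\CC_i)$ for those $k$. For the $k$ with $u\notin s(V_k)$, the set $s(V_k)$ is clopen, so I also shrink $O$ to avoid it. This yields the $O_u$.

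If one prefers not to shrink against the finite cover, an alternative is to pass directly to $\overline{K}\subset\bigcup_k V_k$ and handle the finitely many bisections one at a time. The Hausdorff and étale properties make the argument local, so no further compactness issue should arise.
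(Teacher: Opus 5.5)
Your overall plan matches the paper's: cover by finitely many compact open bisections, use the Lebesgue property pointwise, then thicken each point to an open set. However, the key step producing $O_u$ does not work as written.

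You apply $\dimtow(\CG)\le d$ to $K'=\overline{K}\cup\overline{K}^{-1}\cup\GU$, and only afterwards cover $\overline{K}$ by compact open bisections $V_1,\dots,V_m$. Nothing forces $V_k\subset K'$. Indeed $\overline{K}$ need not be open (take $K\subset\GU$ open with non-clopen closure in a Cantor unit space), so a cover by compact open bisections generally sticks out of it. Hence, for $u\in W_i\cap s(V_k)$, the unique arrow $\gamma_k\in V_k$ with $s(\gamma_k)=u$ may lie in $V_k\setminus K'$. The Lebesgue property for $K'$ says nothing about such an arrow. If $\gamma_k\notin\CG(\CC_i)$, then no neighbourhood $O$ of $u$ satisfies $V_kO\subset\CG(\CC_i)$, so your $O_u$ cannot be obtained this way.

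There are two repairs.
\begin{enumerate}
\item Choose the bisections first, covering $\overline{K}\cup\overline{K}^{-1}$, and apply the tower-dimension hypothesis to the compact set $L=\bigcup_k V_k$, as the paper does. Then $V_ku\subset Lu\subset\CG(\CC_i)$ for the relevant $u$, and your shrinking argument goes through. Your device of shrinking $O$ away from the clopen sets $s(V_k)$ with $u\notin s(V_k)$ is a clean substitute for the paper's chopping of the $s(O_i)$ into equal-or-disjoint pieces.
\item Keep your $K'$ and drop the bisections altogether. Since $\CG(\CC_i)$ is open, $\overline{K}\setminus\CG(\CC_i)$ is compact. Its image under $s$ is therefore closed in the Hausdorff space $\GU$, so
\[U_i:=\GU\setminus s\bigl(\overline{K}\setminus\CG(\CC_i)\bigr)=\{u\in\GU:\overline{K}u\subset\CG(\CC_i)\}\]
is open. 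It contains your $W_i$, so the $U_i$ cover $\GU$. Moreover $\{\gamma\in K: s(\gamma)\in U_i\}\subset\CG(\CC_i)$. This compactness argument does not even use ampleness.
\end{enumerate}
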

\begin{proof}
    Let $K\subset \CG$ be a precompact open set. Since $\CG$ is ample, one chooses finitely many  compact open bisections $O_1,\dots, O_m$ such that 
    \[K\cup K^{-1}\subset \bigcup_{i=1}^mO_i.\]
    By a standard chopping technique for all of these compact open sets $O_i$ and $s(O_i)$, one may assume that either $s(O_i)=s(O_j)$ or $s(O_i)\cap s(O_j)=\emptyset$ for any $1\leq i, j\leq m$.
    
    Denote by  $L=\bigcup_{i=1}^m O_i$, which is compact open. Then since $\dimtow(\CG)\leq d$, there exits a family $\{\CC_0, \dots, \CC_d\}$ of $L$-Lebesgue open castles such that 
    \[s(K)\cup r(K)\subset s(L)\subset \bigcup_{i=0}^d\bigcup \CC^{(0)}_i.\]
    Then for $i=0,\dots, d$, define
   \[U_i=\{u\in s(L): L\cdot u\subset (\bigcup \CC_i)\cdot u\}.\]
    We claim each $U_i$ is open. Indeed, let $u\in U_i$, define $I_u=\{1\leq i\leq m: u\in s(O_i)\}$. By our condition for all $s(O_i)$, there exits an compact open $V_u\subset \GU$ such that $V_u=s(O_i)$ for any $i\in I_u$.
This implies that for any $v\in V_u$, one has
\begin{equation}\label{equ1}
  L\cdot v=\bigcup_{j=1}^mO_j\cdot v=\bigcup_{j\in I_u}O_j\cdot v  
\end{equation}
because $V_u\cap s(O_j)=\emptyset$ holds for any $j\notin I_u$. On the other hand, for the $u$, let $C\in \CC^{(0)}_i$ be the unique level of $\CC_i$ such that $u\in C$. Then note that
\[L\cdot u=\bigcup_{j\in I_u}O_j\cdot u\subset \bigcup \CC_i\cdot u.\] 
Since each $O_j$ is a bisection, the set $O_j\cdot u$ is a singleton and we denote by $\{\gamma_j\}=O_j\cdot u\subset (\bigcup \CC_i)\cdot u$. This implies that  $\gamma_j\in C_j$ for a unique $C_j\in \CC_i$ with $s(C_j)=C$. Then choose an open bisection $W_j$ such that $\gamma_j\in W_j\subset O_j\cap C_j$.  Then we define 
\[Z_u=V_u\cap C\cap \bigcap_{j\in I_u}s(W_j)\]
which is an open neighborhood of $u$, contained in $s(L)$.  Then for any $v\in Z_u$, using (\ref{equ1}), one has 
\[L\cdot v=\bigcup_{j=1}^mO_j\cdot v=\bigcup_{j\in I_u}O_j\cdot v=\bigcup_{j\in I_u}W_j\cdot v=\bigcup_{j\in I_u}C_j\cdot v\subset \bigcup \CC_i\cdot v\]
and thus $Z_u\subset U_i$. This implies that each $U_i$ is open. 

Finally, for any $u\in s(L)$, since $\{\CC_0, \dots, \CC_d\}$ is a $L$-Lebesgue collection of castles, there has to be an $0\leq i\leq d$ such that $L\cdot u\subset (\bigcup \CC_i)\cdot u$. By definition, this implies that open sets $U_i$ for $i=0,\dots, d$ form an open cover of $s(L)\supset s(K)\cup r(K)$. Then denote by
\[H_i:=\{\gamma\in K: s(\gamma), r(\gamma)\in U_i\}.\]
Then, for any $\gamma\in H_i\subset L$ one has
\[\gamma=\gamma\cdot s(\gamma)\in (\bigcup \CC_i) \cdot s(\gamma)\subset \bigcup \CC_i,\]
which entails that  $H_i\subset \bigcup \CC_i=\CG(\CC_i)$.
Note that $\CG(\CC_i)$ is a precompact open subgroupoid by Remark \ref{rmk: basic property of castles}. Therefore, the subgroupoid $\langle H_i\rangle$ generated by $H_i$ is also precompact. This shows that $\operatorname{d.a.d}(\CG)\leq d$.
\end{proof}

\begin{rmk}
We remark that Proposition \ref{prop: dad implies tower dim} still holds for general locally compact Hausdorff \'{e}tale groupoids $\CG$ with compact unit spaces by a more complicated proof demonstrated in \cite{LM}. 
\end{rmk}

\section{Finite tower dimension and finite dynamical asymptotic dimension of tiling groupoids $R_\Lambda$}\label{sec: finite dad}

In this section, we calculate the tower dimension and dynamical asymptotic dimension of $R_\varphi$.
For any $x\in X$ and $r>0$, the set 
\[O_r(x)=\{p\in \R^d: \varphi^p(x)\in X, \|p\|_\infty< r\}\] is said to be the $r$-\textit{partial orbit} of $x$. Note that by definition, one always has $0\in O_r(x)$.

\begin{lem}\label{lem: local towers}
Let $\R^d\curvearrowright \Omega$ be a free action on a compact metrizable space and let $X$ be the flat transversal and $R_\varphi=\{(\varphi^p(x), x): x\in X, \varphi^p(x)\in X, p\in \R^d\}$ the groupoid induced from a free action $\R^d$. Then for any $r>0$ there exists finitely many clopen set $U_1,\dots, U_n$ in $X$ such that
\begin{enumerate}[label=(\roman*)]
    \item\label{local towers 1} the $2r$-partial obits $O_{2r}(x)$ stay the same for any $x\in U_i$, denoted by $F_i$;
    \item\label{local towers 2} the union $\bigcup_{i=1}^n\bigcup_{p\in E_i}\varphi^p(U_i)=X$ in which each $E_i\subset F_i$ is the $r$-partial orbit; 
    \item\label{local towers 3} and $\varphi^p(U_i)\cap \varphi^q(U_j)=\emptyset$ holds for any $p\in F_i$ and $q\in F_j$ with $\|p-q\|_\infty<r$.
\end{enumerate}
\end{lem}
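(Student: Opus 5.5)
The plan is to build a clopen set $Y\subset X$ of ``centres'' whose returns along every orbit are $r$-separated in $\|\cdot\|_\infty$ and $r$-dense. We then partition $Y$ into clopen pieces on which the $2r$-partial orbit is constant; these pieces are the $U_i$. Throughout, condition (iii) is read for distinct pairs $(i,p)\neq(j,q)$, since otherwise it fails trivially for $i=j$, $p=q$.

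\textbf{Step 1: local constancy of partial orbits.} By condition (iii) of Definition \ref{defn: flat cantor transversal}, the map $X\times B(0,M)\to C$ is a homeomorphism. Hence two distinct return times of a point to $X$ differ by at least $M$ (up to the norm comparison), so each $O_s(x)$ is finite, with cardinality bounded uniformly in $x$. By condition \ref{cantor transversal etale}, $x\mapsto O_s(x)$ is locally constant. Since $X$ is a compact Cantor set, for any fixed $s$ we get a finite clopen partition of $X$ on whose pieces $O_s$ is constant.

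\textbf{Step 2: greedy construction of $Y$.} Fix a finite clopen partition $V_1,\dots,V_m$ of $X$ on whose pieces $O_{2r}$ is constant. Refine it, using freeness, Hausdorffness and finiteness of $O_r(x)$, so that $\varphi^p(V_k)\cap V_k=\emptyset$ for every $p\in O_r(V_k)\setminus\{0\}$. Then set $Y_1=V_1$ and
\[Y_k=Y_{k-1}\cup\{x\in V_k:\ \varphi^p(x)\notin Y_{k-1}\ \text{for all } p\in O_r(x)\}.\]
Each $Y_k$ is clopen by induction: on a clopen piece where $O_r(x)$ is a constant finite set $F$, the added set equals $\bigcap_{p\in F}\varphi^{-p}(X\setminus Y_{k-1})\cap V_k$, and each $\varphi^{-p}$ is a homeomorphism between clopen subsets of $X$ by Remark \ref{rmk: basic info on the R_varphi}. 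Put $Y=Y_m$.

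We check two properties of $Y$ by induction on $k$.
\begin{itemize}
\item \emph{Separation.} If $y,\varphi^p(y)\in Y$ with $0<\|p\|_\infty<r$, then whichever of the two points was added later contradicts the defining condition, or the refinement of the $V_k$ if both come from the same $V_k$.
\item \emph{Maximality.} Every $x\in X$ lies in some $V_k$, so either $x\in Y$ or some $\varphi^p(x)\in Y_{k-1}$ with $\|p\|_\infty<r$. In both cases $x=\varphi^{-p}(y)$ with $y\in Y$ and $-p\in O_r(y)$.
\end{itemize}

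\textbf{Step 3: the sets $U_i$.} Partition $Y$ into clopen sets $U_1,\dots,U_n$ on which $O_{2r}$ is constant, equal to $F_i$; this gives \ref{local towers 1}. The set $E_i=O_r(U_i)\subset F_i$ is then also constant, and maximality gives \ref{local towers 2}.

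For \ref{local towers 3}, suppose $\varphi^p(u)=\varphi^q(v)$ with $u\in U_i$, $v\in U_j$, $p\in F_i$, $q\in F_j$ and $\|p-q\|_\infty<r$. Then $v=\varphi^{p-q}(u)$ with $u,v\in Y$. Separation forces $p=q$, and freeness then gives $u=v$, hence $i=j$. So the intersection is empty whenever $(i,p)\neq(j,q)$.

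The main obstacle is Step 2: checking that the greedy sets stay clopen and that the separation really propagates. This needs the uniform finiteness and local constancy of partial orbits from condition \ref{cantor transversal etale}, applied to several translates simultaneously. I would handle it by passing to a common clopen refinement on which $O_{3r}$ is constant before running the induction.
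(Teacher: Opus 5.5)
Your proof is correct and is essentially the paper's argument. Your greedy set
\[
A_k=\{x\in V_k:\varphi^p(x)\notin Y_{k-1}\ \text{for all } p\in O_r(x)\}
\]
is the paper's recursion
\[
U_k=V_k\setminus\bigcup_{j<k}\bigcup_{p\in E_j}\varphi^p(U_j)
\]
rewritten in terms of return times. Your derivation of (iii) from the $r$-separation of $Y=\bigsqcup_i U_i$ is the same contradiction the paper runs, with your self-disjointness refinement of the $V_k$ playing the role of the local towers from Proposition \ref{prop: groupoid tower}.

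Your reading of (iii) for pairs $(i,p)\neq(j,q)$ is the intended one. The extra refinement making $O_{3r}$ constant is unnecessary, because clopenness of each added piece only uses that $O_r$ is constant on $V_k$.
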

\begin{proof}
Let $r>0$ be given. Since $\R^d\curvearrowright \Omega$ is free, the groupoid $R_\varphi=\{(\varphi^p(x), x): x\in X, \varphi^p(x)\in X, p\in \R^d\}$ is principal. Then for the $r$ and any $x\in X$, the points $\varphi^p(x)$ are pairwise different for any $p\in O_{2r}(x)$. Then Proposition \ref{prop: groupoid tower} implies that there exists clopen neighborhood $V_x\ni x$ and a family  $\{B_{p, V_x}: p\in O_{2r}(x)\}$ of clopen bisections in $R_\varphi$ such that $\{\varphi^p(V_x): p\in O_{2r}(x)\}$ is a disjoint family. Moreover, shrink each $V_x$ if necessary, one may assume the $2r$-partial orbits $O_{2r}(y)$ are same for all $y\in V_x$. Then by the compactness of $X$, choose a finite subcover $V_1,\dots, V_n$ of $X$ with their corresponding $F_1, \dots, F_n$. In addition, denote by $E_i\subset F_i$ the subset of the $r$-partial orbit. Now define $U_1=V_1$ and \[U_i=V_i\setminus \bigcup_{j<i}\bigcup_{p\in E_j}\varphi^p(U_j).\]
Note that the family $\{(U_i, F_i): i=1,\dots, n\}$ satisfy the condition \ref{local towers 1} by the construction.

Suppose the condition \ref{local towers 2} fails. Let $x\notin \bigcup_{i=1}^n\bigcup_{p\in E_i}\varphi^p(U_i)$. Then because $V_1,\dots, V_n$ form cover of $X$, there exits an $i_0$ such that \[x\in V_{i_0}\setminus \bigcup_{i=1}^n\bigcup_{p\in E_i}\varphi^p(U_i)\subset V_{i_0}\setminus \bigcup_{i<i_0}\bigcup_{p\in E_i}\varphi^p(U_i)=U_{i_0},\]
which is a contradiction as $0\in E_{i_0}$ by definition. Thus the condition \ref{local towers 2} holds. 

Finally, suppose the condition \ref{local towers 3} fails, which means there exists $p\in F_i$ and $q\in F_j$ with $\|p-q\|_\infty<r$ such that $\varphi^p(U_i)\cap \varphi^q(U_j)\neq \emptyset$. This necessarily implies that $i\neq j$ because $\{\varphi^p(U_i): p\in F_i\}$ is a disjoint family.
Without loss of generality, one assumes $i<j$. Let $y\in \varphi^p(U_i)\cap \varphi^q(U_j)$. Then there exists open neighborhood $W\ni x= \varphi^{-p}(y)$ such that $W\subset U_i$ and $\varphi^p(W)\subset  \varphi^p(U_i)\cap \varphi^q(U_j)$. This implies that $\varphi^{p-q}(W)\subset U_j$. Note also $p-q\in O_r(x)$ as $\|p-q\|_\infty<r$. In addition, because $x\in U_i$ and the $r$-partial orbits for any points in $U_i$ equal $E_i$, one has $p-q\in E_i$. However, this implies that 
\[U_j\cap \bigcup_{l\in E_i}\varphi^l(U_i)\neq \emptyset,\]
which is a contradiction to the definition of $U_j$. Therefore, the condition \ref{local towers 3} holds.
\end{proof}

\begin{defn}\label{defn: center cube}
  Let $m>0$ and let $D$ be a $d$-dimensional cube in $\R^d$ such that each edge of $D$ is of length $3m$. We say the cube whose edges are of length $m$:
  \[D^0=\{z\in D: \operatorname{dist}_{\|\cdot\|_\infty}(z, \partial D)\geq m\}\]
  is the \textit{center subcube} of $D$, in which $\partial D$ is the boundary of $D$ in $\R^d$.
\end{defn}

The following is an elementary fact but useful in the proof of Theorem \ref{thm: R_varphi has finite tower dim}.

\begin{rmk}\label{rmk: partition of domain}
Let $D$ be a cube whose edges are of length $3m$ and $D^0$ the center subcube of $D$. Denote by $w_l$ for $l=0,\dots, 3^d-1$ vectors in $\R^d$ whose coordinates are all either $m$, or $-m$, or $0$  Then the family 
\[\{w_l+D_0: l=0,\dots, 3^d-1\}\]
form a cover of $D$ with $\operatorname{int}(w_l+D_0)\cap \operatorname{int}(w_k+D_0)=\emptyset$ whenever $l\neq k$.
\end{rmk}

\begin{thm}\label{thm: R_varphi has finite tower dim}
   Let $R_\varphi$ be a minimal principal groupoid introduced in Definition \ref{defn: groupoid def}. Then $R_\varphi$ has finite tower dimension bounded by $6^d$, i.e. $\dimtow(R_\varphi)\leq 6^d-1$.
\end{thm}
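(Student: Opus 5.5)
Given a compact $K\subset R_\varphi$, cover $K$ by finitely many compact open bisections $B_{p,V}$. Let $r_0$ be the maximum of $\|p\|_\infty$ over the translation vectors that occur, so every $\gamma\in K$ has the form $(\varphi^p(u),u)$ with $\|p\|_\infty\le r_0$. The plan is to build $6^d$ precompact open castles, indexed by a $2^d$-colouring of a cubical grid times the $3^d$ shifts $w_l$ of Remark \ref{rmk: partition of domain}, and show they form a $K$-Lebesgue family.

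\textbf{Step 1 (coarse towers).} Fix $m\gg r_0$, say $m\ge 2r_0$, and apply Lemma \ref{lem: local towers} with a large radius $r$ comparable to $3m$. This gives clopen bases $U_1,\dots,U_n$ with constant partial orbits $F_i\supset E_i$. By \ref{local towers 2} and \ref{local towers 3}, every $x\in X$ lies in exactly one level $\varphi^p(U_i)$ with $p\in E_i$. Thus each $\R^d$-orbit, read through the transversal, is partitioned into ``centres'' (points of the $U_i$), and these centres are $r$-separated by \ref{local towers 3}. For each $i$, the sets $\{\varphi^p(U_i):p\in F_i\}$ are the levels of an open tower $\{B_{p,U_i}B_{q,U_i}^{-1}\}$ by Proposition \ref{prop: groupoid tower} and the subsequent remark.

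\textbf{Step 2 (cubes and colouring).} Around each centre $x\in U_i$, consider the cube $D_x$ of side $3m$ centred at $0$ in the orbit coordinates, with centre subcube $D_x^0$. Tile $\R^d$ into cubes of side $m$ and colour them with $2^d$ colours by the parities of their integer coordinates, so that distinct cubes of the same colour are at $\|\cdot\|_\infty$-distance at least $m$. For each colour $c$ and each shift $l\in\{0,\dots,3^d-1\}$, define a castle $\CC_{c,l}$. Its towers are indexed by $i$ and the patterns of $F_i$. The levels of a tower over $U_i$ are the points $\varphi^p(U_i)$ with $p$ in the translate $w_l+D^0$ of the centre cube, enlarged by $r_0$. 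Only those parts are kept whose grid cell, measured relative to the centre, has colour $c$. Because centres are $r$-separated and cubes of one colour are $m$-separated, distinct towers in one castle have disjoint levels, so condition (ii) of Definition \ref{defn: castle} holds. Precompactness is automatic because finitely many clopen bisections are used.

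\textbf{Step 3 (Lebesgue property).} Fix $u\in X$. Its point $0$ lies in some cell $w_l+D^0$ relative to the nearest centre, by Remark \ref{rmk: partition of domain} applied to that centre's cube, which covers the Voronoi-type region of the centre since $E_i$ is the $r$-partial orbit. The grid cell containing it has some colour $c$. Then $K\cdot u$ consists of arrows with translation at most $r_0<m$, so $K\cdot u$ stays inside the $r_0$-enlargement of that cell. Hence $K\cdot u\subset(\bigcup\CC_{c,l})\cdot u$, and the collection has size $2^d\cdot3^d=6^d$.

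\textbf{Main obstacle.} The delicate part is Step 2: reconciling cells coming from different centres. Near the boundary between two centres' regions the enlarged cells overlap, and the disjointness required in Definition \ref{defn: castle}(ii) could fail. My first attempt would be to assign each orbit point to a unique centre via the partition from Lemma \ref{lem: local towers}\ref{local towers 2}, and to use the $2r$-partial orbit $F_i$ (which is constant on $U_i$) to see all neighbouring centres within distance $2r$. Cells would then be cut consistently: intersect the level bisections with the clopen sets determined by the relative positions of neighbouring centres, which is possible by finite local complexity encoded in \ref{local towers 1}. Minimality is used only to make $X$ a genuine transversal, ensuring every orbit meets the $U_i$ with bounded gaps.
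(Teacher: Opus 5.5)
\textbf{Gap: the covering in Step 3.} Lemma~\ref{lem: local towers}\ref{local towers 2} only says that $u=\varphi^p(y)$ for some centre $y\in U_i$ and some $p\in E_i$, i.e.\ $\|p\|_\infty<r\approx 3m$. Your $3^d$ cells $w_l+D^0$ tile only the cube of side $3m$ about a centre, i.e.\ offsets with $\|p\|_\infty\le 3m/2$. Nothing in the lemma prevents $u$ from sitting at offset $3m/2<\|p\|_\infty<r$ from every centre: the centres are only $r$-separated and $r$-dense, so the ``Voronoi-type region'' of a centre can extend to distance close to $r$, well beyond $D_x$. Such a $u$ lies in no level of any $\CC_{c,l}$. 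Consequently neither the covering of $s(K)$ by levels nor the $K$-Lebesgue condition follows. A smaller point: ``exactly one level'' in Step 1 is false, since $u$ can be within $r$ of two centres whose mutual distance lies in $[r,2r)$. Only ``at least one'' holds, but that is all you need.

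Conversely, the difficulty you flag as the main obstacle is not there, and the colouring does no work. Suppose every tower of a castle takes its level offsets from one and the same box $S\subset\R^d$, independent of $i$, with $\operatorname{diam}_\infty S<r$. Then Lemma~\ref{lem: local towers}\ref{local towers 3} already makes all levels $\varphi^p(U_i)$, $p\in F_i\cap S$, pairwise disjoint over all $i$: a common point $\varphi^p(y)=\varphi^q(y')$ would give two centres $y'=\varphi^{p-q}(y)$ with $\|p-q\|_\infty<r$. Your $r_0$-enlarged cells are boxes of this kind, with diameter $m+2r_0\le 2m<r$. So you need no Voronoi assignment, no FLC cutting and no colour separation. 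The real task is to cover $B(0,r)$ by boxes whose $r_0$-enlargements still have diameter $<r$.

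The paper does this with $m=\max_j\|m_j\|_\infty$ and $r=3m$. It splits $B(0,r)$ into the $2^d$ half-open ``quadrant'' cubes $D_k$ of side $r$, and uses the $3^d$ shifts $D_{l,k}=w_l+D_k\subset B(0,2r)$ as castle boxes. Each $p\in E_i\cap D_k$ lies in the centre subcube of some $D_{l,k}$, hence $p+m_j\in F_i\cap D_{l,k}$. So the factor $2^d$ comes from the quadrants, not from a parity colouring.

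Alternatively, you can exploit your choice $m\gg r_0$: take $r=3m/2$ and $m>4r_0$. Then the $3^d$ cells $w_l+D^0$ already cover $B(0,r)$, and their $r_0$-enlargements have diameter $m+2r_0<r$. This even yields $3^d$ castles.
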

\begin{proof}
    Let $K\subset R_\varphi$ be a compact set. Without loss of generality, one may assume the unit space $X\subset K$. Then choose finitely many clopen bisections $B_j=B_{m_j, V_j}=\{(\varphi^{m_j}(x), x): x\in V_j\}$ for $j\in J$ such that $K\subset \bigcup_{j\in J}B_j$. Define 
    \[m=\max\{\|m_j\|_\infty: j\in J\}.\]
    and denote by
    \[\CO=\bigvee\{V_j, V_j^c: j\in J\}\]
    the common refinement of covers $\{V_j, V^c_j\}$ of $X$ for $j\in J$,
    which still form a clopen cover of $X$.
    Then, for the $r=3m$, Lemma \ref{lem: local towers} implies there exists finitely many clopen sets $U_1,\dots, U_n$ in $X$ and corresponding $2r$-partial orbit sets $F_1,\dots, F_n$ and $r$-partial orbit sets $E_1,\dots, E_n$ satisfying the conditions \ref{local towers 1}, \ref{local towers 2}, and \ref{local towers 3} in Lemma \ref{lem: local towers}. Since $X$ is a Cantor set. A standard technique (using Lebesgue number) allows to chop $\varphi^p(U_i)$ for all $p\in F_i$ and $i=1,\dots, n$ such that 
    the family
    \[\{\varphi^p(U_i): p\in F_i: i=1,\dots, n\}\]
    refines the cover $\CO$. Then note 
    \[B_{\R^d}(0, r)=\{p\in \R^d: \|p\|_\infty<r\}=\bigcup_{k=0}^{2^d-1}v_k+\{p=(p_1,\dots, p_d): 0\leq p_j< r\text{ for }j=1,\dots,d \}\]
    in which the coordinates in $v_i$ are either $0$ or $-r$. For each $0\leq k\leq 2^d-1$, we denote by 
    \[D_k=v_k+\{p=(p_1,\dots, p_d): 0\leq p_j< r=3m\text{ for }j=1,\dots, n \}\]
    for simplicity. Note that for each $p\in D_k\cap F_i=D_k\cap E_i$ and $q\in D_k\cap F_j=D_k\cap E_j$, the distance $\|p-q\|_\infty<r$. This implies that 
    \[\CT_k=\{\varphi^p(U_i): p\in E_i\cap D_k, i=1,\dots,n\}\]
    is a disjoint family by Lemma \ref{lem: local towers}\ref{local towers 3}. Moreover, Lemma \ref{lem: local towers}\ref{local towers 2} entails that 
    \[\bigcup_{k=0}^{2^d-1}\bigcup\CT_k=X.\]
 Then denote by $w_l$ for $l=0,\dots, 3^d-1$ the vectors in $\R^d$ whose all coordinates are either $m$, or $-m$, or $0$. We set $w_0=0$ for simplicity. For each $k$, we shift $D_k$ to $D_{l, k}=w_l+D_k$ for $l=0,\dots, 3^d-1$ by vector $w_l$.  By definition, note that all $D_{l, k}$ is contained in $B_{\R^d}(0, 4m)\subset B_{\R^d}(0, 2r)$ and $D_{0, k}=w_0+D_k=D_k$.
    Moreover, for any $p\in D_{l, k}\cap F_i$ and $q\in D_{l, k}\cap F_j$, note that $\|p-q\|_\infty<r$, and thus one has 
    \[\CT_{l, k}=\{\varphi^p(U_i): p\in F_i\cap D_{l, k}, i=1,\dots, n\}\]
    is a disjoint family by Lemma \ref{lem: local towers}\ref{local towers 3}. We now look at the collection 
    \[\{\CT_{l, k}: k=0,\dots, 2^d-1, l=0,\dots, 3^d-1\}.\]
    Note that $\CT_{0,k}=\CT_k$ by definition.

Then for any $i=1,\dots, n$, since $R_\varphi$ is principal, each pair $\varphi^{p_1}(U_i), \varphi^{p_2}(U_i)\in\CT_{l, k}$ uniquely determines a clopen bisection $B_{p_1,p_2, i}$ such that $s(B_{p_1,p_2, i})=\varphi^{p_1}(U_i)$ and $r(B_{p_1, p_2, i})=\varphi^{p_2}(U_i)$. This allows to define a collection of bisections 
\[\CC_{l,k}=\{B_{p_1, p_2, i}: p_1, p_2\in E_i\cap D_{l, k}, i=1,\dots, n\},\]
which form a castle in the sense of Definition \ref{defn: castle} such that $\CC_{l, k}^{(0)}=\CT_{l,k}$. Then recall
\[\bigcup_{k=0}^{2^d-1}\CT_k=\{\varphi^p(U_i): p\in E_i\cap D_k, i=1,\dots,n, k=0,\dots, 2^d-1 \}\]
form a cover of $X=s(K)$. Then, so does $\bigcup_{l=0}^{3^d-1}\bigcup_{k=0}^{2^d-1}\CC_{l, k}^{(0)}$.
Finally, let $x\in X$ and denote by 
\[J_0=\{j\in J: x\in V_j=s(B_j)\}.\]  
Since the collection 
\[\{\varphi^p(U_i): p\in E_i\cap D_k, i=1,\dots,n, k=0,\dots, 2^d-1 \}\]
covers $X$ and refines the cover $\CO$, there exists a $\varphi^p(U_i)$ such that 
\[x\in \varphi^p(U_i)\subset \bigcap_{j\in J_0}V_j\]
for some $k=0,\dots, 2^d-1$, some $i=1,\dots, n$ and some $p\in D_k\cap E_i$. Then $p$ is located in a $w_l+D^0_k$  for some $l=0,\dots, 3^d-1$ by Remark \ref{rmk: partition of domain}, where $D^0_k$ is the center subcube of $D_k$ in the sense of Definition \ref{defn: center cube}. Then by our construction, it is direct to see $w_l+D^0_k$ is the center subcube of the cube $w_l+D_k=D_{l, k}$. 
For any $j\in J_0$, since $\|m_j\|_\infty< m$, one obtains that 
\[r(B_jx)=\varphi^{m_j}(x)\in \varphi^{m_j+p}(U_i)\]
which is a member of $\CT_{l, k}$ because $p$ is in the center subcube of $D_{l, k}$. Then since $\varphi^p(U_i)$ and $\varphi^{m_i+p}(U_i)$ are all in $\CT_{l, k}$, one has
\[B_jx=\{(\varphi^{m_j}(x), x)\}\subset\bigcup\CC_{l,k}\]
by the principality of $R_\varphi$. This further implies that
\[Kx\subset \bigcup_{j\in J_0}B_jx\subset(\bigcup\CC_{l, k})x.\] 
As a consequence, one obtains that $\dimtow(R_\varphi)\leq 2^d\cdot 3^d-1=6^d-1$ by Definition \ref{defn: lebesgue} and \ref{defn: tower dim}.
\end{proof}

\begin{rmk}\label{rmk: proof idea}
We illustrate the idea of the proof in the two-dimensional case in Figure~\ref{fig:figure1}, i.e., $d=2$.  Let $r = 3m$, and partition the disk $B_{\mathbb{R}^2, \|\cdot\|_\infty}(0, r)$ into $2^2 = 4$ regions, each lying in one of the four quadrants of the plane. 
Consider the region $D$ in the second quadrant. We further divide $D$ into $3^2 = 9$ smaller squares, labeled $1, \dots, 9$, each of size $m \times m$, with the center cube $D_0$ labeled by $5$. 
Observe that each of these nine subsquares, being a translate of $D_0$, serves as the center cube of a larger square of size $3m \times 3m$, and that these larger squares can be assigned in total $9=3^2$ distinct colors.
\end{rmk}

\begin{figure}[htbp]
  \centering % 居中对齐
  \begin{tikzpicture}[scale=1.2]
  
  % ========== 5×5 网格 ==========
  \foreach \x in {0,...,4}
    \foreach \y in {0,...,4}
    {
      \draw[thin] (\x,\y) rectangle (\x+1,\y+1);
    }
  
  % ========== 标记九宫格位置（中心3×3） ==========
  \fill[gray!10] (1,1) rectangle (4,4);
  
  % ========== 九宫格内部分隔线 ==========
  \draw[thin, black] (1,1) grid (4,4);
  
  % ========== 加粗九宫格的底部线和右边线 ==========
  \draw[very thick, blue] (1,1) -- (4,1);  % 底部线
  \draw[very thick, blue] (4,1) -- (4,4);  % 右边线
  
  % ========== 9个不同的3×3方形（用不同颜色边框） ==========
  % 定义9种颜色
  \definecolor{color1}{RGB}{255,0,0}      % 红色
  \definecolor{color2}{RGB}{0,150,0}      % 绿色
  \definecolor{color3}{RGB}{0,0,255}      % 蓝色
  \definecolor{color4}{RGB}{255,165,0}    % 橙色
  \definecolor{color5}{RGB}{128,0,128}    % 紫色
  \definecolor{color6}{RGB}{0,200,200}    % 青色
  \definecolor{color7}{RGB}{255,0,255}    % 品红
  \definecolor{color8}{RGB}{165,42,42}    % 棕色
  \definecolor{color9}{RGB}{255,215,0}    % 金色
  
  % 绘制9个3×3方形的彩色边框
  % 第1行
  \draw[thick, color1] (0,2) rectangle (3,5);  % 左上角
  \draw[thick, color2] (1,2) rectangle (4,5);  % 中上
  \draw[thick, color3] (2,2) rectangle (5,5);  % 右上角
  
  % 第2行
  \draw[thick, color4] (0,1) rectangle (3,4);  % 左中
  \draw[thick, color5] (1,1) rectangle (4,4);  % 正中（九宫格）
  \draw[thick, color6] (2,1) rectangle (5,4);  % 右中
  
  % 第3行
  \draw[thick, color7] (0,0) rectangle (3,3);  % 左下角
  \draw[thick, color8] (1,0) rectangle (4,3);  % 中下
  \draw[thick, color9] (2,0) rectangle (5,3);  % 右下角
  
  % ========== 坐标轴（无刻度版） ==========
  % y轴：在九宫格右边线上 (x=4)，向上向下延长
  \draw[->, thick] (4,1) -- (4,-2.5);  % 向下延长
  \draw[->, thick] (4,1) -- (4,6.5) node[above] {$y$}; % 向上延长
  
  % x轴：在九宫格底部线上 (y=1)，向左向右延长
  \draw[->, thick] (1,1) -- (-2.5,1);  % 向左延长
  \draw[->, thick] (1,1) -- (7.5,1) node[right] {$x$}; % 向右延长
  
  % ========== 九宫格内数字示例 ==========
  \node at (1.5,1.5) {1};
  \node at (2.5,1.5) {2};
  \node at (3.5,1.5) {3};
  \node at (1.5,2.5) {4};
  \node at (2.5,2.5) {5};
  \node at (3.5,2.5) {6};
  \node at (1.5,3.5) {7};
  \node at (2.5,3.5) {8};
  \node at (3.5,3.5) {9};
  
  % ========== 原点标记 ==========
  \node[below left] at (4,1) {$O$};
  
  \end{tikzpicture}
  
  % ========== 图号和注释 ==========
  \caption{$2$-dimensional case}
  \label{fig:figure1}
\end{figure}

\begin{cor}\label{cor:C^*_r(R_varphi,sigma) has the strict comparison}  
Let $\R^d\curvearrowright \Omega$ be a free action on a compact metrizable space $\Omega$. Suppose the induced transverse groupoid $R_\varphi$ is minimal and principal. Then  $\operatorname{d.a.d}(R_\varphi)\leq 6^d-1$. Therefore, for any twist $\Sigma$ (could come from a continuous $2$-cocycle $\sigma$) on $R_\varphi$, the nuclear dimension $\dimnuc(C^*_r(R_\varphi, \Sigma))\leq 6^d$ and thus $C^*_r(R_\varphi, \Sigma)$ has the strict comparison (of projections). In addition, $C^*_r(R_\varphi, \Sigma)$ is classified by its Elliott invariant.
\end{cor}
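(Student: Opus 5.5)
The corollary follows by chaining results already in hand; no new estimate is needed.

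\emph{Step 1: the dynamic asymptotic dimension.} Theorem \ref{thm: R_varphi has finite tower dim} gives $\dimtow(R_\varphi)\leq 6^d-1$. By Remark \ref{rmk: basic info on the R_varphi}, $R_\varphi$ is ample with compact unit space $X$. Proposition \ref{prop: dad implies tower dim} then yields $\operatorname{d.a.d}(R_\varphi)\leq 6^d-1$.

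\emph{Step 2: the nuclear dimension.} Since $X$ is a Cantor set, $\dim(X)=0$, so $\dim^{+1}(X)=1$. The groupoid $R_\varphi$ is second countable because $\Omega$ is compact metrizable, and its topology has a countable basis of bisections $B_{p,V}$. Indeed, although $p\in\R^d$ varies continuously, only countably many $p$ arise locally, by the étale condition \ref{cantor transversal etale} in Definition \ref{defn: flat cantor transversal}. Theorem \ref{thm: finite dad implies finite dim nuc} then gives
\[\dimnuc(C^*_r(R_\varphi,\Sigma))+1\leq 6^d\cdot 1,\]
so $\dimnuc\leq 6^d-1\leq 6^d$. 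A continuous $2$-cocycle $\sigma$ gives a twist $\Sigma=\T\times_\sigma R_\varphi$ with $C^*_r(R_\varphi,\sigma)\cong C^*_r(R_\varphi,\Sigma)$, so the statement covers the cocycle case.

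\emph{Step 3: simplicity and nuclearity.} These are needed to pass from finite nuclear dimension to strict comparison and classification. Topological amenability of $R_\varphi$ (noted in the preliminaries) gives nuclearity. Minimality and principality (hence effectiveness), together with amenability, give simplicity of $C^*_r(R_\varphi,\Sigma)$ by the standard twisted simplicity results (Renault; Brown et al.). The algebra is unital because $X$ is compact, and separable by second countability. It is infinite-dimensional because $X$ is infinite and the orbits are infinite.

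By Winter's theorem \cite{Winter}, a simple separable unital nuclear $C^*$-algebra with finite nuclear dimension is $\CZ$-stable. By Rørdam \cite{Ror}, it then has strict comparison of positive elements, and in particular of projections in the sense of Definition \ref{defn: comparison for projections}.

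\emph{Step 4: classification.} The algebra satisfies the UCT: twisted groupoid $C^*$-algebras of amenable étale groupoids satisfy it (Barlak–Li \cite{BL}). Being simple, separable, unital, nuclear and $\CZ$-stable with the UCT, it is classified by its Elliott invariant via \cite{CGSTW,EGLN,GLN,TWW}.

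\emph{Main obstacle.} The only point needing care is verifying the hypotheses of the cited theorems, namely second countability of $R_\varphi$ and simplicity in the twisted setting. Everything else is citation.
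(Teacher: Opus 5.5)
Your proof is correct and follows the same chain as the paper: Theorem \ref{thm: R_varphi has finite tower dim} with Proposition \ref{prop: dad implies tower dim} for the d.a.d.\ bound, Theorem \ref{thm: finite dad implies finite dim nuc} for nuclear dimension, then simplicity, Winter and R{\o}rdam, and the classification theorems. Your value $6^d-1$ is what the formula actually gives (the paper's display writes $6^d$). Your extra checks of second countability, infinite-dimensionality and the UCT are points the paper leaves implicit; note, however, that in this paper \cite{BL} is the source of Theorem \ref{thm: finite dad implies finite dim nuc}, so the UCT for twisted groupoid algebras of amenable groupoids should be credited to Barlak--Li's Cartan/UCT paper (or to Tu in the untwisted case) as a separate reference, not to \cite{BL}.
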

\begin{proof}
Proposition \ref{prop: dad implies tower dim} and Theorem \ref{thm: R_varphi has finite tower dim} show that $\operatorname{d.a.d}(R_\varphi)\leq 6^d-1$ and furthermore its twisted $C^*$-algebra  satisfies 
\[\dimnuc(C^*(R_\varphi, \Sigma))\leq (\operatorname{d.a.d}(R_\varphi)+1)\cdot(\dim(X)+1)-1=6^d\]
by Theorem \ref{thm: finite dad implies finite dim nuc} and the fact that $\dim(X)=0$. Since $R_\varphi$ is minimal and principal, it is known the $C^*$-algebra $C^*_r(R_\varphi, \Sigma)$ is simple (see, e.g., \cite[Theorem 7.26]{KM} for a proof)  and therefore it has the strict comparison of positive elements by \cite{Winter} and \cite{Ror}. The classification result follows from \cite{CGSTW}, \cite{EGLN}, \cite{GLN}, \cite{TWW}. 
\end{proof}

\begin{rmk}
It is worth noting that the groupoid $R_\varphi$ is almost finite in the sense of \cite[Definition 6.2]{Matui} by \cite[Remark 6.4]{Matui}. Therefore, it follows from \cite{IWZ} or \cite{MW} that the untwisted groupoid  $C^*$-algebra $C^*_r(R_\varphi)$ is $\CZ$-stable and thus has the strict comparison of positive elements by \cite{Ror}.
\end{rmk}

\section{Frame and Riesz vectors for projective unitary representations of groupoids}\label{sec: groupoid frame}
In this section, we mainly study (average) frame and Riesz vectors for projective unitary representations of groupoids, and associated analysis and synthesis operators as a preparation for Section \ref{sec: tiling groupoid}. The following definition appears in \cite{ER} in the measurable setting, which is used to define frames on groupoids. See also \cite[Definition II.1.6]{R}. However, we apply the definition to our topological setting.
\begin{defn}
    Let $\CG$ be a groupoid and $\sigma\in Z^2(\CG, \T)$. A \textit{$\sigma$-projective unitary representation} $(\pi, \CH, \mu)$ of $\CG$ on a measurable field of $\CH=\{H_{u}\}_{u\in \GU}$ of Hilbert spaces over $(\GU, \mu)$, equipped with a quasi-invariant measure $\mu$,  is a collection of unitary maps $\pi(x): H_{s(x)}\to H_{r(x)}$ for any $x\in \CG$ such that the following hold.
    \begin{enumerate}[label=(\roman*)]
        \item For any pair of measurable sections $\xi$ and $\eta$ of $\CH$, the map $\CG\to \C$ defined by $x\mapsto \langle \pi(x)\xi(s(x)), \eta(r(x)) \rangle$ is measurable and 
        \item $\pi(x)\pi(y)=\sigma(x, y)\pi(xy)$ whenever $(x,y)\in \CG^{(2)}$.
    \end{enumerate}
\end{defn}

From now on, for simplicity, we will always assume that the measure $\mu$ in a $\sigma$-projective unitary representation $(\pi, \CH, \mu)$ of $\CG$ is a $\CG$-invariant measure so that we may get rid of modular functions. Let $\eta=\{\eta(u)\}_{u\in \GU}$ be in $\CH=\{\CH_u\}_{u\in \GU}$. Then for $u\in \GU$, we define
\[\pi(\CG^u)\eta:=\{\pi(x)\eta(s(x))\}_{x\in \CG^u}.\]

\begin{defn}\cite[Definition 3.1]{ER}\label{groupoid frame vector}
Let $\pi$ be a unitary projective representation of a groupoid $\mathcal{G}$ on a measurable field $\{\mathcal{H}_u\}_{u\in \mathcal{G}^{(0)}}$ of Hilbert spaces.
\begin{enumerate}[label=(\roman*)]
    \item We say that $\eta \in \{\mathcal{H}_u\}_{u\in \GU}$ is a \textit{frame vector for $\pi$} if there exists $0<A \le B < \infty$ such that 
\[A\|\xi(u) \|^2 \le \sum_{x\in \mathcal{G}^{u}}|\langle \xi(u) ,\pi(x)\eta(s(x))\rangle | ^2 \le B\|\xi(u)\|^2  \ \text{for all}\  u \in \mathcal{G}^{(0)} \ \text{and} \  \xi \in \{\mathcal{H}_u\}_{u\in \GU}.\]
In other words, each family $\pi(\mathcal{G}^{u})\eta$ is a frame for $\mathcal{H}_u,$ with frame bounds $0 <A \le B <\infty$ independent of $u.$ If only the upper frame bounds exist in the definition of a frame vector, we call it a \textit{Bessel vector} for $\pi.$
\item We say that $\eta\in \{\mathcal{H}_u\}_{u\in \GU}$ is a \textit{Riesz vector} for $\pi$ if there exists $0<A \le B < \infty$ such that 
\[A\|c\|^2\le \|\sum_{x \in \mathcal{G}^u}c(x)\pi(x)\eta(s(x))\|^2\le B\|c\|^2 \ \text{for all} \  u \in \mathcal{G}^{(0)} \ \text{and all} \  c=\{c(x)\}_{x \in \mathcal{G}^{u}}\in \ell^2(\mathcal{G}^u).\]
In other words, each family $\pi(\mathcal{G}^{u})\eta$ is a Riesz sequence for $\mathcal{H}_u,$ with Riesz bounds $0 <A \le B <\infty$ independent of $u.$
\end{enumerate}
\end{defn}
 Suppose $\eta$ is a Bessel vector for $\pi$. We denote by $C_{\eta,u}: \CH_u\to \ell^2(\CG^u)$ the \textit{analysis operator} for $\pi(\CG^u)\eta$, defined by 
 $C_{\eta, u}(\xi)(x)=\langle\xi,\pi(x)\eta(s(x))\rangle.$
 In addition,  denote by 
 $D_{\eta, u}=C_{\eta, u}^{*}: \ell^2(\CG^u)\to \CH_u$ the \textit{synthesis operator} for $\pi(\CG^u)\eta$,  which is given by 
 $D_{\eta, u}(c)=\sum_{x\in \CG^u}c(x)\pi(x)\eta(s(x))$. Meanwhile, the frame operator $S_{\eta,u}$ and Gramian operator $G_{\eta,u}$ are  given by $S_{\eta,u}=D_{\eta,u}C_{\eta,u},G_{\eta,u}=C_{\eta,u}D_{\eta,u}$, respectively.

\begin{defn}\label{defn: multi frame}
We say that $\vec\eta=\{\eta_i\}_{i=1}^n$ for  $\eta_i\in \{\CH_u\}_{u\in \CG^{(0)}}$ is a \textit{multi-frame vector for} $\pi$ if there exists $0<A\le B<\infty$ such that 
\[A\|\xi(u) \|^2 \le \sum_{i=1}^n\sum_{x\in \mathcal{G}^{u}}|\langle \xi(u) ,\pi(x)\eta_i(s(x))\rangle | ^2 \le B\|\xi(u)\|^2  \ \text{for all}\  u \in \mathcal{G}^{(0)} \ \text{and} \  \xi \in \{\mathcal{H}_u\}_{u\in \GU}.\] If only the upper frame bounds exists, we call $\{\eta_i\}_{i=1}^n$ a \textit{multi-Bessel vectors} for $\pi.$

We say that $\vec\eta=\{\eta_i\}_{i=1}^n$ for  $\eta_i\in \{\CH_u\}_{u\in \CG^{(0)}}$ is a \textit{multi-Riesz vector for} $\pi$ if there exists $0<A\le B<\infty$ such that 
\[A\sum_{i=1}^n\|c_i\|^2\le \|\sum_{i=1}^n\sum_{x \in \mathcal{G}^u}c_i(x)\pi(x)\eta_i(s(x))\|^2\le B\sum_{i=1}^n\|c_i\|^2 \] 
holds for any  $u \in \mathcal{G}^{(0)}$ and $c_i=\{c_i(x)\}_{x \in \mathcal{G}^{u}}\in \ell^2(\mathcal{G}^u)$.
\end{defn}
 Suppose $\vec\eta=\{\eta_i\}_{i=1}^n$ is a multi-Bessel vectors for $\pi$. The analysis operator $C_{\vec\eta,u}= : \CH_u\to \ell^2(\CG^u)^n$ is defined by 
$C_{\vec\eta, u}(\xi)(x,i)=C_{\eta_i, u}(\xi)(x)=\langle\xi,\pi(x)\eta_i(s(x))\rangle$.
 In addition,  the associated synthesis operator
 $D_{\vec\eta, u}=C_{\vec\eta, u}^{*}: \ell^2(\CG^u)^n\to \CH_u$  is given by 
 $D_{\vec\eta, u}(c_1,\dots,c_n)=\sum_{i=1}^n D_{\eta_i,u}(c_i)=\sum_{i=1}^n\sum_{x\in \CG^u}c_i(x)\pi(x)\eta_i(s(x))$. The frame operator $S_{\vec\eta,u}$ and Gramian operator $G_{\vec\eta,u}$ are  given by $S_{\vec\eta,u}=D_{\vec\eta,u}C_{\vec\eta,u},G_{\vec\eta,u}=C_{\vec\eta,u}D_{\vec\eta,u}$, respectively.

Let $(\pi,\CH,\mu)$ be a unitary projective representation of a groupoid $\mathcal{G}$ on a measurable filed $\CH=\{\mathcal{H}_u\}_{u\in \mathcal{G}^{(0)}}$.
Denote by $\nu$ the measure on $\CG$ induced by $\mu$, defined by $\nu(f)=\int_{\GU}\sum_{x\in \CG^u}f(x)\text{d}\mu$ for all Borel function $f\in B(\CG, \sigma)$. Moreover,  it follows from \cite[Proposition II.1.7]{R} that there is a $*$-representation $L_\pi$ of $C_c(\CG, \sigma)$ on  the \textit{direct integral} of $\CH$  \[\int_{\mathcal{G}^{(0)}}^{\oplus}\mathcal{H}_u \text{d}\mu(u):=\{\xi=(\xi(u))_{u\in \GU}: \int_{\GU}\|\xi(u)\|^2_{\CH_u}\text{d}u<\infty\}\]
by 
\[(L_\pi f)(\xi)(u)=\sum_{x\in \CG^u}f(x)\pi(x)\xi(s(x)).\]

This allows us to introduce the following average version of Definition \ref{groupoid frame vector}, which plays a role in Section \ref{sec: tiling groupoid}.

\begin{defn}\label{defn: average frame vector}
     Let triple $(\pi, \mathcal{H}, \mu)$ be a unitary projective representation of a groupoid $\mathcal{G}.$ 
\begin{enumerate} [label=(\roman*)]
    \item We say $\eta\in \int_{\mathcal{G}^{(0)}}^{\oplus}\mathcal{H}_u \text{d}\mu(u)$ is an \textit{average frame vector} for $(\pi,\mathcal{H}, \mu)$ if there exists $0<A \le B < \infty$ such that for all  $\xi \in \int_{\mathcal{G}^{(0)}}^{\oplus}\mathcal{H}_u \text{d}\mu(u),$ we have
\begin{equation}\label{averageframe}
    A\int_{\mathcal{G}^{(0)}}\|\xi(u) \|^2 \text{d}\mu(u)\le \int_{\mathcal{G}^{(0)}}\sum_{x\in \mathcal{G}^u}\vert \langle \xi(u) ,\pi(x)\eta(s(x))\rangle \vert ^2 \text{d}\mu(u)\le B\int_{\mathcal{G}^{(0)}}\|\xi(u) \|^2 \text{d}\mu(u).
\end{equation}
If $A=B=1,$ we call $\eta$ an average Parseval frame vector. If only the upper fame bounds exist in the definition of an average frame vector, we call it an \textit{average Bessel vector} for $(\pi,\mathcal{H}, \mu).$ 
 \item We say $\eta\in \int_{\mathcal{G}^{(0)}}^{\oplus}\mathcal{H}_u \text{d}\mu(u)$ is an \textit{average Riesz vector} for $(\pi, \mathcal{H}, \mu)$ if there exists $0<A \le B < \infty$ such that  for all $\omega \in \GU $ and all $c\in L^2(\mathcal{G},\nu),$ we have
 \begin{equation}\label{averageRiesz}
     A\int_{\mathcal{G}^{(0)}}\sum_{x\in \mathcal{G}^u}\vert c(x) \vert ^2 \text{d}\mu (u)\le 
     \int_{\mathcal{G}^{(0)}}\|\sum_{x \in \mathcal{G}^u}c(x) \pi(x)\eta(s(x))\|^2 \text{d}\mu\le B\int_{\mathcal{G}^{(0)}}\sum_{x\in \mathcal{G}^u}\vert c(x) \vert ^2 \text{d}\mu (u).
 \end{equation}
\end{enumerate}
\end{defn}

\begin{defn}
Let $\eta$ be an average Bessel vector for $(\pi, \mathcal{H}, \mu)$. The associated \textit{average analysis operator} 
\[\overline{C}_{\eta}:\int_{\mathcal{G}^{(0)}}^{\oplus}\mathcal{H}_u \text{d}\mu(u) \to L^2(\mathcal{G},\nu)=\int_{\mathcal{G}^{(0)}}^{\oplus}\ell^2(\mathcal{G}_u) \text{d}\mu(u)\]
is defined by 
\[\overline{C}_\eta(\xi)(x)= \langle \xi(r(x)),\pi(x)\eta(s(x))\rangle.  \]
 In addition, the operator \[\overline{D}_{\eta}=\overline{C}_{\eta}^*: L^2(\mathcal{G},\nu) \to \int_{\mathcal{G}^{(0)}}^{\oplus}\mathcal{H}_u \text{d}\mu(u)\]  is called the \textit{average synthesis operator}, which is given by 
\[(\overline{D}_\eta c)(u):=\sum_{x\in \mathcal{G}^u}c(x)\pi(x)\eta(s(x)).\] 
\end{defn}

Let $\eta$ be an average Bassel vector. Actually $\overline{C}_\eta=\int_{\CG^{(0)}}^{\oplus}C_{\eta,u}\text{d}\mu(u)$ and 
$\overline{D}_{\eta}=\overline{C}_\eta ^*=\int_{\CG^{(0)}}^{\oplus}D_{\eta,u}\text{d}\mu(u)$. We then additionally define
the associated \textit{average frame operator} $\overline{S}_{\eta}$ by $\overline{S}_{\eta}=\overline{D}_{\eta} \overline{C}_{\eta}=\int_{\CG^{(0)}}^{\oplus}S_{\eta,u}\text{d}\mu(u)$ and the \textit{average Gramian operator} $\overline{G}_{\eta}$ by $\overline{G}_{\eta}=\overline{C}_{\eta}\overline{D}_{\eta}=\int_{\CG^{(0)}}^{\oplus}G_{\eta,u}\text{d}\mu(u).$ 

For the future use, we also introduce average multi-frame vectors. 
\begin{defn}\label{defn: average multi}
We say $\vec{\eta}=\{\eta_i\}_{i=1}^n\subset \int_{\mathcal{G}^{(0)}}^{\oplus}\mathcal{H}_u \text{d}\mu(u)$ is an \textit{average multi-frame vectors} for $(\pi,\mathcal{H}, \mu)$ if there exists $0<A \le B < \infty$ such that for all  $\xi \in \int_{\mathcal{G}^{(0)}}^{\oplus}\mathcal{H}_u \text{d}\mu(u),$ we have
\begin{equation} \label{eq:average multi-frame}
    A\int_{\mathcal{G}^{(0)}}\|\xi(u) \|^2 \text{d}\mu(u)\le \int_{\mathcal{G}^{(0)}}\sum_{i=1}^n\sum_{x\in \mathcal{G}^u}\vert \langle \xi(u) ,\pi(x)\eta_i(s(x))\rangle \vert ^2 \text{d}\mu(u)\le B\int_{\mathcal{G}^{(0)}}\|\xi(u) \|^2 \text{d}\mu(u).
\end{equation}
If $A=B=1,$ we call $\vec\eta=\{\eta_i\}_{i=1}^n$ an \textit{average multi-Parseval frame vectors.} If only the upper fame bounds exist in the definition of an average frame vector, we call it an \textit{average multi-Bessel vector} for $(\pi,\mathcal{H}, \mu).$

We say $\vec{\eta}=\{\eta_i\}_{i=1}^n\subset \int_{\mathcal{G}^{(0)}}^{\oplus}\mathcal{H}_u \text{d}\mu(u)$ is an \textit{average multi-Riesz vectors} for $(\pi,\mathcal{H}, \mu)$ if there exists $0<A \le B < \infty$ such that for all  $c_i \in L^2(\CG,\nu),$ we have
\[A\int_{\mathcal{G}^{(0)}}\sum_{i=1}^n\sum_{x\in \mathcal{G}^u}|c_i(x) | ^2 \text{d}\mu (u)\le 
     \int_{\mathcal{G}^{(0)}}\|\sum_{i=1}^n\sum_{x \in \mathcal{G}^u}c_i(x) \pi(x)\eta(s(x))\|^2 \text{d}\mu\le B\int_{\mathcal{G}^{(0)}}\sum_{i=1}^n\sum_{x\in \mathcal{G}^u}| c_i(x) |^2 \text{d}\mu (u).\]
\end{defn}

\begin{rmk}\label{rmk: frame is average frame}
It is straightforward to see every (multi-)frame vector in Definition \ref{groupoid frame vector} and \ref{defn: multi frame} is an average (multi-)frame vector. This also applies to Bessel and Parseval (multi-)frame vectors.
\end{rmk}

 The average analysis operator for an average multi-frame $\vec\eta=\{\eta_i\}_{i=1}^n$ is  \[\overline{C}_{\vec\eta}:\int_{\mathcal{G}^{(0)}}^{\oplus}\mathcal{H}_u \text{d}\mu(u) \to \int_{\mathcal{G}^{(0)}}^{\oplus}\ell^2(\mathcal{G}^u \times \{1,\dots,n\}) \text{d}\mu(u)=L^2(\CG, \nu)^n,\]
 defined by
\[\overline{C}_{\vec\eta}(\xi)(x,i)= \langle \xi(r(x)),\pi(x)\eta_i(s(x))\rangle.  \]
One similarly may define the corresponding  average synthesis operator $\overline{D}_{\vec\eta}=\overline{C}_{\vec\eta}^*$, the average Gramian operator $\overline{G}_{\vec\eta}=\overline{C}_{\vec\eta}\overline{D}_{\vec\eta}$, and the average frame operator $\overline{S}_{\vec\eta}=\overline{D}_{\vec\eta}\overline{C}_{\vec\eta}.$ 

For average multi-frame vector and average multi-Riesz vectors, we also have the following characterization in terms of average frame operators and average Gramian operators, respectively.
\begin{prop} \label{prop:average multi-frame and average frame operator}
     For a unitary projective representation $(\pi, \mathcal{H}, \mu)$ of $\mathcal{G}$ and an  average multi-Bessel vector $ \vec \eta=\{\eta_i\}_{i=1}^n \subset \int_{\mathcal{G}^{(0)}}^{\oplus}\mathcal{H}_u\text{d}\mu(u)  .$ Then the following hold:
     \begin{enumerate}[label=(\roman*)]
         \item $\vec\eta$ is an average multi-frame vector if and only if $\overline{S}_{\vec \eta}$ is an invertible operator in $\mathcal{B}(\int_{\mathcal{G}^{(0)}}^{\oplus}\mathcal{H}_u\text{d}\mu(u)). $
         \item $\vec \eta$ is an average multi-Riesz vector if and only if $\overline{G}_{\vec\eta}$ is an invertible operator in $\mathcal{B}(L^2(\mathcal{G},\nu)^n). $
     \end{enumerate}
\end{prop}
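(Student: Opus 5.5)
This is the standard Hilbert-space fact: a Bessel system is a frame iff its frame operator is invertible, and a Riesz sequence iff its Gramian is invertible, applied to the direct integral setting. Write $\mathbf{H}=\int_{\GU}^{\oplus}\CH_u\,\text{d}\mu(u)$ and $\mathbf{K}=L^2(\CG,\nu)^n$. Since $\vec\eta$ is an average multi-Bessel vector, the upper inequality in (\ref{eq:average multi-frame}) says precisely $\|\overline{C}_{\vec\eta}\xi\|^2\le B\|\xi\|^2$. The first step is to verify this identity: unfolding the definition of $\nu$ as $\int_{\GU}\sum_{x\in\CG^u}(\cdot)\,\text{d}\mu(u)$ gives $\|\overline{C}_{\vec\eta}\xi\|_{\mathbf K}^2=\int_{\GU}\sum_{i}\sum_{x\in\CG^u}|\langle\xi(u),\pi(x)\eta_i(s(x))\rangle|^2\,\text{d}\mu(u)$. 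Hence $\overline{C}_{\vec\eta}\in\mathcal B(\mathbf H,\mathbf K)$, its adjoint is $\overline{D}_{\vec\eta}$, and both $\overline{S}_{\vec\eta}=\overline{C}_{\vec\eta}^*\overline{C}_{\vec\eta}$ and $\overline{G}_{\vec\eta}=\overline{C}_{\vec\eta}\overline{C}_{\vec\eta}^*$ are bounded positive operators. The only point needing care is that the adjoint is given by the displayed formula for $\overline{D}_{\vec\eta}$. I would check this by computing $\langle\overline{C}_{\vec\eta}\xi,c\rangle_{\mathbf K}$ and exchanging the fibrewise sums with the inner product, which is justified by Cauchy--Schwarz and Bessel-type bounds on a dense set of $c$ with finite fibre support, then extending by continuity.

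For (i): the frame inequality (\ref{eq:average multi-frame}) reads $A\|\xi\|^2\le\langle\overline{S}_{\vec\eta}\xi,\xi\rangle\le B\|\xi\|^2$. If this holds, $\overline{S}_{\vec\eta}\ge A\cdot I$, and a positive operator bounded below by $A>0$ is invertible (injective with closed range, and its range is dense because it is self-adjoint). Conversely, if $\overline{S}_{\vec\eta}$ is positive and invertible, then its spectrum lies in $[\|\overline{S}_{\vec\eta}^{-1}\|^{-1},\|\overline{S}_{\vec\eta}\|]$. Functional calculus then gives the lower bound $A=\|\overline{S}_{\vec\eta}^{-1}\|^{-1}$.

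For (ii): I first need to match the middle term of the Riesz inequality with $\|\overline{D}_{\vec\eta}c\|^2=\langle\overline{G}_{\vec\eta}c,c\rangle$. This requires reading the definition with $\eta_i$ attached to $c_i$; the definition as printed writes $\eta$, which I would treat as a typo for $\eta_i$. The outer terms are $\|c\|_{\mathbf K}^2$, again by the definition of $\nu$. The argument then repeats (i) with $\overline{G}_{\vec\eta}$ on $\mathbf K$. The upper Riesz bound is automatic, since $\|\overline{D}_{\vec\eta}\|=\|\overline{C}_{\vec\eta}\|\le\sqrt B$.

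The main obstacle is not operator-theoretic; it is the measurability and bookkeeping in identifying $\mathbf K$ with $\int^{\oplus}\ell^2(\CG^u\times\{1,\dots,n\})\,\text{d}\mu$. This identification is what makes $\overline{C}_{\vec\eta}\xi$ a measurable, square-integrable element of $\mathbf K$. I would handle it using the measurability axiom of a projective representation together with the fact that $\mu$ is invariant, so no modular function appears.
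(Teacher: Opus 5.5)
Your proposal is correct and follows essentially the paper's proof. You identify $\langle\overline{S}_{\vec\eta}\xi,\xi\rangle=\|\overline{C}_{\vec\eta}\xi\|^2$ with the middle term of the frame inequality, use $A I\le\overline{S}_{\vec\eta}$ to get invertibility and $\inf\sigma(\overline{S}_{\vec\eta})=\|\overline{S}_{\vec\eta}^{-1}\|^{-1}$ for the converse, and repeat the argument with $\overline{G}_{\vec\eta}$ for (ii). Your extra care about the adjoint formula for $\overline{D}_{\vec\eta}$, and about the $\eta$ versus $\eta_i$ typo in the average multi-Riesz definition, covers points the paper leaves implicit.
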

\begin{proof}
(\romannumeral 1) For any $\xi \in \int_{\mathcal{G}^{(0)}}^{\oplus}\mathcal{H}_u\text{d}\mu(u) ,$ we have
    \begin{align}
        \langle \overline{S}_{\vec\eta} \xi,\xi \rangle&=
        \langle \overline{C}_{\vec \eta} \xi,\overline{C}_{\vec \eta}\xi \rangle=\|\overline{C}_{\vec\eta}\xi\|^2_{L^2(\CG,  \nu)^n}\notag
        =\int_{\mathcal{G}^{(0)}}\sum_{i=1}^n\sum_{x\in \mathcal{G}^u}\vert \langle \xi(u) ,\pi(x)\eta_i(s(x))\rangle \vert ^2 \text{d}\mu.\notag
    \end{align}
    Therefore, if $\eta$ is an average multi-frame vector satisfying (\ref{eq:average multi-frame}), then 
    \begin{equation} \label{ieq:average multi-frame}
        0<A I \le \overline{S}_{\vec\eta} \le BI,
    \end{equation}
   where $I$ is the identity operator on $\int_{\GU}^\oplus \CH_u\text{d}\mu(u)$. It thus follows that $\overline{S}_{\vec\eta}$ is invertible in  $\mathcal{B}(\int_{\mathcal{G}^{(0)}}^{\oplus}\mathcal{H}_u\text{d}\mu(u)). $ 

    For the converse, suppose $\overline{S}_{\vec\eta}$ is invertible in  $\mathcal{B}(\int_{\mathcal{G}^{(0)}}^{\oplus}\mathcal{H}_u\text{d}\mu(u))$. Using the spectral theory of positive invertible operator,
     one has $1/\|\overline{S}_{\vec\eta}^{-1}\|=\inf\sigma(\overline{S}_{\vec\eta})>0$ and
    \[\frac{1}{\|\overline{S}_{\vec \eta}^{-1}\|}\cdot\int_{\mathcal{G}^{(0)}}\|\xi(u) \|^2 \text{d}\mu\leq\langle\overline{S}_{\vec\eta}\xi,\xi\rangle=\int_{\mathcal{G}^{(0)}}\sum_{i=1}^n\sum_{x\in \mathcal{G}^u}\vert \langle \xi(u) ,\pi(x)\eta_i(s(x))\rangle \vert ^2 \text{d}\mu \le \|\overline{S}_{\vec\eta}\|\cdot\int_{\mathcal{G}^{(0)}}\|\xi(u) \|^2 \text{d}\mu \]holds for all $\xi \in\int_{\mathcal{G}^{(0)}}^{\oplus}\mathcal{H}_u\text{d}\mu(u).$

(\romannumeral 2) The proof is similar to (\romannumeral 1) and we omit it.
\end{proof}
Parallel to \cite[Corollary 3.3]{ER}, we have the following result. 
\begin{prop} \label{Parseval}
 Let $(\pi, \mathcal{H}, \mu)$ be a unitary projective representation of $\mathcal{G}.$ Then the following hold:
\begin{enumerate} [label=(\roman*)]
    \item If $(\pi, \mathcal{H}, \mu)$ admits an average multi-frame vector, then it admits an average multi-Parseval frame vector.
    \item If $(\pi, \mathcal{H}, \mu)$ admits an average multi-Riesz vector, then it admits an average  multi-orthonormal vector.
\end{enumerate}
\end{prop}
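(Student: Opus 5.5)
The plan is to follow the standard canonical-dual argument from frame theory, as in \cite[Corollary 3.3]{ER}, but carried out fiberwise so that the averaging over $\mu$ is respected. Two facts about the average operators drive everything.

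First, $\overline{S}_{\vec\eta}=\int^{\oplus}_{\GU}S_{\vec\eta,u}\,\text{d}\mu(u)$ is decomposable. Moreover, each $S_{\vec\eta,u}$ intertwines the representation. For $y\in\CG$ with $s(y)=u$, $r(y)=v$ we have
\[\pi(y)S_{\vec\eta,u}\pi(y)^*=S_{\vec\eta,v}.\]
To verify this, expand $S_{\vec\eta,u}\xi=\sum_i\sum_{x\in\CG^u}\langle\xi,\pi(x)\eta_i(s(x))\rangle\pi(x)\eta_i(s(x))$. Then reindex $x\mapsto yx$, which is a bijection $\CG^u\to\CG^v$. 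Use $\pi(y)\pi(x)=\sigma(y,x)\pi(yx)$; the cocycle factors have modulus one and cancel in the rank-one terms.

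Second, by Proposition \ref{prop:average multi-frame and average frame operator}, $\overline{S}_{\vec\eta}$ is positive and invertible. So the continuous functional calculus gives $\overline{S}_{\vec\eta}^{-1/2}$. Since $\overline{S}_{\vec\eta}$ lies in the abelian-commutant von Neumann algebra of decomposable operators, $\overline{S}_{\vec\eta}^{-1/2}$ is again decomposable, $\overline{S}_{\vec\eta}^{-1/2}=\int^{\oplus}(S_{\vec\eta,u})^{-1/2}\text{d}\mu(u)$. Here, for $\mu$-a.e.\ $u$, $S_{\vec\eta,u}$ is bounded below by $A$, since $AI\le\overline{S}_{\vec\eta}$ forces the fiberwise inequality almost everywhere.

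For (i), I define $\tilde\eta_i(u)=S_{\vec\eta,u}^{-1/2}\eta_i(u)$, that is, $\tilde\eta_i=\overline{S}_{\vec\eta}^{-1/2}\eta_i$, and modify it on a null set if needed. The covariance relation passes through the functional calculus, giving $\pi(x)S_{\vec\eta,s(x)}^{-1/2}=S_{\vec\eta,r(x)}^{-1/2}\pi(x)$ for a.e.\ relevant $x$; this needs a null-set check using quasi-invariance/invariance of $\mu$. Hence
\[\langle\xi(u),\pi(x)\tilde\eta_i(s(x))\rangle=\langle S_{\vec\eta,u}^{-1/2}\xi(u),\pi(x)\eta_i(s(x))\rangle.\]
Summing, integrating and applying the identity $\langle\overline{S}_{\vec\eta}\zeta,\zeta\rangle=\|\overline{C}_{\vec\eta}\zeta\|^2$ from the proof of Proposition \ref{prop:average multi-frame and average frame operator} with $\zeta=\overline{S}_{\vec\eta}^{-1/2}\xi$ gives
\[\|\overline{C}_{\vec{\tilde\eta}}\xi\|^2=\langle\overline{S}_{\vec\eta}\overline{S}_{\vec\eta}^{-1/2}\xi,\overline{S}_{\vec\eta}^{-1/2}\xi\rangle=\|\xi\|^2,\]
so $\vec{\tilde\eta}$ is an average multi-Parseval frame vector.

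For (ii), I argue dually with the Gramian. Invertibility of $\overline{G}_{\vec\eta}$ (Proposition \ref{prop:average multi-frame and average frame operator}(ii)) implies that $\overline{C}_{\vec\eta}$ has closed range and that $\overline{D}_{\vec\eta}$ is injective with closed range. Therefore $\overline{S}_{\vec\eta}$ restricts to an invertible operator on the closed invariant subspace $\mathcal{K}=\operatorname{ran}\overline{D}_{\vec\eta}$, which is decomposable with fibers $\mathcal{K}_u=\overline{\operatorname{span}}\,\pi(\CG^u)\vec\eta$. I then set $\tilde\eta_i=(\overline{S}_{\vec\eta}|_{\mathcal{K}})^{-1/2}\eta_i$. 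By the same covariance, $\overline{D}_{\vec{\tilde\eta}}=(\overline{S}_{\vec\eta}|_{\mathcal K})^{-1/2}\overline{D}_{\vec\eta}$, so
\[\overline{G}_{\vec{\tilde\eta}}=\overline{C}_{\vec\eta}(\overline{S}_{\vec\eta}|_{\mathcal K})^{-1}\overline{D}_{\vec\eta}.\]
This equals the identity on $L^2(\CG,\nu)^n$: it is the orthogonal projection onto $\operatorname{ran}\overline{C}_{\vec\eta}$, and that range is everything because $\overline{G}_{\vec\eta}$ is invertible. Hence the synthesis operator of $\vec{\tilde\eta}$ is an isometry, and $\vec{\tilde\eta}$ is average multi-orthonormal.

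I expect the main obstacle to be the measure-theoretic bookkeeping: showing that the fiberwise inverse square roots are measurable fields, and that the covariance identity holds for $\nu$-almost every $x$ rather than merely almost every $u$. I would handle this by invariance of $\mu$, which makes $\nu$ invariant under inversion, so that a $\mu$-null set of bad units generates a $\nu$-null set of bad arrows. If necessary, I would redefine $\tilde\eta_i$ to be $0$ on the saturation of the bad set, which is $\mu$-null because $\CG$ is étale and countable-to-one.
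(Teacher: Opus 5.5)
Your proposal is correct and follows the same route as the paper. The paper also takes $\overline{S}_{\vec\eta}^{-1/2}\eta_i$ and uses the decomposition $\overline{S}_{\vec\eta}^{-1}=\int^{\oplus}S_{\vec\eta,u}^{-1}\,\text{d}\mu(u)$; it imports the covariance $\pi(x)S_{\vec\eta,s(x)}^{-1/2}=S_{\vec\eta,r(x)}^{-1/2}\pi(x)$ from the proof of \cite[Corollary 3.3]{ER} without stating it, whereas you verify it explicitly. For (ii) the paper only says ``similar'' and omits the argument; your restriction of $\overline{S}_{\vec\eta}$ to $\operatorname{ran}\overline{D}_{\vec\eta}$, followed by the check that $\overline{C}_{\vec\eta}(\overline{S}_{\vec\eta}|_{\mathcal K})^{-1}\overline{D}_{\vec\eta}=I$, is a valid way to fill that in.
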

\begin{proof}
(\romannumeral 1) Let $\vec\eta$ be an average multi-frame vector and $\overline{S}_{\vec\eta}$ the frame operator associated to $
\vec\eta.$ For $\xi \in \int_{\mathcal{G}^{(0)}}^{\oplus}\mathcal{H}_u\text{d}\mu(u),$ as $\overline{S}_{\vec \eta}$ is invertible by Proposition~\ref{prop:average multi-frame and average frame operator} and $\overline{S}_{\vec\eta}=\int_{\CG^{(0)}}^\oplus S_{\vec\eta,u}\text{d}\mu(u)$, we have $S_{\vec\eta,u} $ is invertible, a.e. $u\in \CG^{(0)}$ and $\overline{S}_{\vec\eta}^{-1}=\int_{\CG^{(0)}}^\oplus S_{\vec\eta,u}^{-1}\text{d}\mu(u)$(See \cite[Section 14.1]{KR}). We compute
\begin{align*}
   \int_{\CG^{(0)}}^\oplus \sum_{i=1}^n \sum_{x\in \CG^u} |\langle \xi(u),\pi(x)(\overline{S}_{\vec \eta}^{-1/2}\eta_i)(s(x)) \rangle |^2\text{d}\mu(u)&=\int_{\CG^{(0) }}^\oplus \sum_{i=1}^n\sum_{x\in \CG^u} |\langle \xi(u),\pi(x)S_{\vec \eta,s(x)}^{-1/2}\eta_i(s(x)) \rangle |^2\text{d}\mu(u)\\
&=\int_{\CG^{(0)}}^\oplus \| \xi(u)\|^2\text{d}\mu(u) =\|\xi\|^2,
\end{align*}
where the second equality follows from \cite[Remark 14.1.7]{KR} and the proof of \cite[Corollary 3.3]{ER}.
The computation shows that $\{\overline{S}_{\vec\eta}^{-1/2}\eta_i\}_{i=1}^n$ is the average multi-Parseval  frame vector.

(\romannumeral 2) The proof is similar to (\romannumeral 1) and we omit it.
\end{proof}

%We note the following remark.

%\begin{rmk}\label{rmk: multi version}
%Using the same proofs, Propositions \ref{afo} and \ref{Parseval} also hold for average multi-Bessel vectors $\vec\eta=\{\eta_i\}_{i=1}^n$ with respect to operators $\overline{D}_{\vec\eta}$, $\overline{S}_{\vec\eta}$, and $\overline{G}_{\vec\eta}$.
%\end{rmk}

\begin{defn}\label{sigma-positive}
    Let $\sigma$ to be a continuous 2-cocycle in $Z^2(\mathcal{G},\mathbb{T}).$ A function $f:\mathcal{G}\to \mathbb{C}$ is said to be $\sigma$-positive definite if
    $$\sum_{i,j=1}^n c_i \overline{c_j}\sigma(\gamma_j \gamma_i^{-1} ,\gamma_i)f( \gamma_j \gamma_i^{-1})\ge 0 \quad  \text{for all} \ u \in \mathcal{G}^{(0)},\gamma_i \in \mathcal{G}_{u},c_i \in \mathbb{C},i=1,\dots, n.$$
\end{defn}
\begin{prop} \label{positive}
Assume $f\in \ell_1(\mathcal{G}, \sigma).$ Then $f$ is a $\sigma$-positive definite if and only if
$(\bigoplus_{u\in\mathcal{G}^{(0)}} \pi_u)(f)$ is positive as an element in $C_r^*(\mathcal{G},\sigma).$ 
\end{prop}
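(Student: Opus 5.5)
The plan is to show that positivity of $\pi_r(f)=\bigoplus_{u}\pi_u(f)$ is equivalent to positivity of each $\pi_u(f)$ on $\ell^2(\CG_u)$, and that positivity of a single $\pi_u(f)$ is exactly the $\sigma$-positive definiteness inequality at the unit $u$. An operator on a direct sum is positive iff each summand is positive. So it suffices to prove, for each fixed $u\in\GU$, that $\pi_u(f)\geq 0$ on $\ell^2(\CG_u)$ if and only if
\[\sum_{i,j}c_i\overline{c_j}\,\sigma(\gamma_j\gamma_i^{-1},\gamma_i)f(\gamma_j\gamma_i^{-1})\geq 0\]
for all finite families $\gamma_i\in\CG_u$. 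Positivity in $C^*_r(\CG,\sigma)$ of the image of $f\in\ell_1(\CG,\sigma)$ is the same as positivity of the operator $\pi_r(f)$, since $\pi_r$ is faithful on the completion.

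The key computation is the matrix coefficient of $\pi_u(f)$ in the standard basis $\{\delta_\gamma\}_{\gamma\in\CG_u}$. By the convolution formula,
\[\pi_u(f)\delta_{\gamma}(\eta)=\sum_{\eta=yz}\sigma(y,z)f(y)\delta_\gamma(z)=\sigma(\eta\gamma^{-1},\gamma)f(\eta\gamma^{-1}).\]
Hence $\langle \pi_u(f)\delta_{\gamma_i},\delta_{\gamma_j}\rangle=\sigma(\gamma_j\gamma_i^{-1},\gamma_i)f(\gamma_j\gamma_i^{-1})$. This formula stays valid for $f\in\ell_1(\CG,\sigma)$ by $I$-norm continuity, since $\pi_u$ is $I$-norm bounded and evaluation at a point is $I$-norm continuous. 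For the finitely supported vector $c=\sum_i c_i\delta_{\gamma_i}$ we then get
\[\langle\pi_u(f)c,c\rangle=\sum_{i,j}c_i\overline{c_j}\,\sigma(\gamma_j\gamma_i^{-1},\gamma_i)f(\gamma_j\gamma_i^{-1}).\]
If the $\gamma_i$ are not distinct, we merge repeated terms. Both sides are sesquilinear in $c$, so this is harmless.

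For the forward direction, if $\pi_r(f)\geq 0$ then each $\langle\pi_u(f)c,c\rangle\geq0$, which gives $\sigma$-positive definiteness. For the converse, $\sigma$-positive definiteness gives $\langle\pi_u(f)c,c\rangle\geq 0$ on the dense subspace of finitely supported vectors. The operator $\pi_u(f)$ is bounded, so $c\mapsto\langle\pi_u(f)c,c\rangle$ is continuous and nonnegativity extends to all of $\ell^2(\CG_u)$.

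We then need self-adjointness, because a bounded operator on a complex Hilbert space with real quadratic form is self-adjoint by polarization. So $\pi_u(f)\geq0$ for every $u$, and therefore $\pi_r(f)\geq 0$.

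The main obstacle is bookkeeping rather than ideas. One point is checking that the cocycle factor in the matrix coefficient matches exactly the convention $\sigma(\gamma_j\gamma_i^{-1},\gamma_i)$ of Definition \ref{sigma-positive}, given the convolution $f*_\sigma g(x)=\sum_{x=yz}\sigma(y,z)f(y)g(z)$. The other is justifying that the pointwise formula for $\pi_u(f)\delta_\gamma$ holds for general $f\in\ell_1(\CG,\sigma)$, not just $C_c$. I would handle the latter by approximating $f$ in $I$-norm by $f_n\in C_c(\CG,\sigma)$ and passing to the limit on both sides.
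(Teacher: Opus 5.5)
Your proposal is correct and follows essentially the same route as the paper: you reduce positivity of $\bigoplus_u \pi_u(f)$ to positivity of each $\pi_u(f)$, compute $\langle \pi_u(f)\delta_{\gamma_i},\delta_{\gamma_j}\rangle=\sigma(\gamma_j\gamma_i^{-1},\gamma_i)f(\gamma_j\gamma_i^{-1})$, and pass from finitely supported vectors to all of $\ell^2(\mathcal{G}_u)$ (the paper phrases this last step via the compressions $P_F\pi_u(f)P_F$). Your $I$-norm approximation argument, which justifies the matrix-coefficient formula for $f\in\ell_1(\mathcal{G},\sigma)$ rather than only for $f\in C_c(\mathcal{G},\sigma)$, supplies a detail the paper leaves implicit.
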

\begin{proof}
Recall $C_r^*(\CG)=\overline{\ell_1(\CG, \sigma)}^{\|\cdot\|_r}$ and
    \[\langle(\bigoplus _{u\in\mathcal{G}^{(0)}} \pi_u)(f)(\xi_u)_u,(\xi_u)_u \rangle=\sum_{u\in \mathcal{G}^{(0)}}\langle\pi_u(f)\xi_u,\xi_u \rangle,\] for $(\xi_u)_u \in \bigoplus_{u\in\mathcal{G}^{(0)}}\ell^2(\mathcal{G}_u).$ Thus, $(\bigoplus _{u\in\mathcal{G}^{(0)}} \pi_u)(f)$ is positive if and only if $\pi_u(f)$ is positive for all $u \in \mathcal{G}^{(0)}.$ 

    For each $u \in \mathcal{G}^{(0)},$ let $F$ be a finite subset of $ \mathcal{G}_u$. Denote by $P_{F}$ the orthogonal projection of $\ell^2(\mathcal{G}_u)$ onto $\ell^2(F)$. Then  note $\pi_u(f)$ is positive if and only if $P_{F}\pi_{u}(f)P_{F}$ is positive for all finite $F\subset \CG_u.$ Write $F=\{\gamma_1,\dots, \gamma_n\}$ explicitly.
  Then for $\xi_u\in \ell^2(\mathcal{G}_u)$, one has $P_{F}\xi_u=\sum_{i=1}^nc_i\delta_{\gamma_i}.$ This implies
    \begin{align}  \langle\pi_u(f)\xi_u,\xi_u\rangle&=\sum_{i,j=1}^nc_i \overline{c_j}\langle\pi_u(f)\delta_{\gamma_i},\delta_{\gamma_j}\rangle=\sum_{i,j=1}^nc_i \overline{c_j}\langle\sum_{\alpha\in \mathcal{G}_{r(\gamma_i)}}\sigma(\alpha,\gamma_i)f(\alpha)\delta_{\alpha\gamma_i},\delta_{\gamma_j}\rangle \notag\\
        &=\sum_{i,j=1}^nc_i \overline{c_j}\sigma(\gamma_j\gamma_i^{-1},\gamma_i)f(\gamma_j \gamma_i^{-1}). \notag
    \end{align}
 Therefore,  $\pi_u(f)$ is positive for all $u \in \mathcal{G}^{(0)}$ is equivalent to $f$ is $\sigma$-positive definite.
\end{proof}
%In the rest of this section, we give a method to construct a left Hilbert $C_r^*(\CG,\sigma)$-module $\CE$ from $\sigma$-projective representation $(\pi,\mathcal{H},\mu)$ of groupoid $\CG$.
%We can first find some dense subspace $\mathcal{H}_0$ of $\mathcal{H}$ and 
 %define an action of $C_c(\CG,\sigma)$ on $\mathcal{H}_0$ and $A_0$-valued function on $\mathcal{H}_0 \times \mathcal{H}_0$ which can be extended to $C^*_r(\CG,\sigma)$-valued inner product.

\section{Full Gabor frames and Full Riesz sequences for Delone sets}\label{sec: tiling groupoid}
In this section, we establish the ``if'' part of Theorem \ref{thm: main 1}, i.e., the converse direction of the Balian-Low theorem for tiling groupoids $R_\Lambda$. We will always assume $\Lambda$ is a FLC, aperiodic, and repetitive Delone set in $\R^{2d}$. Then recall that its tiling groupoid $R_\Lambda$ is locally compact Hausdorff $\acute{e}$tale,  minimal principal, ample with the compact metrizable unit space $\Omega_0(\Lambda)$ by Remark \ref{rmk: tiling groupoid well defined}. 

We define the symplectic $2$-cocycle $\sigma$ on $\R^{2d}$ by $\sigma(z_1,z_2)=e^{-2\pi i x_1 \omega_2}$ for $z=(x_1,\omega_1),z_2=(x_2,\omega_2 )\in \R^{2d}$. As usual, we define a continuous $2$-cocycle  $ \sigma_{\Lambda}$ in $Z^2(R_{\Lambda},\mathbb{T})$ associated to $\sigma$  by
 \[\sigma_{\Lambda}((T,T-z_1),(T',T'-z_2))=\sigma(z_1,z_2).\]

 Let $(\{\CH_T\}_{T\in \Omega_0(\Lambda)},\mu,\pi_{\Lambda})$  be a 
 $\sigma_{\Lambda}$-projective unitary representation  of $R_{\Lambda},$ where $\mu$ is an invariant measure on $\Omega_0(\Lambda)$, all $\CH_T=L^2(\mathbb{R}^d)$, and $\pi_{\Lambda}=\{\pi_{(T,T-z)}\}_{(T,T-z)\in R_{\Lambda}}$ is given by 
 \begin{equation}\label{projetive representation of tiling groupoid case}
     \pi_{(T,T-z)}f=\pi(z)f, f \in L^2(\mathbb{R}^{d}).
 \end{equation}
Here $\pi(z)$ is the time-frequency shift on $L^2(\mathbb{R}^{d})$. Recall that $C_c(R_{\Lambda},\sigma_{\Lambda})$ is a $*$-algebra with the convolution product 
 \[(f*_{\sigma_\Lambda} g)(T,T-z)=\sum_{\omega \in T}f(T,T-\omega)g(T-\omega,T-z)\sigma(\omega,z-\omega).\]
 and the involution 
 \[f^*(T,T-z)=\overline{f(T-z,T)\sigma(z,-z)}.\]
 The completion of $C_c(R_{\Lambda},\sigma_{\Lambda})$ under the I-norm, i.e., $\ell_{1}(R_{\Lambda}, \sigma_\Lambda):=\overline{C_c(R_{\Lambda},\sigma_{\Lambda})}^{\| \cdot \|_I}$,  is a dense $*$-subalgebra in $C_r^*(R_{\Lambda},\sigma_{\Lambda})$. 
 
 This $\sigma_{\Lambda}$-projective unitary representation of $R_{\Lambda}$ above induces a $*$-representation  of $C_c(R_{\Lambda},\sigma)$ on $\CH=L^2(\Omega_0(\Lambda),L^2(\mathbb{R}^d),\mu)$ by \cite[Proposition II.1.7]{R}. We still denote this representation by $\pi_{\Lambda}$, which is defined by 
 \[\pi_{\Lambda}(f)\Psi(T)=\sum_{z \in T } f(T,T-z)\pi(z)\Psi(T-z),f \in C_c(R_{\Lambda},\sigma),\Psi \in \mathcal{H}, T\in \Omega_0(\Lambda).\] 

We first record a useful lemma that will be applied in  Proposition \ref{prop: pre module} and \ref{equ}.

\begin{lem} \label{esitimate of STFT}
    Let $ \Lambda$ be the Delone set in $\R^{2d}.$ If  $g\in M^1(\R^d) $ and $f\in M^p(\R^d),$ then for every $\epsilon >0,$ there exists $r>0$ such that for all $T\in \Omega(\Lambda)$ we have 
    \begin{equation}
        \sum_{z\in T\setminus B(0,r) }|V_gf(z)|^p  < \epsilon.
    \end{equation}
\end{lem}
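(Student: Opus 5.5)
The plan is to reduce everything to a uniform tail estimate for $|V_gf|^p$ over the lattice $\Z^{2d}$, and then compare sums over any $T\in\Omega(\Lambda)$ with sums of a local maximal function over unit cubes. Every $T\in\Omega(\Lambda)$ is a limit of translates $\Lambda-z$ in the local-patch topology, and $\operatorname{rel}(\Lambda-z)=\operatorname{rel}(\Lambda)$. Hence each such $T$ is relatively separated with $\operatorname{rel}(T)\le N:=\operatorname{rel}(\Lambda)$, since any cube containing more than $N$ points of $T$ would witness the same configuration in some translate of $\Lambda$. This uniform constant $N$ is the only input from $\Lambda$ that we will use.

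Next we use the fact that $V_gf$ lies in a Wiener amalgam space. For $g\in M^1$ and $f\in M^p$ one has $V_gf\in W(C_0,\ell^p)(\R^{2d})$ (the $p$-version of \cite[Theorem 12.1.11]{G01}, or the standard convolution relation $|V_gf|\le |V_{g_0}f|*|V_gg_0|$ with $V_gg_0\in W(C_0,\ell^1)$ and $V_{g_0}f\in L^p$). We work with the local maxima $a_k:=\|V_gf\cdot T_k\chi_{[0,1]^{2d}}\|_\infty$ for $k\in\Z^{2d}$, so that $\sum_k a_k^p<\infty$. For $1\le p<\infty$, given $\epsilon>0$ we choose $R$ with $\sum_{|k|\ge R}a_k^p<\epsilon/N'$, where $N'$ bounds the number of points of $T$ in a unit cube $k+[0,1]^{2d}$. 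Such a bound holds with $N'\le 2^{2d}N$, because a unit cube is covered by boundedly many unit cubes of the type $C_1(z)$.

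The estimate itself follows by bucketing. Take $r=R+\sqrt{2d}$. Each $z\in T\setminus B(0,r)$ lies in some cube $k+[0,1]^{2d}$ with $|k|\ge R$, so
\[\sum_{z\in T\setminus B(0,r)}|V_gf(z)|^p\le \sum_{|k|\ge R}\#\bigl(T\cap(k+[0,1]^{2d})\bigr)\,a_k^p\le N'\sum_{|k|\ge R}a_k^p<\epsilon,\]
and this bound is uniform in $T$.

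The main obstacle is justifying the amalgam membership $V_gf\in W(L^\infty,\ell^p)$ for general $f\in M^p$, since the paper only cites the case $p=1$ (with $f,g\in M^1$). If that citation is insufficient, I would prove it via the pointwise inequality $|V_gf(z)|\le \|g_0\|_2^{-2}(|V_{g_0}f|*|V_gg_0|)(z)$ for a Gaussian $g_0$. One then uses the amalgam convolution bound $L^p*W(L^\infty,\ell^1)\subset W(L^\infty,\ell^p)$, where $V_gg_0\in W(C_0,\ell^1)$ because $g,g_0\in M^1$. The case $p=\infty$ only makes sense with a supremum instead of a sum, so I would restrict to $1\le p<\infty$, which is the range where the statement is used (mainly $p=1,2$).
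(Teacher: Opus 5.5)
Your proof is correct and follows essentially the same route as the paper. The paper also uses the uniform bound $\operatorname{rel}(T)\le\operatorname{rel}(\Lambda)$ for all $T\in\Omega(\Lambda)$ and the membership $V_gf\in W(L^{\infty},\ell^p)(\R^{2d})$; it takes the latter directly from \cite[Theorem 12.2.1]{G01} for general $p$, so your convolution fallback is not needed. It then buckets the points of $T$ into unit cubes, so that the tail is bounded by $\operatorname{rel}(\Lambda)$ times the amalgam tail. Your restriction to $1\le p<\infty$ and your offset $r=R+\sqrt{2d}$ are, if anything, slightly more careful than the paper's write-up.
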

\begin{proof}
By Remark~\ref{Delone set is relative separated} we know $\Lambda$ is relative separated. Moreover $\operatorname{rel(T)}=\operatorname{rel(\Lambda)}$ for all $T\in \Omega(\Lambda).$ According to \cite[Theorem~12.2.1]{G01} if $g\in M^1(\R^d)$ and $f\in M^p(\R^d),$ then $V_gf\in W(L^{\infty},l^p)(\R^{2d}).$ Hence for every $\epsilon ,$ there exists $r>0$ such that $\sum_{k\in \Z^{2d}\setminus B(0,r-1)}\|V_gf\cdot T_k \chi_{[0,1]^{2d}}\|^p_{\infty} < \epsilon/\operatorname{rel(\Lambda)}.$ For $T \in \Omega(\Lambda)$ and $z\in T\cap [k,k+1]^{2d} ,$ one has $|V_gf(z)|\le \|V_gf\cdot T_k \chi_{[0,1]^{2d}}\|_{\infty}$ and $|T\cap [k,k+1]^{2d}|\le \operatorname{rel(\Lambda)}$.  Thus, for all $T \in \Omega(\Lambda)$  we have 
\[\sum_{z\in T\setminus B(0,r) }|V_gf(z)|^p =\sum_{k\in \Z^{2d}}\sum_{z\in    T\cap[k,k+1]^{2d} \setminus B(0,r)}|V_gf(z)|^p \le \operatorname{rel(\Lambda)}\sum_{k\in \Z^{2d}\setminus B(0,r)}\|V_gf\cdot T_k \chi_{[0,1]^{2d}}\|^p_{\infty} < \epsilon. \]
\end{proof}

\subsection{Hilbert $C^*$-module $\CE$ of $C_r^*(R_\Lambda,\sigma_{\Lambda})$}\label{subsec1}
In this subsection, we aims to construct a left Hilbert $C_r^*(R_\Lambda,\sigma_{\Lambda})$-module associated to the $\sigma_{\Lambda}$-projective representation $\pi_{\Lambda}$. To this end,
we first find a dense subspace $\mathcal{H}_0$ of $\mathcal{H}$ and then
    define an action of $C_c(R_{\Lambda},\sigma_{\Lambda})$ on $\mathcal{H}_0$ and $\ell_1(R_{\Lambda}, \sigma_\Lambda)$-valued function on $\mathcal{H}_0 \times \mathcal{H}_0$, which can be extended to $C^*_r(R_{\Lambda},\sigma_{\Lambda})$-valued inner product. Denote by $\CS(\R^d)$ the  Schwartz space.
    
    %we take $$A_0=L^{1,\sigma_{\Lambda}}_I(R_{\Lambda}),A_{00}=C_c(R_{\Lambda},\sigma_{\Lambda}),$$ and
    %$\mathcal{H}_0=\{g\in C( \Omega _{\text{trans}}, M^1(\mathbb{R}^d)) \ |  \sup_{T\in \Omega_{\text{trans}}}\sum_{z\in T}|\langle \Psi(T),\pi(z)g(T-z)\rangle| < \infty \  \ \text{for all}\ \Psi\in C( \Omega _{\text{trans}}, M^1(\mathbb{R}^d))\}.$
    %$$\mathcal{H}_0=C(\Omega_\punc)\odot M^1(\mathbb{R}^d)).$$
     %As we mentioned above, the set $A_{00}$ and $ A_0$ is dense in $C_r^*(R_{\Lambda},\sigma_{\Lambda}).$ 
     %Meanwhile,  we have that 
      %\begin{align}
       % \sup_{T \in \Omega_{\text{trans}}}\sum_{z\in T}|\langle \Psi(T),\pi(z)gh(T-z) \rangle|&\le \sup_{T \in \Omega_{\text{trans}}}\sum_{z\in T}|\langle \Psi(T),\pi(z)g \rangle|\notag\\
       % &=\sup_{T \in \Omega_{\text{trans}}}\sum_{z\in T}|V_g\Psi(T)(z)|\notag\\
        %&\le \text{rel}(\Lambda)\sup_{T \in \Omega_{\text{trans}}}\| V_g\Psi(T)\|_{W(L^{\infty},L^1)}\notag\\
        %&\le \text{rel}(\Lambda) \|g\|_{M^1(\mathbb{R}^d)}\sup_{T \in %\Omega_{\text{trans}}}\| \Psi(T)\|_{M^1(\mathbb{R}^d)}<\infty,\notag
    %\end{align}
     %therefore, $gh$ is in $\mathcal{H}_0.$ 

    \begin{prop}\label{prop: pre module}
         Let $R_{\Lambda}$ be a tiling groupoid of a FLC, repetitive and aperiodic Delone set $\Lambda \subset \mathbb{R}^{2d}$ with the $2$-cocycle $\sigma_{\Lambda}$ defined above. Let $(\{\mathcal{H}_T\}_{T\in \Omega_0(\Lambda)},\mu,\pi_{\Lambda})$ be the $\sigma_{\Lambda}$-projective unitary representation of $R_{\Lambda}$ above. Denote $\mathcal{H}_0=C(\Omega_0(\Lambda))\odot \CS(\mathbb{R}^d).$ Then  $\mathcal{H}_0$ is a dense subspace of $\mathcal{H}=L^2(\Omega_0(\Lambda),L^2(\mathbb{R}^d),\mu)$ satisfying the following.
         \begin{enumerate}[label=(\roman*)]
         \item\label{pre module 1} For every $\Psi,\Phi \in C(\Omega_0(\Lambda))\odot \CS(\mathbb{R}^d),$ the map
         \begin{equation}\label{inner}
             R_{\Lambda}\ni(T,T-z) \mapsto {_\bullet \langle} \Psi, \Phi \rangle(T,T-z):= \langle \Psi(T),\pi(z)\Phi(T-z) \rangle_{L^2(\mathbb{R}^d)} \in \mathbb{C}
         \end{equation}
         defines an element in $\ell_1(R_{\Lambda},\sigma_\Lambda).$
        \item\label{pre module 2} For $f\in C_c(R_{\Lambda},\sigma_{\Lambda})$ and $ \Psi\in C(\Omega_0(\Lambda))\odot \CS(\mathbb{R}^d),$ the element $f\cdot \Psi$ defined by
        \begin{equation}\label{action}
            f \cdot \Psi (T):= \sum_{z \in T } f(T,T-z)\pi(z)\Psi(T-z)
        \end{equation}
         is in $C(\Omega_0(\Lambda) )\odot \CS(\mathbb{R}^d).$
         \end{enumerate}
    \end{prop}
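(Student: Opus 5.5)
The plan is to verify the three claims (density, \ref{pre module 1}, \ref{pre module 2}) separately. Each reduces by linearity to elementary tensors $\Psi=\varphi\otimes g$ and $\Phi=\psi\otimes h$ with $\varphi,\psi\in C(\Omega_0(\Lambda))$ and $g,h\in\CS(\R^d)$. Throughout I will use that $R_\Lambda$ is ample with the basis of clopen bisections $B_{p,V}=\{(T,T-p):T\in V\}$ (Remark~\ref{rmk: basic info on the R_varphi}).

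\textbf{Density.} Simple functions $\sum_k \mathbf 1_{E_k}\otimes g_k$ are dense in $\CH=L^2(\Omega_0(\Lambda),L^2(\R^d),\mu)$. Since $C(\Omega_0(\Lambda))$ is dense in $L^2(\mu)$ and $\CS(\R^d)$ is dense in $L^2(\R^d)$, each such term can be approximated by an elementary tensor in $\mathcal H_0$, with an obvious norm estimate.

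\textbf{Part \ref{pre module 2}.} Since $f\in C_c(R_\Lambda,\sigma_\Lambda)$ has compact support, I can chop the support into finitely many disjoint clopen bisections. This gives $f=\sum_{k=1}^m h_k\mathbf 1_{B_{p_k,V_k}}$ with $h_k\in C(\Omega_0(\Lambda))$, viewed as functions of the range $T$. Then
\[
f\cdot\Psi(T)=\sum_{k=1}^m h_k(T)\mathbf 1_{V_k}(T)\,\varphi(T-p_k)\,\pi(p_k)g .
\]
Each $T\mapsto \mathbf 1_{V_k}(T)\varphi(T-p_k)$ is continuous on $\Omega_0(\Lambda)$, because $V_k$ is clopen and $T\mapsto T-p_k$ is continuous on $V_k$. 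Moreover $\pi(p_k)g\in\CS(\R^d)$. Hence $f\cdot\Psi$ is a finite sum of elementary tensors, so it lies in $C(\Omega_0(\Lambda))\odot\CS(\R^d)$.

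\textbf{Part \ref{pre module 1}.} For elementary tensors,
\[
{_\bullet\langle}\Psi,\Phi\rangle(T,T-z)=\varphi(T)\overline{\psi(T-z)}\,V_hg(z).
\]
This is continuous on $R_\Lambda$, since on each basic bisection $B_{p,V}$ it equals $T\mapsto\varphi(T)\overline{\psi(T-p)}V_hg(p)$. It is bounded by $\|\varphi\|_\infty\|\psi\|_\infty|V_hg(z)|$.

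I then truncate with a continuous cutoff $\chi_r\in C_c(\R^{2d})$, where $0\le\chi_r\le1$ and $\chi_r=1$ on $B(0,r)$. The truncated function $F_r(T,T-z)=\chi_r(z){_\bullet\langle}\Psi,\Phi\rangle(T,T-z)$ is continuous. It is supported in $\{(T,T-z): z\in T,\ |z|\le r+1\}$, which is compact by FLC and compactness of $\Omega_0(\Lambda)$. So $F_r\in C_c(R_\Lambda,\sigma_\Lambda)$.

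It remains to show that $F_r$ is Cauchy in the $I$-norm, with limit ${_\bullet\langle}\Psi,\Phi\rangle$; this is the main obstacle, since it needs \emph{uniform} tail control over all fibers. There are two fiber sums to control.
\begin{itemize}
\item On the range fiber at $T$ the sum is over $z\in T$. By Lemma~\ref{esitimate of STFT} with $p=1$ (note $g,h\in M^1$), the tail $\sum_{z\in T\setminus B(0,r)}|V_hg(z)|$ is $<\eps$ uniformly in $T\in\Omega(\Lambda)$.
\item On the source fiber at $S$ the elements are $(S+w,S)$ with $w\in -S$, so the sum becomes $\sum_{w'\in S}|V_hg(-w')|$. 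Since $|V_hg(-w')|=|V_gh(w')|$, Lemma~\ref{esitimate of STFT} applied with the roles of $g$ and $h$ swapped again gives a uniform tail bound.
\end{itemize}
Together these give $\|{_\bullet\langle}\Psi,\Phi\rangle-F_r\|_I\le\|\varphi\|_\infty\|\psi\|_\infty\,\eps$ for $r$ large. This yields ${_\bullet\langle}\Psi,\Phi\rangle\in\ell_1(R_\Lambda,\sigma_\Lambda)$, identified as a function in $C_0(R_\Lambda)$ as in Proposition~\ref{prop: conditional expectation}.
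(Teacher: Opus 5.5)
Your proposal is correct and follows essentially the same route as the paper. The paper proves density by approximating indicators with continuous functions and $L^2$ functions with Schwartz ones, proves part (ii) by decomposing $f$ over clopen bisections $B_{p,V}$, and proves part (i) by truncating ${_\bullet\langle}\Psi,\Phi\rangle$ and bounding the range-fiber and source-fiber tails uniformly with the tail lemma for $W(L^\infty,\ell^1)$. The only difference is cosmetic: you use a continuous cutoff $\chi_r(z)$ and assert that $\{(T,T-z): |z|\le r+1\}$ is compact, whereas the paper multiplies by $1_K$ for the compact open set $K=\bigcup_i\bigcup_{p\in F_{T_i}}B_{p,N_i}$, built from the flat-transversal condition, and that construction is precisely what justifies your compactness claim.
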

\begin{proof}
Let $g\in \CS(\mathbb{R}^d)$. For any Borel function $f: \Omega_0(\Lambda)\to \C$, we define a $\CS(\mathbb{R}^d)$-valued function  $\Phi_{f, g}:\Omega_0(\Lambda) \to \CS(\mathbb{R}^d)$ by $\Phi_{f, g}(T)=f(T)\cdot g$.

For every Borel set $A$ in $\Omega_0(\Lambda)$ and $\epsilon >0,$ one can choose open set $O$ and compact set $K$ such that $K\subseteq A\subseteq O$ and $\mu(O \setminus K)\le \epsilon.$ By Urysohn's lemma, there exists a continuous function $h: \Omega_0(\Lambda) \mapsto [0,1]$ satisfying that $h|_K\equiv 1$ and $\supp(h)\subseteq  O$.  Note that the function $\Phi_{h, g}$ is in $C( \Omega_0(\Lambda), \mathcal{S}(\mathbb{R}^d))$. Then by the definition, one has
\[\|\Phi_{1_A, g}-\Phi_{h,g}\|_{L^2(\Omega_0(\Lambda),L^2(\mathbb{R}^d),\mu)}\le \mu (O\setminus K)\|g\|_{L^2(\mathbb{R}^d)}\le \epsilon\|g\|_{L^2(\mathbb{R}^d)}\] Moreover, using the fact that $\CS(\mathbb{R}^d)$ is dense in $L^2(\mathbb{R}^d),$  we obtain that $\mathcal{H}_0$ is a dense subspace of  $\mathcal{H}.$ 

We now establish \ref{pre module 1}. Let $f,g \in \CS(\mathbb{R}^d)$ and $h_1, h_2\in C(\Omega_0(\Lambda))$ and $\Psi,\Phi \in C(\Omega_0(\Lambda))\odot \CS(\mathbb{R}^d)$ be generating functions defined by $\Psi(T)=h_1(T)\cdot f$  and $\Phi(T)=h_2(T)\cdot g$, respectively. It is direct to see $_\bullet \langle \Psi, \Phi \rangle$ as defined in the statement is a continuous function on $R_{\Lambda}$ as ${_\bullet \langle} \Psi, \Phi \rangle(T,T-z)=h_1(T)\overline{h_2(T-z)}\langle f,\pi(z)g\rangle$.
     Since $f,g \in \CS(\mathbb{R}^d)$, Lemma~\ref{esitimate of STFT} shows that for every $\epsilon>0$, there exists $r>0$ such that for all $T\in\Omega_0(\Lambda)$ we have
     \[\max\{\sum\limits_{z\in T\setminus B(0,r)}|\langle f,\pi(z)g \rangle |,\sum\limits_{z\in T\setminus B(0,r)}|\langle g,\pi(z)f \rangle |\}<\frac{\epsilon}{\|h_1\|_{\infty}\|h_2\|_{\infty}}.\]
     Therefore we have that 
     \[\max\{\sup_{T\in \Omega_0(\Lambda)}\sum_{z\in T \setminus  B(0,r)}\vert _\bullet \langle \Psi, \Phi \rangle(T,T-z) \vert,\sup_{T\in \Omega_0(\Lambda)}\sum_{z\in T \setminus  B(0,r)}\vert _\bullet \langle \Psi, \Phi \rangle(T-z,T) \vert\}<\epsilon.\]
     For the $r$ and each $T\in \Omega_0(\Lambda)$, define $F_{T}:=\{ p\in B(0,r) : T-p \in \Omega_0(\Lambda)\}.$ Since $\Omega_0(\Lambda)$ is a flat Cantor transversal of $\Omega(\Lambda)$, Definition \ref{defn: flat cantor transversal} \ref{cantor transversal etale} implies that  there exists a clopen neighborhood $N_T \ni T$ such that $F_T=F_S$ for all $S\in N_T.$ Shrinking each $N_T$ if necessary, one may assume that $B_{p, N_T}=\{(S, S-p): S\in N_T\}$ is a compact open bisection for each $p\in F_T$. It then follows from compactness that there exists finite $\{N_i\}_{i=1}^n\subset \{N_T\}_{T\in \Omega_0(\Lambda)}$ forming an clopen  cover of $\Omega_0(\Lambda).$ Choosing $T_i\in N_i$ for each $i,$ and define $K:=\bigcup_{i=1}^n \bigcup_{p\in F_{T_i}}B_{p,N_i}$.  Then $K$ is a compact open subset in $R_{\Lambda}$ and thus by the choice of $r$, one has 
     \begin{equation}\label{equi l1}
         \max\{\sup_{T\in \Omega_0(\Lambda)}\sum_{ (T,T-z)\in K^c}\vert _\bullet \langle \Psi, \Phi \rangle(T,T-z) \vert,\sup_{T\in \Omega_0(\Lambda)}\sum_{ (T-z,T)\in K^c}\vert _\bullet \langle \Psi, \Phi \rangle(T-z,T) \vert \}<\epsilon.
     \end{equation}
     Now define the restriction function $H=_\bullet\langle \Psi, \Phi \rangle \cdot 1_K$, which belongs to $C_c(R_{\Lambda},\sigma)$ because $K$ is clopen. In addition, by (\ref{equi l1}), one has $\|_\bullet \langle \Psi, \Phi \rangle-H\|_I < \epsilon.$ This implies $_\bullet \langle \Psi, \Phi \rangle \in \ell^{1}(R_{\Lambda},\sigma_\Lambda).$ Finally, since such functions $\Phi$ and $\Psi$ linearly generate $C(\Omega_0(\Lambda))\odot \CS(\mathbb{R}^d)$, one obtains that \ref{pre module 1} holds.
      
    For \ref{pre module 2}, first let $f\in C_c(R_{\Lambda},\sigma_{\Lambda})$ supported on compact bisection $B_{p,U}=\{(T,T-p):T \in U\},$ where $U$ is a clopen set in $\Omega_0(\Lambda)$ and $\Psi \in C(\Omega_0(\Lambda))\odot \CS(\mathbb{R}^d) $ a generating function, i.e., $\Psi (T)=h(T)\cdot g$ for some $h\in C(\Omega_0(\Lambda))$ and $g\in \CS(\mathbb{R}^d).$  Then for each $T \in U,$ because $f$ is supported on $B_{p, U}$, one has
    \[f\cdot \Psi(T)=\sum_{z\in T}f(T, T-z)\pi(z)\Psi(T-z)=\sum_{z\in T}f(T, T-z)\pi(z)h(T-z)g=f(T, T-p)\pi(p)h(T-p)g.\]
    Therefore, $f\cdot \Psi(T)=h'(T)\pi (p)g$, where $h'(T):=f(T,T-p)h(T-p)$ defines an function in $ C(\Omega_0(\Lambda))$. On the other hand, for $T\notin U$, one obtains $f\cdot \Psi(T)=0$. Then combining these two cases and using facts that $\CS(\R^d)$ is invariant under $\pi$ and $U$ is a clopen set, one has $f\cdot\Psi \in C(\Omega_0(\Lambda)) \odot \CS(\mathbb{R}^d).$  Finally, note that $C_c(R_\Lambda, \sigma_\Lambda)$ is linearly generated by functions supported on clopen bisections by \cite[Lemma 3.1.3]{Sims} and $C(\Omega_0(\Lambda))\odot \CS(\mathbb{R}^d)$ is also linearly generated by functions $\Phi$ of the above form $\Phi(T)=h(T)g$, the claim \ref{pre module 2} holds.    
    \end{proof}

%    \textcolor{blue}{XM: Prop 6.1 above is to construct admissible pair $(A_0, H_0)$ in the sense of \cite[Definition 4.1]{BEV}}

Then it follows from Proposition \ref{prop: pre module} that one may define an inner product $C_c(R_\Lambda, \sigma_\Lambda)$-module. This will then provide a desired $C_r^*(R_\Lambda, \sigma_\Lambda)$-module as demonstrated in the following.

\begin{prop} \label{module}
   There exist a Hilbert $C_r^*(R_{\Lambda},\sigma_{\Lambda})$-module $\mathcal{E}$ such that 
   $\mathcal{E}$ is the completion of $\mathcal{H}_0$ and the action is extended by (\ref{action}) and $C_r^*(R_{\Lambda},\sigma_{\Lambda})$-valued inner product  is extended by (\ref{inner}). 
\end{prop}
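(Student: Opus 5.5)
The plan is to verify that $\mathcal{H}_0=C(\Omega_0(\Lambda))\odot\CS(\R^d)$, with the action (\ref{action}) and the pairing (\ref{inner}), is a pre-inner product $A_0$-module with $A_0=C_c(R_\Lambda,\sigma_\Lambda)$ and $A=C^*_r(R_\Lambda,\sigma_\Lambda)$. Proposition \ref{prop: completion} then yields $\mathcal{E}$. By Proposition \ref{prop: pre module}\ref{pre module 1} the pairing lands in $\ell_1(R_\Lambda,\sigma_\Lambda)\subset A$. By Proposition \ref{prop: pre module}\ref{pre module 2}, $A_0$ maps $\mathcal{H}_0$ into itself. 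Note that $A_0$ is only a dense $*$-subalgebra here and we do not require the values to lie in $A_0$; this is exactly why the slightly generalized completion result was recorded. So what remains is to check conditions \ref{Hilbert m1}--\ref{Hilbert m4} of Definition \ref{defn: Hilbert Module}.

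First, the action is a genuine module action. For $f,g\in A_0$ I will verify $f\cdot(g\cdot\Psi)=(f*_{\sigma_\Lambda}g)\cdot\Psi$ by expanding the sums, reindexing $z=\omega+z'$, and using $\pi(\omega)\pi(z')=\sigma(\omega,z')\pi(\omega+z')$ from (\ref{eq:commutation relation}). Condition \ref{Hilbert m1} is immediate from (\ref{inner}).

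For \ref{Hilbert m2}, I will compute ${_\bullet\langle} f\cdot\Psi,\Phi\rangle(T,T-z)=\sum_{\omega\in T}f(T,T-\omega)\langle\pi(\omega)\Psi(T-\omega),\pi(z)\Phi(T-z)\rangle$. Moving $\pi(\omega)^*=\overline{\sigma(\omega,-\omega)}\pi(-\omega)$ to the other side and using the cocycle identity gives $\sigma(\omega,z-\omega)\,{_\bullet\langle}\Psi,\Phi\rangle(T-\omega,T-z)$ inside the sum, which is exactly the convolution formula for $f*_{\sigma_\Lambda}{_\bullet\langle}\Psi,\Phi\rangle$. The convolution of a $C_c$ element with an $\ell_1$ element is the $\ell_1$ product, and it agrees with the product in $A$ by continuity. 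Condition \ref{Hilbert m3} follows from the same kind of computation with the involution formula $f^*(T,T-z)=\overline{f(T-z,T)\sigma(z,-z)}$.

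The main step is positivity, \ref{Hilbert m4}. By Proposition \ref{positive} it suffices to show that $F={_\bullet\langle}\Psi,\Psi\rangle\in\ell_1$ is $\sigma_\Lambda$-positive definite. Take $u=S$ and $\gamma_i=(S-z_i,S)\in\mathcal{G}_S$, so that $\gamma_j\gamma_i^{-1}=(S-z_j,S-z_i)$. I expect the sum to rearrange to
\[
\sum_{i,j}c_i\overline{c_j}\,\sigma\text{-factor}\cdot\langle\Psi(S-z_j),\pi(z_i-z_j)\Psi(S-z_i)\rangle
=\Bigl\|\sum_i \overline{c_i}\,\theta_i\,\pi(-z_i)\Psi(S-z_i)\Bigr\|^2\geq0,
\]
with suitable unimodular phases $\theta_i$. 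The hardest part will be bookkeeping the cocycle factors so that the phases match, and I will fix the conventions using (\ref{eq:commutation relation}). If the phases resist, I would fall back to showing directly that $\pi_S(F)$ is a Gram-type operator $C^*C$, where $C\colon\ell^2(\mathcal{G}_S)\to L^2(\R^d)$ is the synthesis map for the family $\pi(-z)\Psi(S-z)$. This map is bounded because it is Bessel by Lemma \ref{esitimate of STFT}.

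Having checked \ref{Hilbert m1}--\ref{Hilbert m4}, Proposition \ref{prop: completion} produces the Hilbert $A$-module $\mathcal{E}$ in which $q(\mathcal{H}_0)$ is dense. The action of $A_0$ extends to all of $A$ by the standard bound $\|a\cdot x\|\le\|a\|\|x\|$, and the inner product is the extension of (\ref{inner}).
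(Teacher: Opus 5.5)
Your skeleton is the paper's. You check the pre-inner product axioms with the $\ell_1(R_\Lambda,\sigma_\Lambda)$-valued pairing, obtain ${_\bullet\langle} f\cdot\Psi,\Phi\rangle=f*_{\sigma_\Lambda}{_\bullet\langle}\Psi,\Phi\rangle$ and the adjoint relation from the cocycle computation, reduce positivity to $\sigma_\Lambda$-positive definiteness via Proposition \ref{positive}, and finish with Proposition \ref{prop: completion}. Your extra checks are correct: associativity of the action, and agreement of the $\ell_1$ convolution with the product in $C^*_r(R_\Lambda,\sigma_\Lambda)$.

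The problem is the one substantive step, positivity. The identity you anticipate is false, and not because of phases. With $\gamma_i=(S-z_i,S)$, the coefficient of $c_i\overline{c_j}$ in Definition \ref{sigma-positive} is a unimodular multiple of $F(\gamma_j\gamma_i^{-1})=\langle\Psi(S-z_j),\pi(z_i-z_j)\Psi(S-z_i)\rangle$. In the expansion of $\|\sum_i\overline{c_i}\theta_i\pi(-z_i)\Psi(S-z_i)\|^2$, the same coefficient is, up to a unimodular factor, $\langle\Psi(S-z_j),\pi(z_j-z_i)\Psi(S-z_i)\rangle$. These differ in modulus once $\Psi$ genuinely depends on $T$. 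Take $z_1=0$, $z_2\in S\setminus\{0\}$, disjoint clopen sets $U\ni S$ and $U'\ni S-z_2$, a Gaussian $g$, and $\Psi(T)=1_U(T)g+1_{U'}(T)\pi(-z_2)g$. For $(i,j)=(1,2)$ the first quantity has modulus $\|g\|^2$ and the second has modulus $|V_gg(2z_2)|<\|g\|^2$. So no choice of $\theta_i$ works. Your fallback is built on the same family $\pi(-z)\Psi(S-z)$, so it breaks for the same reason.

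The right vectors are $w_i=\pi_\Lambda(\gamma_i)^*\Psi(r(\gamma_i))=\pi(-z_i)^*\Psi(S-z_i)$, which is a unimodular multiple of $\pi(z_i)\Psi(S-z_i)$. With them, no explicit bookkeeping of $e^{-2\pi i x\omega}$ is needed. The projective relation $\pi_\Lambda(\gamma_j\gamma_i^{-1})\pi_\Lambda(\gamma_i)=\sigma_\Lambda(\gamma_j\gamma_i^{-1},\gamma_i)\pi_\Lambda(\gamma_j)$ together with unitarity gives $\sigma_\Lambda(\gamma_j\gamma_i^{-1},\gamma_i)F(\gamma_j\gamma_i^{-1})=\langle w_j,w_i\rangle$. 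Hence the sum equals $\|\sum_i\overline{c_i}w_i\|^2\ge 0$, which is exactly the paper's final expression.

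Also add, as the paper does, the definiteness check ${_\bullet\langle}\Psi,\Psi\rangle(T,T)=\|\Psi(T)\|^2$. Without it, the map $q$ of Proposition \ref{prop: completion} need not be injective, and $\mathcal{E}$ would only be a completion of $\mathcal{H}_0$ modulo null vectors.
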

\begin{proof}
It is obvious that ${_\bullet \langle} \Psi, \Phi\rangle$ is $\C$-linear in the first variable. 
    Let $f \in C_c(R_{\Lambda},\sigma_{\Lambda})$, and $\Psi,\Phi \in \mathcal{H}_0$. 
Suppose that $f$ is supported on a clopen bisection $B_{p,U}=\{(T,T-p):T\in U\},$ where $U$ is a compact open subset of $\Omega_0(\Lambda).$ For $T\in U,$ from (\ref{action}) and (\ref{inner}), we have
\begin{align}
    {_\bullet \langle} f\cdot \Psi, \Phi \rangle(T,T-z)&=\langle f \cdot \Psi(T),\pi(z)\Phi(T-z)\rangle=\langle f(T,T-p)\pi(p)\Psi(T-p),\pi(z)\Phi(T-z)\rangle\notag\\
    &=\sigma(p,z-p)f(T,T-p)\langle \Psi(T-p),\pi(z-p)\Phi(T-z))\rangle\notag\\
    &=\sigma_{\Lambda}((T,T-p),(T-p,T-z))f(T,T-p){_\bullet \langle}  \Psi, \Phi \rangle(T-p,T-z)\notag\\
    &=(f*_{\sigma_{\Lambda}}{_\bullet \langle} \Psi, \Phi \rangle)(T,T-z).\notag
\end{align}
For $T \notin U$, both of $ {_\bullet \langle} f\cdot \Psi, \Phi \rangle(T,T-z)$ and $(f*_{\sigma_{\Lambda}}{_\bullet \langle} \Psi, \Phi \rangle)(T,T-z)$ is zero.
Therefore, \begin{equation} \label{ac}
    {_\bullet \langle} f\cdot \Psi, \Phi \rangle=f*_{\sigma_{\Lambda}}{_\bullet \langle} \Psi, \Phi \rangle .
\end{equation}

Moreover, 
\begin{align}
    {_\bullet \langle} \Psi, \Phi \rangle ^*(T,T-z)&=\overline{\sigma(z,-z){_\bullet \langle} \Psi, \Phi \rangle (T-z,T)}=\overline{\sigma(z,-z)} \langle\pi(-z)\Phi(T),\Psi(T-z)\rangle\notag\\
    &=\overline{\sigma(z,-z)} \langle\Phi(T),\overline{\sigma(z,-z)} \pi(z)\Psi(T-z)\rangle\notag\\
    &=\langle \Phi(T), \pi(z)\Psi(T-z)\rangle={_\bullet \langle} \Phi, \Psi \rangle (T,T-z).\notag
\end{align}
Therefore,\begin{equation} \label{conjugate}
    {_\bullet \langle} \Psi, \Phi \rangle ^*={_\bullet \langle} \Phi, \Psi \rangle .
\end{equation}

Next we prove that ${_\bullet \langle} \Psi, \Psi\rangle\geq 0$ in $C_r^*(R_\Lambda, \sigma_\Lambda)$, i.e., $ (\bigoplus_{T\in \Omega_0(\Lambda)} \pi_T)( {_\bullet \langle} \Psi, \Psi\rangle)  $ is positive.  According to Proposition \ref{positive}, it suffices to show that $ {_\bullet \langle} \Psi, \Psi\rangle$ is a $\sigma_{\Lambda}$-positive definite function. For all   $T\in \Omega_0(\Lambda), z_i \in T,c_i \in \mathbb{C},i=1,\dots, n ,$ we have 
\begin{align}
    & \sum_{i,j=1}^nc_i \overline{c_j}{_\bullet \langle} \Psi, \Psi\rangle((T-z_j,T)\cdot (T-z_i,T)^{-1})\sigma_{\Lambda}((T-z_j, T-z_i),(T-z_i,T))\notag\\
    &=\sum_{i,j=1}^nc_i \overline{c_j}{_\bullet \langle} \Psi, \Psi\rangle(T-z_j,T-z_i)\sigma(z_i-z_j,-z_i)\notag\\
    &=\sum_{i,j=1}^nc_i \overline{c_j}\langle\Psi(T-z_j),\pi(z_i-z_j)\Psi(T-z_i)\rangle\sigma(z_i-z_j,-z_i)\notag\\
    &=\sum_{i,j=1}^nc_i \overline{c_j}\langle\Psi(T-z_j),\pi(-z_j)\pi(z_i)\Psi(T-z_i)\rangle\sigma(z_i-z_j,-z_i)\overline{\sigma(-z_j,z_i)}\notag\\
    &=\sum_{i,j=1}^nc_i \overline{c_j}\langle \pi(-z_j)^*\Psi(T-z_j),\pi(-z_i)^*\Psi(T-z_i)\rangle\notag\\
    &=\|\sum_{i=1}^n\overline{c_i} \pi(-z_i)^*\Psi(T-z_i)\|^2 \ge 0,\notag
\end{align}
thus $ (\bigoplus_{T\in \Omega_0(\Lambda)} \pi_T)( {_\bullet \langle} \Psi, \Psi\rangle)  $ is positive and therefore ${_\bullet \langle} \Psi, \Psi\rangle\geq 0$.  Finally, note that if ${_\bullet \langle} \Psi, \Psi\rangle=0,$ then $\|\Psi(T)\|^2={_\bullet \langle} \Psi, \Psi\rangle(T,T)=0$ for all $T\in \Omega_0(\Lambda), $ so $\Psi=0.$

So far we have shown that $\CH_0$ is a pre-inner product $C_c(R_\Lambda, \sigma_\Lambda)$-module. Then Proposition \ref{prop: completion} provides   a Hilbert $C_r^*(R_{\Lambda},\sigma_{\Lambda})$-module $\mathcal{E},$ where $\mathcal{E}$ be the completion of $\mathcal{H}_0$ and the action is extended by (\ref{action}) and $C_r^*(R_{\Lambda},\sigma_{\Lambda})$-valued inner product is extended by (\ref{inner}). 
\end{proof}

\subsection{Finite generation of the Hilbert module $\CE$}
Let $\tau_{\mu}$ be the faithful tracial state on $C_r^*(R_{\Lambda},\sigma_{\Lambda})$ induced by an invariant probability Radon measure $\mu$ on $\Omega_0(\Lambda)$ by $\tau_{\mu}(a)=\int_{\Omega_0(\Lambda)}E(a)\text{d}\mu$ for $a\in C_r^*(R_\Lambda,\sigma_{\Lambda})$ where $E:C_r^*(R_{\Lambda},\sigma_{\Lambda}) \to C(\Omega_0(\Lambda))$ is the faithful canonical conditional expectation. 

We denote $\mathcal{H}_{\mathcal{E}}^{\tau_{\mu}}$ the localization space of $\mathcal{E}$ (see Section \ref{sec: Hilbert mod} for the definition). For $\Psi,\Phi \in \mathcal{H}_0$, by Proposition \ref{prop: conditional expectation},  we have 
\[\tau_{\mu}(_{\bullet}\langle\ \Psi,\Phi \rangle)=\int_{\mathcal{G}^{(0)}} {_{\bullet}}\langle\Psi ,\Phi \rangle(T,T)\text{d}\mu(T)=\int_{\mathcal{G}^{(0)}}\langle\Psi(T),\Phi(T) \rangle_{L^2(\mathbb{R}^d)}\text{d}\mu(T)=\langle\Psi,\Phi \rangle_{\mathcal{H}}.\]
 This shows that $\mathcal{H}_{\mathcal{E}}^{\tau_{\mu}}\cong \mathcal{H}= L^2(\Omega_0(\Lambda),L^2(\mathbb{R}^d),\mu)$. Similarly, we have $\mathcal{H}_{C_r^*(R_{\Lambda},\sigma_{\Lambda})}^{\tau_{\mu}} \cong L^2(R_{\Lambda},\nu)$ by regarding $C_r^*(R_{\Lambda},\sigma_{\Lambda})$ as the left module over itself.

 \begin{lem}\label{lem: multi-Bessel}
      Let $\pi_{\Lambda}$ be $\sigma_{\Lambda}$-projective unitary representation on $R_{\Lambda}$ defined by (\ref{projetive representation of tiling groupoid case}) and let $\{\Phi_1,\dots, \Phi_n\}\subset \mathcal{H}_0=C(\Omega_0(\Lambda))\odot \CS(\mathbb{R}^d)$. Then $\{\Phi_1,\dots, \Phi_n\}$ is a multi-Bessel vector for $\pi_\Lambda$ in the sense of Definition \ref{defn: multi frame}.
    \end{lem}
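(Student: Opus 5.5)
The plan is to reduce to a single window and then to the $\ell^1$-type bound from Lemma \ref{esitimate of STFT}. A finite sum of Bessel families is Bessel, with the bounds adding. It therefore suffices to show that each $\Phi_i$ is a Bessel vector for $\pi_\Lambda$ in the sense of Definition \ref{groupoid frame vector}, with a bound independent of $T\in\Omega_0(\Lambda)$. Moreover, by Cauchy--Schwarz, $|\sum_{k=1}^m a_k|^2\le m\sum_k|a_k|^2$. Since each $\Phi_i$ is a finite sum $\sum_{k}h_k\cdot g_k$ with $h_k\in C(\Omega_0(\Lambda))$ and $g_k\in\CS(\R^d)$, we may further reduce to an elementary tensor $\Phi(T)=h(T)g$.

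Fix $T\in\Omega_0(\Lambda)$ and $\xi\in L^2(\R^d)$. The range fiber $R_\Lambda^T$ is $\{(T,T-z):z\in T\}$, so the quantity to bound is
\[
\sum_{z\in T}|\langle \xi,\pi(z)h(T-z)g\rangle|^2\le \|h\|_\infty^2\sum_{z\in T}|V_g\xi(z)|^2 .
\]
This reduces the claim to the classical statement that $G(g,T)$ is a Bessel sequence with a bound uniform over $T\in\Omega(\Lambda)$. To obtain it, I will use the Schur-test / Gramian argument. The Gramian of $\{\pi(z)g\}_{z\in T}$ has entries bounded in modulus by $|V_gg(z'-z)|$. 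By Schur's test, the analysis operator $C_T$ satisfies
\[
\|C_T\|^2=\|C_TC_T^*\|\le \sup_{z\in T}\sum_{z'\in T}|V_gg(z'-z)|=\sup_{z\in T}\sum_{w\in (T-z)}|V_gg(w)|.
\]
Since $T-z\in\Omega(\Lambda)$ for $z\in T$, we can apply Lemma \ref{esitimate of STFT} with $p=1$ and $f=g\in\CS(\R^d)\subset M^1(\R^d)$. More precisely, we use the Wiener amalgam estimate $\sum_{w\in S}|V_gg(w)|\le \operatorname{rel}(\Lambda)\|V_gg\|_{W(L^\infty,\ell^1)}$, valid for every $S\in\Omega(\Lambda)$ because $\operatorname{rel}(S)=\operatorname{rel}(\Lambda)$, exactly as in the proof of that lemma. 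This gives a bound $B=\|h\|_\infty^2\operatorname{rel}(\Lambda)\|V_gg\|_{W(L^\infty,\ell^1)}$ that is independent of $T$ and $\xi$.

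The only delicate point I expect is uniformity over the fibers, that is, that the relative separation constant is the same for every element of the hull. This is handled by the observation $\operatorname{rel}(T)=\operatorname{rel}(\Lambda)$ already used in Lemma \ref{esitimate of STFT}, which follows from the hull being the closure of the translates of $\Lambda$ in the local-patch topology. Alternatively, one could cite \cite[Theorem 12.2.1]{G01} or the standard Bessel bound for relatively separated sets with $M^1$ windows.
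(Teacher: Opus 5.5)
Your proof is correct, but it gets the uniform Bessel bound by a different mechanism than the paper.

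The paper works on the analysis side. It invokes \cite[Theorem 12.2.1]{G01} to get $V_{g_i}\Psi(T)\in W(L^{\infty},\ell^2)(\mathbb{R}^{2d})$ with norm $\lesssim\|g_i\|_{M^1}\|\Psi(T)\|_2$. It then bounds the sampled sum $\sum_{z\in T}|V_{g_i}\Psi(T)(z)|^2$ by $\operatorname{rel}(\Lambda)$ times the squared amalgam norm.

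You work on the synthesis side instead. Schur's test on the Gramian of $\{\pi(z)g\}_{z\in T}$ needs only the $\ell^1$-sampling bound $\sum_{w\in T-z}|V_gg(w)|\le\operatorname{rel}(\Lambda)\|V_gg\|_{W(L^{\infty},\ell^1)}$, which is the same estimate already used in Lemma~\ref{esitimate of STFT} and Proposition~\ref{prop: pre module}. You then dualize.

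Your route has two advantages. It is more self-contained, since it needs no amalgam estimate for $V_g f$ with arbitrary $f\in L^2(\mathbb{R}^d)$. It also gives the explicit constant $\|h\|_\infty^2\operatorname{rel}(\Lambda)\|V_gg\|_{W(L^{\infty},\ell^1)}$.

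The price is a routine step hidden in $\|C_T\|^2=\|C_TC_T^*\|$, since $C_T$ is not known to be bounded a priori. Schur's test first gives $\|\sum_z c_z\pi(z)g\|^2=\langle Gc,c\rangle\le\|G\|\,\|c\|^2$ for finitely supported $c$. Hence the synthesis operator extends boundedly, and $C_T$ is its adjoint. Your Cauchy--Schwarz reduction to elementary tensors also makes explicit a step the paper only asserts.

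Both arguments rest on the same uniformity input. In fact they only need $\operatorname{rel}(S)\le\operatorname{rel}(\Lambda)$ for $S\in\Omega(\Lambda)$.
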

    \begin{proof}
    Denote by $\vec\Phi=\{\Phi_i\}_{i=1}^n $. 
Without loss of generality, one may assume each $\Phi_i$ is of the form $\Phi_i(T) =h_i(T)g_i\in \CS(\mathbb{R}^d)$ for $ T\in \Omega_0(\Lambda)$ as any element in $\CH_0$ is a sum of functions of this form. Now, let $\Psi \in \{\CH_T=L^2(\mathbb{R}^d)\}_{T \in \Omega_0(\Lambda)}$. Then   
it follows from \cite[Theorem 12.2.1]{G01} that $V_{g_i} \Psi(T)\in W(L^{\infty},\ell^2)(\mathbb{R}^{2d})$ and we have
   \begin{align}
   &\sum_{i=1}^n\sum_{(S, S-z)\in R^T_{\Lambda}}|\langle \Psi(T), \pi(S, S-z)\Phi_i(s(z))\rangle|^2=
  \sum_{i=1}^n\sum_{z\in T}\vert \langle \Psi(T) ,h_i(T-z)\pi(z)g_i\rangle \vert ^2 \\
  &\le (\max\limits_{i=1,\dots,n}\|h_i\|_{\infty})^2\cdot \text{rel}(T)\cdot\sum_{i=1}^n \|V_{g_i}\Psi(T)\|_{W(L^{\infty},\ell^2)}^2 \notag\\
  &\lesssim (\max\limits_{i=1,\dots,n}\|h_i\|_{\infty})^2 \cdot\text{rel}(\Lambda)\cdot\sum_{i=1}^n \|g_i\|_{M^1(\mathbb{R}^d)}^2\cdot\|\Psi(T)\|_{L^2(\mathbb{R}^d)}^2\notag
 %&=B'\|\Psi(T)\|_{L^2(\mathbb{R}^d)}^2,\notag
   \end{align}
  % where $B':=(\max\limits_{i=1,\dots,n}\|h_i\|_{\infty})^2\cdot \text{rel}(\Lambda)\cdot\sum_{i=1}^n \|g_i\|_{M^1(\mathbb{R}^d)}^2.$ 
  Therefore, $\vec\Phi=\{\Phi_1,\dots,\Phi_n\}$ is a multi-Bessel vector for $\pi_{\Lambda}$.
    \end{proof}
 The following is a groupoid version of \cite[Proposition 4.3]{BEV} for the Delone set $\Lambda$ in $\R^{2d}$.
\begin{prop}\label{single}
    Let $\pi_{\Lambda}$ be $\sigma_{\Lambda}$-projective unitary representation on $R_{\Lambda}$ defined by (\ref{projetive representation of tiling groupoid case}) and $\{\Phi_1,\dots, \Phi_n\}\subset \mathcal{H}_0=C(\Omega_0(\Lambda))\odot \CS(\mathbb{R}^d),$ then the following hold:
    \begin{enumerate} [label=(\roman*)]
        \item\label{prop: single 1} The finite set $\{\Phi_1,\dots \Phi_n\} $ is an algebraic generating set for $\mathcal{E}$  if and only if $\{\Phi_1,\dots ,\Phi_n\}$ is  average multi-frame vectors for $\pi_{\Lambda}.$
        \item\label{prop: single 2} The finite set $\{\Phi_1,\dots \Phi_n\} $ is an $C_r^*(R_\Lambda,\sigma)$-linearly independent set in $\mathcal{E}$ with closed $C_r^*(R_\Lambda,\sigma)$-span if and only if $\{\Phi_1,\dots ,\Phi_n\}$ is average multi-Riesz vectors for $\pi_{\Lambda}.$
    \end{enumerate}
\end{prop}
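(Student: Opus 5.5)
The plan is to follow the strategy of \cite[Proposition 4.3]{BEV}. The algebraic properties of $\{\Phi_1,\dots,\Phi_n\}$ in $\CE$ are first translated into invertibility of the module frame operator, respectively the module Gramian. Invertibility is then moved to the localization space $\CH_\CE^{\tau_\mu}\cong \CH$, where it is compared with the average operators of Section~\ref{sec: groupoid frame}. Throughout, write $A=C_r^*(R_\Lambda,\sigma_\Lambda)$, let $\mathscr{C}:\CE\to A^n$, $\mathscr{C}x=({_\bullet\langle}x,\Phi_i\rangle)_i$, be the module analysis operator, and let $\mathscr{D}=\mathscr{C}^*$, $\mathscr{S}=\mathscr{D}\mathscr{C}$ and $\mathscr{G}=\mathscr{C}\mathscr{D}$.

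\emph{Step 1: algebraic reduction.} The $A$-span of $\{\Phi_i\}$ is exactly $\opRange(\mathscr{D})$, so the set is an algebraic generating set iff $\mathscr{D}$ is surjective. For adjointable maps between Hilbert modules, surjectivity of $\mathscr{D}$ is equivalent to $\mathscr{C}=\mathscr{D}^*$ being bounded below, i.e.\ to $\mathscr{S}=\mathscr{D}\mathscr{D}^*$ being invertible in $\CL_A(\CE)$. This is the closed-range theory for adjointable operators, as in Frank--Larson \cite{FL} and Lance \cite{Lance}. Similarly, $A$-linear independence together with closed $A$-span means that $\mathscr{D}$ is injective with closed range. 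This is equivalent to $\mathscr{D}$ being bounded below, i.e.\ to $\mathscr{G}=\mathscr{D}^*\mathscr{D}$ being invertible in $\CL_A(A^n)=M_n(A)$.

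\emph{Step 2: passing to the localization.} The assignment $T\mapsto T^{\tau_\mu}$ of \cite[Lemma 3.6]{BEV} is a $*$-homomorphism from $\CL_A(\CE)$ to $\mathcal{B}(\CH_\CE^{\tau_\mu})$. It is injective because $\tau_\mu$ is faithful: if $T^{\tau_\mu}=0$, then $\tau_\mu({_\bullet\langle}Tx,Tx\rangle)=0$ for all $x$, hence $Tx=0$. An injective $*$-homomorphism of $C^*$-algebras is isometric and preserves spectra, so $\mathscr{S}$ is invertible iff $\mathscr{S}^{\tau_\mu}$ is invertible. The same argument on $A^n$, whose localization is $L^2(R_\Lambda,\nu)^n$, gives that $\mathscr{G}$ is invertible iff $\mathscr{G}^{\tau_\mu}$ is invertible.

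\emph{Step 3: identification with the average operators.} It remains to show $\mathscr{S}^{\tau_\mu}=\overline{S}_{\vec\Phi}$ and $\mathscr{G}^{\tau_\mu}=\overline{G}_{\vec\Phi}$. Proposition~\ref{prop:average multi-frame and average frame operator} then finishes both parts, since the boundedness of $\overline{S}_{\vec\Phi}$ and $\overline{G}_{\vec\Phi}$ comes from Lemma~\ref{lem: multi-Bessel}. For $\Psi\in\CH_0$ one has $\mathscr{S}\Psi=\sum_i {_\bullet\langle}\Psi,\Phi_i\rangle\cdot\Phi_i$. By Proposition~\ref{prop: pre module}, the coefficients ${_\bullet\langle}\Psi,\Phi_i\rangle$ lie only in $\ell_1(R_\Lambda,\sigma_\Lambda)$, not in $C_c$, so I would first check the following. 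Formula (\ref{action}) extends to $f\in\ell_1$ as an absolutely convergent sum, and it agrees with the module action in $\CE$. To see this, approximate $f$ in $\|\cdot\|_I$ by $C_c$ functions, using that $\|f\cdot\Psi\|_\CE\le\|f\|_r\|\Psi\|_\CE\le \|f\|_I\|\Psi\|_\CE$ and that pointwise evaluation is controlled by the $I$-norm and $\sup_T\|\Psi(T)\|$. Granting this, evaluation at $T$ gives
\[
(\mathscr{S}\Psi)(T)=\sum_{i=1}^n\sum_{z\in T}\langle\Psi(T),\pi(z)\Phi_i(T-z)\rangle\,\pi(z)\Phi_i(T-z)=S_{\vec\Phi,T}\Psi(T).
\]
Hence $\mathscr{S}^{\tau_\mu}$ and $\overline{S}_{\vec\Phi}$ agree on the dense subspace $\CH_0$, and both are bounded, so they are equal. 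For the Gramian, $\mathscr{G}$ acts on $(a_j)_j\in A^n$ by $(a_j)_j\mapsto(\sum_j a_j\, {_\bullet\langle}\Phi_j,\Phi_i\rangle)_i$. On $C_c(R_\Lambda)^n\subset L^2(R_\Lambda,\nu)^n$ this convolution is computed pointwise, and it matches $\overline{C}_{\vec\Phi}\overline{D}_{\vec\Phi}$ through the identity $\langle\pi(x)\Phi_j(s(x)),\pi(y)\Phi_i(s(y))\rangle$ combined with the cocycle relation used in the proof of Proposition~\ref{module}.

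I expect the main obstacle to be Step 3, in particular the $\ell_1$ versus $C_c$ issue: one has to justify that the abstract module action on $\CE$ coincides with the pointwise formula when the coefficients are ${_\bullet\langle}\Psi,\Phi_i\rangle\in\ell_1\setminus C_c$. A secondary check is the twisted convolution bookkeeping needed to match $\mathscr{G}^{\tau_\mu}$ with $\overline{G}_{\vec\Phi}$, including the convention for left versus right convolution on $L^2(R_\Lambda,\nu)$.
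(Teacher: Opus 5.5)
Your proposal is correct and follows the same route as the paper. Like the paper, you use \cite[Lemmas 3.1, 3.2]{BEV} to reduce to invertibility of $\mathscr{S}$ (resp.\ $\mathscr{G}$), pass to the faithful localization at $\tau_\mu$ via \cite[Lemma 3.6]{BEV}, identify the localized operators with $\overline{S}_{\vec\Phi}$ and $\overline{G}_{\vec\Phi}$, and finish with Proposition~\ref{prop:average multi-frame and average frame operator}. The one difference is that the paper identifies only $\mathscr{C}^{\tau_\mu}=\overline{C}_{\vec\Phi}$, using just the pointwise formula for ${_\bullet\langle}\Psi,\Phi_i\rangle$ on $\CH_0$, and then gets $\mathscr{S}^{\tau_\mu}$ and $\mathscr{G}^{\tau_\mu}$ from the $*$-homomorphism property you already use in Step~2; this makes the $\ell_1$-module-action and twisted-convolution checks you flag as obstacles unnecessary, although your verifications of them are sound.
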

\begin{proof}
First, $\{\Phi_1,\dots \Phi_n\} $ is a multi-Bessel vector of $\pi_\Lambda$ by Lemma \ref{lem: multi-Bessel}. Moreover, it follows from Remark \ref{rmk: frame is average frame} that $\vec\Phi$ is an average multi-Bessel frame vector for $\pi_{\Lambda}.$  
    
    For the average multi-Bessel $\vec\Phi=\{\Phi_1,\dots,\Phi_n\}$, recall its average analysis operator \[\overline{C}_{\vec\Phi}:L^2(\Omega_0(\Lambda), L^2(\mathbb{R}^d),\mu)\to \int_{\Omega_0(\Lambda)}^\oplus \ell^2(R_\Lambda^T\times \{1,\dots,n\})\text{d}\mu(T)=L^2(R_\Lambda, \nu)^n\] is defined to be \[\overline{C}_{\vec\Phi}(\Psi)((T,T-z),i)=\langle\Psi(T),\pi(z)\Phi_i(T-z)\rangle \] for $\Psi \in L^2(\Omega_0(\Lambda), L^2(\mathbb{R}^d),\mu). $ Moreover, it follows from Proposition~\ref{prop: pre module}~\ref{pre module 1} that $\overline{C}_{\vec\Phi}$ maps $\CH_0$ to $\int_{\Omega_0(\Lambda)}^\oplus \ell^1(R_\Lambda^T\times \{1,\dots,n\})\text{d}\mu(T)$, which is also identified with $L^1(R_\Lambda, \nu)^n$.

    Let $\mathscr{C}:\mathcal{E}\to C_r^*(R_\Lambda,\sigma_{\Lambda})^n $ be the module analysis operator associated to $\{\Phi_1,\dots \Phi_n\}$ (see Section \ref{sec: Hilbert mod} for the definition). Proposition~\ref{prop: pre module}~\ref{pre module 1} implies that 
       \[\mathscr{C}|_{\CH_0}:\CH_0\to \ell_1(R_{\Lambda},\sigma_{\Lambda})^n\subseteq L^1(R_{\Lambda},\nu)^n ,\]
       and
       \[\mathscr{C}(\Psi)(T,T-z)=(_{\bullet}\langle\ \Psi,\Phi_i \rangle(T,T-z))_{i=1}^n=(\langle\Psi(T),\pi(z)\Phi_i(T-z)\rangle)_{i=1}^n.\] 
      Therefore, $\mathscr{C}$ coincides with $\overline{C}_{\vec\Phi}$ on ${\mathcal{H}_0}.$ Recall $\mathcal{H}_{\mathcal{E}}^{\tau_{\mu}}\cong \mathcal{H}$ and $\mathcal{H}_{C_r^*(R_{\Lambda},\sigma_{\Lambda})}^{\tau_{\mu}} \cong L^2(R_{\Lambda},\nu)$ hold. Then the localized operator \[\mathscr{C}^{\tau_{\mu}}:\mathcal{H}_{\mathcal{E}}^{\tau_{\mu}} \to \mathcal{H}_{C_r^*(R_{\Lambda,\sigma_{\Lambda}})^n}^{\tau_{\mu}}\] of $\mathscr{C}$ can be identified with $\overline{C}_{\vec\Phi}$ by the density of $\CH_0$. Therefore, the localizations, with respect to the $\tau_\mu$, of the module synthesis operator $\mathscr{D}=\mathscr{C}^*$, the module frame operator $\mathscr{S}$, and  the module Gramian operator $\mathscr{G}$ can be identified with $\overline{D}_{\vec\Phi}$, $\overline{S}_{\vec\Phi}$ and $\overline{G}_{\vec\Phi}$, respectively. After these identifications, we can establish the results.
       
    For \ref{prop: single 1}, it follows from \cite[Lemma 3.1]{BEV} that $\vec\Phi=\{\Phi_1,\dots,\Phi_n\}\subseteq \mathcal{E} $ is a generating set for $\mathcal{E}$ if and only if the associated module frame operator $\mathscr{S}$ is invertible in $\mathcal{L}_{C_r^*(R_{\Lambda},\sigma)}(\mathcal{E})$, which is further equivalent to the invertibility of the localization operator $\mathscr{S}^{\tau_{\mu}}=\overline{S}_{\vec\Phi}$  by \cite[Lemma 3.6]{BEV} and the remark after it. Finally, it is  shown in Proposition \ref{prop:average multi-frame and average frame operator} that the invertibility of $\overline{S}_{\vec\Phi}$ is equivalent to $ \{\Phi_1,\dots,\Phi_n\}$ being average multi-frame vectors.
       
       For \ref{prop: single 2}, it follows from  \cite[Lemma 3.2]{BEV} that $\vec\Phi=\{\Phi_1,\dots,\Phi_n\}\subseteq \mathcal{E} $ is $C_r^*(R_{\Lambda},\sigma)$-linearly independent with closed $C_r^*(R_{\Lambda},\sigma)$-span if and only if the associated module Gramian operator $\mathscr{G}$ is invertible in $\mathcal{L}_{C_r^*(R_{\Lambda},\sigma)}(\mathcal{E})$, which is equivalent to the invertibility of the localization operator $\mathscr{G}^{\tau_{\mu}}=\overline{G}_{\vec\Phi}$  by \cite[Lemma 3.6]{BEV} and the remark after it. However, the equivalence of the invertibility of $\overline{G}_{\vec\Phi}$ and $ \{\Phi_1,\dots,\Phi_n\}$ being average multi-Riesz vectors follows form  Proposition \ref{prop:average multi-frame and average frame operator} as well.
\end{proof}

%\textcolor{blue}{XM: the above is a groupoid version of \cite[Prop. 4.3]{BEV}}

\begin{prop}\label{equ}
    Let $\{\Phi_1,\dots, \Phi_n\}\subset \mathcal{H}_0$ be such that each $\Phi_i$ is of the form $\Phi_i(T)=h_i(T)g_i$ for some $h_i\in C(\Omega_0(\Lambda))$ and $g_i\in \CS(\mathbb{R}^d)$. Then  we have the following.
    \begin{enumerate}[label=(\roman*)]
        \item  $\{\Phi_1,\dots \Phi_n\}$ is a multi-frame vector for $\pi_{\Lambda}$ if and only if $\{\Phi_1,\dots \Phi_n\}$ is an average multi-frame vector for $\pi_{\Lambda}$ with same frame bounds.
        \item $\{\Phi_1,\dots \Phi_n\}$ is a multi-Riesz vector for $\pi_{\Lambda}$ if and only if $\{\Phi_1,\dots \Phi_n\}$ is an average multi-Riesz vector for $\pi_{\Lambda}$ with same Riesz bounds.
    \end{enumerate}   
\end{prop}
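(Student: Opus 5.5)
The direction ``multi-frame $\Rightarrow$ average multi-frame'' (and likewise for Riesz) with the same bounds is Remark \ref{rmk: frame is average frame}: integrate the pointwise inequalities over $\mu$. For the converse I will show that the fiberwise inequalities are expressed by functions of $T$ that are \emph{continuous}. Averaged inequalities then force them $\mu$-almost everywhere, and continuity together with full support of $\mu$ upgrades this to every $T\in\Omega_0(\Lambda)$. Full support holds because $R_\Lambda$ is minimal, so the support of an invariant probability measure is a nonempty closed invariant set and hence all of $\Omega_0(\Lambda)$. Consequently an open set of measure zero is empty.

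\emph{Frame case.} Fix $f\in L^2(\R^d)$ and set
\[F_f(T)=\sum_{i=1}^n\sum_{z\in T}|h_i(T-z)|^2|V_{g_i}f(z)|^2,\]
which is exactly the middle term of the fiberwise frame inequality at $T$ for $\xi(T)=f$.

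\textbf{Step 1 (continuity).} Lemma \ref{esitimate of STFT} with $p=2$ (using $g_i\in\CS\subset M^1$ and $f\in M^2$) gives an $r$ with $\sum_{z\in T\setminus B(0,r)}|V_{g_i}f(z)|^2<\eps$ uniformly in $T$. By Definition \ref{defn: flat cantor transversal}\ref{cantor transversal etale}, the finite set $T\cap B(0,r)$ is constant on a clopen neighbourhood of $T_0$. On that neighbourhood the finite sum over $B(0,r)$ is continuous, since each $h_i$ is continuous. Hence $F_f$ is continuous.

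\textbf{Step 2 (a.e.\ bounds).} Test the average inequality \eqref{eq:average multi-frame} with $\xi(T)=\varphi(T)f$ for arbitrary $\varphi\in C(\Omega_0(\Lambda))$. This gives
\[A\int|\varphi|^2\|f\|^2\,d\mu\le\int|\varphi|^2F_f\,d\mu\le B\int|\varphi|^2\|f\|^2\,d\mu .\]
Since $\varphi$ is arbitrary, $A\|f\|^2\le F_f\le B\|f\|^2$ holds $\mu$-a.e.

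\textbf{Step 3 (everywhere).} The set where either inequality fails is open by Step 1 and $\mu$-null by Step 2, hence empty. This gives the multi-frame inequality at every $T$ with the same $A,B$.

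\emph{Riesz case.} This is the main obstacle, because the coefficient space $\ell^2(R_\Lambda^T)$ changes with $T$. It suffices to verify the fiberwise Riesz inequality for finitely supported coefficients, by density and the Bessel bound of Lemma \ref{lem: multi-Bessel}.

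\textbf{Step 1 (local constancy of patterns).} Fix $T_0$ and finitely supported $c_i$ on $T_0\cap B(0,r)$, viewed as functions $c_i^0(z)$ of $z$. Choose a clopen neighbourhood $N\ni T_0$ on which $T\cap B(0,r)=T_0\cap B(0,r)$.

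\textbf{Step 2 (continuity).} For $T\in N$ define
\[G(T)=\Big\|\sum_{i}\sum_{z}c_i^0(z)h_i(T-z)\pi(z)g_i\Big\|^2,\]
which is continuous in $T$.

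\textbf{Step 3 (a.e.\ bounds).} Plug $c_i(T,T-z)=\varphi(T)c_i^0(z)$, with $\varphi$ supported in $N$, into the average Riesz inequality. This yields $A\sum_i\|c^0_i\|^2\le G\le B\sum_i\|c^0_i\|^2$ $\mu$-a.e.\ on $N$.

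\textbf{Step 4 (everywhere).} Continuity and full support on the open set $N$ give the bounds at $T_0$. Since $T_0$ and the finite coefficients were arbitrary, and bounded extension passes to all of $\ell^2$, this proves the multi-Riesz property with the same bounds.

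Two points need care here: checking that the test coefficients lie in $L^2(R_\Lambda,\nu)$ (they are bounded and supported on finitely many compact bisections), and the measurability of these test sections.
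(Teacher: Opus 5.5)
Your proposal is correct and is essentially the paper's argument. Both rest on the same ingredients: the continuity in $T$ of the fiberwise frame (resp.\ Riesz) expressions, obtained from the uniform tail bound of Lemma \ref{esitimate of STFT}, the local constancy of $T\cap B(0,r)$ and the continuity of the $h_i$, together with testing the averaged inequality on sections localized to small clopen neighbourhoods, which have positive $\mu$-measure by minimality. The differences are cosmetic: you phrase the upgrade as ``a.e.\ bound $+$ continuity $+$ full support'' rather than locating a good point $T_f$ in each neighbourhood, and in the Riesz case you reduce to finitely supported coefficients via the Bessel bound of Lemma \ref{lem: multi-Bessel} instead of truncating $c^T$ with a tail estimate, which makes your comparison step exact rather than approximate.
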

 \begin{proof} 
 (\romannumeral 1) It follows from Remark \ref{rmk: frame is average frame} that  if $\{\Phi_1,\dots, \Phi_n\}\subset \mathcal{H}_0$ is a multi-frame vector for $\pi_{\Lambda},$ it is easy to see that $\{\Phi_1,\dots \Phi_n\}$ is an average multi-frame vector for $\pi_{\Lambda}$ with same frame bounds. 
 
Conversely, one may still assume  suppose $\{\Phi_1,\dots \Phi_n\}\subset \mathcal{H}_0$ is an average multi-frame vector with frame bounds $0<A\le B < \infty$ for $\pi_{\Lambda}.$
%It follows from Lemma \ref{lem: multi-Bessel} that $\{\Phi_1,\dots,\Phi_n\}$ is multi-Bessel vector for $\pi_{\Lambda}$ for some bound $B'.$

   %Then the compactness of $\Omega_0(\Lambda)$ allows to find a subcover  $\{N_1^{(i)}\}_{i=1}^{m_1}\subset \{N_{T,1}\}_{T\in \Omega_\punc}$ to cover $\Omega_0(\Lambda).$ Assume that $\mu(N_1^1)>0.$ 
%Noticing that $N^1_1\subseteq \bigcup\limits_{T\in N_1^1}\bar{B}(T,\delta_1),$ so there exists some $T_1\in N_1^1$ such that $\mu(\bar{B}(T,\delta_1)\cap N_1^1)>0.$ Denote $N_1^1 \cap (\bar{B}(T_1,\delta_1)$ by $N_1.$ Similarly, there exists finite clopen set $\{N_2^{(i)}\}_{i=1}^{m_2}\subset \{N_{T,2}\}_{T\in \Omega_\Lambda}$ to cover $N_1.$ We can assume $\mu(N_2^1 \cap N_1)>0$ and there exists $T_2\in N_2^1 \cap N_1$ such that $\mu(\bar{B}(T,\delta_2)\cap N_2^1 \cap N_1)>0.$
%And we define $N_2:=N_2^1 \cap N_1\cap \bar{B}(T_2,\delta_2).$
%By this method, we can construct a decreasing sequence of closed set $\{N_k\}_{k=1}^{\infty}.$ Since $\Omega_\Lambda$ is compact, there exists $T_0\in \bigcap_{k=1}^{\infty} N_k.$ 
    
For all $f\in L^2(\mathbb{R}^d)=M^2(\R^d)$ and $\epsilon >0$, using Lemma~\ref{esitimate of STFT}, there exists $k_0 \in \mathbb{N}_+$ such that for all $T\in \Omega_0(\Lambda),$ we have 
\begin{equation} \label{l2estimate}
    \sum\limits_{i=1}^n\sum\limits_{z\in T\setminus B(0,k_0)}\vert \langle f,\pi(z)h_i(T-z)g_i\rangle\vert ^2 \le \max_{i=1,\dots,n}\|h_i\|_{\infty}\sum\limits_{i=1}^n\sum\limits_{z\in T\setminus B(0,k_0)}\vert \langle f,\pi(z)g_i\rangle\vert ^2\le \frac{\epsilon}{4}.
\end{equation}

Now, let $f\in L^2(\mathbb{R}^d)$. 
 For an integer $k\in \mathbb{N}_+,$ denote by $C_k:=\sup_{x\in \R^d}\vert \Lambda \cap B(x,k)\vert <\infty.$ Then since each $h_i \in C(\Omega_0(\Lambda))$, there exits $ \delta_k >0 $ such that if $S,T \in \Omega_0(\Lambda) $ satisfying $d(S,T)<\delta_k,$ then 
    \[\vert h_i(S)-h_i(T)\vert \le \frac{\epsilon} {4 C_k\cdot\sum_{i=1}^n\|g_i\|^2_2\cdot\|h_i\|_{\infty}\|f\|_2^2}\]
   holds for any $i=1,\dots,n.$ Let $T\in \Omega_0(\Lambda)$ and $k\in \N_+$, define \[F_{T,k}:=\{ p\in B(0,k) : T-p \in \Omega_0(\Lambda)\} .\] 
   In addition, because $\Omega_0(\Lambda)$ is a flat Cantor transversal, there exists a clopen neighborhood $N_{T,k} \ni T$ such that $F_{T,k}=F_{S,k}$ for any $S\in N_{T,k}.$ If necessary, we may shrink $N_{T, k}$ such that such that $d(T-z, S-z)<\delta_k$ for any $S\in N_{T,k}$ and $z\in F_{T, k}$. 
   %Moreover, by induction, we may assume $\{N_{T, k}: k\in \N_+\}$ is decreasing and $\bigcap_{k=1}^\infty N_{T, k}=\{T\}$ because each $N_{T, k}$ is clopen. 
   Because $R_{\Lambda}$ is minimal and each $N_{T, k}$ is non-empty and open, one has $\mu(N_{T, k})>0$.
   
For the $f$ and $k_0\in \N_+$, we define $\Psi_{f}\in L^2(\Omega_0(\Lambda),L^2(\mathbb{R}^d),\mu)$ be such that $\Psi_{f}(T)=f$ for $T \in N_{T,k_0}$ and $\Psi_{f}(T)=0$ for $T \notin N_{T,k_0}.$ Since $\{\Phi_1,\dots \Phi_n\}\subset \mathcal{H}_0$ is an average multi-frame vector with frame bounds $0<A\le B < \infty$ for $\pi_{\Lambda},$ we claim that  there necessarily exists $T_f\in N_{k_0}$ such that \[\sum_{i=1}^n\sum_{z\in T_f} \vert \langle f ,\pi(z)h_i(T_f-z)g_i\rangle \vert ^2 \ge A\|f \|^2 .\]
Otherwise, suppose $\sum_{i=1}^n\sum_{z\in S} \vert \langle f ,\pi(z)h_i(S-z)g_i\rangle \vert ^2 < A\|f \|^2$ holds for all $S\in N_{T,k_0}$. Then 
\begin{align*}
\int_{\Omega_0(\Lambda)}\sum\limits _{i=1}^n\sum\limits_{z\in S} \vert \langle \Psi_{f}(S) ,\pi(z)h_i(S-z)g_i\rangle \vert ^2\text{d}\mu&=  \int_{N_{T,k_0}}\sum\limits _{i=1}^n\sum\limits_{z\in S} \vert \langle f ,\pi(z)h_i(S-z)g_i\rangle \vert ^2\text{d}\mu\\
&< A\|f \|^2 \mu(N_{T,k_0})=A\|\Psi_f\|^2,\end{align*}
\[\] which is a contradiction to Definition \ref{defn: average multi}. 

Now  as $T_f\in N_{T,k_0}$, by our construction of $N_{T,k_0}$, one has $F_{T,k_0}=F_{T_f,k_0}$ and $d(T-z,T_f-z)<\delta_{k_0}$ for all $z\in F_{T,k_0}= F_{T_f,k_0}$. Therefore, one has
\begin{align}
   & \quad \vert \sum_{i=1}^n \sum_{z\in F_{T,k_0}}\vert \langle f, \pi(z)h_i(T-z)g_i \rangle \vert ^2- \sum_{i=1}^n \sum_{z\in F_{T_f,k_0}}\vert \langle f, \pi(z)h_i(T_f-z)g_i \rangle \vert ^2\vert \notag\\
   &=\vert \sum_{z\in F_{T_f,k_0}}\sum_{i=1}^n(\vert h_i(T-z) \vert^2 - \vert h_i(T_f-z) \vert^2)\vert V_{g_i}f(z)\vert^2\vert \notag\\
   &\le  \sum_{z\in F_{T_f,k_0}}\sum_{i=1}^n 2\|f\|_2^2\cdot\|g_i\|_2^2\cdot\|h_i\|_{\infty} \frac{\epsilon} {4C_{k_0}\cdot\sum_{i=1}^n\|g_i\|^2_2\cdot\|h_i\|_{\infty}\|f\|_2^2} \le \frac{\epsilon}{2}.\notag\
\end{align}
Combing with (\ref{l2estimate}), we have 
\begin{align}
    & \quad \sum_{i=1}^n\sum_{z\in T} \vert \langle f ,\pi(z)h_i(T-z)g_i\rangle \vert ^2 \notag\\
    &\ge \sum_{i=1}^n\sum_{z\in F_{T_f,k_0}} \vert \langle f ,\pi(z)h_i(T_f-z)g\rangle \vert ^2 -\frac{\epsilon}{2} + \sum\limits_{i=1}^n\sum\limits_{z\in T_f\setminus B(0,k_0)}\vert \langle f,\pi(z)h_i(T_f-z)g_i\rangle\vert ^2 -\frac{\epsilon}{2}\notag\\
    &\ge A\|f\|_2^2 -\epsilon.\notag
\end{align}
Since $\epsilon$ is arbitrary, one has $ \sum_{i=1}^n\sum_{z\in T} \vert \langle f ,\pi(z)h_i(T-z)g_i\rangle \vert ^2 \ge A\|f\|_2^2$. By the same method, we also have 
\[ \sum_{i=1}^n\sum_{z\in T} \vert \langle f ,\pi(z)h_i(T-z)g_i\rangle \vert ^2 \le B\|f\|_2^2.\]
Therefore, for all $f\in L^2(\mathbb{R}^d)$, we obtain
\[A\|f\|^2_2\le \sum_{i=1}^n\sum_{z\in T} \vert \langle f ,\pi(z)h_i(T-z)g_i\rangle \vert ^2 \le B \|f\|_2^2, \]
%i.e.,$\{\pi_{\Lambda}(R_\Lambda ^{T_0})\Phi_i\}_{i=1}^n $ is a frame for $L^2(\mathbb{R}^d).$
%Obviously, if $T\in \Omega_\punc$ is a translate of $T_0,$ then $\{\pi_{\Lambda}(R_\Lambda ^{T})\Phi_i\}_{i=1}^n $ is also a frame for $L^2(\mathbb{R}^d)$ with the same frame bounds. 
 %   Similar to the proof above and the proof in \cite[Proposition 3.9]{ER}, we obtain that $$ \frac{A}{3}\|f \|^2 \le \sum_{z\in T} \vert \langle f ,\pi(z)\Phi(T-z)\rangle \vert ^2 \text{d}\mu\le B'\|f \|^2 \ \text{ for all} \ T\in \Omega_\punc.$$ 
which implies that $\vec\Phi=\{\Phi_1,\dots, \Phi_n\}\subset \mathcal{H}_0$ is a multi-frame vector for $\pi_{\Lambda}.$

    (\romannumeral 2)
    %First, it follows from \ref{prop: equiv bessel and upper riesz} and Lemma \ref{lem: multi-Bessel} that  $\{\Phi_1,\dots, \Phi_n\}$ has the upper Riesz bound $B':=(\max\limits_{i=1,\dots,n}\|h_i\|_{\infty})^2\cdot \text{rel}(\Lambda)\cdot\sum_{i=1}^n \|g_i\|_{M^1(\mathbb{R}^d)}^2$. We now show that the lower Riesz bound also exists as well.
   Assume that  $\{\Phi_1,\dots \Phi_n\}$ is an average multi-Riesz vector for $\pi_{\Lambda}$ with Riesz bounds $0 < A \le B < \infty$.

Fix $T \in \Omega_0(\Lambda)$. Let $\epsilon >0$ and $c_i^T=(c_i^T(z))_{z\in T}\in \ell^2(R_{\Lambda}^T)$ for $1\le i \le n$, there exist $k_0\in \IN_+$ such that 
\begin{align} \label{estimiate of multi-Riesz vector}
\sum_{i=1}^n\sum_{z\in T\setminus B(0,k_0)}|c_i^T(z)| ^2 \le\frac{\epsilon^2}{4 \cdot(\max_{i=1,\dots,n} \|h_i\|_{\infty}\|g_i\|_2)^2}.
\end{align}
Like (\romannumeral 1), for any integer $k>0$, there exists a clopen neighborhood $N'_{T,k}\ni T$  such that $\mu(N'_{T,k})>0$ and \[|h_i(S-z)-h_i(S'-z)|^2 \le \frac{\epsilon^2}{4\sum_{i=1}^n \|c_i^T\|^2\|g_i\|^2}\] for any $1 \le i\le n,z \in F_{T,k}$ and $S,S'\in N'_{T,k}$. For $1\le i \le n$, we define $c_i \in L^2(R_{\Lambda},\nu)=\int_{\Omega_0(\Lambda)}^\oplus \ell^2(R_\Lambda^S)\text{d}\mu(S)$ be such that $c_i^S(z)=c_i(S,S-z)=c_i^T(z)$ for $S\in N'_{T,k}$ and $z \in F_{T,k}$. Otherwise, $c_i^S(z)=c_i(S,S-z)=0$. 
%Since $\ell^2 \cong \ell^2(R_{\Lambda}^T)$ for $T\in \Omega_0(\Lambda),$ we view $c_i$ as an element in $\ell^2(R_{\Lambda}^T)$ with $\sum_{z\in F_{T,k_0}}|c_i(T,T-z)|^2=\sum_{k\le C_{k_0}}|c_i(k)|^2$ and $c_i(T,T-z)=c_i(S,S-z)$ if $S\in N'_{T,k_0},z\in F_{T,k_0}.$   
% Then for all $S\in N'_{T,k} ,$ we have 
%\begin{align} \label{estimiate of multi-Riesz vector}
  %  &\|\sum_{i=1}^n\sum_{z\in S \setminus B(0,k_0)}c_i^S(z)\pi(z) h_i(T-z)g_i\|^2\le\sum_{i=1}^n\sum_{z\in T \setminus B(0,k_0)}| c_i(T,T-z)h_i(T-z)|^2\|g_i\|^2_2  \notag \\&\le  \max_{i=1,\dots,n} (\|h_i\|_{\infty}\|g_i\|_2)^2\sum_{i=1}^n\sum_{\vert k \vert>M}|c_i(k)|^2 \le\frac{\epsilon^2}{16}. 
%\end{align}
%The inequality (\ref{estimiate of multi-Riesz vector}) plays the same role as (\ref{l2estimate}).
Similar to  (\romannumeral 1), by our construction of $N'_{T,k_0}$ and $\{c_i\}_{i=1}^n$, we can find $T_c\in N'_{T,k_0}$ such that 
\[\|\sum_{i=1}^n\sum_{z\in T_c }c_i(T_c,T_c-z )\pi(z)h_i(T_c-z)g_i\|^2\ge A\sum_{i=1}^n\|c_i^{T_c}\|^2.\]
We also have 
\begin{align*}
   & \| \sum_{i=1}^n \sum_{z\in F_{T,k_0}}  c_i(T,T-z)\pi(z)h_i(T-z)g_i -  \sum_{i=1}^n \sum_{z\in F_{T_c,k_0}} c_i(T_c,T_c-z)\pi(z)h_i(T_c-z)g_i \|^2 \\
   &\le \sum_{z\in F_{T_c,k_0}}\sum_{i=1}^n|h_i(T-z)- h_i(T_c-z) |^2 |c_i^T(z)|^2\|g_i\|^2\\
   &\le \sum_{i=1}^n \|c_i^T\|^2 \cdot \frac{\epsilon^2}{4\sum_{i=1}^n \|c_i^T\|^2}= \frac{\epsilon^2}{4}.
\end{align*}
Combing with (\ref{estimiate of multi-Riesz vector}), the rest is similar to (\romannumeral 1).
 \end{proof}
 
Combining Proposition \ref{single} and Proposition \ref{equ} , we have the following corollary.

\begin{cor} \label{avp} 
Let $\{\Phi_1,\dots \Phi_n\} $ be a finite subset of $ \mathcal{H}_0$ such that each $\Phi_i$ is of the form $\Phi_i(T)=h_i(T)g_i$ for some $h_i\in C(\Omega_0(\Lambda))$ and $g_i\in \CS(\mathbb{R}^d)$. Then the following hold: 
    \begin{enumerate}[label=(\roman*)]
    \item  $\{\Phi_1,\dots \Phi_n\}\subset \mathcal{H}_0 $ is an algebraic generating set for $\mathcal{E}$ if and only if $\{\Phi_1,\dots \Phi_n\}$ is a multi-frame vector for $\pi_{\Lambda}.$
    \item  $\{\Phi_1,\dots \Phi_n\}\subset \mathcal{H}_0 $ is an $C_r^*(R_\Lambda,\sigma)$-linearly independent set in $\mathcal{E}$ with closed $C_r^*(R_\Lambda,\sigma)$ span if and only if $\{\Phi_1,\dots \Phi_n\}$ is a multi-Riesz vector for $\pi_{\Lambda}.$
\end{enumerate}
\end{cor}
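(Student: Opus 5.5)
The plan is to chain Proposition \ref{single} with Proposition \ref{equ}; beyond checking hypotheses, no new argument is needed. The elements $\Phi_i$ are assumed to be elementary tensors $\Phi_i(T)=h_i(T)g_i$ with $h_i\in C(\Omega_0(\Lambda))$ and $g_i\in\CS(\R^d)$. In particular they lie in $\mathcal{H}_0$, so both propositions apply. Lemma \ref{lem: multi-Bessel} guarantees that $\{\Phi_1,\dots,\Phi_n\}$ is a multi-Bessel vector, and hence an average multi-Bessel vector by Remark \ref{rmk: frame is average frame}. Therefore all the analysis, synthesis, frame and Gramian operators appearing in both propositions are well defined and bounded.

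For (i), Proposition \ref{single}\ref{prop: single 1} says that $\{\Phi_1,\dots,\Phi_n\}$ is an algebraic generating set for $\mathcal{E}$ if and only if it is an average multi-frame vector for $\pi_\Lambda$. This is the localization argument: the module frame operator $\mathscr{S}$ is invertible in $\CL(\mathcal{E})$ if and only if its localization $\overline{S}_{\vec\Phi}$ is invertible. Proposition \ref{equ}(i) then upgrades the average multi-frame property to a genuine multi-frame vector with the same bounds, and the converse direction is immediate from Remark \ref{rmk: frame is average frame}. Composing the two equivalences gives (i).

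For (ii), I would argue in the same way. Proposition \ref{single}\ref{prop: single 2} identifies $C_r^*(R_\Lambda,\sigma)$-linear independence with closed span with the average multi-Riesz property, via invertibility of the module Gramian $\mathscr{G}$ and of its localization $\overline{G}_{\vec\Phi}$. Proposition \ref{equ}(ii) then identifies the average multi-Riesz property with the fiberwise multi-Riesz property.

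The only point needing care is that the hypotheses of Proposition \ref{equ} are stated exactly for elementary tensors, which is why the corollary imposes that form. No genuine obstacle remains, because all the analytic work was done in the two propositions. The Riesz half of Proposition \ref{equ}, whose proof was only sketched there, is the step I would scrutinize if anything needed checking.
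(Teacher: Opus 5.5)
Your proposal is correct and follows the paper's route exactly. The paper states the corollary as an immediate consequence of combining Proposition~\ref{single} (algebraic generation, resp.\ linear independence with closed span, is equivalent to the average multi-frame, resp.\ average multi-Riesz, property) with Proposition~\ref{equ} (for elementary tensors, average is equivalent to fiberwise), which is precisely the composition of equivalences you describe.
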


We remark that Lemma \ref{equ} and thus Corollary \ref{avp} are true as well for general $\Phi_1,\dots, \Phi_n\in \CH_0$ via a small adjustment of the proof based on the fact that $\CH_0$ is generated by functions of the form $T\mapsto h(T)g$ where $h\in C(\Omega_0(\Lambda))$ and $g\in \CS(\R^d)$. Nevertheless, we still have the following as an application of current Corollary \ref{avp}. 
%The proof should be compared to \cite[Proposition 3]{K}.

\begin{prop}\label{finitely generated}
    The Hilbert $C_r^*(R_{\Lambda},\sigma_{\Lambda})$-module $\mathcal{E}$ constructed in Proposition \ref{module} is finitely generated.
\end{prop}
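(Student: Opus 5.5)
By Corollary \ref{avp}(i), it suffices to find finitely many $\Phi_1,\dots,\Phi_n\in\mathcal{H}_0$, each of the form $\Phi_i(T)=h_i(T)g_i$ with $h_i\in C(\Omega_0(\Lambda))$ and $g_i\in\CS(\R^d)$, such that $\{\Phi_1,\dots,\Phi_n\}$ is a multi-frame vector for $\pi_\Lambda$. Concretely, we need uniform constants $0<A\le B<\infty$ with
\[A\|f\|_2^2\le \sum_{i=1}^n\sum_{z\in T}|\langle f,\pi(z)h_i(T-z)g_i\rangle|^2\le B\|f\|_2^2\]
for all $T\in\Omega_0(\Lambda)$ and all $f\in L^2(\R^d)$. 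The upper bound is automatic by Lemma \ref{lem: multi-Bessel}, so only the lower bound needs work.

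The plan is to choose all $h_i\equiv 1$ and reduce to a regular multi-window Gabor frame. Let $\rho=\rho(\Lambda)<\infty$ be the hole of $\Lambda$. Every $T\in\Omega(\Lambda)$ is a limit of translates of $\Lambda$, so it has hole at most $\rho$. Hence every cube $Q$ of side $2\rho$ (in the sup norm, after enlarging $\rho$ by a dimensional constant) contains at least one point of $T$.

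Next fix a lattice $\alpha\Z^{2d}$ with $\alpha$ small, and a window $g\in\CS(\R^d)$, say Gaussian, such that $G(g,\alpha\Z^{2d})$ is a frame; this holds for small $\alpha$ by classical results. Choose $\beta=\alpha/N$ with $N$ large. The goal is a finite set $P\subset[-\rho,\rho]^{2d}$ such that, for every $z_0\in\R^{2d}$ and every $T$, the windows $\pi(p)g$, $p\in P$, replace the missing lattice point. A cleaner route is the following perturbation step. For each $T$ and each $\lambda\in\alpha\Z^{2d}$ there is $z_\lambda\in T$ with $|z_\lambda-\lambda|\le\rho$. Writing $\pi(\lambda)g=c\,\pi(z_\lambda)\pi(\lambda-z_\lambda)g$ up to a unimodular factor, the shifted window $\pi(\lambda-z_\lambda)g$ lies in the compact set $\{\pi(w)g:|w|\le\rho\}\subset M^1$. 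Approximate this compact set in $M^1$ by finitely many $g_1=\pi(w_1)g,\dots,g_n=\pi(w_n)g$, with $w_j$ from a fine net of $B(0,\rho)$. Then
\[\sum_{i}\sum_{z\in T}|\langle f,\pi(z)g_i\rangle|^2\ \ge\ \frac{1}{K}\sum_{\lambda}|\langle f,\pi(z_\lambda)\pi(w_{j(\lambda)})g\rangle|^2,\]
where $K$ bounds the multiplicity of the map $\lambda\mapsto z_\lambda$ (finite by uniform discreteness). The right-hand side is the frame sum of a perturbed system $\{\pi(\lambda+e_\lambda)g\}$ with $|e_\lambda|$ small, uniformly in $T$. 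By standard stability of Gabor frames with $M^1$ windows under small uniform jitter (e.g.\ via the Wiener amalgam estimate as in \cite{G01}), this perturbed system is still a frame with lower bound at least $A_0/2$ for a fine enough net. This yields the uniform lower bound $A_0/(2K)$.

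The main obstacle is making the jitter estimate uniform and precise. The error $\sum_\lambda|\langle f,\pi(\lambda)g-c_\lambda\pi(z_\lambda)g_{j(\lambda)}\rangle|^2$ must be bounded by a small multiple of $\|f\|_2^2$. I would handle this using the fact that the difference windows are controlled in $M^1$ and form a relatively separated family, so Lemma \ref{esitimate of STFT}-type amalgam bounds apply with constants depending only on $\operatorname{rel}(\Lambda)$, $\alpha$ and $\rho$. Once the multi-frame property is established for all $T$, Corollary \ref{avp}(i) gives that $\{\Phi_i\}=\{1\otimes g_i\}$ algebraically generates $\mathcal{E}$, so $\mathcal{E}$ is finitely generated.
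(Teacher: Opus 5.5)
Your proposal is correct, and it certifies the multi-frame property by a different route than the paper.

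The windows are the same kind in both arguments: shifts $g_i=\pi(z_i)g$ with $\{z_i\}$ a net of the hole ball $\bar B(0,\rho(\Lambda))$. The paper then relies on two cited results. It applies \cite[Corollary 5.3]{GOR} (a relatively separated set with hole at most $\delta_g$ gives a Gabor frame) to $\bigcup_i(\Lambda+z_i)$, which yields a multi-window frame over $\Lambda$ only. It then uses \cite[Proposition 6, Corollary 2]{K} (or \cite[Theorem 3.9]{ER}) to pass the frame property, with uniform bounds, from $\Lambda$ to every $T\in\Omega_0(\Lambda)$. You bypass both. You treat each $T\in\Omega_0(\Lambda)$ directly, using only $\rho(T)\le\rho(\Lambda)$ (and Lemma \ref{lem: multi-Bessel} for the upper bound). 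You then compare with one fixed lattice frame $G(g,\alpha\Z^{2d})$ via a jitter perturbation and a multiplicity count. This is self-contained, makes uniformity in $T$ automatic, and gives explicit lower bounds such as $(\sqrt{A_0}-\sqrt{\eta(\epsilon)})^2/K$. The cost is that you must prove the perturbation estimate yourself, whereas the paper's proof is shorter because it outsources exactly that work.

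Two points to tighten when you write it up.

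First, the error term is $\sum_\lambda|\langle f,\pi(\lambda)(g-\pi(e_\lambda)g)\rangle|^2$; phases are irrelevant, since $|\langle f,\pi(\lambda+e_\lambda)g\rangle|=|\langle f,\pi(\lambda)\pi(e_\lambda)g\rangle|$. Smallness of each difference window in $M^1$ plus relative separation is not enough by itself here, because the windows vary with $\lambda$. What you need is a common envelope, for instance $\Phi_\epsilon(z)=\sup_{|e|\le\epsilon}|V_\gamma(g-\pi(e)g)(z)|$, with $\|\Phi_\epsilon\|_{W(L^{\infty},\ell^1)}\to 0$ as $\epsilon\to 0$.
\begin{itemize}
\item For the Gaussian this follows by direct computation.
\item For any $g\in M^1$ it follows from continuity of $V_\gamma g$ on cubes, the covariance phase being uniformly close to $1$ on each cube, and dominated convergence in the amalgam norm.
\item A Schur-type estimate then bounds the error by $C_\alpha\|\Phi_\epsilon\|^2_{W(L^{\infty},\ell^1)}\|f\|_2^2$.
\end{itemize}

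Second, the multiplicity is governed by the lattice, not by uniform discreteness of $\Lambda$. Any $\lambda$ with $(z_\lambda,j(\lambda))=(z,j)$ lies in $\alpha\Z^{2d}\cap\bar B(z+w_j,\epsilon)$, so $K=1$ once $\epsilon<\alpha/2$.

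You can also delete the abandoned ``$\beta=\alpha/N$'' sketch.
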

\begin{proof}
Let $g\in \CS(\mathbb{R}^d).$ First it follows from \cite[Corollary 5.3]{GOR} that there exists $\delta_g >0$ such that whenever $\Gamma$ is relatively separated set and its hole $\rho(\Gamma)\le \delta_g,$ the $G(g,\Gamma)=\{\pi(\gamma)g:\gamma\in \Gamma\}$ forms a frame for $L^2(\mathbb{R}^{d}).$

Now, for the Delone set $\Lambda$, denote by $R=\rho(\Lambda)$, the hole of $\Lambda$. This implies $\bigcup_{\lambda \in \Lambda} \bar{B}(\lambda, R)=\mathbb{R}^{2d}$ by definition. Now because the closed ball $\bar{B}(0, R)$ in $\R^{2d}$ is compact, there exists $\{z_i\}_{i=1}^n \subset \mathbb{R}^{2d}$ such that $\bigcup_{i=1}^n\bar{B}(z_i,\delta_g)\supseteq \bar{B}(0,R).$  
Then define $\Gamma:=\bigcup_{i=1}^n(\Lambda+z_i)$, which is relatively seperated and satisfies 
\[\bigcup_{\gamma \in \Gamma} \bar{B}(\gamma,\delta_g)=\bigcup_{\lambda\in \Lambda}\bigcup_{i=1}^n(\lambda+\bar{B}(z_i,\delta_g))\supset\bigcup_{\lambda\in \Lambda}\bar{B}(\lambda, R)=\mathbb{R}^{2d}.\]
Therefore, one has $\rho(\Gamma)\le \delta_g$ and this implies that $\{\pi(\lambda+z_i)g\}_{\lambda\in \Lambda,1\le i\le n}$ is a frame for $L^2(\mathbb{R}^{2d}).$ Let $g_i:= \pi(z_i)g,i=1,\dots,n,$ then $g_i\in \CS(\mathbb{R}^d) $ and $\bigcup_{i=1}^n\mathcal{G}(g_i,\Lambda)$ is  Gabor multi-frame. Now for each $1\leq i\leq n$, define $\Phi_i\in \CH_0$ by $\Phi\equiv g_i$ on the whole $\Omega_0(\Lambda)$. Note that $\Phi_i$ is exactly the map $T\mapsto 1_{\Omega_0(\Lambda)}(T)\cdot g_i$. Then, it follows from \cite[Proposition 6, Corollary 2]{K} ( see also \cite[Theorem 3.9]{ER}) with Definition \ref{defn: multi frame} that $\{\Phi_1,\dots,\Phi_n\}$ is a multi-frame vector for $\pi_{\Lambda}.$ Then, Corollary \ref{avp} implies that the Hilbert $C_r^*(R_{\Lambda}, \sigma_{\Lambda})$-module $\mathcal{E}$ is algebraic finitely generated.
\end{proof}

\subsection{Existence of full Gabor frames and full Riesz sequences}
Let $\tau$ be a tracial state on $C_r^*(R_{\Lambda},\sigma_{\Lambda})$. Then one may define 
\[\tau(\mathcal{E}):=\sum_{i}\tau(p_{ii})\] for any  projection $p=[p_{ij}]\in M_n(C_r^*(R_{\Lambda},\sigma_{\Lambda}))$ such that $\mathcal{E} \cong C_r^*(R_{\Lambda},\sigma_{\Lambda})^np.$ See, e.g., \cite[Section 5.2]{BEV}.  To avoid ambiguity, in this subsection, we denote by $\mu$ a finite $\R^{2d}$-invariant measure on $\Omega(\Lambda)$ and denote by $\mu_0$ is the transverse measure associated to $\mu$ (see, e.g., \cite[Section 2.3]{ER}).
We then recall that the lower and upper Beurling density of a set $\Lambda \subseteq \mathbb{R}^{2d}$ are defined respectively as 
\[D^-(\Lambda)=\liminf_{R\to \infty}\inf_{z\in \mathbb{R}^{2d}}\frac{|\Lambda \cap B(z, R)|}{\text{vol}(B(0,R))} \ \ \text{and }\ \ D^+(\Lambda)=\limsup_{R\to \infty}\sup_{z\in \mathbb{R}^{2d}}\frac{|\Lambda \cap B(z, R)|}{\text{vol}(B(0, R))}.\]
The covolume of $\Lambda \subseteq \mathbb{R}^{2d}$  is then defined to be the positive number 
\[\text{covol}_{\mu}(\Lambda)=\frac{\mu(\Omega(\Lambda))}{\mu_0(\Omega_0(\Lambda)}\]
Moreover, the lower and upper covolume is defined respectively by \[\text{covol}_-(\Lambda)=\inf_{\mu \in \mathcal{P}_{\mathbb{R}^{2d}}(\Omega(\Lambda))}\text{covol}_\mu(\Lambda), \ \ \text{and } \ \ \text{covol}_+(\Lambda)=\sup_{\mu \in \mathcal{P}_{\mathbb{R}^{2d}}(\Omega(\Lambda))}\text{covol}_\mu(\Lambda).\]
When $\Lambda$ is a Delone set, it was shown in \cite[Theorem 1.2]{ER} that 
\[D^-(\Lambda)=\frac{1}{\text{covol}_+(\Lambda)},\ \ \ \text{and }\ \ D^+(\Lambda)=\frac{1}{\text{covol}_{-}(\Lambda)}.\]

\begin{prop} \label{trace}
    Let $\mu$ be a non-zero finite $\mathbb{R}^{2d}$-invariant measure on the hull $\Omega(\Lambda).$ Denote by $\tau_{\mu_0}$ the faithful tracial state on $C_r^*(R_{\Lambda},\sigma_{\Lambda})$ associated to the invariant probability Radon measure $\mu_0$ on $\Omega_0(\Lambda)$ induced by $\mu.$ Then $\tau_{\mu_0}(\mathcal{E})=\operatorname{covol}_\mu(\Lambda).$
\end{prop}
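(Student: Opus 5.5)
Our approach is to compute $\tau_{\mu_0}(\mathcal{E})$ through a Parseval module frame, and then to evaluate the resulting trace as a time-frequency integral. By Proposition \ref{finitely generated}, $\mathcal{E}$ is finitely generated. By \cite[Lemma 3.1]{BEV} and the Frank--Larson theory, it therefore admits a finite module frame $\{\Phi_1,\dots,\Phi_n\}$. Replacing $\Phi_i$ by $\mathscr{S}^{-1/2}\Phi_i$, we may assume the frame is Parseval. Then $\mathcal{E}\cong C_r^*(R_\Lambda,\sigma_\Lambda)^n p$, where $p=[{_\bullet\langle}\Phi_i,\Phi_j\rangle]$ is the Gramian projection. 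Consequently
\[\tau_{\mu_0}(\mathcal{E})=\sum_{i=1}^n\tau_{\mu_0}({_\bullet\langle}\Phi_i,\Phi_i\rangle)=\sum_{i=1}^n\int_{\Omega_0(\Lambda)}\|\Phi_i(T)\|_2^2\,\text{d}\mu_0(T).\]
In the second equality we use Proposition \ref{prop: conditional expectation}: for elements of $\mathcal{H}_0$, ${_\bullet\langle}\Phi_i,\Phi_i\rangle\in\ell_1\cap C_0$, so $E$ is restriction to the unit space. A density argument handles general $\Phi_i\in\mathcal{E}$. Because $\mathscr{S}=\mathrm{id}$ localizes to $\overline{S}_{\vec\Phi}=I$, the family $\vec\Phi$ is an average multi-Parseval frame vector for $\pi_\Lambda$ with respect to $\mu_0$.

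Next we turn the Parseval identity into a pointwise trace formula. For a.e. $T$ the fiber frame operator $S_{\vec\Phi,T}$ equals the identity; this follows from the decomposition $\overline{S}_{\vec\Phi}=\int^\oplus S_{\vec\Phi,T}\,\text{d}\mu_0$ as in Proposition \ref{Parseval}. We then test it against time-frequency shifts of a fixed normalized $h\in\CS(\R^d)$. By Moyal's formula, $\int_{\R^{2d}}|\langle f,\pi(w)h\rangle|^2\,\text{d}w=\|f\|_2^2$. Taking $f=\pi(z)\Phi_i(T-z)$ in the identity $\sum_i\sum_{z\in T}|\langle \pi(w)h,\pi(z)\Phi_i(T-z)\rangle|^2=1$ and integrating over $w$ in a large region shows that the averaged window energy per unit volume equals $1$.

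To connect this with $\operatorname{covol}_\mu$, we integrate over the continuous hull. Recall that $\mu$ disintegrates locally as $\mu_0\times\text{Lebesgue}$ on flow boxes (\cite[Section 2.3]{ER}). Define the function $F(\omega)=\sum_i\sum_{z\in\omega}|\langle h,\pi(z)\Phi_i(\omega-z)\rangle|^2$ on $\Omega(\Lambda)$, extending $\Phi_i$ by transversal translation. Covariance gives $F(\omega-w)$ from the $\pi(w)h$ version, so the Parseval property yields $F\equiv1$ $\mu$-a.e., and hence $\int_{\Omega(\Lambda)}F\,\text{d}\mu=\mu(\Omega(\Lambda))$. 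On the other hand, unfolding the sum over punctures $z\in\omega$ via the transverse measure (the standard Campbell/Palm formula $\int_{\Omega}\sum_{z\in\omega}G(\omega-z,z)\,\text{d}\mu(\omega)=\int_{\Omega_0}\int_{\R^{2d}}G(T,z)\,\text{d}z\,\text{d}\mu_0(T)$) together with Moyal gives
\[\int_{\Omega}F\,\text{d}\mu=\sum_i\int_{\Omega_0}\|\Phi_i(T)\|_2^2\|h\|_2^2\,\text{d}\mu_0(T).\]
Dividing by $\mu_0(\Omega_0(\Lambda))$, which appears because $\tau_{\mu_0}$ uses the normalized transverse measure, we obtain $\tau_{\mu_0}(\mathcal{E})=\mu(\Omega(\Lambda))/\mu_0(\Omega_0(\Lambda))=\operatorname{covol}_\mu(\Lambda)$.

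The main obstacle is the Campbell/Palm unfolding step. Specifically, it must be justified that the cocycle phases cancel in the squared moduli, that summability holds (Lemma \ref{esitimate of STFT} gives uniform $\ell^1$ tails, which allows Fubini), and that the normalization of $\mu_0$ versus the probability measure is kept consistent. This mirrors \cite[Theorem 4.1]{ER}, whose density computation I would essentially follow.
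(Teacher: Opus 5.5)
Your argument is correct, but it differs from the paper's at both ends.

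\textbf{What each proof does.} The paper normalizes at the level of the Gabor system. It replaces the windows from Proposition \ref{finitely generated} by $S^{-1/2}g_i$, so the $\Phi_i$ are constant fields and $E(p_{ii})\equiv\|g_i\|_2^2$. It then identifies $\sum_i\|g_i\|_2^2$ with $\operatorname{covol}_\mu(\Lambda)$ by quoting a multi-window version of \cite[Theorem 3.12]{ER}. You instead normalize inside the Hilbert module via $\mathscr{S}^{-1/2}$. You then prove the density identity yourself, from Moyal's formula and the Campbell/Palm formula.

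\textbf{What your route buys.} Your normalization is the more robust one. For aperiodic $\Lambda$, the Gabor frame operator $S$ need not commute with the shifts $\pi(\lambda)$, $\lambda\in\Lambda$ (it does for lattices). So $\bigcup_i G(S^{-1/2}g_i,\Lambda)$ need not be Parseval. In fact your own computation shows that a constant-window Parseval system would force $\operatorname{covol}_\mu(\Lambda)=\sum_i\|g_i\|_2^2$ for every invariant $\mu$. That is impossible whenever $\operatorname{covol}_-(\Lambda)<\operatorname{covol}_+(\Lambda)$.

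\textbf{What it costs.} Your $\Phi_i$ are arbitrary elements of $\mathcal{E}$, not of $\mathcal{H}_0$. You should therefore say explicitly why they are honest fields, and why the identification $\mathscr{S}^{\tau}=\overline{S}_{\vec\Phi}$ from Proposition \ref{single} (proved only on $\mathcal{H}_0$) still holds. Both follow by continuity:
\begin{enumerate}
\item On $\mathcal{H}_0$ one has $\sup_{T}\|\Phi(T)\|_2^2=\|E({_\bullet\langle}\Phi,\Phi\rangle)\|\le\|\Phi\|_{\mathcal{E}}^2$. Hence $\mathcal{E}$ embeds continuously in $C(\Omega_0(\Lambda),L^2(\mathbb{R}^d))$.
\item Since $j$ is contractive, the formula $j({_\bullet\langle}\Psi,\Phi\rangle)(T,T-z)=\langle\Psi(T),\pi(z)\Phi(T-z)\rangle$ extends to all $\Psi,\Phi\in\mathcal{E}$.
\end{enumerate}
With these two facts, $\tau_{\mu_0}({_\bullet\langle}\Phi_i,\Phi_i\rangle)=\int\|\Phi_i(T)\|_2^2\,\text{d}\mu_0$ and the fiberwise Parseval identity both follow as in the paper.

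\textbf{On the obstacles you flag.} They are milder than you suggest. The Campbell integrand $\sum_i|\langle h,\pi(z)\Phi_i(T)\rangle|^2$ is nonnegative, so Tonelli suffices and no $\ell^1$-tail estimate is needed. The phases disappear in the moduli. The heuristic about integrating $w$ over a large region can be dropped once you use the Palm formula. Do keep separate symbols for the transverse measure appearing in Campbell's formula and its probability normalization used in $\tau_{\mu_0}$.
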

\begin{proof}
 In the proof of Proposition~\ref{finitely generated}, we may choose $\bigcup_{i=1}^n\mathcal{G}(g_i,\Lambda)$ to be Parseval Gabor multi-frame as  we can always look at $\bigcup_{i=1}^n\mathcal{G}(S^{-1/2}g_i,\Lambda)$ where $S$ is the frame operator associated to $\bigcup_{i=1}^n\mathcal{G}(g_i,\Lambda).$ Then $\{ \Phi_i:\Phi_i(T)=g_i \  \text{for all} \ T\in \Omega_0,i=1,\dots,n \}$ is a Parseval multi-frame vector for $\pi_{\Lambda}.$  Consequently, $\vec \Phi =\{\Phi_i\}_{i=1}^n$ is an average multi-Parseval frame vectors for $\pi_{\Lambda}$ and then the associated average frame operator $\overline{S}_{\vec \Phi}=I_{\CB(\CH)}$ by (\ref{ieq:average multi-frame}). Since the associated module frame operator $\mathscr{S}$ coincides with  $\overline{S}_{\vec \Phi}$ on $\CH_0$ by the proof of Proposition~\ref{single}, we deduce that $\mathscr{S}$ is identity operator and then $\{\Phi_i\}_{i=1 }^n$ is a Parseval frame for $\CE.$

Denote by $A=C^*_r(R_\Lambda,\sigma_\Lambda)$ for simplicity. Denote by $\mathscr{C}$ and $\mathscr{D}$ the module analysis operator and module synthesis operator associated to this Parseval frame $\{\Phi_1,\dots,\Phi_n\}$ in the Hilbert $A$-module $\CE$, respectively (see Section \ref{sec: Hilbert mod}). Then it follows from \cite[Proposition 3.3 (i)]{BEV} (and its proof) that the adjointable isometry $\mathscr{C}:\CE\to A^np $ is an isomorphism where $p=[p_{ij}]\in M_n(A)$ is a projection such that the $i$-th row vector of $p$ is $\mathscr{CD}e_i$. Here $e_i=(\delta_{ij}1_{A})_{j=1}^n\in A^n.$ A simple calculation shows 
   $\mathscr{C}\mathscr{D}e_i=\mathscr{C}(\Phi_i)=(_{\bullet}\langle\Phi_i,\Phi_j \rangle)_{j=1}^n,$  and therefore $p_{ij}={_{\bullet}\langle}\Phi_i,\Phi_j \rangle$. This implies 
   \begin{align}
       \tau_{\mu_0}(\mathcal{E})&=\sum_{i=1}^n\tau_{\mu_0}(p_{ii})=\int_{\Omega_0(\Lambda)}\sum_{i=1}^nE(p_{ii})\text{d}\mu_0\notag
       =\int_{\Omega_0(\Lambda)}\sum_{i=1}^np_{ii}(T,T)\text{d}\mu_0(T)=\int_{\Omega_0(\Lambda)}\sum_{i=1}^n\|g_i\|^2\text{d}\mu_0\notag\\
       &=\text{covol}_\mu(\Lambda). \notag
   \end{align}
   The third equality is due to Proposition \ref{prop: conditional expectation} as each $p_{ii}={_{\bullet}\langle} \Phi_i,\Phi_i \rangle \in  \ell_{1}(R_{\Lambda}, \sigma_\Lambda)\cap C_0(R_\Lambda)$
   and the last equality comes from the multi-window version of \cite[Theorem 3.12]{ER}, which has the same proof.
\end{proof}
\begin{thm} \label{Thm:converse Balian-low}
    Let $\Lambda  \subset \mathbb{R}^{2d}$ a FLC, repetitive and aperiodic Delone set. Then the following hold:
    \begin{enumerate} [label=(\roman*)]
        \item If $D^-(\Lambda)  >1,$ then there exist disjoint sets $\Lambda_i \subset \Lambda,i=1,\dots,n$ and $\vec{g}=\{g_i\}_{i=1}^n\subset \CS(\R^d)$ such that $\bigsqcup\limits_{i=1}^n \Lambda_i=\Lambda$ and 
        $\{\pi(\Lambda_i)g_i\}_{i=1}^n$ is a frame for $L^2(\mathbb{R}^d)$, i.e., $G_F(\Lambda, \vec{g})$ is a full Gabor frame for $L^2(\R^d)$.
        \item  If $D^+(\Lambda)  <1,$ then there exist disjoint sets $\Lambda_i \subset \Lambda,i=1,\dots,n$ and $\vec{g}=\{g_i\}_{i=1}^n\subset \CS(\R^d)$ such that $\bigsqcup\limits_{i=1}^n \Lambda_i=\Lambda$ and 
        $\{\pi(\Lambda_i)g_i\}_{i=1}^n$ is a Riesz sequence for $L^2(\mathbb{R}^d)$, i.e., $G_F(\Lambda, \vec{g})$ is a full Riesz sequence for $L^2(\R^d)$.
    \end{enumerate}
\end{thm}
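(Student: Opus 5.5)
Throughout, write $A=C^*_r(R_\Lambda,\sigma_\Lambda)$. The plan for (i) is to realize a single generator of $\mathcal{E}$ of the form $\Phi(T)=\sum_{i=1}^n 1_{U_i}(T)\,g_i$, with $\{U_i\}$ a clopen partition of $\Omega_0(\Lambda)$, and then read off the full Gabor frame at the unit $T=\Lambda$.

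\textbf{Step 1: identify $\mathcal{E}$.} By Proposition \ref{finitely generated} and Proposition \ref{trace}, $\mathcal{E}\cong A^m p$ for some projection $p\in M_m(A)$. Every tracial state of $A$ is $\tau_{\mu_0}$ for an invariant measure, so
\[
\tau(\mathcal{E})=\operatorname{covol}_\mu(\Lambda)\le \operatorname{covol}_+(\Lambda)=1/D^-(\Lambda)<1=\tau(1_A).
\]
I would justify the description of traces via the principal, ample structure: the conditional expectation is the only source of traces, since every trace factors through $E$ on a principal groupoid. The trace space is compact, so the inequality $\tau(p)<\tau(1_A)$ holds uniformly. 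Strict comparison (Corollary \ref{cor:C^*_r(R_varphi,sigma) has the strict comparison}) then gives $p\preceq 1_A$, i.e. $\mathcal{E}$ embeds as a direct summand of $A$. Hence $\mathcal{E}$ is singly generated, and by the argument of \cite[Proposition 3.3]{BEV} it has a single Parseval frame element $\eta\in\mathcal{E}$.

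\textbf{Step 2: perturb $\eta$ into $\mathcal{H}_0$ of the right shape.} The set of single generators is open, since invertibility of the frame operator $\mathscr{S}$ is an open condition and $\mathscr{S}$ depends continuously on the generator. So $\eta$ can be approximated in $\mathcal{E}$ by some $\Phi\in\mathcal{H}_0$ that is still a generator. Elements of $\mathcal{H}_0$ are finite sums $\sum_j h_j\otimes f_j$ with $h_j\in C(\Omega_0(\Lambda))$. Because $\Omega_0(\Lambda)$ is totally disconnected, each $h_j$ can be uniformly approximated by a locally constant function. Uniform approximation of the $h_j$ controls the module norm through the I-norm estimate of Proposition \ref{prop: pre module}(i), which is uniform in the coefficient functions. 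Hence we may take $\Phi=\sum_i 1_{U_i}\otimes g_i$ with $U_i$ clopen and pairwise disjoint, and $g_i\in\mathcal{S}(\mathbb{R}^d)$.

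\textbf{Step 3: extract the frame.} Corollary \ref{avp}, in the version for general $\Phi\in\mathcal{H}_0$ remarked after it, says $\Phi$ is a frame vector for $\pi_\Lambda$. Evaluating at $T=\Lambda\in\Omega_0(\Lambda)$ shows that $\{\pi(z)\Phi(\Lambda-z)\}_{z\in\Lambda}$ is a frame for $L^2(\mathbb{R}^d)$. Setting $\Lambda_i=\{z\in\Lambda:\Lambda-z\in U_i\}$, these sets partition $\Lambda$, and the frame is exactly $G_F(\Lambda,\vec g)$.

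\textbf{Part (ii).} The argument is dual. Now
\[
\tau(1_A)=1<1/D^+(\Lambda)=\operatorname{covol}_-(\Lambda)\le\tau(\mathcal{E})
\]
for all traces, so strict comparison embeds $A$ as a direct summand of $\mathcal{E}$. This produces an element whose Gramian is invertible, i.e. $A$-linearly independent with closed span. We perturb it into $\mathcal{H}_0$ with locally constant coefficients as in Step 2, apply Proposition \ref{single}(ii) and Proposition \ref{equ}(ii) to get a Riesz vector, and evaluate at $\Lambda$.

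\textbf{Main obstacle.} I expect the hardest part to be Steps 2–3. Corollary \ref{avp} is stated only for elementary tensors $h_i\otimes g_i$, so one must check that the passage from average frames to genuine frames in Proposition \ref{equ} survives finite sums. Its proof only uses continuity of the coefficients and the uniform tail bounds of Lemma \ref{esitimate of STFT}, so I would redo it for $\sum_i 1_{U_i}\otimes g_i$ directly; local constancy even simplifies the continuity estimates there. A secondary point is converting "generator" into "frame vector" after the perturbation, which goes through Proposition \ref{single} and the openness of invertibility.
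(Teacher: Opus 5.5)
Your proposal is correct and follows essentially the same route as the paper. Canonical traces plus Proposition~\ref{trace} give $\tau(\mathcal{E})<1$ (resp.\ $>1$) for every trace, strict comparison with \cite[Proposition 5.2]{BEV} gives a single generator (resp.\ an element with invertible Gramian), density of locally constant coefficients together with openness of these conditions moves it to the form $\sum_i 1_{U_i}g_i$, and Corollary~\ref{avp} (in the general form the paper also uses implicitly) yields the full frame or Riesz sequence; the only slip is that $\Lambda\in\Omega_0(\Lambda)$ requires $0\in\Lambda$, which you can arrange by translating $\Lambda$ or by evaluating at $\Lambda-\lambda_0$ for some $\lambda_0\in\Lambda$ and using the commutation relations.
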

\begin{proof}
(\romannumeral 1)  According to \cite[Corollary 1.15]{BHK}, there is a bijection between non-zero finite $\mathbb{R}^{2d}$-invariant measure $\mu$ on the hull $\Omega(\Lambda)$ and invariant probability Radon measure $\mu_0$ on $\Omega_0(\Lambda).$
   Meanwhile, since the tiling groupoid $R_{\Lambda}$ is principal, every tracial state $\tau$ on $C_r^*(R_{\Lambda},\sigma_{\Lambda})$ is canonical, i.e., $\tau=\tau_{\mu_0} $ for some invariant probability Radon measure $\mu_0$ on $\Omega_0(\Lambda)$ (see \cite[Proposition \uppercase\expandafter{\romannumeral2 }.5.4]{R}). Suppose  $D^-(\Lambda)>1$. Then from the discussion before Proposition \ref{trace}, we have
   \begin{equation}\label{densityeq}
       \text{covol}_{\mu}(\Lambda)\le \text{covol}_+(\Lambda)=\frac{1}{D^-(\Lambda)}<1,
   \end{equation}
    for all non-zero finite $\mathbb{R}^{2d}$-invariant measure $\mu$ on the continuous hull $\Omega(\Lambda).$ Therefore, we can combine Proposition \ref{trace}  with (\ref{densityeq}) to deduce that $\tau(\mathcal{E})<1$  for all tracial states $\tau$ on $C_r^*(R_{\Lambda},\sigma_\Lambda).$ Applying  Corollary~\ref{cor:C^*_r(R_varphi,sigma) has the strict comparison} and Proposition~\ref{finitely generated} and \cite[Proposition 5.2]{BEV} we can find $\eta \in \mathcal{E}$ such that $\{\eta\}$ is the generating set of $\mathcal{E}.$ As $\Omega_0(\Lambda)$ is Cantor set, the function space $H(\Omega_0(\Lambda)):=\{\sum_{i=1}^na_i\chi_{A_i}:a_i\in \C,A_i\subset \Omega_0  \ \text{is clopen set,} i=1,\dots,n,\ \Omega_0=\bigsqcup\limits_{i=1}^n A_i\}$ is dense in $C(\Omega_0(\Lambda)).$ Let $\Phi \in \mathcal{H}_0$ be such that $\Phi(T)=h(T)g$ for some $h\in C(\Omega_0(\Lambda))$ and $g \in \CS(\R^{d})$ and $\Phi'\in H(\Omega_0(\Lambda)) \odot \CS(\R^d)$ be such that $\Phi'(T)=h'(T)g$ for some $h'\in H(\Omega_0(\Lambda))$. Then we have 
   \begin{align*}
       \|\Phi- \Phi'\|_{\CE}^2&=\|{_\bullet \langle} \Phi-\Phi', \Phi-\Phi' \rangle\|_{C_r^*(R_{\Lambda},\sigma_{\Lambda})}\\
       &\le \|{_\bullet \langle} \Phi-\Phi', \Phi -\Phi'\rangle\|_{\ell^1(R_{\Lambda}, \sigma_{\Lambda})}=\sup_{T \in \Omega_0(\Lambda)}\sum_{z\in T}|\langle (h-h')(T)g, \pi (z) (h-h')(T-z)g \rangle|\\
       &\le \|h-h'\|_{\infty}^2 \sup_{T \in \Omega_0(\Lambda)}\sum_{z\in T}|V_gg(z)|\\
       &\le\|h-h'\|_{\infty}^2 \sup_{T \in \Omega_0(\Lambda)} \operatorname{rel}(T)\|V_gg\|_{W(L^{\infty},\ell^1)}=\|h-h'\|_{\infty}^2  \operatorname{rel}(\Lambda)\|V_g g\|_{W(L^{\infty},\ell^1)}.
   \end{align*}
   Hence $\overline{H(\Omega_0(\Lambda))\odot \CS(\R^d)}^{\|\cdot\|_{\mathcal{E}}}=\overline{\mathcal{H}_0}^{\|\cdot\|_{\mathcal{E}}}=\mathcal{E}.$ Using \cite[Proposition 3.3]{BEV}, we can choose $\eta \in H(\Omega_0(\Lambda))\odot \CS(\R^d)$ and  there exist disjoint clopen sets $A_i \subset \Omega_0(\Lambda),i=1,\dots,n$ and $\{g_i\}_{i=1}^n\subset \CS(\R^d)$ such that $\bigsqcup\limits_{i=1}^n A_i=\Omega_0(\Lambda)$ and $\eta(T)=g_i$ when $T\in A_i.$ Denote $\Lambda_i:=\{z\in\Lambda:\Lambda-z \in A_i\}$ then  $\bigsqcup\limits_{i=1}^n \Lambda_i=\Lambda$ and $\{\pi(\Lambda_i)g_i\}_{i=1}^n$ is a frame for $L^2(\mathbb{R}^d)$ by Corollary~\ref{avp}.

   (\romannumeral 2) If $D^+(\Lambda)<1,$ from the discussion before Proposition \ref{trace}, we have
   $$\text{covol}_{\mu}(\Lambda)\ge \text{covol}_-(\Lambda)=\frac{1}{D^+(\Lambda)}>1,$$
    for all non-zero finite $\mathbb{R}^{2d}$-invariant measure $\mu$ on the continuous hull $\Omega(\Lambda).$ From the proof in (\romannumeral 1), we can obtain that $\tau (\mathcal{E})>1$ for every tracial state $\tau$ on $C_r^*(R_{\Lambda},\sigma).$ Applying  Corollary \ref{cor:C^*_r(R_varphi,sigma) has the strict comparison} and \cite[Proposition 5.2]{BEV} we can find $\eta \in \mathcal{E}$ such that $\{\eta\}$ is closed $C_r^*(R_{\Lambda},\sigma)$-span in $\CE.$ Like (\romannumeral 1), using \cite[Proposition 3.3]{BEV}, we can choose $\eta \in H(\Omega_0(\Lambda))\odot \CS(\R^d)$ and  there exists disjoint clopen sets $A_i \subset \Omega_0(\Lambda)$ for $i=1,\dots,n$, and $\{g_i\}_{i=1}^n\subset \CS(\R^d)$ such that $\bigsqcup\limits_{i=1}^n A_i=\Omega_0(\Lambda)$ and $\eta(T)=g_i$ when $T\in A_i.$ Denote by $\Lambda_i:=\{z\in\Lambda:\Lambda-z \in A_i\}$. Then  $\bigsqcup\limits_{i=1}^n \Lambda_i=\Lambda$ and $\{\pi(\Lambda_i)g_i\}_{i=1}^n$ is a Riesz sequence for $L^2(\mathbb{R}^d)$ by Corollary~\ref{avp}.
\end{proof}

\begin{rmk}
    Since each part $\Lambda_i\subset \Lambda$ is uniformly separated in $\R^{2d}$ and $g_i \in \CS,$ \cite[Corollary 8]{G03} implies that the Gabor system $\{\pi(\Lambda_i)g_i\}$ is a union of Riesz sequence. So we can refine the frame obtained in Theorem~\ref{Thm:converse Balian-low}(\romannumeral 1) such that $\{\pi(\Lambda_i)g_i\}_{i=1}^n$ is a frame for $L^2(\R^d)$ while for each $i$, $\{\pi(\Lambda_i)g_i\}$ is a Riesz sequence for $L^2(\R^d)$.
\end{rmk}

\section{Balian-Low theorem for full Gabor frames and full Riesz sequences}\label{sec: Balian-Low}
In this section,  we prove the ``only if '' part of Theorem \ref{thm: main 1}. In particular, this will strengthen \cite[Theorem 1.1]{CDH}. Overall, the proof is a small modification of the arguments in \cite{GOR}. Denoted by $(\bigoplus_{i\in I} X_i)_p$ the diect sum of spaces $X_i$ under $\ell_p$-norm.

First we recall Beurling's notion of weak convergence of sets (see,e.g.,\cite[Section 4]{GOR}). A sequence $\{\Lambda_n\} $ of subsets of $\R^d$ converges weakly to $\Lambda$, denoted by $\Lambda_n \xrightarrow{w} \Lambda$, if for every $R>0$ and $\epsilon >0$, there exists $n_0\in \N $ such that for all $n \ge n_0$,
\[\Lambda \cap B(0,R) \subseteq \Lambda_n +B(0, \epsilon) \ \text{and} \ \Lambda_n \cap B(0,R) \subseteq \Lambda + B(0 ,\epsilon).\] 
Given a set $\Lambda \subset \R^d$, denoted by $W(\Lambda)$ the set of all weak limits of the translated sets $\Lambda +z$ for $z\in \R^d$. For a relatively separated set $\Lambda \subset \R^d$ and a sequence $\{z_k\} \subseteq \R^d$, it follows from \cite[Lemma 4.5]{GOR} that there exists a subsequence $\{z_{k_n}\}$ and a relatively separated set $\Gamma \subseteq \R^d$ such that $\Lambda+z_{k_n} \xrightarrow{w} \Gamma$.
Given two $n$-tuples of sets $\vec \Lambda=(\Lambda_1, \dots ,\Lambda_n)$ and $\vec \Gamma=(\Gamma_1 ,\dots ,\Gamma_n)$, we say that $\vec \Gamma \in W(\vec \Lambda)$ if there exists a sequence $\{z_k:k\ge 1\} \subseteq \R^d$ such that $\Lambda_i +z_k \xrightarrow{w} \Gamma_i$ for all $1\le i \le n $ (See \cite[Section 2]{GRS20}). 

Let $\{\Lambda_i\}_{i=1}^n\subseteq \R^{2d}$ be relatively separated sets, and denote $\vec\Lambda=(\Lambda_1,\dots,\Lambda_n)$. Let $\vec{g}=(g_1, \dots ,g_n) \in M^1(\R^d)^n$.
We denote the analysis operator associated to $\bigcup_{i=1}^n G(g_i,\Lambda_i)$ by $C_{\vec{g},\vec{\Lambda}}$, defined as
\begin{align*}
    &C_{\vec{g},\vec{\Lambda}}f:=(\langle f,\pi(\lambda)g_i \rangle)_{\lambda\in \Lambda_i,1\le i\le n}, \quad f\in M^{p}(\R^d),\\
&C_{\vec{g},\vec{\Lambda}}^* \vec{c}:=\sum_{i=1}^n\sum_{\lambda \in \Lambda_i}c_{i}(\lambda)\pi(\lambda)g_i, \quad \vec{c}=(c_1 ,\dots ,c_n)\in  (\bigoplus_{i=1}^n \ell^p(\Lambda_i))_p.
\end{align*}
According to \cite[Corollary 12.1.12]{G01} and \cite[Theorem 12.2.1]{G01}, for $p \in [1,\infty]$, it is direct to verify 
\begin{align} 
    &\|C_{\vec{g},\vec{\Lambda}}f\|_{\ell ^p}\lesssim (\sum_{i=1}^n\|g_i\|_{M^1}\operatorname{rel}(\Lambda_i))\|f\|_{M^p},\notag\\
    &\|C_{\vec{g},\vec{\Lambda}}^* \vec{c}\|_{M^p}\lesssim (\sum_{i=1}^n\|g_i\|_{M^1}\operatorname{rel}(\Lambda_i))\|\vec{c}\|_{\ell^p}. \label{full Bessel sequence}
\end{align}
%Therefore, if we restrict operators  $C_{\vec{g},\vec{\Lambda}}$ and $C_{\vec{g},\vec{\Lambda}}^*$ to $M^p(\R^d)$ and $\ell^p(\vec \Lambda)$, respectively, operators $C_{\vec{g},\vec{\Lambda}}:M^p(\R^d)\to \ell^p(\vec \Lambda)$ and $C_{\vec{g},\vec{\Lambda}}^*:\ell^p(\vec \Lambda) \to M^p(\R^d)$ are well-defined. 
We say  $\bigcup_{i=1}^n G(g_i,\Lambda_i)$ is a p-frame for $M^p(\R^d)$ if $C_{\vec{g},\vec{\Lambda}}: M^p(\R^d) \to (\bigoplus_{i=1}^n \ell^p(\Lambda_i))_p$ is bounded below and  $\bigcup_{i=1}^n G(g_i,\Lambda_i)$ is a p-Riesz sequence for $M^p(\R^d)$ if $C^*_{\vec{g},\vec{\Lambda}}:(\bigoplus_{i=1}^n \ell^p(\Lambda_i))_p \to M^p(\R^d)  $ is bounded below. 

For $z=(x,\omega)\in \R^{2d}$, denote $\vec \Lambda +z=(\Lambda_1 +z, \dots,\Lambda_n +z)$. The twisted shift operator $\kappa(z):(\bigoplus_{i=1}^n \ell^{\infty}(\Lambda_i))_{\infty} \to (\bigoplus_{i=1}^n \ell^{\infty}(\Lambda_i+z))_{\infty}$ is defined by 
\[(\kappa(z)\vec c)_{i}(\lambda+z):=e^{-2\pi x \lambda_2}c_{i}(\lambda), \quad \lambda=(\lambda_1,\lambda_2)\in \Lambda_i, 1\le i \le n.\]
The commutation relations of time-frequency shift operator (\ref{eq:commutation relation}) implies that 
\begin{equation} \label{relation between synthesis operator}
    \pi(z)C_{\vec{g},\vec{\Lambda}}^*=C_{\vec{g},\vec{\Lambda}+z}^*\kappa (z) \ \text{and}\ e^{2\pi ix \omega}C_{\vec{g},\vec{\Lambda}}\pi(-z)=e^{-2\pi ix \omega}\kappa(-z)C_{\vec{g},\vec{\Lambda}+z}. 
\end{equation}

\begin{defn}[Time-frequency molecules {\cite[section 3]{GOR}}] \label{def:time-frequncy molecules}
    We say $\{f_\lambda:\lambda\in \Lambda\}\subseteq L^2(\R^d)$ is a set of time-frequency molecules if $\Lambda\subseteq \R^{2d}$ is a relatively separated set, and there exists a non-zero function $g\in M^1(\R^d)$ as well as an envelope function $\Phi \in W(L^{\infty},\ell^1)(\R^{2d})$ such that 
    \[|V_g f_\lambda(z)| \le \Phi(z-\lambda), \quad \text{a.e.} \ z\in \R^d, \lambda \in \Lambda.\]
\end{defn}
    Let $\{\Lambda_i\}_{i=1}^n\subseteq \R^{2d}$ be relatively separated sets. Taking $g,g_1,\dots, g_n\in M^1(\R^d)$ and $\Phi=\sum_{i=1}^n|V_gg_i|\in W(L^{\infty},\ell ^1)(\R^{2d})$, we have
    \[|V_g \pi(\lambda)g_i(z)|=|V_gg_i(z-\lambda)|\le \sum_{i=1}^n|V_gg_i(z-\lambda)|=\Phi(z-\lambda).\]
    Therefore, the multi-window Gabor system $\bigcup_{i=1}^n G(g_i,\Lambda_i)$ is a set of time-frequency molecules. Using \cite[Theorem 3.2]{GOR}, we have that if  $\bigcup_{i=1}^n G(g_i,\Lambda_i)$ is a p-frame for $M^p(\R^d)$ for some $p \in [1,\infty]$, then it is a p-frame for $M^p(\R^d)$ for all $p\in [1,\infty]$. Similarly, if $\bigcup_{i=1}^n G(g_i,\Lambda_i)$ is a p-Riesz sequence for $M^p(\R^d)$ for some $p \in [1,\infty]$, then it is a p-Riesz sequence for $M^p(\R^d)$ for all $p\in [1,\infty]$.

\begin{thm}\label{thm:characterization of full frames}
    Assume that  $\Lambda_1,\dots,\Lambda_n$ are relatively separated sets in $\R^{2d}$ and  $\vec{g}=(g_1, \dots ,g_n) \in  M^1(\R^d)^n$. Set $\vec\Lambda=(\Lambda_1,\dots ,\Lambda_n)$ Then the following are equivalent.
    \begin{enumerate}[label=(\roman*)]
    \item  $\bigcup_{i=1}^n G(g_i,\Lambda_i)$ is a frame for $L^2(\R^d)$.
    \item\label{thm:characterization of full frames: condition 2} $C_{\vec{g},\vec{\Gamma}}$ is one-to-one from $M^{\infty}(\R^d)$ to $(\bigoplus_{i=1}^n \ell^{\infty}(\Gamma_i))_{\infty}$ for every weak limit $\vec{\Gamma}=(\Gamma_1,\dots ,\Gamma_n) \in W(\vec{\Lambda})$. 
\end{enumerate}
\end{thm}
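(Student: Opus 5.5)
The plan is to adapt the argument of \cite[Theorem 5.1]{GOR} (and its multi-window vector-valued variant in \cite[Section 6]{GRS20}) to the tuple $\vec\Lambda$. Throughout, the operators $C_{\vec g,\vec\Lambda}$ and $C^*_{\vec g,\vec\Lambda}$ are bounded on all $M^p$ and $\ell^p$ levels by (\ref{full Bessel sequence}). The union $\bigcup_i G(g_i,\Lambda_i)$ is a family of time-frequency molecules, so by \cite[Theorem 3.2]{GOR} the frame property for $L^2(\R^d)$ is equivalent to being a $p$-frame for every (equivalently, some) $p\in[1,\infty]$. In particular (i) is equivalent to $C_{\vec g,\vec\Lambda}$ being bounded below on $M^\infty(\R^d)$, and I will work entirely at $p=\infty$.

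For (i)$\Rightarrow$(ii), I first note that being a $\infty$-frame is stable under translation with the same lower bound. Indeed, by (\ref{relation between synthesis operator}), $C_{\vec g,\vec\Lambda+z}$ equals $C_{\vec g,\vec\Lambda}\pi(-z)$ up to the isometry $\kappa$ and unimodular factors, and $\pi(-z)$ is an isometry of $M^\infty$. Now take $\vec\Gamma\in W(\vec\Lambda)$ with $\Lambda_i+z_k\xrightarrow{w}\Gamma_i$ for all $i$, and $f\in M^\infty$ with $C_{\vec g,\vec\Gamma}f=0$. Fix $\epsilon>0$, and choose $w$ with $|V_gf(w)|\ge\|f\|_{M^\infty}/2$ after normalizing. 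Shifting $f$ so that this point lies near the origin, I use the uniform lower bound $A\|\pi(w)f\|_{M^\infty}\le \|C_{\vec g,\vec\Lambda+z_k}\pi(w) f\|_\infty$. I then compare the samples $V_{g_i}f$ on $\Lambda_i+z_k$ with those on $\Gamma_i$: inside a large ball $B(0,R)$ the points are $\epsilon$-close, so uniform continuity of $V_{g_i}f$ in the $W(C_0,\ell^1)$ sense makes the difference small. The points outside $B(0,R)$ must be handled by localization; here I follow \cite[Lemma 4.6 / Theorem 5.1]{GOR}, which uses weak-$*$ compactness and the fact that the $\ell^\infty$-norm of a molecule expansion is attained in a bounded region up to $\epsilon$. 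This yields $f=0$.

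For (ii)$\Rightarrow$(i), I argue by contradiction, and I expect this to be the main obstacle. Suppose $C_{\vec g,\vec\Lambda}$ is not bounded below on $M^\infty$. Then there are $f_k$ with $\|f_k\|_{M^\infty}=1$ and $\|C_{\vec g,\vec\Lambda}f_k\|_\infty\to0$. Choose $w_k$ with $|V_gf_k(w_k)|\ge 1/2$ and set $h_k=\pi(-w_k)f_k$. By (\ref{relation between synthesis operator}), $\|C_{\vec g,\vec\Lambda-w_k}h_k\|_\infty\to0$. Passing to subsequences via \cite[Lemma 4.5]{GOR} \emph{simultaneously for each $i$}, we get $\Lambda_i-w_k\xrightarrow{w}\Gamma_i$ for all $i$ with a common subsequence, so $\vec\Gamma\in W(\vec\Lambda)$. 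Since $M^\infty=(M^1)^*$, a further subsequence gives $h_k\to h$ weak-$*$, i.e. $V_gh_k\to V_gh$ uniformly on compacts. Then $|V_gh(0)|\ge1/2$, so $h\neq0$. The delicate point is to show $C_{\vec g,\vec\Gamma}h=0$: for $\gamma\in\Gamma_i$, pick $\lambda_k\in\Lambda_i-w_k$ converging to $\gamma$. Equicontinuity of $\{V_{g_i}h_k\}$, which follows from $g_i\in M^1$ and the uniform bound on $\|h_k\|_{M^\infty}$, then gives $V_{g_i}h(\gamma)=\lim_k V_{g_i}h_k(\lambda_k)=0$. This contradicts (ii).

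Finally, (i) is recovered from the $\infty$-frame property via \cite[Theorem 3.2]{GOR}.
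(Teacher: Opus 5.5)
Your (ii)$\Rightarrow$(i) is essentially the paper's argument, and that half is fine: you normalize, recenter at a near-maximizer, extract a weak-$*$ limit $h\neq 0$ and a common weak limit $\vec\Gamma\in W(\vec\Lambda)$, and pass the vanishing samples to the limit by equicontinuity on compacts.

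The gap is in (i)$\Rightarrow$(ii), exactly at the step you defer. For $f\in M^\infty(\R^d)$ the sample sequence $C_{\vec g,\vec\Lambda+z_k}f$ has no decay. So the supremum in $A\|f\|_{M^\infty}\le\|C_{\vec g,\vec\Lambda+z_k}f\|_\infty$ may be attained only at points far outside any fixed ball $B(0,R)$. Out there the weak convergence $\Lambda_i+z_k\xrightarrow{w}\Gamma_i$ carries no information, so the hypothesis $V_{g_i}f=0$ on $\Gamma_i$ says nothing about those samples, and no contradiction follows. Your assertion that ``the $\ell^\infty$-norm of a molecule expansion is attained in a bounded region up to $\epsilon$'' is false for general elements of $M^\infty$. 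Also, \cite[Lemma 4.6]{GOR} is a norm bound for $\sum_\lambda c_\lambda\delta_\lambda$ in $W(\mathcal{M},L^\infty)$, not a localization statement. A sharp truncation of $V_gf$ to a ball does not help either: it destroys the vanishing of the samples near the truncation boundary. A minor point: $V_{g_i}f\notin W(C_0,\ell^1)$ for $f\in M^\infty$; only $|V_{g_i}f|$ is bounded and uniformly continuous.

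The paper avoids the problem by dualizing.
\begin{itemize}
\item The $\infty$-frame property means $C^*_{\vec g,\vec\Lambda}$ maps $(\bigoplus_i\ell^1(\Lambda_i))_1$ onto $M^1(\R^d)$.
\item By covariance and the open mapping theorem, every $f\in M^1$ has coefficient vectors $\vec c^{\,k}$ on $\vec\Lambda-z_k$ with $\|\vec c^{\,k}\|_1\le C\|f\|_{M^1}$, uniformly in $k$.
\item Viewed as measures, these have a weak-$*$ limit supported on the $\Gamma_i$ by \cite[Lemma 4.3]{GOR}.
\item Testing against $\overline{V_{g_i}\pi(z)g}\in C_0(\R^{2d})$ gives $f=C^*_{\vec g,\vec\Gamma}\vec c$. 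Hence $C^*_{\vec g,\vec\Gamma}$ is onto $M^1$, and $C_{\vec g,\vec\Gamma}$ is injective on $M^\infty$.
\end{itemize}
Coefficient mass escaping to infinity is harmless here because the test functions vanish at infinity. That is the localization your argument lacks.

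If you want to keep your direct route, use a \emph{slowly varying} cut-off instead of a sharp one.
\begin{itemize}
\item Take $\|g\|_2=1$ and $\chi\in C_c(\R^{2d})$ Lipschitz with $0\le\chi\le1$, $\chi\equiv 1$ on $B(0,1)$ and $\operatorname{supp}\chi\subseteq B(0,2)$. Put $F_R=V_g^*\bigl(\chi(\cdot/R)V_gf\bigr)$.
\item The reproducing formula gives $V_{g_i}F_R(\lambda)=\chi(\lambda/R)V_{g_i}f(\lambda)+o(1)\|f\|_{M^\infty}$ uniformly in $\lambda$ as $R\to\infty$.
\item Hence the samples of $F_R$ on $\vec\Lambda+z_k$ are small outside $B(0,2R)$. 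For $k$ large they are also small inside $B(0,2R)$, since those points are close to $\Gamma_i$ and $|V_{g_i}f|$ is uniformly continuous.
\item Meanwhile $\|F_R\|_{M^\infty}\ge|V_gF_R(w_0)|\to|V_gf(w_0)|>0$ for any $w_0$ with $V_gf(w_0)\neq0$.
\end{itemize}
This contradicts the uniform lower bound $A$ for the translated systems.
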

\begin{proof} 
    (\romannumeral 1) $\Rightarrow$ (\romannumeral 2). Suppose $\bigcup_{i=1}^n G(g_i,\Lambda_i)$ is a frame for $L^2(\R^d)$. By \cite[Theorem 3.2 (a)]{GOR}, the multi-window system $\bigcup_{i=1}^n G(g_i,\Lambda_i)$ is a $\infty$-frame for $M^{\infty}(\R^d)$, i.e., the operator $C_{\vec{g},\vec{\Lambda}}: M^{\infty}(\R^d) \to (\bigoplus_{i=1}^n \ell^{\infty}(\Lambda_i))_{\infty}$ is bounded below. Thus, $C_{\vec{g},\vec{\Lambda}}^*$ is surjective from $(\bigoplus_{i=1}^n \ell^1(\Lambda_i))_1$ onto $M^1(\R^d)$. For $\vec\Gamma=(\Gamma_1 ,\dots ,\Gamma_n)\in W(\vec\Lambda),$ there exists a sequence $\{z_k\}_{k\in \N}\subset \R^{2d}$ such that $\Lambda_i -z_k \xrightarrow{w} \Gamma_i$. It follows from (\ref{relation between synthesis operator}) that $C_{\vec{g},\vec{\Lambda}-z_k}^*$ are also surjective from $(\bigoplus_{i=1}^n \ell^1(\Lambda_i-z_k))_1$ to $M^1(\R^d)$. So for every $f\in M^1(\R^d)$ and $k\in \N$,
    there exists a vector $\vec c^k =(c_1^k.\dots,c_n^k) \in (\bigoplus_{i=1}^n \ell^1(\Lambda_i-z_k))_1$,
    %and $k\in \N$, there exist a sequence $\{c_{i}^k(\lambda)\}_{\lambda\in \Lambda_i -z_k}\in \ell^1(\Lambda_i-z_k)$
    %with $\|\{c_{i}^k(\lambda)\}_{\lambda\in \Lambda_i -z_k}\|_1\le C$
    such that 
    \[f=\sum_{i=1}^n\sum_{\lambda\in \Lambda _i-z_k}c_{i}^{k}(\lambda)\pi(\lambda)g_i\]  converges in $M^1(\R^d)$. Using the open mapping theorem, the map  $C^*_{\vec{g},\vec{\Lambda}}: (\bigoplus_{i=1}^n \ell^{1}(\Lambda_i))_{1} \to M^{1}(R^d)$ is open. Then by(\ref{relation between synthesis operator}), there exists a constant $C>0$ such that \[\| \vec c^k\|_1=\|\kappa (z_k)^{-1}(\vec c^k)\|_1 \le C\|  C_{\vec{g},\vec{\Lambda}}^*(\kappa (z_k)^{-1}(\vec c^k))\|_{M^{1}}=C\|\pi(z_k)^* f\|_{M^1} \le C\|f\|_{M^1}\] for all $k\in \N$.
    
    %we have that the preimages of operators $C_{\vec{g},\vec{\Lambda}-z_k}^*$ are bounded  independent of $k$. 
For $1\le i\le n$, consider the measures $\mu_k^i:=\sum_{\lambda\in \Lambda_i-z_k}c_{i}^k(\lambda) \delta_\lambda$. Because $\|\mu_k^i\|_{\CM}=\|c_{i}^k\|_{\ell^1(\Lambda_i-z_k)}\le C\|f\|_{M^1}$ for all $k\in \N$, we can pass to a subsequence and then assume that $\mu_k^i \rightarrow \mu^i$ under weak*-topology $\sigma(\CM,C_0)$ for some measure $\mu^i \in \CM(\R^{2d})$. It follows from \cite[Lemma 4.3]{GOR} that  $\supp(\mu^i)\subseteq \Gamma_i$ and then $\mu^i=\sum_{\gamma\in \Gamma_i}c_{i}(\gamma)\delta_\gamma$ for some $\{c_{i}(\gamma)\}_{\gamma \in \Gamma_i}\in \ell^1(\Gamma_i)$. Define $\mu_k=\sum_{i=1}^n\mu_k^i$, and $f'=\sum_{i=1}^n\sum _{\gamma\in \Gamma_i}c_{i}(\gamma)\pi(\gamma)g_i\in M^1(\R^d)$ by (\ref{full Bessel sequence}).
Taking some nonzero function $g\in M^1(\R^d)$,
since $V_{g_i} \pi(z)g \in W(C_0,\ell^1)(\R^{2d}) \subseteq C_0(\R^{2d})$ and $\mu_k^i \rightarrow \mu^i$ under the topology $\sigma(\CM,C_0)$ for $1\le i \le n$, we compute
\begin{align*}
    \langle f,\pi(z)g\rangle&=\sum_{i=1}^k\sum_{\lambda\in \Lambda_i-z_k}c_{i}^k(\lambda) \overline{V_{g_i} \pi(z)g(\lambda)} \\
    &=\sum_{i=1}^k \int_{\R^d}\overline{V_{g_i}\pi(z)g}\text{d}\mu_k^i \rightarrow \sum_{i=1}^k \int_{\R^d}\overline{V_{g_i}\pi(z)g}\text{d}\mu^i=\langle f',\pi(z)g \rangle.
\end{align*}
 Hence $f=f'$ and then $C_{\vec{g},\vec{\Gamma}}^*$ is surjective from $(\bigoplus_{i=1}^n \ell^1(\Gamma_i))_1$ onto $M^1(\R^d)$. It follows that  $C_{\vec{g},\vec{\Gamma}}$ is one-to-one from $M^{\infty}(\R^d)$ to $(\bigoplus_{i=1}^n \ell^{\infty}(\Lambda_i))_{\infty}$.
%W(\Lambda) and $\Omega(\Lambda); f'\in M^1(\R^d).$

 (\romannumeral 2) $\Rightarrow$ (\romannumeral 1). Suppose $\bigcup_{i=1}^n G(g_i,\Lambda_i)$ is not a frame for $L^2(\R^d)$. Then it is not a $\infty$-frame for $M^{\infty}(\R^d)$ by \cite[Theorem 3.2(a)]{GOR}. So there exists a sequence of nonzero functions $\{f_k\}_k \subset M^{\infty}(\R^d)$ such that $\|V_{g_1}f_k\|_{\infty}=1$ and 
 \[\sup_{1\le i\le n , \lambda\in \Lambda_i}|V_{g_i}f_k(\lambda)|\rightarrow 0, \ \text{as} \ k\to \infty.\] It follows that, by pass to a subsequence if necessary, we can choose  $\{z_k: k\ge 1\} \subseteq \R^{2d} $ such that $|V_{g_1}f_k(z_k)|\ge 1/2$, and function $h_k:=\pi(-z_k)f_k$ converges to some $h\in M^{\infty}(\R^d)$ under the topology $\sigma(M^{\infty},M^1)$ as $k \to \infty$, and $\Lambda_i-z_k \xrightarrow{w} \Gamma_i$ for some closed relatively separated set $\Gamma_i \in W(\Lambda_i)$. Note that $|V_{g_1}h_k(0)|=|V_{g_1}f_k(z_k)|\ge 1/2,$ by \cite[Lemma 2.1(b)]{GOR}, and thus $h$ is nonzero. Given $\gamma\in \Gamma_i$, there exists $\lambda_{k,i}\in\Lambda_i $ such that $\lambda_{k,i}-z_k \rightarrow \gamma$. Using \cite[Lemma 2.1(b)]{GOR} again, we have \[|\langle h,\pi(\gamma)g_i \rangle|=|V_{g_i}h(\gamma)|=\lim_k |V_{g_i}h(\lambda_{k,i} -z_k)|=\lim_k |V_{g_i}h_k(\lambda_{k,i} -z_k)|=\lim_k|V_{g_i}f_k(\lambda_{k,i})|=0.\] 
 This implies that $\operatorname{ker}(C_{\vec{g}, \vec{\Gamma}})\neq 0$, which is a contraction to the condition \ref{thm:characterization of full frames: condition 2}.
\end{proof}

The following is a vector-valued version of Lemma 6.8 in \cite{GOR} with the same proof. So we omit it.
\begin{lem}\label{lem:deformation still in W(lambda)}
    Suppose $\Lambda_1 ,\dots,\Lambda_n$ are relatively separated sets in $\R^{d}$ and $\lim _{k \to \infty} \alpha_k =1$. Set $\vec\Lambda=(\Lambda_1 ,\dots, \Lambda_n)$. Then the following holds.  
    \begin{enumerate}[label=(\roman*)]
    \item \label{lem:deformation still in W(lambda) condition 1} Let $\vec \Gamma =(\Gamma_1, \dots, \Gamma_n)$ and $\{\lambda_k\}_{k\in \N_+} \subseteq \bigcup_{i=1}^n \Lambda_i$. If $\alpha_k\Lambda_i-\alpha_k \lambda_k \xrightarrow{w} \Gamma_i$ for all $1 \le i\le n$, then $\vec {\Gamma} \in W(\vec {\Lambda})$.
    \item Suppose that $\bigcup_{i=1}^n \Lambda_i$ is relatively dense and $\{z_k: k \ge 1\}\subseteq \R^{d}$. If $\alpha_k \Lambda_i -z_k \xrightarrow{w} \Gamma_i$, then $\vec {\Gamma}=(\Gamma_1, \dots ,\Gamma_n) \in W(\vec \Lambda)$.
    \end{enumerate}
\end{lem}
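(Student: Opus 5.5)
The plan is to follow the scalar argument of \cite[Lemma 6.8]{GOR} coordinatewise, with one point requiring care: the limits of all $n$ coordinates must be produced along a \emph{single} sequence of translations. The basic device is to rewrite a dilated and translated set as an exact translate of $\Lambda_i$ plus a small perturbation on bounded sets. For any $z$ and $R>0$,
\[
\alpha_k\Lambda_i - z = \alpha_k\bigl(\Lambda_i - \alpha_k^{-1}z\bigr),
\]
and on $B(0,R)$ the dilation by $\alpha_k$ moves each point of norm at most about $R/\alpha_k$ by at most $|\alpha_k-1|R/\alpha_k\to 0$. Consequently, if $\Lambda_i-\alpha_k^{-1}z_k$ converges weakly to some set, then $\alpha_k\Lambda_i-z_k$ converges weakly to the \emph{same} set. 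For part (ii), the argument will run as follows.

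\emph{Step 1 (a common subsequence).} Put $w_k=\alpha_k^{-1}z_k$. Each $\Lambda_i$ is relatively separated, so by \cite[Lemma 4.5]{GOR} we can pass to a subsequence with $\Lambda_1-w_k\xrightarrow{w}\Gamma_1'$. Passing successively to further subsequences for $i=2,\dots,n$ (finitely many), we obtain one subsequence, still written $w_k$, with $\Lambda_i-w_k\xrightarrow{w}\Gamma_i'$ for all $i$ simultaneously.

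\emph{Step 2 (identifying the limits).} By the dilation estimate above, $\alpha_k\Lambda_i-z_k\xrightarrow{w}\Gamma_i'$ along this subsequence. Along the full sequence we also have $\alpha_k\Lambda_i-z_k\xrightarrow{w}\Gamma_i$. Weak limits are unique among closed sets: given $R$ and $\epsilon$, each limit lies in the $2\epsilon$-neighbourhood of the other on $B(0,R)$. It follows that $\Gamma_i=\Gamma_i'$ if we take weak limits to be closed. Hence $\Lambda_i-w_k\xrightarrow{w}\Gamma_i$ for every $i$ along the common sequence $w_k$, which by definition gives $\vec\Gamma\in W(\vec\Lambda)$. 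Since the translations are arbitrary real vectors $w_k$, the hypothesis that $\bigcup_i\Lambda_i$ is relatively dense is not strictly needed for this route. In \cite{GOR} it serves to replace $z_k$ by nearby points of the set, and if $W(\vec\Lambda)$ were understood with translations taken from $\bigcup_i\Lambda_i$, that same replacement would be made here. Concretely, one chooses $\lambda_k\in\bigcup_i\Lambda_i$ with $|\alpha_k\lambda_k-z_k|\le\alpha_k\rho$, extracts a convergent subsequence of the bounded vectors $z_k-\alpha_k\lambda_k\to v$, and reduces to part (i) applied with the shifted limits $\Gamma_i+v$. Translating back by $v$ is harmless, because $W(\vec\Lambda)$ is closed under translation with a common vector.

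Part (i) is the special case $z_k=\alpha_k\lambda_k$, so that $w_k=\lambda_k$. Steps 1 and 2 go through verbatim and show $\Lambda_i-\lambda_k\xrightarrow{w}\Gamma_i$ for all $i$ along one sequence.

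The main obstacle is the bookkeeping in the dilation estimate, which requires uniformity in $k$ on bounded balls. The point $\alpha_k\mu$ lies in $B(0,R)$ exactly when $\mu\in B(0,R/\alpha_k)\subset B(0,2R)$ for large $k$, and then $|\alpha_k\mu-\mu|\le|\alpha_k-1|\,2R<\epsilon/2$. Both inclusions in the definition of weak convergence must be checked with these radius adjustments, together with the triangle inequality for the $\epsilon$-neighbourhoods. Relative separation is used only through \cite[Lemma 4.5]{GOR}, to guarantee that the limits in Step 1 exist.
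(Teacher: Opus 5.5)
Your proof is correct. The paper gives no argument of its own; it refers to \cite[Lemma 6.8]{GOR}, applied coordinatewise.

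For part (i), your proof is essentially that argument. The one point specific to the vector-valued case is extracting a \emph{common} subsequence for all $i$ by successive extraction, and you handle it correctly.

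For part (ii), you take a more direct route than the one the paper inherits from \cite{GOR}. That route uses relative density of $\bigcup_i\Lambda_i$ to choose $\lambda_k$ with $z_k-\alpha_k\lambda_k$ bounded. It then passes to a subsequence with $z_k-\alpha_k\lambda_k\to v$ and reduces (ii) to (i) for the shifted limits $\Gamma_i+v$, which is exactly the alternative you sketch. You instead write $\alpha_k\Lambda_i-z_k=\alpha_k(\Lambda_i-\alpha_k^{-1}z_k)$ and run the same dilation-plus-uniqueness argument with base points $w_k=\alpha_k^{-1}z_k$. Since $W(\vec\Lambda)$ is defined here with arbitrary translation vectors in $\R^d$, this is legitimate. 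It shows that for dilations the relative density hypothesis in (ii) is superfluous, so (i) and (ii) become a single statement.

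What the base-point formulation buys is robustness. It is the version that survives for general deformations $\tau_k(\Lambda)$, which are defined only on points of $\Lambda$ and controlled only relative to a base point of the set. There, comparing $\tau_k(\Lambda)-\tau_k(\lambda_k)$ with $\Lambda-\lambda_k$ is all one can do, and relative density is genuinely needed to reach arbitrary $z_k$.

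Finally, your caveat about closedness costs nothing. Weak convergence depends only on the closure of the limit set. Moreover, weak limits of sets with uniformly bounded $\operatorname{rel}$ are themselves relatively separated, hence closed, so the identification $\Gamma_i=\Gamma_i'$ holds outright.
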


\begin{lem}\label{lem: continuity property of synthesis operator}
    Assume that $\{\Lambda_i\}_{i=1}^n$ are uniformly separated sets in $\R^{2d}$ and $\vec{g}=(g_1,\dots ,g_n)\in M^1(\R^d)^n$. Set $\vec\Lambda=(\Lambda_1, \dots ,\Lambda_n)$. For every $k \in \N$, let $\alpha_k \in \R^{2d}$ satisfies $\alpha_k \ge 1$ and $\lim_k \alpha_k =1$. Suppose $\vec{c}^k=(c_1^k,\dots ,c_n^k) \in (\bigoplus_{i=1}^n\ell^{\infty}(\alpha_k\Lambda_i))_{\infty}$ be such that $\|\vec{c}^k\|_{\infty}=1$ and $\lim_k\|\sum _{i=1}^n \sum_{\lambda \in \alpha_k \Lambda_i}c_{i}^k(\lambda)\pi(\lambda)g_i\|_{M^{\infty}}=0$. Then there exist $n$-tuples of uniformly separated sets $\vec\Gamma = (\Gamma_1,\dots ,\Gamma_n)\in W(\vec\Lambda)$, a subsequence $\{\lambda_{k_m}\} \subset  \bigcup_{i=1}^n\Lambda_i$, and a non-zero sequence $\vec c =(c_1, \dots ,c_n)\in (\bigoplus_{i=1}^n\ell^{\infty}(\Gamma_i))_{\infty}$ such that 
    \[ \alpha_{k_m}\Lambda_i-\lambda_{k_m} \xrightarrow{w} \Gamma_i  \ \text{for  \ all}\  1\le i \le n\  \text{as} \ m\to \infty \ \text{and} \ \sum_{i=1}^n\sum_{\gamma \in \Gamma_i}c_{i}(\gamma) \pi (\gamma)g_i=0.\]
\end{lem}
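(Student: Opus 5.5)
The plan is to follow the proof of the corresponding scalar statement in \cite{GOR} (the lemma preceding Lemma 6.8 there), carried out componentwise. Since $\|\vec c^k\|_\infty=1$, for each $k$ we pick an index $i_k\in\{1,\dots,n\}$ and a point $\lambda'_k\in\Lambda_{i_k}$ with $|c^k_{i_k}(\alpha_k\lambda'_k)|\ge 1/2$. Passing to a subsequence, we may assume $i_k=i_0$ is constant, and we set $\lambda_k:=\alpha_k\lambda'_k$, the corresponding point of $\alpha_k\Lambda_{i_0}$. (The statement writes $\lambda_k$ inside $\bigcup_i\Lambda_i$, but the translations that matter are by points of $\alpha_k\Lambda_{i_0}$, which is exactly the situation of Lemma~\ref{lem:deformation still in W(lambda)}\ref{lem:deformation still in W(lambda) condition 1}.)

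Next we translate and renormalise. Define $\vec d^k:=\kappa(-\lambda_k)\vec c^k$, which lives on $\alpha_k\vec\Lambda-\lambda_k$ and satisfies $\|\vec d^k\|_\infty=1$ and $|d^k_{i_0}(0)|\ge1/2$. By (\ref{relation between synthesis operator}), $C^*_{\vec g,\alpha_k\vec\Lambda-\lambda_k}\vec d^k=\pi(-\lambda_k)C^*_{\vec g,\alpha_k\vec\Lambda}\vec c^k$ up to a unimodular constant, so these elements still tend to $0$ in $M^\infty$. Because $\alpha_k\ge1$, each $\alpha_k\Lambda_i$ keeps separation constant at least $\operatorname{sep}(\Lambda_i)$; the relative separation constants are therefore uniformly bounded in $k$. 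Hence \cite[Lemma 4.5]{GOR} lets us pass to a further subsequence $k_m$ with $\alpha_{k_m}\Lambda_i-\lambda_{k_m}\xrightarrow{w}\Gamma_i$ for all $i$ simultaneously (pass to subsequences one index at a time). The weak limit of sets with a uniform separation constant is uniformly separated with the same constant, and $0\in\Gamma_{i_0}$. Lemma~\ref{lem:deformation still in W(lambda)}\ref{lem:deformation still in W(lambda) condition 1} then gives $\vec\Gamma\in W(\vec\Lambda)$.

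The third step passes to the limit in the coefficients. Consider the measures $\mu^i_m:=\sum_{\lambda\in\alpha_{k_m}\Lambda_i-\lambda_{k_m}}d^{k_m}_i(\lambda)\delta_\lambda$. By uniform separation and $\|\vec d^k\|_\infty\le1$, these are bounded in $W(\mathcal M,L^\infty)$. A diagonal argument gives weak$^*$ convergence (in $\sigma(W(\mathcal M,L^\infty),W(C_0,\ell^1))$) to measures $\mu^i$. By \cite[Lemma 4.3]{GOR}, $\supp\mu^i\subseteq\Gamma_i$, so $\mu^i=\sum_{\gamma\in\Gamma_i}c_i(\gamma)\delta_\gamma$ with $\vec c\in(\bigoplus\ell^\infty(\Gamma_i))_\infty$. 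Nonvanishing is the main obstacle. Because the separation is uniform, a small ball $B(0,\delta)$ with $\delta<\operatorname{sep}/2$ contains exactly one point of $\alpha_{k_m}\Lambda_{i_0}-\lambda_{k_m}$, namely $0$. Testing against a bump $\phi\in C_c(B(0,\delta))$ with $\phi(0)=1$ and $0\le\phi\le1$ then gives $\mu^{i_0}_m(\phi)=d^{k_m}_{i_0}(0)$. We may additionally pass to a subsequence along which this quantity converges to a limit of modulus at least $1/2$. Hence $c_{i_0}(0)=\mu^{i_0}(\phi)\neq0$.

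Finally, we identify the limit function. For $z\in\R^{2d}$ and a fixed nonzero $g\in M^1$,
\[
\langle C^*_{\vec g,\alpha_{k_m}\vec\Lambda-\lambda_{k_m}}\vec d^{k_m},\pi(z)g\rangle=\sum_{i=1}^n\int\overline{V_{g_i}\pi(z)g}\,d\mu^i_m .
\]
Since $V_{g_i}\pi(z)g\in W(C_0,\ell^1)$, the right-hand side converges to $\langle\sum_i\sum_{\gamma\in\Gamma_i}c_i(\gamma)\pi(\gamma)g_i,\pi(z)g\rangle$. The left-hand side tends to $0$, because its modulus is bounded by the $M^\infty$ norm, which tends to $0$. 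Thus $V_g$ of the limit series vanishes identically, and so $\sum_i\sum_{\gamma\in\Gamma_i}c_i(\gamma)\pi(\gamma)g_i=0$ in $M^\infty$; the series converges weak$^*$ by (\ref{full Bessel sequence}). This completes the argument.
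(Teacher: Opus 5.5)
Your proposal is correct and follows essentially the same route as the paper. Both arguments pick $\lambda_k$ with $|c^k_{i(k)}(\lambda_k)|\ge 1/2$, extract weak limits via \cite[Lemma 4.5]{GOR} and Lemma~\ref{lem:deformation still in W(lambda)}, take weak$^*$ limits of the translated coefficient measures, use a bump at $0$ to show the limit coefficients are nonzero, and test against $\pi(z)g$ to show the limit series vanishes; your $\kappa(-\lambda_k)\vec c^k$ is precisely the paper's $\sigma(-\lambda_k,\lambda)c_i^k(\lambda)\delta_{\lambda-\lambda_k}$. Your pigeonhole step fixing $i_0$, and your remark that $\lambda_k$ actually lies in $\alpha_k\Lambda_{i_0}$ rather than in $\bigcup_i\Lambda_i$, spell out two points the paper's write-up leaves implicit.
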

\begin{proof}
    Under the assumption $\|\vec{c}^k\|_{\infty}=1$ for every $k \ge 1$, we can choose $\lambda_{k} \in \alpha_k \Lambda_{i(k)}$ for all $1\le i \le n$ such that $|c^k_{i(k)}(\lambda_{k})|\ge 1/2$.
    Since $\operatorname{rel}(\alpha_k\Lambda_i)=\sup_x|\Lambda_i \cap B(x,1/\alpha_k)|$, one has $\operatorname{rel}(\alpha_k\Lambda_i)\le  \operatorname{rel}(\Lambda_i)$ for all $k \in \N$ and $1\le i \le n$.
    It follows from \cite[Lemma 4.5(b)]{GOR}, we can assume  $\alpha_k\Lambda_i-\lambda_k \xrightarrow{w} \Gamma_i$ by passing to a subsequence again  for some relatively separated set $\Gamma_i\subseteq \R^{2d}$ for all $1\le i \le n$. By Lemma \ref{lem:deformation still in W(lambda)} \ref{lem:deformation still in W(lambda) condition 1}, we have $\vec \Gamma  = (\Gamma_1 ,\dots ,\Gamma_n)\in W(\vec\Lambda)$ as $\lambda_k \in \alpha_k\Lambda_{i(k)}$. Moreover, since $\Lambda_i$ is uniformly separated, each $\Gamma_i$ is uniformly separated as well for $1\le i \le n$. 
    
    For $1\le i\le n$, consider the measure $\mu _{k,i}:=\sum_{\lambda \in \alpha_k \Lambda_i}\sigma(-\lambda_k,\lambda)c_{i}^k(\lambda) \delta_{\lambda - \lambda_k}$ in which $\sigma$ is the $2$-cocycle defined in (\ref{eq:commutation relation}). Using \cite[Lemma 4.6]{GOR}, one has $\|\mu _{k,i}\|_{W(\CM,L^{\infty})}\lesssim \operatorname{rel}(\alpha_k \Lambda_i - \lambda_k)\|c^k\|_{\infty}\le \operatorname{rel}(\Lambda_i)$. Then we assume $\mu_{k,i} \rightarrow \mu _i$ under the topology$\sigma(W(\CM,L^{\infty}),W(C_0,\ell^1))(\R^{2d})$ by passing to a subsequence for some $\mu_i \in W(\CM, L^{\infty})(\R^{2d})$.
     Since $\supp(\mu_{k,i})\subseteq \alpha_k\Lambda_i -\lambda_k$, it follows from \cite[Lemma 4.3]{GOR} that $\supp(\mu_i) \subseteq \Gamma_i$. Hence, $\mu_i=\sum_{\gamma \in \Gamma_i} c_{i}(\gamma)\delta _{\gamma}$ for some $c_i=\{c_{i }(\gamma)\}_{\gamma \in \Gamma_i} \in \ell^{\infty}(\Gamma_i)$. We define $\mu:= \sum_{i=1}^n \mu_i$ and  $\mu_k:= \sum_{i=1}^n \mu_{k,i}$. Then $\mu=\sum_{i=1}^n \sum_{\gamma \in \Gamma_i}c_{i}(\gamma) \delta_\gamma$  for $\vec c =(c_1 ,\dots ,c_n) \in (\bigoplus \ell^{\infty}( {\Gamma_i}))_{\infty}$.

     Define $r:=\min_{1\le i\le n}\inf_{k}\inf\{d(x, y): x\neq y\in \alpha_k\Lambda_{i}\}>0$. Let $\varphi \in W(C_0, \ell^1)(\R^{2d})$ be a real-valued function, supported on $B(0,r/2)$ and $\varphi(0)=1$. Then $B(\lambda_{k},r/2) \cap \alpha_k \Lambda_{i(k)} =\{\lambda_{k}\}$ for each $ k \in \N$ and 
     \[\left|\int_{\R^{2d}}\varphi \text{d}\mu_i\right|= \lim _{k} \left|\int_{\R^{2d}}\varphi \text{d}\mu_{k,i} \right|=\lim_k|c_{i(k)}^k(\lambda_k)|\ge 1/2.\]
     Thus $\mu_i \neq 0$. If follows that $c_i \neq 0$. Hence $\vec {c} \ne 0$.

     Taking $g\in M^1(\R^{2d})$ and $z\in \R^{2d}$, one has $V_{g_i} \pi (z)g \in W(C_0,\ell^1)(\R^{2d})$. We compute
     \begin{align*}
         \bigl| \bigl \langle \sum_{i=1}^n \sum_{\gamma \in \Gamma_i} c_{i}(\gamma)\pi(\gamma)g_i ,\pi(z)g  \bigr \rangle  \bigr | &= \left|\sum_{i=1}^n  \int_{\R^{2d}} \overline{V_{g_i}\pi(z)g} \text{d} \mu _i \right|=\lim_k \left| \sum_{i=1}^n  \int_{\R^{2d}} \overline{V_{g_i}\pi(z)g} \text{d} \mu _{k,i} \right|\\
         &= \lim _k \bigl| \bigl\langle \sum_{i=1}^n\sum_{\lambda \in \alpha_k\Lambda_i} \sigma(-\lambda_k , \lambda)c_{i}^k(\lambda) \pi(\lambda -\lambda_k)g_i, \pi (z) g \bigr\rangle \bigr|\\
         & \le \lim_k\|\sum_{i=1}^n\sum_{\lambda \in \alpha_k\Lambda_i} \sigma(-\lambda_k , \lambda)c_{i}^k(\lambda) \pi(\lambda -\lambda_k)g_i\|_{M^{\infty}} \|g\|_{M^1}\\
         &= \lim_k\|\pi(-\lambda_k)\sum_{i=1}^n \sum_{\lambda \in \alpha_k\Lambda_i} c_{i}^k(\lambda) \pi(\lambda )g_i\|_{M^{\infty}} \|g\|_{M^1}\\
         &= \lim_k\|\sum_{i=1}^n\sum_{\lambda \in \alpha_k\Lambda_i} c_{i}^k(\lambda) \pi(\lambda)g_i\|_{M^{\infty}} \|g\|_{M^1}=0.
         \end{align*}
     Therefore, we have $V_g(\sum_{i=1}^n \sum_{\gamma \in \Gamma} c_{i}(\gamma)\pi(\gamma)g_i) \equiv0$, which implies that $\sum_{i=1}^n \sum_{\gamma \in \Gamma} c_{i}(\gamma)\pi(\gamma)g_i=0$.
     \end{proof}

\begin{thm} \label{thm:characterization of full Riesz sequence }
      Assume that  $\Lambda_1,\dots,\Lambda_n$ are uniformly separated sets in $\R^{2d}$ and  $\vec{g}=(g_1, \dots ,g_n)\in  M^1(\R^d)^n$. Set $\vec\Lambda=(\Lambda_1 ,\dots ,\Lambda_n)$. Then the following are equivalent.
    \begin{enumerate}[label=(\roman*)]
    \item  $\bigcup_{i=1}^n G(g_i,\Lambda_i)$ is a Riesz sequence in $L^2(\R^d)$.
    \item $C_{\vec{g},\vec{\Gamma}}^*$ is one-to-one from  $(\bigoplus_{i=1}^n\ell^{\infty}( \Gamma_i))_{\infty}$  to $M^{\infty}(\R^d)$ for every weak limit $\vec\Gamma=(\Gamma_1 ,\dots ,\Gamma_n) \in W(\vec{\Lambda})$. 
    \end{enumerate}
\end{thm}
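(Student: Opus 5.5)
The plan is to mirror the proof of Theorem \ref{thm:characterization of full frames}, dualizing frames to Riesz sequences. Everything will be moved to the endpoint $p=\infty$ via \cite[Theorem 3.2]{GOR}. The multi-window system $\bigcup_{i=1}^n G(g_i,\Lambda_i)$ is a set of time-frequency molecules with envelope $\Phi=\sum_i|V_gg_i|$. Hence being a Riesz sequence in $L^2(\R^d)$ is equivalent to being an $\infty$-Riesz sequence, i.e.\ to $C^*_{\vec g,\vec\Lambda}:(\bigoplus_{i}\ell^\infty(\Lambda_i))_\infty\to M^\infty(\R^d)$ being bounded below. It is also equivalent to being a $1$-Riesz sequence, so that $C_{\vec g,\vec\Lambda}:M^1(\R^d)\to(\bigoplus_i\ell^1(\Lambda_i))_1$ is surjective.

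For (i) $\Rightarrow$ (ii), fix $\vec\Gamma\in W(\vec\Lambda)$ with $\Lambda_i-z_k\xrightarrow{w}\Gamma_i$. By (\ref{relation between synthesis operator}), each $C^*_{\vec g,\vec\Lambda-z_k}$ is bounded below on $\ell^\infty$ with the same constant, since $\kappa(z_k)$ is isometric and $\pi(z_k)$ is an isometry of $M^\infty$. Suppose $\vec c\in(\bigoplus_i\ell^\infty(\Gamma_i))_\infty$ with $\|\vec c\|_\infty=1$ and $C^*_{\vec g,\vec\Gamma}\vec c=0$. I would transport $\vec c$ to the approximating sets and derive a contradiction. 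Concretely, take $\gamma$ with $|c_i(\gamma)|\ge1/2$ and, for large $R$, define $\vec c^k$ on $\Lambda_i-z_k$ by assigning to each point within $\epsilon_k$ of some $\gamma'\in\Gamma_i\cap B(0,R)$ the value $c_i(\gamma')$, and zero elsewhere. This is well defined for small $\epsilon_k$ because the sets are uniformly separated. Then $\|\vec c^k\|_\infty\ge1/2$ and
\[
C^*_{\vec g,\vec\Lambda-z_k}\vec c^k-C^*_{\vec g,\vec\Gamma}(\vec c\,1_{B(0,R)})\to0 \text{ in } M^\infty.
\]
This uses continuity of $z\mapsto\pi(z)g_i$ in $M^1$ and uniform relative separation. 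The truncation error $C^*_{\vec g,\vec\Gamma}(\vec c\,1_{B(0,R)^c})$ is not small in $M^\infty$ globally, only near the origin. To handle this I would instead test against $\pi(w)g$ for $w$ near $0$ and use localization: $|V_g C^*\vec c(w)|\lesssim\sum|c(\gamma)|\Phi(w-\gamma)$. Alternatively, the cleaner route is a dual argument: since $C_{\vec g,\vec\Lambda}$ is onto $\ell^1$, one can produce $f_k\in M^1$ with $\|f_k\|_{M^1}\le C$ and $C_{\vec g,\vec\Lambda-z_k}f_k=\delta$-vectors. Taking weak$^*$ limits via \cite[Lemma 2.1]{GOR}, as in the second half of the frame proof, shows that $C_{\vec g,\vec\Gamma}:M^1\to\ell^1$ has dense range containing the finitely supported vectors. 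This forces the kernel of its adjoint $C^*_{\vec g,\vec\Gamma}$ on $\ell^\infty$ to be trivial. I would adopt this dual route; the key point is that interpolating $\delta_{\lambda}$ with uniformly bounded $M^1$-norm is translation invariant.

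For (ii) $\Rightarrow$ (i), argue by contradiction. If the system is not a Riesz sequence, then $C^*_{\vec g,\vec\Lambda}$ is not bounded below on $\ell^\infty$. So there are $\vec c^k$ with $\|\vec c^k\|_\infty=1$ and $\|C^*_{\vec g,\vec\Lambda}\vec c^k\|_{M^\infty}\to0$. Apply Lemma \ref{lem: continuity property of synthesis operator} with $\alpha_k\equiv1$. It yields $\vec\Gamma\in W(\vec\Lambda)$ and a nonzero $\vec c\in(\bigoplus_i\ell^\infty(\Gamma_i))_\infty$ with $C^*_{\vec g,\vec\Gamma}\vec c=0$, contradicting (ii). This direction is essentially immediate.

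The main obstacle is (i) $\Rightarrow$ (ii), specifically obtaining the uniform $M^1$-bounded interpolation functions and passing to the weak limit while keeping the interpolation property at every point of $\Gamma_i$. This needs uniform separation, so that each $\gamma\in\Gamma_i$ is the limit of a unique $\lambda_k-z_k$ and distinct indices $i$ do not merge in a conflicting way. Coincidences $\gamma\in\Gamma_i\cap\Gamma_j$ will need care, since the interpolation conditions must then be for the pair $(i,\gamma)$ rather than for $\gamma$ alone.
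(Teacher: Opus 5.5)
Your proposal is correct and, once you settle on the dual route for (i) $\Rightarrow$ (ii), it is essentially the paper's proof. There you build uniformly $M^1$-bounded interpolants of the $\delta$-vectors at $(i,\lambda_{k,i}-z_k)$ via covariance and the open mapping theorem, pass to a $\sigma(M^1,M^0)$-limit evaluated with \cite[Lemma 2.1(b)]{GOR}, and for (ii) $\Rightarrow$ (i) apply Lemma \ref{lem: continuity property of synthesis operator} with $\alpha_k\equiv 1$. Stopping at dense range of $C_{\vec g,\vec\Gamma}$ rather than the paper's full surjectivity already gives injectivity of the adjoint. One small slip: surjectivity of $C_{\vec g,\vec\Lambda}:M^1\to\ell^1$ is dual to the $\infty$-Riesz property, not the $1$-Riesz one, but since you invoke both nothing is lost.
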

\begin{proof}
    (\romannumeral1) $\Rightarrow$ (\romannumeral2). Suppose $\bigcup_{i=1}^n G(g_i,\Lambda_i)$ is a Riesz sequence for $L^2(\R^d)$. Then $\bigcup_{i=1}^n G(g_i,\Lambda_i)$ is a $\infty$-Riesz sequence for $M^{\infty}(\R^d)$ by \cite[Theorem 3.2(b)]{GOR}. It follows that $C_{\vec{g},\vec\Lambda}$ is surjective from $M^1(\R)^d$ to $(\bigoplus_{i=1}^n\ell^1( \Lambda_i))_1$. For $\vec\Gamma=(\Gamma_1 ,\dots ,\Gamma_n) \in W(\vec\Lambda)$, we assume that $\Lambda_i -z_k \xrightarrow{w} \Gamma_i$ for all $1 \le i\le n$. Fix $1\le i \le n$ and $\gamma \in \Gamma _i$, there exists $\{\lambda_{k,i}\}_{k\in \N_+} \subset \Lambda_i $ such that
    $\lambda_{k,i} -z_k \rightarrow \gamma$. For each $k\in \N$ and $1\le i\le n$, let $\vec c^k=(c_1^k,\dots,c_n^k)\in  (\bigoplus_{i=1}^n\ell ^1 (\Lambda_i-z_k))_i$ be such that
       \[\begin{cases}
  &c_j^k (\lambda-z_k) =1 \text{ if } (j,\lambda)= (i,\lambda_{k,i}),\\
  &c_j^k (\lambda-z_k) =0 \text{ if } (j,\lambda) \ne(i,\lambda_{k,i}).
\end{cases}\]
    By (\ref{relation between synthesis operator}) and the open mapping theorem similar to Theorem \ref{thm:characterization of full frames}, the operators $C_{\vec{g},\vec \Lambda-z_k}$ are also surjective from $M^1(\R^d)$ to $(\bigoplus _{i=1}^n\ell^1(\lambda_i-z_k))_1$ and with bounds of preimages independent of $k$. It follows that we can find a function $h_{k,i}^{\gamma}\in M^1(\R^d)$ such that  $\|h_{k,i}^\gamma\|_{M^1} \le 1$ and $\vec c^k=C_{\vec{g},\vec \Lambda-z_k}(h_{k,i}^{\gamma})$, i.e.,  
    \begin{equation}\label{eq:surjective of analysuis op}
        V_{g_i}h_{k,i}^\gamma(\lambda_{k,i}-z_k)=1 \ \text{and} \ V_{g_j}h_{k,i}^\gamma(\lambda-z_k)= 0 \ \text{when} \ (j,\lambda)\ne (i,\lambda_k^i).
    \end{equation}
     Without loss of generality, we can assume that $h_{k,i}^\gamma$ converges to some function $h^{\gamma}_i\in M^1(\R^{2d})$ under the topology $\sigma(M^1,M^0)$. It follows from \cite[Lemma 2.1(b)]{GOR} that 
    \[V_{g_i}h^{\gamma}_i(\gamma)=\lim _k V_{g_i}h_{k,i}^\gamma(\lambda_{k,i}-z_k)=1.\]
    For $(j,\gamma')\ne (i,\gamma) $,
    there exists $\lambda'_{k,j}\in \Lambda_j$ such that $\lambda' _{k,j} -z_k \rightarrow \gamma'$. Using (\ref{eq:surjective of analysuis op}) and \cite[Lemma 2.1(b)]{GOR} again,  we have
    \[V_{g_j}h^{\gamma}_i(\gamma')=\lim _k V_{g_j}h_{k,i}^\gamma(\lambda'_{k,i}-z_k)=0.\]
    Therefore, for each $1\le i\le n$ and $\gamma \in \Lambda_i$, we have already obtained a function $h^\gamma_i\in M^1(\R^d)$ such that $\|h_i^{\gamma}\|_{M^1}\le 1$, $V_{g_i} h^{\gamma}_i(\gamma)=1$ and $V_{g_j} h^{\gamma}_i(\gamma')=0$ when $(j,\gamma')\ne (i,\gamma)$. For $\vec c=(c_1,\dots ,c_n)\in (\bigoplus _{i=1}^n\ell^1(\Gamma_i))_1$, we can define a function 
    \[f:=\sum_{i=1}^n \sum_{\gamma \in \Gamma_i}c_{i}(\gamma) h^{\gamma}_i.\]
    It follows that $f\in M^1(\R^d)$ and that $C_{\vec{g},\vec \Gamma}f=\vec c$. Hence  $C_{\vec{g},\vec \Gamma}$ is surjective from $M^1(\R^d) $ to $(\bigoplus_{i=1}^n\ell^1(\Gamma_i))_1$. It follows that $C^*_{\vec{g},\vec\Gamma}$ is one-to-one from $M^{\infty}(\R^d)$ to $(\bigoplus_{i=1}^n\ell^{\infty}( \Gamma_i))_1$.

    (\romannumeral 2) $\Rightarrow$ (\romannumeral 1). Suppose $\bigcup_{i=1}^n G(g_i,\Lambda_i)$ is not a Riesz sequence for $L^2(\R^d)$. Then it is not a $\infty$-Riesz sequence for $M^{\infty}(\R^d)$ by \cite[Theorem 3.2(b)]{GOR}. So there exists a sequence $\{\vec{c}^k\}_{k \in \N} \subseteq (\bigoplus_{i=1}^n\ell^\infty (\Lambda_i))_1$  such that $\|\vec {c}^k\|_{\infty}=1$ and 
    \[\lim_{k\to \infty}\|\sum_{i=1}^n \sum_{\lambda \in \Lambda_i}c_{i}^k(\lambda) \pi(\lambda)g_i\|_{M^{\infty}}=0.\]

    Applying Lemma~\ref{lem: continuity property of synthesis operator} with $\alpha_k=1$, we can obtain n-tuples of  uniformly separated sets $\vec \Gamma=(\Gamma_1, \dots ,\Gamma_n) \in W(\vec \Lambda)$ and a non-zero sequence $\vec c=(c_1 ,\dots,c_n)\in (\bigoplus_{i=1}^n\ell^\infty ( \Gamma_i))_1$ such that $\sum_{i=1}^n\sum_{\gamma\in \Gamma_i}c_{i}(\gamma)\pi(\gamma)g_i=C^*_{\vec{g},\vec \Gamma}\vec c=0$. This leads to a contraction.
\end{proof}
The following proposition for full Riesz sequence is a dual result of  \cite[Theorem 1.1]{CDH} for frames.
\begin{prop} \label{prop:Balian-low of full Riesz sequence}
     Let $\Lambda=\bigsqcup_{i=1}^n \Lambda_i \subset \R^d$ be a discrete set of disjoint of union of $\Lambda_i$. For each $1\le i \le n$, choose a nonzero function $g_i\in L^2(\R^d)$. If $\{\pi(\Lambda_i)g_i\}_{i=1}^n$ is a Riesz sequence for $L^2(\mathbb{R}^d)$, then $\Lambda_i$ is uniformly separated for $1\le i\le n$ and
         $D^+(\Lambda)  \le1$.
\end{prop}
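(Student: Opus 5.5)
The plan has two parts: first uniform separation of each $\Lambda_i$, then the upper density bound through a Landau-type dimension count against a reference orthonormal basis. Throughout, $\Lambda$ is a subset of the time--frequency plane $\R^{2d}$, since $\pi(\lambda)$ requires this. Let $A>0$ be the lower Riesz bound of $\{\pi(\Lambda_i)g_i\}_{i=1}^n$.

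\emph{Step 1: uniform separation.} Fix $i$ and suppose there are $\lambda\neq\lambda'\in\Lambda_i$ with $|\lambda-\lambda'|$ arbitrarily small. By the commutation relation (\ref{eq:commutation relation}), $\pi(\lambda)^*\pi(\lambda')=\theta\,\pi(\lambda'-\lambda)$ for some unimodular $\theta$. Take the coefficient vector with $c_\lambda=1$, $c_{\lambda'}=-\overline{\theta}$ and all other entries $0$. The lower Riesz bound gives
\[2A\le \|\pi(\lambda)g_i-\overline{\theta}\pi(\lambda')g_i\|^2=\|g_i-\pi(\lambda'-\lambda)g_i\|^2 .\]
The right-hand side tends to $0$ as $\lambda'-\lambda\to 0$, by strong continuity of $z\mapsto\pi(z)g_i$ on $L^2(\R^d)$. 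This is a contradiction, so each $\Lambda_i$ is uniformly separated. Consequently $\Lambda$ is relatively separated, with $\operatorname{rel}(\Lambda)\le\sum_i\operatorname{rel}(\Lambda_i)$.

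\emph{Step 2: density via dimension counting.} Take the reference orthonormal basis $\{\pi(k)\varphi\}_{k\in\Z^{2d}}$ with $\varphi=\chi_{[0,1]^d}$. For a cube $Q_R(z)$ of side $R$ and a margin $r>0$, set
\[W=\operatorname{span}\{\pi(\lambda)g_i:\lambda\in\Lambda_i\cap Q_R(z),\ i=1,\dots,n\},\]
so $\dim W=N:=|\Lambda\cap Q_R(z)|$ by linear independence. Let $P$ be the orthogonal projection onto $\operatorname{span}\{\pi(k)\varphi:k\in\Z^{2d}\cap Q_{R+r}(z)\}$, whose dimension is $M\approx (R+r)^{2d}$.

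The aim is a uniform estimate
\[\|(I-P)w\|^2\le\varepsilon\|c\|^2\quad\text{for } w=\sum c_\lambda\pi(\lambda)g_{i(\lambda)}\in W,\]
with $\varepsilon<A$ independent of $R$ and $z$. Given this, the lower Riesz bound yields $\|(I-P)w\|^2<\|w\|^2$ for $w\ne0$. Hence $P|_W$ is injective and $N\le M$. Dividing by $R^{2d}$, letting $R\to\infty$ and then taking $\sup_z$ gives $D^+(\Lambda)\le 1$.

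The estimate itself comes from expanding
\[\|(I-P)w\|^2=\sum_{k\notin Q_{R+r}(z)}\Bigl|\sum_\lambda c_\lambda\,\sigma\text{-phase}\cdot V_\varphi g_{i(\lambda)}(k-\lambda)\Bigr|^2\]
and applying Schur's test. The kernel is evaluated only at $|k-\lambda|\ge r/2$, and $\Lambda$ is relatively separated. So if each $V_\varphi g_i\in W(L^\infty,\ell^1)(\R^{2d})$, Lemma \ref{esitimate of STFT}-type tail bounds make the Schur constant as small as we like for $r$ large.

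\emph{Main obstacle.} The windows $g_i$ are only in $L^2(\R^d)$, so $V_\varphi g_i$ need not lie in $W(L^\infty,\ell^1)$ and the Schur step fails as stated. Perturbing the $g_i$ to $M^1$ windows does not obviously help, because Bessel bounds over a separated set are not controlled by $L^2$-norms alone.

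My first attempt to bypass this is a trace (Ramanathan--Steger/Landau) version. Let $\{\tilde e_\lambda\}$ be the biorthogonal system in $\overline{\operatorname{span}}\{\pi(\lambda)g_i\}$, which satisfies $\|\tilde e_\lambda\|\le A^{-1/2}$. Then
\[1=\langle \pi(\lambda)g_i,\tilde e_\lambda\rangle\le |\langle P\pi(\lambda)g_i,\tilde e_\lambda\rangle|+A^{-1/2}\|(I-P)\pi(\lambda)g_i\|.\]
This needs only the per-element tail $\sum_{k\notin Q_{R+r}(z)}|V_\varphi g_i(k-\lambda)|^2\to0$ uniformly, for $\lambda$ at depth at least $r$ inside $Q_{R+r}(z)$, which does hold for $L^2$ windows since there are only finitely many $g_i$ and $\varphi$ has compact support.

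Summing over $\lambda\in\Lambda\cap Q_R(z)$ then bounds $N(1-o(1))$ by $\operatorname{tr}(PT)$, where $T=\sum\langle\cdot,\tilde e_\lambda\rangle\pi(\lambda)g_i$. The remaining difficulty is to bound $\operatorname{tr}(PT)$ by $M(1+o(1))$ rather than by $M\|T\|$. I would try to do this by symmetrizing with the canonical orthonormalization $G^{-1/2}$ of the Gram matrix, which makes the relevant operator a projection and so has norm $1$.
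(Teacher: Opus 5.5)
Your Step 2 stops short of the decisive estimate. With $T_N=\sum_{\lambda\in\Lambda\cap Q_R(z)}\langle\cdot,\tilde e_\lambda\rangle f_\lambda$, where $f_\lambda=\pi(\lambda)g_{i(\lambda)}$, you only know $\|T_N\|\le\sqrt{B/A}$, so this gives $D^+(\Lambda)\le\sqrt{B/A}$ rather than $\le 1$. If you sum over all of $\Lambda$ instead, $T$ is the orthogonal projection onto the closed span, but then the terms $\langle Pf_\lambda,\tilde e_\lambda\rangle$ with $\lambda\notin Q_R(z)$ are uncontrolled. The symmetrization you suggest would compute the trace as $\sum_\lambda\|Pu_\lambda\|^2$ with $u_\lambda=\sum_\mu(G^{-1/2})_{\mu\lambda}f_\mu$. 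That needs the $u_\lambda$ to be localized, i.e.\ off-diagonal decay of $G^{-1/2}$, and nothing of that kind is available for windows that are merely in $L^2(\R^d)$.

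The missing observation is that no symmetrization is needed. Let $Q_W$ be the orthogonal projection onto $W$ and use the compressed dual vectors $Q_W\tilde e_\lambda$. Since $f_\mu\in W$, they are still biorthogonal to $\{f_\lambda\}_{\lambda\in\Lambda\cap Q_R(z)}$ and still satisfy $\|Q_W\tilde e_\lambda\|\le A^{-1/2}$. Moreover $Q_W=\sum_{\lambda}\langle\cdot,Q_W\tilde e_\lambda\rangle f_\lambda$. Let $\delta(r)\to 0$ be a uniform bound for $\|(I-P)f_\lambda\|$, $\lambda\in\Lambda\cap Q_R(z)$. Then
\[N\bigl(1-A^{-1/2}\delta(r)\bigr)\le\sum_{\lambda\in\Lambda\cap Q_R(z)}\operatorname{Re}\langle Pf_\lambda,Q_W\tilde e_\lambda\rangle=\operatorname{tr}(PQ_W)=\operatorname{tr}(PQ_WP)\le\operatorname{rank}P=M,\]
which is exactly the bound you need.

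The rest of your proposal is sound.
\begin{itemize}
\item Step 1 is correct, and more careful than the paper's one-line argument. The paper asserts $\|\pi(\lambda_n)g_i-\pi(\lambda_n')g_i\|\to0$, which ignores the phase $\theta$. For example, $\lambda_n=(n,0)$ and $\lambda_n'=(n,\frac{1}{2n})$ in $d=1$ give the limit $2\|g_i\|$. Your choice $c_{\lambda'}=-\overline{\theta}$ repairs this.
\item The per-element tail bound does hold for $g_i\in L^2(\R^d)$. The clean reason is that $\pi(m)$, $m\in\Z^{2d}$, permutes $\{\pi(k)\varphi\}_{k\in\Z^{2d}}$ without phases. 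The tails therefore reduce to orthonormal-expansion tails over the compact set $\{\pi(y)g_i: y\in[0,1]^{2d},\ 1\le i\le n\}$, where they tend to $0$ uniformly.
\end{itemize}

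Once repaired, your route is a valid alternative to the paper's. The paper does not run a trace argument itself. It takes a Gaussian frame over $\alpha\Z^d\times\beta\Z^d$ with $\alpha\beta<1$, gets $\ell^2$-localization of $\{\pi(\lambda)g_i\}$ from \cite[Theorem 2]{BCHL}, and applies the comparison theorem \cite[Theorem 3]{BCHL1} to obtain $(\alpha\beta)^dD^+(\Lambda)\le 1$ before letting $\alpha\beta\to1$. Your argument is essentially a self-contained proof of that comparison step with an orthonormal reference basis. This avoids the limit in $\alpha\beta$ and uses only per-element $\ell^2$ tails.
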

\begin{proof}
Suppose  $\{\pi(\Lambda_i)g_i\}_{i=1}^n$ is a Riesz sequence for $L^2(\mathbb{R}^d)$ with lower Riesz bound $A>0$ and $\Lambda_i$ is not uniformly separated. Then there exists sequences $\{\lambda_n\}$ and $\{\lambda'_n\} \subset \Lambda_i$ with $\lambda_n \ne \lambda '_n$ such that $|\lambda_n-\lambda'_n| \to 0$ as $n \to \infty$. It follows that $\lim_{n\to \infty} \|\pi(\lambda_n)g_i-\pi(\lambda'_n)g_i\| =0$. This leads to a contraction that $ \|\pi(\lambda_n)g_i-\pi(\lambda'_n)g_i\|^2 \ge 2A$ for all $n\in \N $.

To prove $D^{+}(\Lambda) \le 1$ we follow the proof of \cite[Theorem 4]{BCHL}. Suppose $\phi$ is a Gaussian function and $\Lambda'=\alpha \Z^d \times \beta \Z^d$ with $\alpha \beta <1$. Then $G(\phi ,\Lambda')$ is a Gabor frame for $L^2(\R^d)$.
For $(x, \omega)\in \R^{2d}$, set $\bar x= \alpha (\lfloor x_1/\alpha_1\rfloor, \dots ,$ and $ \lfloor x_n/\alpha_n \rfloor),\bar \omega = \alpha (\lfloor \omega _1/\alpha_1\rfloor, \dots , \lfloor \omega_n/\alpha_n \rfloor)$. Define function $a:\Lambda \to \Lambda'$ by $ (x, \omega ) \mapsto (\bar x ,\bar \omega )$. 
%and \[r_{i,\lambda'}:=\sup_{\lambda\in \lambda'+[0,\alpha) ^d \times [0 ,\beta )^d}|V_{\phi}g_i(-\lambda)|, \lambda' \in \Lambda',1\le i \le n.\] By \cite[Theorem~12.2.1]{G01}, we have $r_i=\{r_{i,\lambda'}\}_{\lambda' \in \Lambda '} \in \ell^2(\Lambda')$. 
Given $\lambda' \in \Lambda'$ and $\lambda \in \Lambda_i$ for some $1\le i \le n $, by \cite[Theorem 2]{BCHL}, there exists a sequence $r_i=\{r_{i,\lambda'}\}_{\lambda '\in \Lambda'} \in \ell^2(\Lambda')$ such that $|\langle \pi(\lambda)g_i, \pi (\lambda')\phi \rangle | \le  r_{i,a(\lambda)-\lambda'}.$ Hence   $|\langle \pi(\lambda)g_i, \pi (\lambda')\phi \rangle | \le  \sum_{i=1}^nr_{i,a(\lambda)-\lambda'}$ for all $1\le i\le n$ and $\lambda \in \Lambda_i$. It follows from \cite[Theorem 3]{BCHL1} that  $(\alpha \beta)^d D^+(\Lambda) \le 1$ for any $\alpha \beta <1$. Therefore,  we conclude that $D^+(\Lambda) \le 1$. %   Let $\{\pi(\Lambda_i)g_i\}_{i=1}^n$ be a Riesz sequence for $L^2(\mathbb{R}^d)$ with Riesz bounds $A$ and $B$. Suppose $\Lambda$ is not uniform separated 
\end{proof}

\begin{thm}\label{thm: improved Balian-Low}
     Let $\Lambda=\bigsqcup_{i=1}^n \Lambda_i \subset \R^{2d}$ be a discrete set of disjoint union of $\Lambda_i$. For each $1\le i \le n$, choose a nonzero function $g_i\in M^1(\R^d)$. Then the following hold:
    \begin{enumerate} [label=(\roman*)]
        \item \label{thm: improved Balian-Low condition 1} If  
        $\{\pi(\Lambda_i)g_i\}_{i=1}^n$ is a frame for $L^2(\mathbb{R}^d)$, then $D^-(\Lambda)  >1$.
        \item \label{thm: improved Balian-Low condition 2} If $\{\pi(\Lambda_i)g_i\}_{i=1}^n$ is a Riesz sequence for $L^2(\mathbb{R}^d)$, then 
         $D^+(\Lambda)  <1$.
    \end{enumerate}
\end{thm}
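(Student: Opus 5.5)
The plan follows the deformation strategy of \cite{GOR}, using the characterizations in Theorem \ref{thm:characterization of full frames} and Theorem \ref{thm:characterization of full Riesz sequence }.

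For \ref{thm: improved Balian-Low condition 1}, the density theorem \cite[Theorem 1.1]{CDH} already gives $D^-(\Lambda)\ge 1$, so it suffices to rule out $D^-(\Lambda)=1$. The first step is to show that the frame property is stable under small dilations. Suppose $\{\pi(\Lambda_i)g_i\}_{i=1}^n$ is a frame. We claim that $\{\pi(\alpha\Lambda_i)g_i\}_{i=1}^n$ is still a frame for all $\alpha>1$ sufficiently close to $1$. Arguing by contradiction, take $\alpha_k\downarrow 1$ for which this fails. By Theorem \ref{thm:characterization of full frames} applied to $\alpha_k\vec\Lambda$, there are weak limits $\vec\Gamma^k\in W(\alpha_k\vec\Lambda)$ and nonzero $h_k\in M^\infty$ with $C_{\vec g,\vec\Gamma^k}h_k=0$. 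Unwinding the proof of the implication (ii)$\Rightarrow$(i), we normalize $\|V_{g_1}h_k\|_\infty=1$, translate so that $|V_{g_1}h_k(0)|\ge 1/2$, and pass to a weak$^*$ limit $h\neq 0$. Along a diagonal subsequence, $\alpha_k\Lambda_i-z_k\xrightarrow{w}\Gamma_i$. Lemma \ref{lem:deformation still in W(lambda)}(ii) requires $\Lambda$ to be relatively dense; this holds because a frame forces relative density, a standard fact that can also be read off from $D^-\ge1$. The lemma then gives $\vec\Gamma\in W(\vec\Lambda)$. Since $V_{g_i}h$ vanishes on $\Gamma_i$ by \cite[Lemma 2.1(b)]{GOR}, this contradicts Theorem \ref{thm:characterization of full frames}\ref{thm:characterization of full frames: condition 2} for $\vec\Lambda$. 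Once the claim is established, apply \cite[Theorem 1.1]{CDH} to the full frame on $\alpha\Lambda=\bigsqcup\alpha\Lambda_i$. This gives $1\le D^-(\alpha\Lambda)=\alpha^{-2d}D^-(\Lambda)$, and therefore $D^-(\Lambda)\ge\alpha^{2d}>1$.

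For \ref{thm: improved Balian-Low condition 2}, Proposition \ref{prop:Balian-low of full Riesz sequence} first gives that each $\Lambda_i$ is uniformly separated and that $D^+(\Lambda)\le 1$. The next step is to show that $\{\pi(\alpha\Lambda_i)g_i\}$ remains a Riesz sequence for some $\alpha<1$ close to $1$. We work with the reciprocal parametrization $\alpha_k^{-1}$ so that the lemma's convention $\alpha_k\ge1$ or its symmetric variant applies. If this fails, Theorem \ref{thm:characterization of full Riesz sequence } together with \cite[Theorem 3.2(b)]{GOR} produces, for each $k$, coefficient sequences $\vec c^k$ with $\|\vec c^k\|_\infty=1$ and synthesis $M^\infty$-norm below $1/k$ on $\alpha_k\vec\Lambda$. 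Lemma \ref{lem: continuity property of synthesis operator} then yields $\vec\Gamma\in W(\vec\Lambda)$ and $\vec c\neq0$ with $C^*_{\vec g,\vec\Gamma}\vec c=0$, which contradicts Theorem \ref{thm:characterization of full Riesz sequence }. Proposition \ref{prop:Balian-low of full Riesz sequence} applied to $\alpha\Lambda$ gives $\alpha^{-2d}D^+(\Lambda)\le 1$, and hence $D^+(\Lambda)\le\alpha^{2d}<1$.

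The main obstacle is the stability step. The failure at each $\alpha_k$ comes from a possibly different weak limit, and we only have qualitative injectivity. These must be converted into a single sequence of near-kernel elements for $\alpha_k\vec\Lambda$ with uniform normalization. To do this I would use \cite[Theorem 3.2]{GOR}, which says that failing to be a frame (respectively a Riesz sequence) is equivalent to failing the $\infty$-version. This reduces the problem to extracting one limiting object by a diagonal argument, and it requires checking that the relative separation constants of $\alpha_k\Lambda_i$ stay uniformly bounded.
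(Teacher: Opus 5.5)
Your proposal is correct and follows essentially the paper's own argument. The non-strict bounds come from \cite[Theorem 1.1]{CDH} and Proposition~\ref{prop:Balian-low of full Riesz sequence}. The strict bounds then come from stability of the full frame (resp.\ Riesz) property under dilations $\alpha\Lambda$ with $\alpha\to1$, proved by contradiction exactly as in your last paragraph: near-kernel elements for $\alpha_k\vec\Lambda$ obtained directly from \cite[Theorem 3.2]{GOR}, translation, weak limits, Lemma~\ref{lem:deformation still in W(lambda)}, Lemma~\ref{lem: continuity property of synthesis operator}, and Theorems~\ref{thm:characterization of full frames} and~\ref{thm:characterization of full Riesz sequence }. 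Your initial detour through exact kernels on $\vec\Gamma^k\in W(\alpha_k\vec\Lambda)$ plus a diagonal argument is valid but unnecessary.

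Your dilation directions are the correct ones: $\alpha>1$ for frames and $\alpha<1$ for Riesz sequences, with $D^{\pm}(\alpha\Lambda)=\alpha^{-2d}D^{\pm}(\Lambda)$. The paper's written proof takes $\alpha_k<1$ in (i) and $\alpha_k>1$ in (ii) and writes $D^-(\alpha_k\Lambda)=\alpha_k^{2d}$, which is inverted, so as literally written it gives no contradiction. In the Riesz case this means Lemma~\ref{lem: continuity property of synthesis operator} must be applied with $\alpha_k\uparrow1$. Its proof permits this, because $\operatorname{rel}(\alpha_k\Lambda_i)$ and the separation of $\alpha_k\Lambda_i$ stay uniformly controlled; reparametrizing by $\alpha_k^{-1}$ alone would not achieve this.
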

\begin{proof}
   (\romannumeral 1) If $\{\pi(\Lambda_i)g_i\}_{i=1}^n$ is a frame for $L^2(\mathbb{R}^d),$ then $\Lambda$ is relatively separated and relatively dense and $D^-(\Lambda)\ge1 $ by \cite[Theorem 1.1]{CDH}. Suppose $\{\pi(\Lambda_i)g_i\}_{i=1}^n$ is a frame for $L^2(\mathbb{R}^d)$, but $D^-(\Lambda) =1$. Choose a sequence $\{\alpha_k\}_k$ such that $\alpha_k <1$ and $\lim_{k\to \infty}\alpha_k=1$. Define $\alpha_k\Lambda:=\{\alpha_k\lambda:\lambda\in \Lambda\}$. This implies $D^-(\alpha_k\Lambda)=\alpha_k^{2d}<1$, and $\alpha_k\Lambda_i$ is relatively separated and relatively dense. We will show that $\{\pi(\alpha_k\Lambda_i)g_i\}_{i=1}^n$ is a frame for $L^2(\R^d)$ when $k$ is large enough. But this contradicts \cite[Theorem 1.1]{CDH}. Thus, $D^-(\Lambda)>1$ holds.

Suppose  $\{\pi(\alpha_k\Lambda_i)g_i\}_{i=1}^n$ is not a frame for $L^2(\R^d)$. Then it is not a $\infty$-frame for $M^{\infty}(\R^d)$ by \cite[Theorem 3.2(a)]{GOR}. Thus, for $k \in \N_+$, there exists a nonzero function $f_k \in M^{\infty}(\R^d)$ such that $\|V_{g_1}f_k\|_{\infty}=1$ and 
 \[\sup_{1\le i\le n , \lambda\in \alpha_k\Lambda_i}|V_{g_i}f_k(\lambda)|\rightarrow 0.\] It follows that we can choose  $z_k\in \R^{2d} $ such that $|V_{g_1}f_k(z_k)|\ge 1/2$, and functions $h_k:=\pi(-z_k)f_k$ converging to some $h\in M^{\infty}(\R^d)$ under $\sigma(M^{\infty},M^1)$. Since $|V_{g_1}h_k(0)|=|V_{g_1}f_k(z_k)|\ge 1/2,$ we have $h$ is nonzero  by \cite[Lemma 2.1(b)]{GOR}. 
 Since $\{\alpha_k\}_k$ is bounded and $\operatorname{rel}(\alpha_k\Lambda)=\sup_x|\Lambda \cap B(x,1/\alpha_k)|$, we have that 
$\sup_k\operatorname{rel}(\alpha_k\Lambda-z_k)=\sup_k\operatorname{rel}(\alpha_k\Lambda) < \infty$. For all $1\le i \le n$, by \cite[Lemma 4.5 (b)]{GOR}, we may pass to a subsequence and assume $\alpha_k \Lambda_i-z_k \xrightarrow{w} \Gamma_i$ for some $\vec{\Gamma} =(\Gamma_1 ,\dots ,\Gamma_n)$.  Since $\Lambda=\bigsqcup_{i=1}^n\Lambda_i$ is relatively dense, it follows from  Lemma \ref{lem:deformation still in W(lambda)} that $\vec \Gamma \in W(\vec \Lambda)$.
 For $\gamma\in \Gamma_i$, there exists $\lambda_k^i\in\Lambda_i $ such that $\alpha_k\lambda_k^i-z_k \rightarrow \gamma$. Using \cite[Lemma 2.1(b)]{GOR} again, we have 
 \[|V_{g_i}h(\gamma)|=\lim_k|V_{g_i}h_k(\alpha_k\lambda_k^i-z_k)|=\lim_k|V_{g_i}f_k(\alpha_k\lambda_k^i)|=0.\] By Theorem~\ref{thm:characterization of full frames}, one has $\{\pi(\Lambda_i)g_i\}_{i=1}^n$ is not a frame which is a contraction to the assumption in \ref{thm: improved Balian-Low condition 1}.

(\romannumeral 2) 
If $\{\pi(\Lambda_i)g_i\}_{i=1}^n$ is a Riesz sequence for $L^2(\mathbb{R}^d)$, then  $\Lambda_i$ is uniformly separated for $1\le i \le n$ and
         $D^+(\Lambda)  \le 1$ by Proposition~\ref{prop:Balian-low of full Riesz sequence}. Suppose the contrary that $D^+(\Lambda) =1$. Choose a seuqence $\{\alpha_k\}$ such that $\alpha_k >1$ and $\lim_{n\to \infty}\alpha_k=1$. Define $\alpha_k\Lambda:=\{\alpha_k\lambda:\lambda\in \Lambda\}$. This implies $D^-(\alpha_k\Lambda)=\alpha_k^{2d}<1$ and $\alpha_k\Lambda_i$ is uniformly separated. We will show that $\{\pi(\alpha_k\Lambda_i)g_i\}_{i=1}^n$ is a Riesz sequence for $L^2(\R^d)$ when $k$ is large enough. But this contradicts Proposition \ref{prop:Balian-low of full Riesz sequence}. Thus, $D^+(\Lambda)<1$ holds.

 Assuming that  $\{\pi(\alpha_k\Lambda_i)g_i\}_{i=1}^n$ is not a Riesz sequence for $L^2(\R^d)$. Then it is not a $\infty$-Riesz sequence for $M^{\infty}(\R^d)$ by \cite[Theorem 3.2(b)]{GOR}. Thus, for $k \in \N$, there exists a non-zero sequence $\vec{c}^k \in (\bigoplus_{i=1}^n\ell^{\infty}(\alpha_k \Lambda_i))_{\infty}$ such that $\|\vec{c}^k\|_{\infty}=1$ and 
 $\lim_{k \to \infty}\|\sum_{i=1}^n \sum_{\lambda \in \alpha_k\Lambda_i}c_{i}^k(\lambda) \pi ( \lambda)g_i\|_{M^{\infty}} =0.$
 Applying Lemma~\ref{lem: continuity property of synthesis operator}, we obtain $n$-tuples of uniformly separated sets $\vec\Gamma=(\Gamma_1 ,\dots ,\Gamma_n)\in W(\vec \Lambda)$, a  non-zero sequence $\vec c\in (\bigoplus_{i=1}^n\ell^{\infty}(\Gamma_i))_{\infty}$ and a subsequence $\{\lambda_{k_m}\} \subset \Lambda$
 such that
     \[ \alpha_{k_m}\Lambda_i-\lambda_{k_m} \xrightarrow{w} \Gamma_i  \ \text{as} \ k\to \infty \ \text{and} \ \sum_{i=1}^n\sum_{\gamma \in \Gamma_i}c_{i}(\gamma) \pi (\gamma)g_i=0.\]
       By Theorem~\ref{thm:characterization of full Riesz sequence }, one has $\{\pi(\Lambda_i)g_i\}_{i=1}^n$ is not a Riesz sequence, which contracts the assumption in \ref{thm: improved Balian-Low condition 2}.
 \end{proof}

\section{Acknowledgment}
The first and third author are supported by NSFC No. 12471131. The second author is supported by NSFC No. 12571133 and  would like to thank Eduard Vilalta for introducing him this fascinating topic. The authors would like to thank Ulrik Enstad,  Karlheinz Gr\"ochenig, Franz Luef, and Eduard Vilalta for very helpful comments and suggestions on our first draft.

%Lipschitz deformation

\end{document}